\newtheorem{theorem}{Theorem}[section]
\newtheorem{lemma}[theorem]{Lemma}
\newtheorem{proposition}[theorem]{Proposition}
\newtheorem{corollary}[theorem]{Corollary}
\newtheorem{conjecture}[theorem]{Conjecture}
\newtheorem{construction}[theorem]{Construction}
\theoremstyle{definition}
\newtheorem{definition}[theorem]{Definition}
\newtheorem{assumption}[theorem]{Assumption}
\newtheorem{fact}[theorem]{Fact}
\theoremstyle{remark}
\newtheorem{remark}[theorem]{Remark}
\numberwithin{equation}{section}
\newcommand{\mcSS}{\mathcal{SS}}
\newcommand{\G}{\mathrm{G}}
\newcommand{\ov}{\overline}
\newcommand{\mcEQ}{\mathcal{EQ}}
\newcommand{\Int}{\mathrm{Int}}
\newcommand{\Inn}{\mathrm{Inn}}
\newcommand{\der}{\mathrm{der}}
\newcommand{\mc}{\mathcal}
\newcommand{\Out}{\mathrm{Out}}
\newcommand{\J}{\mathrm{J}}
\newcommand{\Q}{\mathbb{Q}}
\newcommand{\mb}{\mathbf}
\newcommand{\mf}{\mathfrak}
\newcommand{\Gr}{\mathrm{Groth}}
\newcommand{\inv}{\mathrm{inv}}
\newcommand{\A}{\mathbb{A}}
\newcommand{\Sc}{\mathrm{sc}}
\newcommand{\ad}{\mathrm{ad}}
\newcommand{\iso}{\mathrm{iso}}
\newcommand{\bas}{\mathrm{bas}}
\newcommand{\alg}{\mathrm{alg}}
\newcommand{\bb}{\mathbb}
\newcommand{\C}{\mathbb{C}}
\newcommand{\R}{\mathbb{R}}
\newcommand{\Gal}{\mathrm{Gal}}
\newcommand{\Z}{\mathbb{Z}}
\newcommand{\Sh}{\mathrm{Sh}}
\newcommand{\ms}{\mathsf}
\newcommand{\KT}{\mathrm{KT}}
\newcommand{\im}{\mathrm{im}}
\newcommand{\Aut}{\mathrm{Aut}}
\newcommand{\el}{\mathrm{ell}}
\newcommand{\SL}{\mathrm{SL}}
\newcommand{\tr}{\mathrm{tr}}
\newcommand{\Trans}{\mathrm{Trans}}
\newcommand{\Red}{\mathrm{Red}}
\newcommand{\Groth}{\mathrm{Groth}}
\newcommand{\Ind}{\mathrm{Ind}}
\newcommand{\Irr}{\mathrm{Irr}}
\newcommand{\Jac}{\mathrm{Jac}}
\newcommand{\LG}{{}^LG}
\newcommand{\LH}{{}^LH}
\newcommand{\Leta}{{}^L\eta}
\newcommand{\Lalpha}{{}^L\alpha}
\newcommand{\LM}{{}^LM}
\newcommand{\Ext}{\mathrm{Ext}}
\newcommand{\Mant}{\mathrm{Mant}}
\newcommand{\Ig}{\mathrm{Ig}}
\newcommand{\Res}{\mathrm{Res}}
\newcommand{\LL}{{}^L}
\title{An averaging formula for the cohomology of PEL-type Rapoport--Zink spaces}
\author{Alexander Bertoloni Meli}
\begin{document}

\begin{abstract}
    We prove under certain assumptions a formula for the cohomology of PEL-type Rapoport--Zink spaces that ``averages'' over the Kottwitz set. Our formula generalizes that of Shin's beyond the EL-type case and is proven by combining Mantovan's formula with descriptions of the cohomology of Shimura and Igusa varieties. We then use this averaging formula to derive a conjectural description of the cohomology of Rapoport--Zink spaces, generalizing our earlier work for EL-type spaces. Along the way, we give a description of the cohomology of Igusa varieties in terms of the Langlands correspondence, generalizing work in Shin's thesis. 
\end{abstract}

\maketitle

\tableofcontents

\section{Introduction}

The interactions between Rapoport--Zink spaces and other related moduli spaces, such as Igusa varieties and Shimura varieties, have been considered by many authors. In many cases, global methods have been used to prove powerful local results. For instance, in the pioneering work \cite{HT1}, Harris and Taylor were able to use these methods to prove the local Langlands correspondence for general linear groups over $p$-adic fields. Fargues (\cite{Far1}) and Shin (\cite{Shi1}) used these techniques to give formulas for the cohomology of EL-type Rapoport--Zink spaces in terms of the local Langlands correspondence. Strikingly, in \cite{Shi2}, Shin was able to reverse the method and use knowledge of the cohomology of certain Rapoport--Zink spaces to deduce powerful global results: namely the construction of a global Langlands correspondence for certain automorphic representations of $GL_n$. This paper further develops the method of using Shimura varieties and Igusa varieties to study the cohomology of Rapoport--Zink spaces. 

The goal of this paper is to give a framework that generalizes the averaging formula for the cohomology of Rapoport--Zink spaces proven by Shin (\cite[Theorem 1.2]{Shi1}). Shin's averaging formula is quite remarkable. On the one hand, it implies the Kottwitz conjecture on the supercuspidal part of the cohomology of non-basic Rapoport--Zink spaces and also allows one to conjecturally describe the non-supercuspidal part of the cohomology (cf. \cite{Shi1}, \cite{abm1}). On the other hand, there is currently no local proof of this formula despite the fact that its statement is purely local. 

We now describe the context in which this formula appears. Let $(G, b, \{\mu\})$ be a \emph{local Shimura datum} consisting of a connected reductive group $G$ over $\Q_p$, a conjugacy class of minuscule cocharacters $\{\mu\}$ of $G_{\ov{\Q_p}}$, and an element $b \in \mb{B}(\Q_p, G)$ where $\mb{B}(\Q_p, G)$ is the Kottwitz set. It was conjectured in \cite{RV1} that there is a tower of rigid analytic spaces $\bb{M}_{G, b, \mu, K}$ defined over $\breve{E_{\mu}}$, the completion of the maximal unramified extension of the reflex field $E_{\mu}$ of $\mu$, and indexed by compact open subgroups $K$ of $G(\Q_p)$. These towers were constructed in \cite{RZ1} in the case of Rapoport--Zink spaces of PEL-type and generally in \cite{BerkLect}.

In this paper, we are interested in the study of the $\ell$-adic cohomology $H_c^{i}(\bb{M}_{G, b, \mu, K}, \overline{\Q}_{\ell})$ of the spaces $\bb{M}_{G,b, \mu, K}$. These spaces carry an action of $G(\Q_p) \times J_b(\Q_p) \times W_{E_{\mu}}$ where $J_b(\Q_p)$ is a certain inner form of a Levi subgroup of $G$ associated to $b$. Hence, one can consider the spaces
\[
H^{i,j} (G, b, \mu)[\rho] := \varinjlim_K  \Ext^j_{J_b(\Q_p)}( H_c^{i}(\bb{M}_{G, b, \mu, K}, \overline{\Q}_{\ell}), \rho),  
\]
for $\rho$ an admissible representation of $J_b(\Q_p)$.

In this paper we will be interested in studying the homomorphisms of Grothendieck groups of admissible representations
\begin{equation*}
    \Mant_{G, b, \mu}: \Groth(J_b(\Q_p)) \to \Groth(G(\Q_p) \times W_{E_{\mu}}),
\end{equation*}
given by
\[
\Mant_{G, b, \mu} (\rho) := \sum_{i,j} (-1)^{i+j} H^{i,j} (G, b, \mu)[\rho](- \dim \bb{M}).
\]
These maps are considered by many authors (\cite{Far1}, \cite{Man2}, \cite{RV1}, \cite{Shi1}) and arise in the statement of the Kottwitz conjecture of \cite{RV1}. This construction also arises naturally in the cohomology theory of moduli spaces of local shtukas (cf \cite[Theorem 3.5.1]{KW}).

In many cases, such as for Rapoport--Zink spaces of PEL-type, these spaces are associated to the local geometry of Shimura varieties. Suppose now that we have a global Shimura datum $(\mb{G}, X)$ of PEL-type and such that $\mb{G}$ is a connected reductive group over $\Q$ that is anisotropic modulo center and $\mb{G}_{\Q_p}=G$.  In \cite{Man2}, Mantovan used the geometric relationship between Shimura varieties and Rapoport--Zink spaces to prove the following formula:
\begin{equation}{\label{mantform}}
    H^*_c(\Sh, \mc{L}_{\xi}) =  \sum\limits_{b \in \mb{B}(\Q_p, G, -\mu)} \Mant_{G,b, \mu}(H^*_c(\Ig_b, \mc{L}_{\xi})), 
\end{equation}
in $\Groth(G(\Q_p) \times W_{E_{\mu}})$. We denote respectively by $H^*_c(\Sh, \mc{L}_{\xi})$ and $H^*_c(\Ig_b, \mc{L}_{\xi})$ the alternating sums of the compactly supported cohomology of Igusa and Shimura varieties evaluated at the $\ell$-adic sheaf $\mc{L}_{\xi}$ associated to some irreducible algebraic representation $\xi$ of $\mb{G}$. 

For both Shimura varieties and Igusa varieties, there are, in many cases, trace formulas describing their cohomology (\cite{kot7}, \cite{Shi4}, \cite{Shi3}). In theory, one can combine these trace formulas and the above formula of Mantovan to derive an ``averaging formula'' for the cohomology of Rapoport--Zink spaces. In \cite{Shi1}, Shin does this for $\Res_{E/\Q_p} GL_n$ for $E/\Q_p$ an unramified Galois extension. In this case, he proves (up to an explicit character twist) 
\begin{equation}{\label{shinavg}}
    \sum\limits_{b \in \mb{B}(\Q_p, G, \mu)} \Mant_{G,b, -\mu}(\Red_b(\pi)) = \pi \boxtimes r_{-\mu} \circ \phi_{\pi},
\end{equation}
for $\pi$ an \emph{acceptable} representation of $G(\Q_p)$ and where $\phi_{\pi}$ is the semi-simplified Langlands parameter attached to $\pi$. The map $\Red_b: \Groth(G(\Q_p)) \to \Groth(J_b(\Q_p))$ shows up in Shin's analysis of the cohomology of Igusa varieties and is a combination of a Jacquet module and Jacquet-Langlands map. Loosely, an admissible representation $\pi$ is acceptable if it appears as the $p$-component of a representation appearing in the cohomology of a Shimura variety. Shin proves that such representations are dense in the Bernstein variety of $G(\Q_p)$.

In \cite{Shi1}, Shin used his formula to prove the Kottwitz conjecture for the groups he considers. In \cite{abm1}, we showed that Shin's formula can be combined with the Harris--Viehmann conjecture (\cite[Conjecture 8.4]{RV1}), to give a conjectural description of $\Mant_{G, b, \mu} \circ \Red_b$ for general admissible representations of $G(\Q_p)$.

The goal of this paper is to generalize Shin's formula and the formulas of \cite{abm1} to groups other than $GL_n$ .  In particular, we are interested in understanding what the above formula looks like for groups with nontrivial $L$-packets.

The most naive generalization of Equation \eqref{shinavg} would be to replace $\pi$ with the stable character sum over an $A$-packet, and in fact, such a formula follows from our work. On its own however, this formula is not sufficient to describe $\Mant_{G,b, \mu}$. In particular, it does not allow one to understand the  behavior of $\Mant_{G, b, \mu}$ on a single representation inside a packet.

To resolve this, we derive endoscopic versions of Equation \eqref{shinavg}. For each elliptic endoscopic datum $\mc{H}^{\mf{e}}=(H, s, \Leta)$ of $G$, we define a map 
\begin{equation*}
    \Red^{\mc{H}^{\mf{e}}}_b: \Groth^{st}(H(\Q_p)) \to \Groth(J_b(\Q_p)),
\end{equation*}
where $\Groth^{st}(H(\Q_p))$ is the Grothendieck group of of stable virtual representations of $H(\Q_p)$. Our main result is then as follows. 
\begin{theorem}[cf. Theorem \ref{finalformula}]{\label{mainthm}}
Fix $G$ a connected reductive group over $\Q_p$ and minuscule cocharacter $\mu$ such that $(G, b, \{\mu\})$ is PEL-type for $b \in \mb{B}(\Q_p, G, \mu)$. Then under a number of additional assumptions, we have the following formula in $\Groth(G(\Q_p) \times W_{E_{\mu}})$ associated to an Arthur parameter $\psi$ of $G$ that factors as $\Leta \circ \psi^H$.

\begin{equation*}
\sum\limits_{b \in \mathbf{B}(\Q_p, G, -\mu)} \mathrm{Mant}_{G,b,\mu}(
    \mathrm{Red}^{\mc{H}^{\mf{e}}}_b(S\Theta_{\psi^H}))
\end{equation*}
\begin{equation*}
    = \sum\limits_{\rho}\sum\limits_{\pi_p \in \Pi_{\psi}(G, \varrho_p)}  \langle \pi_p, \eta(s)\rangle \frac{\tr( \eta(s) | V_\rho)}{\dim \rho} \pi_p \boxtimes [\rho \otimes | \cdot |^{-\langle \rho_G, \mu \rangle}].
\end{equation*}
The first sum on the right-hand side is over irreducible factors of the representation $r_{-\mu} \circ \psi$ and $V_{\rho}$ is the $\rho$-isotypic part of $r_{-\mu} \circ \psi$. The term $S\Theta_{\psi^H}$ is the stable character sum for $\psi^H$.
\end{theorem}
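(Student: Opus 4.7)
The plan is to derive the identity by combining three ingredients visible in the introduction: Mantovan's formula~\eqref{mantform}, a stabilized trace-formula description of $H^*_c(\Sh, \mc{L}_{\xi})$ following Kottwitz, and the endoscopic description of $H^*_c(\Ig_b, \mc{L}_{\xi})$ generalizing Shin's thesis to groups with nontrivial $L$-packets. The overall strategy is to apply the Mantovan equality in $\Groth(G(\Q_p) \times W_{E_{\mu}})$, rewrite each side through its respective stabilization, and then extract the contribution of a single Arthur parameter $\psi$ factoring through a chosen elliptic endoscopic datum $\mc{H}^{\mf{e}}$ as $\Leta \circ \psi^H$.

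First I would compute the left-hand side of~\eqref{mantform}. Applying Kottwitz's stable point count and its Arthur-packet expansion expresses $H^*_c(\Sh, \mc{L}_{\xi})$ as a sum over global parameters $\psi^{\mb{G}}$, with each contribution decomposing as a tensor product of local characters at finite places and the Weil-group representation $r_{-\mu}\circ \psi$ (twisted by $|\cdot|^{-\langle \rho_G, \mu\rangle}$). Next I would rewrite the right-hand side: for each $b \in \mb{B}(\Q_p,G,-\mu)$, the endoscopic refinement of Shin's Igusa cohomology formula presents $H^*_c(\Ig_b, \mc{L}_{\xi})$ as a sum over endoscopic data $\mc{H}^{\mf{e}}$ and parameters $\psi^H$ whose $p$-component is precisely $\Red^{\mc{H}^{\mf{e}}}_b(S\Theta_{\psi^H})$. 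Applying $\Mant_{G,b,\mu}$ and summing over $b$ on the right produces an identity of \emph{global} objects in $\Groth(G(\Q_p) \times W_{E_{\mu}})$.

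To convert this into a purely local statement I would invoke the density of acceptable representations, following~\cite{Shi1}. For each acceptable $\pi_p \in \Pi_{\psi}(G, \varrho_p)$ there exists a global representation $\pi$ of $\mb{G}(\A)$ with $p$-component $\pi_p$ and archimedean component cohomological for $\xi$; varying the prime-to-$p$ component and using linear independence of admissible characters isolates the contribution of $\pi_p$ on both sides. The endoscopic pairing $\langle \pi_p, \eta(s)\rangle$ then appears from expanding $S\Theta_{\psi^H}$ into irreducible constituents on the $G$-side via the local Langlands correspondence for $\mc{H}^{\mf{e}}$, while the factor $\tr(\eta(s) \mid V_{\rho})/\dim \rho$ arises when decomposing $r_{-\mu}\circ \psi$ into isotypic pieces under the centralizer of $\psi$ and tracking how $s$ acts on each summand.

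The principal obstacles will be twofold. The first is generalizing Shin's Igusa cohomology formula beyond the EL-type case: one must construct $\Red^{\mc{H}^{\mf{e}}}_b$ so that the requisite twisted endoscopic character identities at $p$ hold and so that the stable character transfer is compatible with the Jacquet module and Jacquet--Langlands operations packaged into $\Red_b$. The second is the global-to-local passage: one needs acceptability to cut out a dense subset of the Bernstein variety in the presence of nontrivial $L$-packets, and one needs Arthur's multiplicity formula to be available in enough generality to match contributions parameter-by-parameter on the two sides. These are presumably the ``number of additional assumptions'' referenced in the statement, and verifying that the desired examples satisfy them will be the bulk of the work.
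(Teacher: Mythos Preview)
Your overall architecture is correct and matches the paper: substitute the Kottwitz--Shimura and Shin--Igusa descriptions into Mantovan's formula, then pass from global to local. The description of how $\tr(\eta(s)\mid V_\rho)/\dim\rho$ arises from decomposing $r_{-\mu}\circ\psi$ into $\mc{L}_{E_{\mf{p}}}\times SL_2(\C)$-isotypic pieces stabilized by $s$ is also right.

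However, your account of the global-to-local step has a genuine gap. You invoke density of acceptable representations \`a la \cite{Shi1} and speak of ``isolating the contribution of $\pi_p$.'' That is Shin's mechanism for $GL_n$, where packets are singletons and the target formula is for a single $\pi_p$. Here the target formula still sums over the entire packet $\Pi_{\psi}(G,\varrho_p)$; what must be isolated is not a representation but a single element $x=\eta(s)\in C_{\psi_p}$, equivalently a single endoscopic datum $\mc{H}^{\mf{e}}$. After taking the $\pi^{p,\infty}$-isotypic piece (under Assumption~\ref{unicityassump}), both sides of the equation are still weighted sums over $s$ ranging through the image $X_\psi$ of the global centralizer $C_\psi$ inside $C_{\psi_p}$, with coefficients $\epsilon_\psi(\bar s)\langle\pi^p,\bar s\rangle\langle\lambda_{\pi_\infty},s\rangle$ on each side. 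Linear independence of characters of $G(\Q_p)$ does nothing to separate these terms, because the same $\pi_p$'s appear for every $s$.

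The paper's mechanism is instead to vary the \emph{global lift} $\psi$ of $\psi_p$: different global $\psi$ have different centralizers $C_\psi\subset C_{\psi_p}$, and Assumption~\ref{liftingassump} posits that enough such lifts exist to linearly separate the contributions of the various $x\in X_{\psi_p}$. Concretely, for each $x$ one needs a global $\psi$ for which $x$ lies in $X_\psi$ but the coefficients at other $x'$ can be made to vary independently (in practice, a global $\psi$ whose centralizer sees only the trivial datum and the chosen $\mc{H}^{\mf{e}}$). This is the step you should flag as the serious assumption; density plays no role here.
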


Unfortunately, our derivation of the above formula makes a number of assumptions on $G$ and $\psi$, some of which likely need to be checked on a group by group basis. We comment on each of these assumptions at the end of the introduction. We expect that the above formula is true for all local Shimura varieties appearing in the $p$-adic uniformization of global Shimura varieties. However, we have stated Theorem \ref{mainthm} in the PEL-type case since this seems to be the most general case in the literature where the trace formulas for Shimura and Igusa varieties are well understood. However, the expected works of \cite{KSZ} and \cite{Mack-Crane} should allow the above formula to be extended to Hodge-type cases.

In the companion paper joint with Kieu Hieu (\cite{BMN1}), we verify all of these assumptions for odd unitary similitude groups unramified over $\Q_p$ and supercuspidal parameters $\psi$. Thus in this case, we have Theorem \ref{mainthm} unconditionally. In that paper, we deduce the following corollary:
\begin{corollary}[\cite{BMN1}]
The Kottwitz conjecture of \cite{RV1} is true for unramified unitary similitude groups in an odd number of variables.
\end{corollary}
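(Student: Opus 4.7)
The plan is to specialize Theorem \ref{mainthm} to $G = GU(n)/\Q_p$ an unramified unitary similitude group with $n$ odd, and to supercuspidal Arthur parameters $\psi$ (so $\psi$ is in fact a discrete supercuspidal $L$-parameter). The companion paper \cite{BMN1} verifies, for this family of groups, each of the auxiliary hypotheses on which the derivation of Theorem \ref{mainthm} rests: construction and internal structure of the local $L$-packets via endoscopic transfer from the quasi-split unitary group, the endoscopic character identities on regular semisimple elements, the correct normalization of the Kottwitz pairing $\langle \pi_p, \eta(s)\rangle$, and the existence of a suitable global unitary similitude group together with a global automorphic representation realizing $\psi$ at $p$ so that the Shin--Kottwitz trace formulas for Shimura and Igusa varieties apply. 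Granted these inputs, Theorem \ref{mainthm} holds unconditionally in the present setting, giving an identity parametrized by the elliptic endoscopic data $\mc{H}^{\mf{e}}$ of $G$ through which $\psi$ factors.

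From this identity I would extract the Kottwitz conjecture by an endoscopic inversion. Fix the basic class $b_0 \in \mb{B}(\Q_p, G, \mu)$ and a supercuspidal representation $\rho$ of $J_{b_0}(\Q_p)$ with parameter $\psi$. Since $\psi$ is supercuspidal, the stable distribution $S\Theta_{\psi^H}$ is elliptic, so the constant-term step built into $\Red^{\mc{H}^{\mf{e}}}_b$ kills all non-basic contributions, and the left-hand side of Theorem \ref{mainthm} collapses to $\Mant_{G, b_0, \mu}$ applied to an $s$-twisted character sum over $\Pi_\psi(J_{b_0})$ obtained from $S\Theta_{\psi^H}$ by Jacquet--Langlands transfer. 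Letting $s$ range over representatives of $\mc{S}_\psi$ and combining the resulting identities by Fourier inversion on the finite abelian group $\mc{S}_\psi$ isolates the individual summand indexed by $\rho$ and yields
\[
\Mant_{G, b_0, \mu}(\rho) = \sum_{\pi \in \Pi_\psi(G)} \pi \boxtimes \bigl[\Hom_{\mc{S}_\psi}(\pi \otimes \rho^{\vee}, r_{-\mu} \circ \psi) \otimes |\cdot|^{-\langle \rho_G, \mu\rangle}\bigr],
\]
which is precisely the Kottwitz conjecture of \cite{RV1} for the basic local Shimura datum $(G, b_0, \{\mu\})$.

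The main obstacle, handled in \cite{BMN1}, is the explicit verification of the hypothesis list: most notably the matching between the endoscopic $L$-packets of $GU(n)$ for odd $n$ at an unramified prime and the discrete automorphic spectrum of the chosen global unitary similitude group, so that Shin's stabilized trace formula for Igusa varieties and the Kottwitz--Shin trace formula for the compact Shimura variety can be combined compatibly to produce Theorem \ref{mainthm} with no unverified input. A secondary but delicate point is reconciling the character twist $|\cdot|^{-\langle \rho_G, \mu\rangle}$, the choice of Haar measures entering $\Red^{\mc{H}^{\mf{e}}}_{b_0}$, and the central-character conventions on both sides, so that the identity obtained by endoscopic inversion is literally the statement of the Kottwitz conjecture as formulated in \cite{RV1}.
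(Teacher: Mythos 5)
The corollary is attributed to \cite{BMN1} and the paper gives no proof of it here; it only records that the companion paper verifies all the auxiliary assumptions for odd unramified unitary similitude groups with supercuspidal parameters, so that Theorem \ref{mainthm} holds unconditionally, and that the Kottwitz conjecture is then deduced in that paper. Your proposed derivation --- specialize Theorem \ref{mainthm} to this family, observe that only the basic $b$ survives on the left-hand side, and then endoscopically invert over $s$ to isolate a single $\rho$ --- is the same strategy the paper indicates, so your proposal is essentially a plausible reconstruction of the argument of \cite{BMN1}.

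One step is misstated, however. You justify the collapse to the basic term by saying ``$S\Theta_{\psi^H}$ is elliptic, so the constant-term step built into $\Red^{\mc{H}^{\mf{e}}}_b$ kills all non-basic contributions.'' Ellipticity is not the right property here: the Steinberg representation is elliptic but has non-vanishing Jacquet modules along proper parabolics. The mechanism that actually kills the non-basic terms is supercuspidality. For non-basic $b$ one has $M_b \subsetneq G$, so $\Red^{\mc{H}^{\mf{e}}}_b$ involves $\Jac^H_{P(\nu)^{op}}$ for a proper parabolic $P(\nu)^{op}$ of $H$, and this vanishes on $S\Theta_{\psi^H}$ precisely when every member of $\Pi_{\psi^H}(H,1)$ is supercuspidal. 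That the supercuspidality of $\psi$ for $G$ forces $\psi^H$ to be a supercuspidal parameter of each elliptic endoscopic group $H$ is a genuine input for unitary (similitude) groups (inherited, via the base-change classification, from the $GL_n$ picture), and it should be stated as such rather than deduced from ellipticity.
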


As we have indicated, the proof of Theorem \ref{mainthm} is by combining the trace formulas for Shimura and Igusa varieties with the Mantovan formula (Equation \eqref{mantform}). To derive our result, we must necessarily deal with all the endoscopic terms of these trace formulas, which makes our analysis considerably more complex than that of \cite{Shi1}.

\section*{Outline of the paper}
We now describe the contents of the paper in detail. Section 2 of the paper describes in detail the theory of \emph{refined endoscopy} which we adopt in this paper. A refined endoscopic datum  $(H,s,\eta)$ of a connected reductive group $G$ over a local or global field $F$ differs from the standard notion of endoscopy in that we require our element $s$ to be in $Z(\widehat{H})^{\Gamma_F}$ as opposed to just $Z(\widehat{H})$ in the standard definition. Our use of this form of endoscopy is necessitated by our choice to normalize the global automorphic multiplicity formula and local Langlands correspondences using the \emph{isocrystal normalization} as in \cite[\S 4.6]{KalTai} and \cite[Conjecture F]{Kal1} respectively. 

In Section 2 we discuss refined endoscopy for Levi subgroups. Given a Levi subgroup $M$ of a connected reductive group $G$, there is a standard procedure (see for instance \cite[\S 2.4]{Mor1}) for producing an endoscopic datum $(H, s, \eta)$ for $G$ from a datum $(H_M, s_M, \eta_M)$ for $M$. In our notation, this is a map
\begin{equation*}
    Y: \mc{E}^r(M) \to \mc{E}^r(G),
\end{equation*}
where $\mc{E}^r(M)$ and $\mc{E}^r(G)$ are the sets of isomorphism classes of refined endoscopic data for $M$ and $G$ respectively.

However, for representation theoretic applications, one often needs to keep track of an embedding $H_M \hookrightarrow H$. Our notion of an \emph{embedded endoscopic datum} keeps track of this embedding and we show in Proposition \ref{refemb} that there is a natural bijection, $X$, between isomorphism classes of refined and embedded endoscopic data that fits into a commutative diagram:

\begin{equation}
\begin{tikzcd}
&\mathcal{E}^r(G)&\\
\mathcal{E}^e(M,G) \arrow[ur, " Y^e"] \arrow[rr, "X"] && \mathcal{E}^r(M) \arrow[ul, swap, "Y"].
\end{tikzcd}
\end{equation}

The notion of embedded endoscopic datum simplifies some of the constructions and arguments in Shin's stabilization of the trace formula for Igusa varieties (\cite{Shi3}) which we review in Section 4. In Proposition \ref{endparam} we give an explicit parametrization of the map $Y^e$ above in terms of double cosets of Weyl groups. This allows one to explicitly compute the more mysterious sets appearing in Shin's stabilization. Interestingly, analogous parametrizations appear in work of Xu (\cite{Xu1}) and Hiraga (\cite{Hir1}) in their formulas for the compatibility of endoscopic transfer and Jacquet modules.

In Section $3$ we review the form of the local and global Langlands correspondences we will use in this paper. Note that in this paper we use the formalism of the conjectural global Langlands group. This is done because it greatly simplifies the exposition. In practice, since our goal is to prove local formulas, one can work with Arthur's version of global parameters in terms of cuspidal automorphic representations as in (\cite{Arthurbook}). This is the approach followed by the author and Nguyen in \cite{BMN1}.

In Section $4$, we describe the stabilization and destabilization of the trace formula for the cohomology of Shimura varieties following Kottwitz (\cite{kot7}). Our task in this section is primarily to re-intepret Kottwitz's work using the normalizations of the local and global Langlands correspondences and of endoscopy that we have made in this paper.  The trace formula for Shimura varieties associated to a connected reductive group $\mb{G}$ over $\Q$ includes terms indexed by semisimple but not strongly-regular elements $\gamma \in \mb{G}(\Q)$. In the stabilization of these terms, one needs an explicit formula for the invariant relating the value of a transfer factor $\Delta(\gamma_{\mb{H}}, \gamma)$ to that of $\Delta(\gamma_{\mb{H}}, \gamma^*)$ where $\gamma^* \in \mb{G}^*(\Q)$ for a fixed quasisplit inner form of $\mb{G}$ and $\gamma^*$ and $\gamma_{\mb{H}}$ both transfer to $\gamma$. These formulas require a modest extension of Kottwitz's theory of $\mb{B}(\Q, \mb{G})$ and are proven in \cite{BM3}.

In Section $5$ we repeat our analysis of the trace formula for Shimura varieties in the case of Igusa varieties. The stabilization is previously known by work of Shin (\cite{Shi3}), but prior to this work, the destabilization had only been studied in a simplified case in Shin's thesis (for instance he only works with PEL data of A-type and assumes his groups are products of inner forms of $GL_n$ at $p$). The results of Section 2 and our work on transfer factors lead to some simplifications of Shin's stabilization. Most notably, the constant $c_{M_H}$ that appears in the definition of the function $h^{\mb{H}}$ in Shin's main theorem \cite[Theorem 7.2]{Shi3} is equal to $1$ under our normalizations. This constant is in general quite difficult to compute (cf \cite[\S6.4, \S8]{Shi3}) and so our normalizations make Shin's formula more explicit.

In Section 6, we combine the cohomology formulas for Igusa and Shimura varieties with the Mantovan formula. Under several assumptions, we derive Theorem \ref{mainthm}.

Finally, in Section 7 we generalize the results of \cite{abm1} and derive a conjectural formula for $\Mant_{G, b, \mu} \circ \Red^{\mc{H}^{\mf{e}}}_b$. In \cite{abm1}, we were able to show by assuming the Harris-Viehmann conjecture, that for $E/\Q_p$ a finite unramified extension and $G=\Res_{E/F} GL_n$ and $\mu$ of weights $0$ and $1$, that
\begin{equation*}
    \Mant_{G, b, \mu} \circ \Red^{\mc{H}^{\mf{e}}_{triv}}_b = [\mc{M}_{G, b, \mu}]
\end{equation*}
in $\Groth(G(\Q_p) \times W_{E_{\mu}}$. In the  above expression, $\mc{H}^{\mf{e}}_{triv}$ refers to the trivial endoscopic datum and the term $ [\mc{M}_{G, b, \mu}]$ is an explicit combination in the Grothendieck group of maps that up to character twists are of the form
\begin{equation*}
\begin{tikzcd}
\Groth(G(\Q_p)) \arrow[dd, "\Jac^G_P"] & & & \Groth(G(\Q_p) \times W_{E_{\mu}})\\
&&& \\
\Groth(M(\Q_p)) \arrow[rrr, "\pi \mapsto \pi \boxtimes r_{-\mu} \circ \phi_{\pi}"] &&& \Groth(M(\Q_p) \times W_{E_{\mu}}) \arrow[uu, "\Ind^G_P"].
\end{tikzcd}    
\end{equation*}
In Section 7, we generalize this to case of nontrivial endoscopy. In particular we derive the following conjectural formula:
\begin{conjecture}
We have
\begin{equation*}
    \Mant_{G,b,\mu} \circ \Red^{\mc{H}^{\mf{e}}}_b = [\mc{M}_{\mc{H}^{\mf{e}}, G, b, \mu}],
\end{equation*}
in $\Groth(G(\Q_p) \times W_{E_{\{\mu_G\}}})$.
\end{conjecture}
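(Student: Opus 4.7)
The plan is to extend the method of \cite{abm1} to the endoscopic setting, proceeding by induction on the semisimple rank of $G$ and splitting $\Mant_{G,b,\mu} \circ \Red^{\mc{H}^{\mf{e}}}_b$ into its basic and non-basic components. The two main ingredients are the averaging formula of Theorem \ref{mainthm}, applied to a rich enough family of Arthur parameters $\psi$ that factor as $\Leta \circ \psi^H$, and the Harris--Viehmann conjecture \cite[Conjecture 8.4]{RV1}, which for non-basic $b \in \mb{B}(\Q_p, G, -\mu)$ expresses $\Mant_{G,b,\mu}$ as a parabolic induction of $\Mant_{M_b, b, \mu_{M_b}}$ for the Levi subgroup $M_b$ attached to the Newton datum.

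For non-basic $b$, I would apply Harris--Viehmann together with the compatibility of $\Red^{\mc{H}^{\mf{e}}}_b$ with the Jacquet module on the endoscopic side, reformulated in terms of the embedded-endoscopy formalism of Section 2 and the map $Y^e$. This reduces the computation of $\Mant_{G,b,\mu} \circ \Red^{\mc{H}^{\mf{e}}}_b$ to the analogous composition for a proper Levi $M_b \subsetneq G$ equipped with its induced (embedded) endoscopic datum, and by the inductive hypothesis that composition equals the Levi analogue $[\mc{M}_{\mc{H}_{M_b}, M_b, b, \mu_{M_b}}]$. One then checks that parabolically inducing this back to $G$ reproduces exactly the non-basic part of the expected $[\mc{M}_{\mc{H}^{\mf{e}}, G, b, \mu}]$; this step is essentially a bookkeeping statement about how the operators $[\mc{M}]$ interact with the embedded-endoscopy machinery and Jacquet modules.

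For basic $b$, one subtracts the now-known non-basic contributions from the averaging formula of Theorem \ref{mainthm}, applied to a single Arthur parameter $\psi$ factoring through $\Leta$. The residual basic term must then match the basic component of $[\mc{M}_{\mc{H}^{\mf{e}}, G, b, \mu}]$, which amounts to a Kottwitz-style description of the supercuspidal part of the basic Rapoport--Zink cohomology in the presence of endoscopy. Verifying this matching for a sufficiently large family of stable characters $S\Theta_{\psi^H}$ pins down the identity in $\Groth(G(\Q_p) \times W_{E_{\{\mu_G\}}})$, by density of acceptable parameters in the Bernstein variety.

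The main obstacle is twofold. First, the Harris--Viehmann conjecture is itself open in the generality required, so the entire derivation is conditional on it, exactly as in \cite{abm1}. Second, to isolate $\Mant_{G,b,\mu} \circ \Red^{\mc{H}^{\mf{e}}}_b$ for each individual endoscopic datum $\mc{H}^{\mf{e}}$ one must vary $\mc{H}^{\mf{e}}$ over all elliptic classes and invert the resulting linear system via the endoscopic character relations; this requires that enough acceptable parameters be realizable on each $\mc{H}$ to separate the transfer operators, which is a genuinely global input tied to the standing assumptions already made for Theorem \ref{mainthm}.
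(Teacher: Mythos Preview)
Your outline is essentially the same conditional derivation the paper gives: the statement is a \emph{conjecture}, and the paper's route to it is precisely the endoscopic analogue of \cite[\S3.3]{abm1}, namely the averaging formula (Theorem \ref{finalformula}) playing the role of the sum identity and the Harris--Viehmann conjecture playing the role of the induction identity, combined by induction on Levi subgroups. The only structural difference is that the paper first packages everything into the combinatorial framework of endoscopic cocharacter data $\mc{C}^{\mf{e}}_G$, proves the sum formula (Theorem \ref{endsum}) and the induction formula purely combinatorially for the symbols $\mc{M}_{\mc{H}^{\mf{e}},G,b,\mu}$, and then checks that the passage $(\mc{H}^{\mf{e}}_S,M_S,\mu_S)\mapsto[\mc{H}^{\mf{e}}_S,M_S,\mu_S]$ to maps of Grothendieck groups intertwines these identities with Theorem \ref{finalformula} and with Harris--Viehmann respectively; your proposal collapses this intermediary and works directly with the representation-theoretic maps.

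One correction: your ``second obstacle'' is not an additional step. The averaging formula of Theorem \ref{finalformula} is already stated for a \emph{fixed} refined elliptic endoscopic datum $\mc{H}^{\mf{e}}$ and a fixed $x=\eta(s)$; the separation across endoscopic data has already been carried out in its derivation via Assumption \ref{liftingassump}. So there is no further linear system to invert. What you \emph{do} need to track, and what the combinatorial framework in the paper handles, are the multiplicities $|\Out_r(H,s,\eta)/\Out_r(H_{M_S},s,\eta)|$ that appear when passing between inner classes of embedded data for $G$ and for a Levi $M_S$ (Proposition \ref{fibercard}); these constants enter both the definition of $\Red^{\mc{H}^{\mf{e}}}_b$ and the map $\iota^G_{M_S}$, and checking that they cancel correctly against the Harris--Viehmann side is the substantive bookkeeping in the inductive step.
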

In the above conjecture, the term $[\mc{M}_{\mc{H}^{\mf{e}}, G, b, \mu}]$ is an explicit combination in the Grothendieck group of maps that are up to character twists are of the form
\begin{equation*}
\begin{tikzcd}
\Groth^{st}(H(\Q_p)) \arrow[dd, "\Jac^H_{P_H}"] & & & \Groth(G(\Q_p) \times W_{E_{\mu}}) \\
&&&\\
\Groth^{st}(H_M(\Q_p)) \arrow[rrr, "{[\mu_{H_M, M}]}"] &&&  \Groth(M(\Q_p) \times W_{E_{\mu}}) \arrow[uu, "\Ind^G_P"],
\end{tikzcd}
\end{equation*}
where $[\mu_{H_M, M}]$ is a composition of the transfer map
\begin{equation*}
 \Trans^{H_{M}}_{M} : \Groth^{st}(H_{M}(\Q_p)) \to \Groth(M(\Q_p))
\end{equation*}
and the ``weighted local Langlands map''
\begin{equation*}
    LL: \Groth(M(\Q_p)) \to \Groth(M(\Q_p) \times W_{E_{\mu}}),
\end{equation*}
defined on tempered $\pi$ by
\begin{equation*}
    \pi \mapsto \langle \pi, \eta(s) \rangle \pi \boxtimes \sum\limits_{\rho \in \Irr(r_{- \mu} \circ \psi_{\pi})} \frac{ \tr( \eta(s) | V_{\rho})  }{\dim \rho} [\rho].
\end{equation*}

\subsection*{Discussion of assumptions}

We now comment on the assumptions appearing in this work. 

We often assume that $\mb{G}_{\der}$ is simply connected. This assumption shows up in two places. The first is that it guarantees that centralizers of semisimple elements are connected and the second is that it allows one to lift the maps $\eta: \widehat{\mb{H}} \to \widehat{\mb{G}}$ appearing in endoscopic data to $L$-maps $\Leta: \LL\mb{H} \to \LL\mb{G}$. This assumption should be removable using the theory of $z$-extensions to reduce statements for general $\mb{G}$ to the simply connected case. In practice, much of the existing literature (\cite{Shi3}, \cite{kot7}) uses this assumption and so it is expositionally simpler to do so.

There are a few assumptions we make relating to our normalization of the local Langlands correspondence using isocrystals. In particular, our assumption that $\mb{G}$ is an extended pure inner twist of $\mb{G}^*$ and our use of the Hasse principle arise because of this normalization.

Following \cite{kot7} we make the assumption that the maximal $\Q$-split torus in $Z(\mb{G})$  coincides with the maximal $\R$-split torus in $Z(\mb{G})$.

As we mentioned above, we only consider spaces of PEL-type since this is the generality in which the trace formulas for Igusa varieties and Shimura varieties are known. 

The Mantovan formula uses $\ell$-adic cohomology of Shimura varieties whereas the trace formula for Shimura varieties computes the intersection cohomology. We assume that $\mb{G}$ is anisotropic modulo center so that the relevant Shimura varieties are compact and these cohomology spaces coincide. In general one can use the fact that the cuspidal parts of the $\ell$-adic cohomology and intersection cohomology of Shimura varieties agree to avoid this assumption. In fact, we do this in \cite{BMN1}.

Assumption \ref{STELLA} is expected to be true and we verify this in \cite{BMN1} for unitary similitude groups using the work of \cite{Mor1}. One can likely avoid this assumption if one works with the full stable trace formula for Shimura varieties rather than restricting to the $P=\mb{G}$ term as we do in this paper.

In Section $3$ and in Assumption \ref{unramassump}, we make a number of assumptions about properties of the Langlands correspondence. These are expected to hold generally. In this paper, we found it simpler to describe our formula in terms of the conjectural global Langlands group. In practice, since our final formula is purely local, one can avoid this by working with Arthur's global automorphic parameters as in \cite{Arthurbook}. This is the approach we take in \cite{BMN1}.

Finally, we have assumptions on the existence of certain globalizations of local representations. Namely, given an admissible representation $\pi_p$ of $G(\Q_p)$, we need to assume the existence of global representations $\pi$ of $\mb{G}(\A_f)$ that are isomorphic to $\pi_p$ at $p$, appear in the cohomology of Shimura varieties, and satisfy certain other conditions. Assumption \ref{unicityassump} is of this form but can usually be resolved by placing conditions on $\pi$ to guarantee its $A$-parameter is trivial on the Arthur $SL_2(\C)$-factor. 

The most serious assumption is Assumption \ref{liftingassump}. For a supercuspidal parameter $\psi_p$ of an odd unitary similitude group, this condition amounts to constructing for each nontrivial elliptic endoscopic datum $(H, s, \Leta)$ through which $\psi_p$ factors, a global elliptic endoscopic datum $(\mb{H}, s, \Leta)$ lifting $(H,s,\Leta)$ and a parameter $\mb{\psi}$ lifting $\psi_p$ that only factors through the trivial endoscopic datum and $(\mb{H},s,\Leta)$ and contains a representation appearing in the cohomology of our chosen Shimura variety attached to $\mb{G}$ (cf \cite{BMN1}).  For $\psi_p$ with non-singleton packet, one will be forced to consider Shimura varieties whose trace formula has non-trivial endoscopic contributions. In particular, the Shimura varieties of \cite{Kot2} have no non-trivial endoscopic contributions and so will not suffice to satisfy Assumption \ref{liftingassump}.

\subsection*{Acknowledgements}
We wish to thank Sug Woo Shin for inspiring us to undertake this project and for his continual support and encouragement. We thank Rahul Dalal, Tasho Kaletha, Sander Mack-Crane, Kieu Hieu Nguyen, Masao Oi, and Alex Youcis for numerous helpful conversations and suggestions.

\section{Endoscopic preparations}
The theory of endoscopy is central to the following work. Moreover, it will be necessary for several of our constructions to differ subtly from those appearing in much of the literature.  Thus, we choose to devote a substantial amount of care to the following standard notions with the hope of avoiding confusion.
% I would like to rewrite this section from the perspective of the (\mathcal{H}, H, s, \eta) version of endoscopic data and then the other types of data exploit "redundancies" in the general datum. This could really clean it up I think.
For this section, we adopt the following notations.
\begin{itemize}
    \item Let $F$ be a local or global field of characteristic $0$.\\
    \item Denote $\mathrm{Gal}(\overline{F}/F)$ by $\Gamma_F$.\\
    \item Let $G$ be a connected reductive group defined over $F$. Let $Z(G)$ be the center of $G$ and if $S \subset G$ is a subset, let $Z_G(S)$ denote the centralizer of $S$ in $G$.\\
    \item Let $\Psi_0(G)$  denote the canonical based root datum $\Psi_0(G_{\ov{F}})$ equipped with its natural $\Gamma_F$-action \cite[\S1]{Kot5}.\\
    \item Let $\widehat{G}$ denote a complex dual group of $G$. Thus, $\widehat{G}$ is a connected reductive group over $\C$ equipped with an action of $\Gamma_F$ that fixes some splitting of $\widehat{G}$ and a $\Gamma_F$-equivariant isomorphism between $\Psi_0(\widehat{G})$ and the dual of $\Psi_0(G)$. \\
    \item Let $^LG$ denote the Langlands dual group $\widehat{G} \rtimes W_F$ with the action of $W_F$ induced by the canonical map $W_F \to \Gamma_F$.\\
    \item The action of $W_F$ on $\widehat{G}$ induces a map $\rho_G: W_F \to \mathrm{Out}(\widehat{G})$.\\
    \item For convenience, we fix a $\Gamma_F$-invariant splitting $(\bb{B}, \bb{T}, \{X_{\alpha}\})$ of $\widehat{G}$.\\
    \item Fix a quasisplit reductive group $G^*$. Then an \emph{inner twist} of $G^*$ is an isomorphism $\Psi: G^* \to G$ defined over $\ov{F}$ and such that for each $\sigma \in \Gamma_F$, we have that $\Psi^{-1} \sigma(\Psi)=\Int(g_{\sigma})$ for some $g_{\sigma} \in G^*(\ov{F})$. Associated to $\Psi$ is the $1$-cocycle of $\Gamma_F$ valued in $G^*_{\ad}(\ov{F})$ and given by $\sigma \mapsto g_{\sigma}$. Inner twists $\Psi_1: G^* \to G_1$ and $\Psi_2: G^* \to G_2$ are equivalent if the corresponding cocycles are equal in $H^1(F, G^*_{\ad})$.\\
    \item A \emph{form} of $G$ is a reductive group $G'$ over $F$ such that $G_{\ov{F}} \cong G'_{\ov{F}}$. The forms of $G$ are classified by $H^1(F, \Aut(G))$ and a form $G'$ is inner to $G$ if its class lies in the image of $H^1(F, G_{\ad})$.
\end{itemize}

\subsection{Inner twists and dual groups}
In this subsection, we record some useful lemmas that we will need on inner twists. To begin, we record the following standard fact.
\begin{fact}{\label{qsforms}}
Let $G$ be a split reductive group. Then there is a bijection between isomorphism classes of quasisplit forms of $G$ and $H^1(F, \Out(G))$, where $\Out(G)$ has the trivial $\Gamma_F$-action.
\end{fact}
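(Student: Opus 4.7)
My plan is to leverage the semidirect product decomposition of $\Aut(G)$ afforded by an $F$-pinning. Forms of $G$ are classified by $H^1(F, \Aut(G))$, and fixing an $F$-pinning $\mathcal{P} = (B, T, \{X_\alpha\})$ of the split group $G$ identifies $\Out(G)$ with the stabilizer of $\mathcal{P}$ in $\Aut(G)$, splitting the short exact sequence
$$1 \to G_{\ad} \to \Aut(G) \to \Out(G) \to 1.$$
Because $G$ is split over $F$, the Galois action on $\Aut(G)$, and hence on $\Out(G)$, is trivial. The splitting yields a map of pointed sets $s_\ast : H^1(F, \Out(G)) \to H^1(F, \Aut(G))$, a section of the natural projection, and the assertion reduces to showing that the image of $s_\ast$ is precisely the set of isomorphism classes of quasisplit forms of $G$.

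To see that the image lands in the quasisplit forms, I would observe that a cocycle $c : \Gamma_F \to \Out(G) \subset \Aut(G)$ takes values in the pinning stabilizer, so the twisted form $G_c$ inherits an $F$-rational pinning from $\mathcal{P}$ and is therefore quasisplit. For surjectivity, given a quasisplit form $G'$ with some $F$-pinning $\mathcal{P}'$, I would pick an $\ov{F}$-isomorphism $\phi : G \to G'$ carrying $\mathcal{P}$ to $\mathcal{P}'$ and set $c_\sigma := \phi^{-1} \circ \sigma(\phi)$. Since $\mathcal{P}$ and $\mathcal{P}'$ are both $F$-rational, $\sigma(\phi)$ also carries $\mathcal{P}$ to $\mathcal{P}'$, so $c_\sigma$ preserves $\mathcal{P}$ and hence lies in $\Out(G)$; then $G' \cong G_c$.

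It remains to verify well-definedness and injectivity of $s_\ast$ on isomorphism classes. I would check that $[c]$ is independent of the choice of $\phi$ (a different choice differs by an element of $\Out(G)$, which with trivial Galois action adjusts $c$ only by $\Out(G)$-conjugation) and independent of the choice of $\mathcal{P}'$, invoking the classical fact that $G'_{\ad}(F)$ acts transitively on $F$-pinnings of the quasisplit group $G'$: a change of pinning multiplies $\phi$ on the left by an $F$-rational inner automorphism, a coboundary in $G_{\ad}$ that dies in $\Out(G)$. The same argument handles replacing $G'$ by an $F$-isomorphic form. Injectivity of $s_\ast$ on $H^1$ is then direct: if two $\Out(G)$-valued cocycles are cohomologous via some $o \in \Out(G)$, any pinning-fixing lift of $o$ produces an $F$-isomorphism between the twisted forms.

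The main obstacle, as expected, is the $G'_{\ad}(F)$-transitivity on $F$-pinnings of a quasisplit group. This is standard but delicate, reducing via the $G'(F)$-conjugacy of $F$-rational Borels (and $F$-tori inside them) in the quasisplit setting to a Galois-descent statement on choices of root vectors along each simple root line; the precise adjoint-versus-non-adjoint bookkeeping is what makes this the subtle step.
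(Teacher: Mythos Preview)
The paper records this as a standard fact and gives no proof, so there is nothing to compare against. Your sketch is the standard approach and is essentially correct. One minor slip: the Galois action on $\Aut(G)(\ov{F})$ is \emph{not} trivial---it is nontrivial on the $G_{\ad}(\ov{F})$ factor---only the induced action on $\Out(G)$ is trivial, since for split $G$ the outer automorphism group is the constant group scheme $\Aut(\Psi_0(G))$. This does not affect the rest of your argument, which only uses the triviality on $\Out(G)$.
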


Fix a reductive group $H$ over $\C$. By the classification theorem for reductive groups, there is, up to isomorphism, a unique split reductive group $G$ over $F$ such that the dual of $\Psi_0(G)$ is isomorphic to $\Psi_0(H)$. We will often use the following lemma. 
\begin{lemma}{\label{qsbijlem}}
There is a natural bijection between 
\begin{enumerate}
\item the set of isomorphism classes of quasisplit forms $G^*$ of $G$,
\item the set of $\Out(H)$-conjugacy classes of morphisms $\rho: \Gamma_F \to \Out(H)$.
\end{enumerate}
\end{lemma}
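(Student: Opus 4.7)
The plan is to reduce the statement to Fact \ref{qsforms} and then to unpack non-abelian $H^{1}$ with trivial coefficients. First, by Fact \ref{qsforms}, the set in (1) is in natural bijection with $H^{1}(F,\Out(G))$, where $\Out(G)$ carries the trivial $\Gamma_{F}$-action (since $G$ is split over $F$, every pinning-respecting outer automorphism is defined over $F$). So it suffices to produce a natural bijection
\[
H^{1}(F,\Out(G)) \;\longleftrightarrow\; \{\rho \colon \Gamma_{F}\to\Out(H)\}/\Out(H)\text{-conj.}
\]

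Second, I would identify $\Out(G)$ with $\Out(H)$. Choosing pinnings on both sides identifies $\Out(G)$ with the group $\Aut(\Psi_{0}(G))$ of automorphisms of the based root datum, and similarly $\Out(H)$ with $\Aut(\Psi_{0}(H))$. Since $\Psi_{0}(H)$ is by hypothesis the dual of $\Psi_{0}(G)$, the transpose map gives a canonical isomorphism $\Aut(\Psi_{0}(G))\cong\Aut(\Psi_{0}(H))$, hence $\Out(G)\cong\Out(H)$ canonically. This identification is $\Gamma_{F}$-equivariant for the trivial actions on both sides.

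Third, I would unwind the definition of $H^{1}(F,A)$ for the finite discrete group $A=\Out(H)$ with trivial $\Gamma_{F}$-action. A continuous $1$-cocycle $c\colon\Gamma_{F}\to A$ satisfies $c(\sigma\tau)=c(\sigma)\cdot\sigma(c(\tau))=c(\sigma)c(\tau)$, so cocycles are exactly continuous homomorphisms $\Gamma_{F}\to A$. Two such cocycles $c,c'$ are cohomologous if and only if there exists $a\in A$ with $c'(\sigma)=a^{-1}c(\sigma)\sigma(a)=a^{-1}c(\sigma)a$, i.e.\ they differ by $A$-conjugation. Combining these three steps gives the desired natural bijection, and naturality in $G$ (and in $H$) is clear from the construction.

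The only subtle point, and the main thing to check carefully, is that the identification $\Out(G)\cong\Out(H)$ in step two is really canonical and truly $\Gamma_{F}$-equivariant, so that the bijection of step three transports correctly. Once the duality of based root data is spelled out and one remembers that the $\Gamma_{F}$-action on $\Out(G)$ is trivial because $G$ is split and on $\Out(H)$ is trivial because $H$ is a group over $\C$ here viewed as a bare reductive group (not yet equipped with a Galois action), this is routine.
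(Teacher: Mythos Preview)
Your proof is correct and follows essentially the same approach as the paper: both identify $\Out(G)\cong\Out(H)$ via the chain $\Out(G)\cong\Aut(\Psi_0(G))=\Aut(\Psi_0(G)^\vee)=\Aut(\Psi_0(H))\cong\Out(H)$, and then observe that set (1) is $H^1(F,\Out(G))$ by Fact~\ref{qsforms} while set (2) is $H^1(F,\Out(H))$ with trivial action. Your write-up is simply more explicit in unwinding the cocycle description of $H^1$ with trivial coefficients.
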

\begin{proof}
We have an isomorphism
\begin{equation*}
    \Out(G) \cong \Aut(\Psi_0(G)) = \Aut(\Psi_0(G)^{\vee})=\Aut(\Psi_0(H))=\Out(H).
\end{equation*}
Then we observe that the first set is identified with $H^1(F, \Out(G))$ and the second with $H^1(F, \Out(H))$.
\end{proof}

Now let $G$ be a connected reductive group over $F$ and take $g \in G(\ov{F})$. Then $\Int(g): G \to G$ induces the identity automorphism of the canonical based root datum. We note the following consequences of this observation:
\begin{fact}{\label{equivarenum}}
\begin{enumerate}
\item  If $\Psi: G^* \to G$ is an inner twist then $\Psi$ induces a $\Gamma_F$-equivariant map on canonical based root data.

\item If $\eta: \widehat{G} \to \widehat{G'}$ is an isomorphism whose conjugacy class is $\Gamma_F$-stable, then $\eta$ induces a $\Gamma_F$-equivariant map of canonical based root data. Moreover, by choosing $F$-splittings of $\widehat{G}$ and $\widehat{G'}$ and a $\widehat{G}$-conjugate $\eta_1$ of $\eta$ that takes one $F$-splitting to the other, we see that $\eta_1^{-1} \circ \gamma \cdot \eta_1$ will act as the identity on our $F$-splitting of $\widehat{G}$ and hence $\eta_1$ is $\Gamma_F$-equivariant.
\end{enumerate}
\end{fact}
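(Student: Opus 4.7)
The plan is to exploit two observations recorded immediately before the statement: first, that any inner automorphism $\Int(g)$ with $g \in G(\ov{F})$ acts as the identity on the canonical based root datum $\Psi_0(G)$; and second, the general compatibility $\sigma(\phi)_* = \sigma_{\Psi_0} \circ \phi_* \circ \sigma_{\Psi_0}^{-1}$ relating the Galois twist of an $\ov{F}$-morphism $\phi$ to its induced map on canonical based root data.

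For part (1), let $\Psi_* \colon \Psi_0(G^*) \to \Psi_0(G)$ be the isomorphism induced by $\Psi$. The inner-twist cocycle relation $\sigma(\Psi) = \Psi \circ \Int(g_\sigma)$ gives $\sigma(\Psi)_* = \Psi_* \circ \Int(g_\sigma)_* = \Psi_*$, and expanding the left side via the compatibility identity yields $\sigma \circ \Psi_* \circ \sigma^{-1} = \Psi_*$, which is exactly $\Gamma_F$-equivariance. For the first assertion of part (2), the same template applies: the hypothesis that the $\widehat{G}$-conjugacy class of $\eta$ is $\Gamma_F$-stable amounts to $\sigma(\eta) = \eta \circ \Int(g_\sigma)$ for some $g_\sigma \in \widehat{G}$, so $\sigma(\eta)_* = \eta_*$, and the compatibility identity again forces $\eta_*$ to intertwine the two $\Gamma_F$-actions on based root data.

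The ``moreover'' portion is the substantive part, because one must upgrade $\eta$ itself (not merely $\eta_*$) to an honestly $\Gamma_F$-equivariant morphism after a single inner modification. My plan is first to fix $F$-splittings of $\widehat{G}$ and of $\widehat{G'}$; then, since $\widehat{G}$ acts transitively on its splittings, to replace $\eta$ by a $\widehat{G}$-conjugate $\eta_1$ that sends the chosen splitting of $\widehat{G}$ to the chosen splitting of $\widehat{G'}$. Because both splittings are $\Gamma_F$-invariant, the twisted morphism $\sigma(\eta_1)$ also carries the splitting of $\widehat{G}$ to that of $\widehat{G'}$. Hence $\eta_1^{-1} \circ \sigma(\eta_1)$ is an automorphism of $\widehat{G}$ that preserves the fixed pinning. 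By the $\Gamma_F$-stability of the conjugacy class of $\eta_1$ it is also inner, and an inner automorphism fixing a pinning must be trivial; therefore $\sigma(\eta_1) = \eta_1$ for all $\sigma \in \Gamma_F$.

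The only point requiring care is the bookkeeping in the last step: one must simultaneously verify that $\sigma(\eta_1)$ remains in the $\widehat{G}$-conjugacy class of $\eta_1$ and that it respects the splittings, and only then invoke the rigidity of pinned automorphisms. Beyond this, the argument needs nothing more than the triviality of inner automorphisms on $\Psi_0$ and on a fixed pinning, so I do not anticipate any genuine obstacle.
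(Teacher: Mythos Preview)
Your proposal is correct and matches the paper's approach: the paper records this as a \emph{Fact} without a separate proof, merely noting it as a consequence of the observation that $\Int(g)$ acts trivially on $\Psi_0(G)$, with the argument for the ``moreover'' clause already sketched in the statement itself. Your write-up simply fills in those details faithfully.
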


We deduce the following result:
\begin{lemma}{\label{itdg}}
Fix a quasisplit group $G^*$ and an inner form $G$ of $G^*$. Let $\widehat{G^*}$ and $\widehat{G}$ be dual groups of $G^*$ and $G$ respectively. Then there is a natural bijection between 
\begin{enumerate}
    \item Conjugacy classes of inner twists $\Psi: G^* \to G$,
    \item $\widehat{G}^{\Gamma_F}$-conjugacy classes of $\Gamma_F$-equivariant isomorphisms $\xi: \widehat{G} \to \widehat{G^*}$.
    \end{enumerate}
\end{lemma}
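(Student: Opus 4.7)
The strategy is to use Fact~\ref{equivarenum} together with the isomorphism theorem for pinned reductive groups (Chevalley) to pass between inner twists on the group side and $\Gamma_F$-equivariant dual-group isomorphisms, and then to match up equivalence relations on each side. The key point in both directions is that inner automorphisms act trivially on the canonical based root data, which allows the ambiguity in lifting root-datum isomorphisms to actual group isomorphisms to be absorbed exactly into the stated conjugation.

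For the forward map, given an inner twist $\Psi: G^* \to G$, I would first apply Fact~\ref{equivarenum}(1) to obtain a $\Gamma_F$-equivariant isomorphism $\Psi_0(G^*) \to \Psi_0(G)$. Dualizing via the fixed $\Gamma_F$-equivariant identifications $\Psi_0(\widehat G) \cong \Psi_0(G)^\vee$ and $\Psi_0(\widehat{G^*}) \cong \Psi_0(G^*)^\vee$ yields a $\Gamma_F$-equivariant $\Psi_0(\widehat G) \to \Psi_0(\widehat{G^*})$, and Chevalley applied with the $\Gamma_F$-invariant splittings fixed at the start of the section lifts this uniquely to a splitting-preserving $\xi: \widehat G \to \widehat{G^*}$. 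Uniqueness of the splitting-preserving lift, combined with $\Gamma_F$-invariance of the splittings and $\Gamma_F$-equivariance at the level of root data, forces $\xi$ to be $\Gamma_F$-equivariant. Since $\Inn(G^*)$ and $\Inn(G)$ act trivially on $\Psi_0$, composing $\Psi$ with an inner automorphism on either side does not change $\xi$, and different choices of splittings shift $\xi$ only by $\widehat G^{\Gamma_F}$-conjugation.

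For the backward map, given a $\Gamma_F$-equivariant $\xi: \widehat G \to \widehat{G^*}$, I would dualize to obtain a $\Gamma_F$-equivariant isomorphism $\Psi_0(G^*) \to \Psi_0(G)$ and invoke Chevalley to lift it to an $\ov F$-isomorphism $\Psi: G^* \to G$, well defined up to $\Inn(G)(\ov F)$. Then $\Psi$ and $\sigma(\Psi)$ induce the same map on $\Psi_0$ (by the $\Gamma_F$-equivariance of $\xi$), so $\Psi^{-1}\sigma(\Psi)$ acts trivially on $\Psi_0(G^*)$ and must be inner; hence $\Psi$ is an inner twist. The two constructions are mutually inverse by construction at the level of canonical based root data, so one only needs to verify that the equivalence relations on the two sides are compatible.

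The main technical point will be this last compatibility, namely showing that cohomological equivalence of inner twists (classes in $H^1(F, G^*_{\ad})$) matches $\widehat G^{\Gamma_F}$-conjugacy of $\xi$. My plan is to reduce two equivalent inner twists $\Psi_1, \Psi_2$ to ones with identical cocycles by precomposing with a suitable $\Int(x)$ absorbing the coboundary; once the cocycles agree, $\Psi_1$ and $\Psi_2$ differ by an $F$-automorphism of source or target whose dual is a $\Gamma_F$-equivariant automorphism of $\widehat G$, and this shifts $\xi$ precisely within its $\widehat G^{\Gamma_F}$-conjugacy class. I expect this bookkeeping, rather than any deeper theoretical issue, to be the principal obstacle.
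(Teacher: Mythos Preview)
Your forward and backward constructions are essentially the paper's, but the final paragraph rests on a misreading of the statement. ``Conjugacy classes of inner twists $\Psi:G^*\to G$'' does \emph{not} mean equivalence classes in $H^1(F,G^*_{\ad})$; it means inner twists modulo composition with inner automorphisms (equivalently, the set of $\Gamma_F$-equivariant isomorphisms $\Psi_0(G^*)\to\Psi_0(G)$, via Fact~\ref{equivarenum}(1)). These two notions differ whenever $G$ has $F$-rational outer automorphisms: two inner twists $\Psi_1,\Psi_2:G^*\to G$ with cohomologous cocycles can differ by such an outer automorphism, and then the induced $\xi_1,\xi_2$ lie in \emph{different} $\widehat G$-conjugacy classes, not merely different $\widehat G^{\Gamma_F}$-classes. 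So the ``main technical point'' you identify is both unnecessary (under the correct reading, conjugate inner twists induce the same based-root-datum map, hence the same splitting-preserving $\xi$) and false as stated.

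There is, however, a genuine point you glossed over. Your argument that the two constructions are mutually inverse works only at the level of based root data; to conclude they are inverse on the sets (1) and (2) you must show that an arbitrary $\Gamma_F$-equivariant $\xi$ is $\widehat G^{\Gamma_F}$-conjugate (not just $\widehat G$-conjugate) to the splitting-preserving one in its class. If $\xi'=\Int(g)\circ\xi$ with both $\Gamma_F$-equivariant, one only gets $g^{-1}\sigma(g)\in Z(\widehat G)$, which does not immediately give $g\in\widehat G^{\Gamma_F}$. The paper handles exactly this step by invoking \cite[Lemma~1.6]{Kot5}; your remark that ``different choices of splittings shift $\xi$ only by $\widehat G^{\Gamma_F}$-conjugation'' requires the same lemma and is not as automatic as you suggest.
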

\begin{proof}
We claim the second item is the same as the set of  $\Gamma_F$-equivariant isomorphisms $\Psi_0(\widehat{G}) \to \Psi_0(\widehat{G^*})$. Indeed, the set of $\Gamma_F$-equivariant isomorphisms of based root data is in bijection with conjugacy classes of isomorphisms $\widehat{G} \to \widehat{G^*}$ containing a $\Gamma_F$-equivariant element by Fact \ref{equivarenum}.  On the other hand, if $\xi, \xi'$ are $\Gamma_F$-equivariant and in the same conjugacy class, then by \cite[Lemma 1.6]{Kot5} they are in the same $\widehat{G}^{\Gamma_F}$-conjugacy class. This proves the claim.

Now, composing with the fixed isomorphisms $\Psi_0(\widehat{G}) \cong \Psi_0(G)^{\vee}$ and similarly for $G^*$, we see that this set is the same as the set of $\Gamma_F$-equivariant isomorphisms $\Psi_0(G)^{\vee} \to \Psi_0(G^*)^{\vee}$. Dually, this set is naturally identified with $\Gamma_F$-equivariant isomorphisms $\Psi_0(G^*) \to \Psi_0(G)$ which is identified with the set of equivalence classes of inner twists $\Psi: G^* \to G$ again by Fact \ref{equivarenum}.
\end{proof}

\subsection{Standard endoscopic data}
In this paper, we primarily use the following definition of endoscopic datum. This definition is due to Kottwitz and appears in work of Shin \cite[Definition 2.1]{Shi3}.
\begin{definition}{\label{endtrip}}
An \emph{endoscopic triple of $G$} is a tuple $(H,s,\eta)$ such that $H$ is a quasisplit reductive group defined over $F$, the element $s \in Z(\widehat{H})$, and $\eta: \widehat{H} \to \widehat{G}$ is an embedding. Further, the following holds.
\begin{itemize}
    \item The $\widehat{G}$-conjugacy class of $\eta$ is stable under the action of $\Gamma_F$. Recall that $\gamma \in \Gamma_F$ acts by $\gamma \cdot \eta= \gamma \circ \eta \circ \gamma^{-1}$.\\
    \item The image of $s$ in $Z(\widehat{H})/Z(\widehat{G})$ is $\Gamma_F$-invariant and its image under the connecting homomorphism, 
    \begin{equation}{\label{endhom}}
 (Z(\widehat{H})/Z(\widehat{G}))^{\Gamma_F} \to H^1(F, Z(\widehat{G})),
    \end{equation}
    arising from the $\Gamma_F$-invariant short exact sequence
    \begin{equation*}1 \to Z(\widehat{G}) \to Z(\widehat{H}) \to Z(\widehat{H})/Z(\widehat{G}) \to 1 \end{equation*}
    is trivial if $F$ is local and locally trivial if $F$ is global.
\end{itemize}
\end{definition}
\begin{remark}
Note that the first condition implies that the map $Z(\widehat{G}) \hookrightarrow Z(\widehat{H})$ is $\Gamma_F$-equivariant so that the second condition makes sense. After all, the first condition implies that for $\gamma \in \Gamma_F$, there exists a $g_{\gamma} \in \widehat{G}$ so that 
\[
\gamma \cdot \eta= \gamma \circ \eta \circ \gamma^{-1}= \mathrm{Int}(g_{\gamma}) \circ \eta.
\]
In particular, for $z \in Z(\widehat{H})$ such that $\eta(z) \in Z(\widehat{G})$, we have 
\[
\gamma(\eta(z))= \mathrm{Int}(g_{\gamma})(\eta(\gamma(z)))=\eta (\gamma(z)),
\]
as desired.
\end{remark}
\begin{definition}{\label{isoendtrip}}
An \emph{isomorphism} of endoscopic triples of $G$,
\[
i: (H, s, \eta) \to (H', s', \eta'),
\]
is an isomorphism $\alpha: H \to H'$ over $F$ such that 
\begin{itemize}
    \item the maps $\eta \circ \widehat{\alpha}$ and $\eta'$ are conjugate by an element of $\widehat{G}$,
    \item the elements $s$ and $\widehat{\alpha}(s')$ are equal in $Z(\widehat{H})/Z(\widehat{G})$.
\end{itemize}
We denote the set of isomorphism classes of endoscopic data by $\mathcal{E}(G)$.
We define the outer automorphism group of $\mathrm{Out}(H, s, \eta)$ to be $\mathrm{Aut}(H,s,\eta)/H_{\ad}(F)$.
\end{definition}
\begin{remark}
We need to explain what we mean by $\widehat{\alpha}$. The map $\alpha$ induces an isomorphism between the root data of $H$ and $H'$ and therefore between $\widehat{H}$ and $\widehat{H'}$. Such an isomorphism determines a $\Gamma_F$-equivariant isomorphism $\widehat{\alpha}: \widehat{H} \cong \widehat{H'}$ unique up to $\widehat{H}^{\Gamma_F}$-conjugacy.
\end{remark}
\begin{remark}
The above definitions are equivalent to their respective notions in \cite[\S 2.1]{KS} (see Appendix\ref{appendixA}).
\end{remark}

We will routinely use that the groups $\Out(H,s,\eta)$ are finite, which follows from the following easy lemma.
\begin{lemma}{\label{finoutim}} Let $X$ be a complex reductive group. Let $s\in X(\C)$ be semisimple and set $Y:=Z_X(s)^\circ$. Then, the map $N_X(Y)\to \Out(Y)$ given on $\C$-points by sending $x\in N_X(Y)(\C)$ to $\Int(x)_{\mid Y}$ has finite image. 
\end{lemma}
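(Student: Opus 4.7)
The plan is to reduce the finiteness of the image to finiteness of the Weyl group of $X$. I would start by identifying the kernel of the map $\phi \colon N_X(Y) \to \Out(Y)$, $n \mapsto \Int(n)|_Y \bmod \Inn(Y)$. An element $n \in N_X(Y)$ lies in $\ker \phi$ precisely when there exists $y \in Y$ with $\Int(n)|_Y = \Int(y)|_Y$, i.e.\ $y^{-1}n \in Z_X(Y)$. Hence $\ker \phi = Y \cdot Z_X(Y)$ and the image of $\phi$ is $N_X(Y)/(Y \cdot Z_X(Y))$.

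The key observation is that any maximal torus of $Y$ containing $s$ is automatically a maximal torus of $X$. Since $s$ is semisimple and lies in $Y = Z_X(s)^\circ$, one can pick a maximal torus $T \subseteq Y$ with $s \in T$. If $T'$ is any maximal torus of $X$ containing $T$, then $s \in T \subseteq T'$ forces $T'$ to centralize $s$, so $T'$ (being connected) sits inside $Z_X(s)^\circ = Y$; maximality of $T$ in $Y$ then yields $T' = T$.

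To conclude, I would exploit the $Y$-conjugacy of the maximal tori of $Y$: for any $n \in N_X(Y)$, the torus $nTn^{-1}$ is a maximal torus of $Y$, so there exists $y \in Y$ with $yn \in N_X(T) \cap N_X(Y)$. This gives the decomposition $N_X(Y) = Y \cdot (N_X(T) \cap N_X(Y))$, and since $T \subseteq Y \subseteq \ker \phi$, the image of $\phi$ is a quotient of $(N_X(T) \cap N_X(Y))/T$, which embeds into the finite Weyl group $N_X(T)/T$.

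The only point that requires care is the choice of $T$: an arbitrary maximal torus of $Y$ need not contain $s$, since $s$ lies in $Z(Y)$ but not necessarily in $Z(Y)^\circ$, and the argument forcing $T' = T$ in the second step genuinely uses $s \in T$. Beyond that I do not expect any real obstacle; the whole proof is built from standard facts about connected reductive groups.
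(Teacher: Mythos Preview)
Your proof is correct and takes a genuinely different route from the paper's.

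The paper argues as follows: it first shows $s\in Z(Y)$, whence $Z_X(Z(Y))^\circ\subseteq Z_X(s)^\circ=Y$, so $Z_X(Z(Y))^\circ$ lies in the kernel of $\phi$. It then observes that $N_X(Y)\subseteq N_X(Z(Y))$ because $Z(Y)$ is characteristic in $Y$, and reduces the claim to finiteness of $N_X(Z(Y))/Z_X(Z(Y))$. Finally it invokes the structural fact (cited from Humphreys) that for a diagonalizable subgroup $D$ of a linear algebraic group, $N_X(D)^\circ=Z_X(D)^\circ$; since $Y$ is reductive, $Z(Y)$ is diagonalizable and the result follows.

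Your argument instead goes through a maximal torus $T$ of $Y$ containing $s$, shows it is already maximal in $X$, and then uses conjugacy of maximal tori in $Y$ to write $N_X(Y)=Y\cdot(N_X(T)\cap N_X(Y))$, bounding the image of $\phi$ by the Weyl group $N_X(T)/T$. This is arguably more elementary: it avoids the cited rigidity result for diagonalizable groups and gives an explicit finite group through which the image factors. The paper's approach, on the other hand, packages the argument into a single structural citation and does not require tracking a specific torus. Both are short and standard; your caution about needing $s\in T$ (rather than an arbitrary maximal torus of $Y$) is exactly the point that makes the torus argument work.
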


\begin{proof} We first note that $Z_X(Z(Y))^\circ$ is contained in the kernel of the map $N_X(Y)\to \Out(Y)$. Indeed, it suffices to show that $Z_X(Z(Y))^\circ \subseteq Y$.  We first observe that $s\in Z(Y)$. Evidently $s\in Z(Z_X(s))\subseteq Z_X(s)$ so the only non-trivial statement is that $s$ is actually in $Z_X(s)^\circ=Y$. But, note that since $s$ is semisimple, we have $s \in T(\C)$ for $T$ a maximal torus of $X$. Hence $s \in T(\C) \subset Y$ and so  $s\in Y$ and thus $s\in Z(Y)$. Therefore, $Z_X(Z(Y))\subseteq Z_X(s)$ and thus $Z_X(Z(Y))^\circ\subseteq Z_X(s)^\circ=Y$. 

To finish the proof, it suffices to show that $N_X(Y)/Z_X(Z(Y))^\circ$ is finite. But, since $Z_X(Z(Y))^\circ$ is finite index in $Z_X(Z(Y))$ it suffices to show that $N_X(Y)/Z_X(Z(Y))$ is finite. Note though that $N_X(Y)\subseteq N_X(Z(Y))$ since $Z(Y)$ is a characteristic subgroup of $Y$. Thus, we get an inclusion
\begin{equation*}
    N_X(Y)/Z_X(Z(Y))\hookrightarrow N_X(Z(Y))/Z_X(Z(Y)),
\end{equation*}
and thus it suffices to show this latter group is finite. Of course, this is equivalent to showing that $N_X(Z(Y))^\circ$ and $Z_X(Z(Y))^\circ$ coincide. Since $Z(Y)$ is multiplicative (since $Y$ is reductive by \cite[\S2.2]{Hum1}) this claim follows from \cite[Corollary, \S16.3]{Hum2}.
\end{proof}

\subsection{Refined endoscopic data}
The Definitions \ref{endtrip} and \ref{isoendtrip} suffice for the study of endoscopy when $F$ is global. In general it is desirable that the transfer factors are invariant under isomorphisms of endoscopic triples.  For local fields this will not be the case using the above definitions (see \cite[\S 3]{Art4}). This motivates the following definitions which were suggested to us by Tasho Kaletha.

\begin{definition}
A \emph{refined endoscopic triple of $G$} is an endoscopic triple $(H,s,\eta)$ such that in addition, $s \in Z(\widehat{H})^{\Gamma}$.

An \emph{isomorphism} of refined endosopic triples of $G$,
\begin{equation}
    i: (H, s, \eta) \to (H', s', \eta')
\end{equation}
is an isomorphism $\alpha: H \to H'$ such that
\begin{itemize}
    \item the maps $\eta \circ \widehat{\alpha}$ and $\eta'$ are conjugate by an element of $\widehat{G}$,
    \item the elements $s$ and $\widehat{\alpha}(s')$ are equal in $Z(\widehat{H})^{\Gamma}$.
\end{itemize}
We denote the set of isomorphism classes of refined endoscopic data by $\mathcal{E}^r(G)$.
\end{definition}
\begin{definition}
A refined or standard endoscopic triple $(H,s,\eta)$ is \emph{elliptic} if $(Z(\widehat{H})^{\Gamma_F})^{\circ} \subset Z(\widehat{G})$.
\end{definition}
\begin{remark}
In the case that $F$ is local, any isomorphism class of endoscopic triples of $G$ has a refined representative. In the global case, this is true if $G$ satisfies the Hasse principle.
\end{remark}

\subsection{Maps of \texorpdfstring{$L$}{L}-groups}
A group map $\xi: \LG_1 \to \LG_2$ is an $L$-\emph{map} if the induced map $\widehat{G_1} \to \widehat{G_2}$ is a map of algebraic groups and $\xi$ commutes with the projections of $\LG_1$ and $\LG_2$ to $W_F$. An $L$-map that is also an embedding is called an $L$-\emph{embedding}.

If we assume $G_{\der}$ is simply connected, it is a fact that any embedding $\eta: \widehat{H} \to \widehat{G}$ extends to an $L$-embedding $\Leta: \LH \to \LG$ (\cite[Prop 1]{Lan1}).

We now show that we can extend isomorphisms of endoscopic data to give maps of $L$-groups.

\begin{lemma}{\label{alphalem}}
Suppose that $(H_1 , s_1, \eta_1)$ and $(H_2, s_2, \eta_2)$ are refined endoscopic data and suppose there exist lifts $^L\eta_1$ and $^L\eta_2$ of $\eta_1$ and $\eta_2$ respectively. Suppose further that $\alpha: H_2 \to H_1$ gives an isomorphism of endoscopic data (refined or standard) and $g \in \widehat{G}$ is such that $\Int(g) \circ \eta_1 = \eta_2 \circ \widehat{\alpha}$. Then for each choice of $\widehat{\alpha}$, there exists a lift $^L\alpha$ of $\alpha$ such that the following diagram commutes:
\begin{equation}{\label{endisodiagram}}
\begin{tikzcd}
^LH_1 \arrow[r, "^L\eta_1"] \arrow[d, swap, "^L\alpha"] & ^LG \arrow[d, "\Int(g)"] \\
^LH_2 \arrow[r, "^L\eta_2"]& ^LG.
\end{tikzcd}
\end{equation}
Moreover, the $\widehat{H_1}$-conjugacy class of $^L\alpha$ does not depend on the choice of $\widehat{\alpha}$ or $g$.
\end{lemma}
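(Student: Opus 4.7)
The plan is to construct $^L\alpha$ by forcing the diagram to commute: define it so that on $\widehat{H_1}$ it equals the chosen $\widehat{\alpha}$, and on Weil group elements it is the unique lift making the square commute. The main work is to show this is well-defined.

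Fix Langlands-dual splittings and write $^L\eta_i(1,w) = (c_w^i, w)$ for $w \in W_F$. Any candidate $^L\alpha$ must send $(h,w)$ to $(\widehat{\alpha}(h)a_w, w)$ for some $a_w \in \widehat{H_2}$. Unwinding the semidirect-product multiplication, commutativity of \eqref{endisodiagram} on $(1,w)$ is equivalent to the equation
\begin{equation*}
\eta_2(a_w) = g\, c_w^1\, \sigma_G(w)(g)^{-1}\, (c_w^2)^{-1},
\end{equation*}
where $\sigma_G$ denotes the $W_F$-action on $\widehat{G}$. So the key claim is that the right-hand side lies in $\eta_2(\widehat{H_2})$. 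To see this, set $\pi_w := \Int(g) \circ {}^L\eta_1(1,w) = (g c_w^1 \sigma_G(w)(g)^{-1}, w)$. Using the $L$-embedding property of $^L\eta_1$, the relation $\Int(g)\circ\eta_1 = \eta_2\circ\widehat{\alpha}$, and the $\Gamma_F$-equivariance of $\widehat{\alpha}$ (which holds because $\alpha$ is defined over $F$), one computes that conjugation by $\pi_w$ on $\eta_2(\widehat{H_2})$ induces the same automorphism $\eta_2\circ\sigma_{H_2}(w)\circ\eta_2^{-1}$ as conjugation by $(c_w^2, w)$. Hence $\pi_w \cdot (c_w^2,w)^{-1}$, which lives in $\widehat{G}$, centralizes $\eta_2(\widehat{H_2})$. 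Because $\eta_2(\widehat{H_2}) = Z_{\widehat{G}}(\eta_2(s_2))^\circ$ is the connected centralizer of a semisimple element, a standard argument (any maximal torus through $\eta_2(s_2)$ lies in this subgroup, and so its $\widehat{G}$-centralizer is already contained in $Z(\eta_2(\widehat{H_2})) = \eta_2(Z(\widehat{H_2}))$) shows this centralizer equals $\eta_2(Z(\widehat{H_2}))$. Therefore $a_w$ exists, and in fact lies in $Z(\widehat{H_2})$.

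Next, I would verify that $w\mapsto a_w$ is a 1-cocycle with respect to $\sigma_{H_2}$, i.e. $a_{ww'} = a_w\,\sigma_{H_2}(w)(a_{w'})$. This is a direct calculation using $c_{ww'}^i = c_w^i\,\sigma_G(w)(c_{w'}^i)$ and the intertwining identity $c_w^2\,\sigma_G(w)(\eta_2(x))(c_w^2)^{-1} = \eta_2(\sigma_{H_2}(w)(x))$ that comes from $^L\eta_2$ being an $L$-embedding. Injectivity of $\eta_2$ then lets us translate the identity on $\widehat{G}$ into the claimed cocycle identity on $\widehat{H_2}$. Continuity of $w \mapsto a_w$ follows from continuity of $^L\eta_1, {}^L\eta_2$. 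Given the cocycle condition and centrality of $a_w$, a short manipulation confirms that $^L\alpha(h,w) := (\widehat{\alpha}(h)a_w, w)$ is a group homomorphism and an $L$-map; commutativity of the diagram is automatic from the defining equation.

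For the independence assertion, I would treat the two sources of ambiguity separately. If $\widehat{\alpha}$ is replaced by another $\Gamma_F$-equivariant representative $\widehat{\alpha}' = \Int(h)\circ\widehat{\alpha}$ with $h\in\widehat{H_2}$, then the constraint $\Int(g')\circ\eta_1 = \eta_2\circ\widehat{\alpha}'$ forces $g' = \eta_2(h)\,g$ (up to the central ambiguity discussed below), and a direct computation gives $a_w' = h\,a_w\,\sigma_{H_2}(w)(h)^{-1}$, so $^L\alpha' = \Int(h)\circ {}^L\alpha$. If instead $g$ is replaced by another element $g'$ satisfying the same relation with the same $\widehat{\alpha}$, then $g'g^{-1}$ lies in $Z_{\widehat{G}}(\eta_2(\widehat{H_2})) = \eta_2(Z(\widehat{H_2}))$, say $g' = \eta_2(z)g$; the same computation as above (with $h=z\in Z(\widehat{H_2})$) shows $^L\alpha$ changes by $\Int(z)$. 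In both cases the map is modified by an inner automorphism coming from $\widehat{H_2}$, proving the asserted well-definedness of the conjugacy class.

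The step I expect to be the main obstacle is the identification $Z_{\widehat{G}}(\eta_2(\widehat{H_2})) = \eta_2(Z(\widehat{H_2}))$: it relies on the implicit fact that $\eta_2$ embeds $\widehat{H_2}$ as the connected centralizer of the semisimple element $\eta_2(s_2)$, a property baked into the definition of endoscopic data but worth making explicit. Once this centralizer computation is in hand, the rest of the argument is mechanical unwinding of the semidirect product structure on $^LG$ and the $L$-embedding compatibility relations.
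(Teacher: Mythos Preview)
Your proof is correct and follows essentially the same route as the paper's. Both arguments reduce to showing that the element $g\,c_w^1\,w(g)^{-1}(c_w^2)^{-1}$ centralizes $\eta_2(\widehat{H_2})$ and then invoke the maximal-torus trick $Z_{\widehat{G}}(\eta_2(\widehat{H_2}))\subset\eta_2(T)\subset\eta_2(\widehat{H_2})$; both implicitly use the $\Gamma_F$-equivariance of $\widehat{\alpha}$ in that centralizer computation. The only presentational difference is that the paper defines $^L\alpha$ directly as the composite $^L\eta_2^{-1}\circ\Int(g)\circ{}^L\eta_1$, which is automatically a group homomorphism once the image is shown to land in $^L\eta_2({}^LH_2)$, whereas you construct $a_w$ explicitly and then separately verify the cocycle condition; the paper's packaging saves that step. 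One small wrinkle: you conclude that $^L\alpha$ changes by conjugation by an element of $\widehat{H_2}$, but the lemma asserts invariance of the $\widehat{H_1}$-conjugacy class; since $\widehat{\alpha}$ is an isomorphism (and your $a_w$ is central), post-composition by $\Int(h_2)$ is the same as pre-composition by $\Int(\widehat{\alpha}^{-1}(h_2))$, so this is a cosmetic translation rather than a gap.
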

\begin{proof}
We want to define $^L\alpha$ to equal $^L\eta^{-1}_2 \circ \, \Int(g) \circ \, ^L\eta_1$. For this to make sense, we need to show that the image of $\Int(g) \circ \, ^L\eta_1$ is contained in the image of $^L\eta_2$.

Now there exists for each $w \in W_F$ and $i \in \{1, 2\}$, elements $g_{w,i} \in \widehat{G}$ so that $^L\eta_i(1,w)=(g_{w,i},w)$. We observe that for any $h_i \in \widehat{H_i}$, we have 
\begin{align*}
    (g_{w,i} (w \cdot \eta_i)(h_i),w)&=\,^L\eta_i(1,w)\, ^L\eta_i(w^{-1}(h_i),1) \\
    &=\, ^L\eta_i(h_i,w)\\
    &=\, ^L\eta_i(h_i,1)\, ^L\eta_i(1,w)\\
    &=(\eta_i(h_i)g_{w,i},w),
 \end{align*}
so that
\begin{equation}{\label{cocycleid}}
    \Int(g^{-1}_{w,i})(\eta_i(h_i))=(w \cdot \eta_i)(h_i).
\end{equation}

Now, it suffices to check that for each $(1,w) \in \, ^LH_1$ there exists an $(h_2,w) \in \, ^LH_2$ such that 
\begin{equation*}
    \Int(g)(\Leta_1(1,w)) =(gg_{w,1}w(g^{-1}),w)=(\eta_2(h_2)g_{w,2},w)= \Leta_2(h_2, w).
\end{equation*}
Hence we need to check that $gg_{w,1}w(g^{-1})g^{-1}_{w,2} \in \eta_2(\widehat{H_2})$. It suffices to show that this element lies in $Z_{\widehat{G}}(\eta_2(\widehat{H_2}))$ since for any maximal torus $T$ of $\widehat{H_2}$, we have $\eta_2(T)$ is a maximal torus of $\widehat{G}$ and so 
\begin{equation}{\label{centralizertrick}}
    Z_{\widehat{G}}(\eta_2(\widehat{H_2})) \subset Z_{\widehat{G}}(\eta_2(T))=\eta_2(T) \subset \eta_2(\widehat{H_2}).
\end{equation}
Now pick $h_2 \in \widehat{H_2}$. We observe that using Equation \eqref{cocycleid}, we have 
\begin{align*}
    \Int(gg_{w, 1}w(g^{-1})g^{-1}_{w,2})(\eta_2(h_2))&= \Int(gg_{w,1}w(g^{-1}))((w \cdot \eta_2)(h_2))\\
    &=(\Int(gg_{w, 1}) \circ w \circ  \Int(g^{-1}) \circ \eta_2)(w^{-1}(h_2))\\
    &=(\Int(gg_{w,1}) \circ w \circ \eta_1 \circ \widehat{\alpha}^{-1})(w^{-1}(h_2))\\
    &=(\Int(gg_{w,1}) \circ w \cdot \eta_1 \circ \widehat{\alpha}^{-1})(h_2)\\
    &=(\Int(g) \circ \eta_1 \circ \widehat{\alpha}^{-1})(h_2)\\
    &=\eta_2(h_2),
\end{align*}
as desired.

Now we show the second statement of the lemma. As above, we have that the map $\widehat{\alpha}$ is unique up to $\widehat{H_1}^{\Gamma_F}$-conjugacy. For a fixed choice of $\widehat{\alpha}$ if we pick two different $g, g' \in \widehat{G}$ such that the requisite diagram commutes, then $\Int(g^{-1}g')$ fixes $\eta_1(\widehat{H_1})$ pointwise and so by Equation \eqref{centralizertrick} we have $g^{-1}g' \in \eta_1(Z(\widehat{H_1}))$. Hence any two $^L\alpha$ will differ at most up to conjugacy by an element of $\widehat{H_1}$.
\end{proof}
%\begin{corollary}{\label{Luniqueness}}
%If $(H, s, \eta)$ is a refined endoscopic datum such that $\eta$ admits an extension $\Leta: \LH \to \LG$ then $\Leta$ is unique.
%\end{corollary}
%\begin{proof}
%Let $\alpha$ be the identity morphism and choose $\widehat{\alpha}$ to be the identity. Let $\Leta_1$ and $\Leta_2$ be two extensions of $\Leta$. Then by Lemma \ref{alphalem}, we get a map $\Lalpha$ and a diagram
%\begin{equation*}
%\begin{tikzcd}
%\LH \arrow[d, swap, "\Lalpha"] \arrow[r, "\Leta_1"] & \LG \arrow[d, equal] \\
%\LH \arrow[r, "\Leta_2"] & \LG
%\end{tikzcd}    
%\end{equation*}
%\end{proof}
\subsection{Endoscopic data and Levi subgroups}
We now discuss a key map of endoscopic data.
\begin{construction}{\label{Ymap}}
Given a Levi subgroup $M$ of a reductive group $G$, there exists a natural map from refined endoscopic data of $M$ to refined endoscopic data of $G$. This map preserves isomorphism classes and hence gives a map
\begin{equation*}
    Y : \mathcal{E}^r(M) \to \mathcal{E}^r(G).
\end{equation*}
\end{construction}
\begin{proof}
The construction uses ideas from Appendix \ref{appendixA} and is essentially equivalent to the one appearing after Definition 2.4.1 of \cite{Mor1}.

Given a refined endoscopic datum $(H_M, s_M, \eta_M)$ of $M$, we have a natural candidate for $\widehat{H}$: namely $Z_{\widehat{G}}(s_M)^0$. What we need to do is define a continuous map $\Gamma_F \to \mathrm{Out}(\widehat{H})$ which then allows us to define a quasisplit group $H$ over $F$ whose dual group is $\widehat{H}$ by Lemma \ref{qsbijlem}. 

Define $\mathcal{H}_M \subset \, ^LM$ as in Construction \ref{triptodat}. This is a split extension
\begin{equation}
    1 \to \eta_M(\widehat{H_M}) \to \mathcal{H}_{M} \to W_F \to 1.
\end{equation}
Pick a splitting $c: W_F \to \mathcal{H}_M$. Then by definition of $\mc{H}_M$, there exists $(h, w) \in \, ^LH_M$ so that we have
\begin{equation*}
    \mathrm{Int}(c(w))(\eta_M(s_M))=\eta_M(\mathrm{Int}(h,w)(s_M))=\eta_M(s_M),
\end{equation*}
where the last equality is because $s_M \in Z(\widehat{H_M})^{\Gamma_F}$ and $W_F$ acts on $\widehat{H_M}$ through a finite quotient of $\Gamma_F$. Via the inclusion $\LM \subset \LG$, we have $c$ induces a map $c: W_F \to \mathrm{Aut}(\widehat{H}) \to \mathrm{Out}(\widehat{H})$. We note that since the action of $W_F$ on $\widehat{G}$ factors through a finite quotient, we have that $\Int(c(w))$ is an inner automorphism of $\widehat{H}$ for $w \in W_K$ for $K/F$ some finite Galois extension. Hence $c: W_F \to \Out(\widehat{H})$ factors through the projection to $\Gal(K/F)$  and hence induces a map $c: \Gamma_F \to \Out(\widehat{H})$ as desired. We define $H$ to be a representative of the corresponding isomorphism class of quasisplit groups. We let $\eta$ be the natural inclusion of $\widehat{H}$ into $\widehat{G}$ and define $s' := \eta^{-1}(\eta_M(s_M))$. Then $(H, s', \eta)$ is a refined endoscopic datum of $G$.
\end{proof}
In the context of the above construction, we record two useful lemmas.
\begin{lemma}{\label{endlevi}}
Let $G$ be a connected reductive group and $M$ a Levi subgroup. Suppose $s \in \widehat{M} \subset \widehat{G}$. Then $Z_{\widehat{M}}(s)^{\circ}=Z_{\widehat{G}}(s)^{\circ} \cap \widehat{M}$ and is a Levi subgroup of $Z_{\widehat{G}}(s)^{\circ}$.
\end{lemma}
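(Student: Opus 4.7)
The plan is to exploit that $\widehat{M}$ is a Levi subgroup of $\widehat{G}$, so $\widehat{M} = Z_{\widehat{G}}(Z(\widehat{M})^{\circ})$, in order to identify $Z_{\widehat{G}}(s)^{\circ} \cap \widehat{M}$ with the centralizer of a torus inside the connected reductive group $Z_{\widehat{G}}(s)^{\circ}$ (noting that $s$ is semisimple, as is the case in the applications since $s$ lies in the center of a connected reductive dual group). Once this identification is made, both the connectedness needed for the first assertion and the Levi property asserted in the second follow from standard structure theory of reductive groups.

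First I would observe that since $s \in \widehat{M}$, the connected torus $Z(\widehat{M})^{\circ}$ centralizes $s$, and being connected therefore lies in $Z_{\widehat{G}}(s)^{\circ}$. Then I would compute
\[
Z_{\widehat{G}}(s)^{\circ} \cap \widehat{M} \;=\; Z_{\widehat{G}}(s)^{\circ} \cap Z_{\widehat{G}}(Z(\widehat{M})^{\circ}) \;=\; Z_{Z_{\widehat{G}}(s)^{\circ}}(Z(\widehat{M})^{\circ}).
\]
Since $Z_{\widehat{G}}(s)^{\circ}$ is a connected reductive group and $Z(\widehat{M})^{\circ}$ is a subtorus, the classical fact that the centralizer of a subtorus in a connected reductive group is a connected Levi subgroup immediately yields both the connectedness of $Z_{\widehat{G}}(s)^{\circ} \cap \widehat{M}$ and the second claim of the lemma.

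Finally I would deduce the equality $Z_{\widehat{M}}(s)^{\circ} = Z_{\widehat{G}}(s)^{\circ} \cap \widehat{M}$. The inclusion $\subseteq$ is immediate: $Z_{\widehat{M}}(s)^{\circ}$ is a connected subgroup of $Z_{\widehat{G}}(s) \cap \widehat{M}$, hence it sits inside $Z_{\widehat{G}}(s)^{\circ}$ and is visibly in $\widehat{M}$. Conversely, $Z_{\widehat{G}}(s)^{\circ} \cap \widehat{M} \subseteq Z_{\widehat{G}}(s) \cap \widehat{M} = Z_{\widehat{M}}(s)$, and the connectedness already established forces this intersection into $Z_{\widehat{M}}(s)^{\circ}$. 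The only conceptual obstacle is the familiar fact that intersections of connected subgroups need not be connected, and the whole argument turns on recognizing this particular intersection as a torus-centralizer so that reductive structure theory supplies the missing connectedness.
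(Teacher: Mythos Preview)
Your proof is correct and follows essentially the same approach as the paper: both identify $Z_{\widehat{G}}(s)^{\circ}\cap\widehat{M}$ with $Z_{Z_{\widehat{G}}(s)^{\circ}}(Z(\widehat{M})^{\circ})$ via $\widehat{M}=Z_{\widehat{G}}(Z(\widehat{M})^{\circ})$, then invoke that the centralizer of a torus in a connected reductive group is a connected Levi subgroup to obtain both assertions. Your version is in fact slightly more explicit in noting that $Z(\widehat{M})^{\circ}\subset Z_{\widehat{G}}(s)^{\circ}$, which is what makes the displayed identification valid.
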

\begin{proof}
Since $\widehat{M}$ is a Levi subgroup of $\widehat{G}$, we have $\widehat{M}= Z_{\widehat{G}}(Z(\widehat{M})^{\circ})$. In particular, 
\begin{equation}
Z_{\widehat{G}}(s)^{\circ} \cap \widehat{M}=Z_{Z_{\widehat{G}}(s)^{\circ}}(Z(\widehat{M})^{\circ}),
\end{equation}
which is a Levi subgroup of $Z_{\widehat{G}}(s)^{\circ}$. Hence the second assertion follows from the first.

Moreover, by the above equation, we see that since the centralizer  of a torus in a reductive group is connected, $Z_{\widehat{G}}(s)^{\circ} \cap \widehat{M}$ is connected and hence $Z_{\widehat{G}}(s)^{\circ} \cap \widehat{M} \subset Z_{\widehat{M}}(s)^{\circ}$. Conversely, $Z_{\widehat{M}}(s)^{\circ}$ is connected so clearly contained within $Z_{\widehat{G}}(s)^{\circ} \cap \widehat{M}$.
\end{proof}

\begin{lemma}{\label{levidersc}}
Suppose that $G$ is a connected reductive group and $G_{\der}$ is simply connected. If $M \subset G$ is a Levi subgroup then $M_{\der}$ is simply connected.
\end{lemma}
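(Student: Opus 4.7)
The plan is to argue via the dual group. For any connected reductive $H$, one has the standard equivalence: $H_{\der}$ is simply connected if and only if $Z(\widehat{H})$ is connected. So it suffices to show that connectedness of $Z(\widehat{G})$ forces connectedness of $Z(\widehat{M})$, where $\widehat{M} \subset \widehat{G}$ is the Levi dual to $M$.

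First, I would fix a maximal torus $\widehat{T} \subset \widehat{M} \subset \widehat{G}$ with associated root systems $\Phi_M \subset \Phi$, and recall the standard descriptions $Z(\widehat{G}) = \bigcap_{\alpha \in \Phi}\ker(\alpha)$ and $Z(\widehat{M}) = \bigcap_{\alpha \in \Phi_M}\ker(\alpha)$. For any sublattice $L \subset X := X^*(\widehat{T})$, the anti-equivalence between diagonalizable groups and finitely generated abelian groups gives $\bigcap_{\chi \in L}\ker(\chi) \cong \Hom(X/L, \Gm)$, which is connected precisely when $X/L$ is torsion-free.

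Next, since $\widehat{M}$ is a Levi of $\widehat{G}$, after choosing a Borel containing $\widehat{T}$ with respect to which $\widehat{M}$ is standard, we have $\Phi_M = \Z I \cap \Phi$ for some subset $I$ of the simple roots $\Delta$. Thus $\Z\Phi_M = \Z I$ is a direct summand of $\Z\Phi = \Z\Delta$, with free complement $\Z(\Delta\setminus I)$. The hypothesis that $G_{\der}$ is simply connected gives that $X/\Z\Phi$ is torsion-free, and the short exact sequence
\begin{equation*}
0 \to \Z\Phi/\Z\Phi_M \to X/\Z\Phi_M \to X/\Z\Phi \to 0
\end{equation*}
has free left term and torsion-free right term, so the middle is torsion-free. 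Hence $Z(\widehat{M})$ is connected and $M_{\der}$ is simply connected.

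I do not anticipate a serious obstacle. The only slightly subtle step is the torus--lattice dictionary for connectedness of intersections of character kernels, which is a routine consequence of the diagonalizable group/abelian group duality; the remaining combinatorics (the standard-Levi reduction and the splitting $\Z\Delta = \Z I \oplus \Z(\Delta\setminus I)$) is formal.
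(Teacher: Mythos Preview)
Your proof is correct. The paper takes a closely related but more hands-on route: it works directly on the group side with cocharacter lattices, characterizing ``$G_{\der}$ simply connected'' as ``the $\Z$-span of the coroots equals $X_*(T\cap G_{\der})$'', and then argues by contradiction. If some $x \in X_*(T\cap M_{\der})$ were not in the $\Z$-span of the simple coroots of $M$ but were in the $\Z$-span of the simple coroots of $G$, the residual combination of simple coroots of $G$ outside $M$ would be a nonzero element of $X_*(T\cap M_{\der})_{\R}$, contradicting linear independence of simple coroots together with the fact that $X_*(T\cap M_{\der})_{\R}$ is the $\R$-span of the coroots of $M$. Your dual-group reformulation (the equivalence $H_{\der}$ simply connected $\Leftrightarrow Z(\widehat{H})$ connected $\Leftrightarrow X^*(\widehat{T})/\Z\Phi$ torsion-free) together with the short-exact-sequence argument is tidier and avoids the contradiction, but the core combinatorial content is identical: the simple (co)roots of $M$ are a subset of the simple (co)roots of $G$, so $\Z\Phi_M$ is a direct summand of $\Z\Phi$.
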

\begin{proof}
Fix a maximal torus $T \subset M \subset G$. Then we have  $M_{\der} \subset G_{\der}$ and tori $T_{M_{\der}} := T \cap M_{\der}$ and $T_{G_{\der}} := T \cap G_{\der}$ respectively. Let $X_M$ and $X_G$ be the image of the natural inclusions $X_*(T_{M_{\der}}) \subset X_*(T)$ and $X_*(T_{G_{\der}}) \subset X_*(T)$ respectively. By definition, we have $X_M \subset X_G$. Then $G_{\der}$ is simply connected if and only if the $\Z$-span of the absolute coroots of $G$ is equal to $X_G$ and the analogous statement is true for $M$.

Suppose $M_{\der}$ is not simply connected. Then we can find some $x \in X_M$ that is not in the integer span of the coroots of $M$. Make some choice of simple coroots of $G$. Then $x \in X_G$ and we claim that $x$ cannot be in the integer span of the simple coroots of $G$. Indeed if it were, then we could write $x= a_1\alpha_1 + ...+a_k\alpha_k + a_{k+1}\alpha_{k+1}+...+a_n\alpha_n$ where $\alpha_1, ..., \alpha_k$ are simple coroots of $M$ and $\alpha_{k+1},...,\alpha_n$ are simple coroots of $G$ but not $M$. Further, $a_i \in \Z$ for all $i$ and at least some $a_i \neq 0$ for $i >k$ by our assumption on $x$. But then $0 \neq a_{k+1}\alpha_{k+1} + ... + a_n \alpha_n \in X_M \subset X_{M, \R}$. This is impossible because the simple coroots of $G$ are linearly independent in $X_*(T)_{\R}$ and ${X_{M,\R}}$ is equal to the $\R$ span of the simple coroots of $M$.
\end{proof}

\subsection{Embedded endoscopic data}
Fix an element $[(H,s, \eta)] \in \mathcal{E}^r(G)$ and a Levi subgroup $M$ of $G$. Our eventual goal is to compute $Y^{-1}( [(H,s,\eta)])$. As a first step to computing these fibers, we describe a seemingly more rigidified notion of endoscopic datum for a Levi subgroup. This notion will be useful for describing maps of representations between groups such as Jacquet modules.

\begin{definition}{\label{embdat}}
Let $G$ be a reductive group over $F$ and $M$ an $F$-rational Levi subgroup. Fix a $\Gamma_F$-invariant splitting $(\widehat{T}, \widehat{B}, \{X_{\alpha}\})$ and $\widehat{M} \subset \widehat{G}$ be the Levi subgroup of a parabolic subgroup containing $\widehat{B}$ and corresponding to $M$. Then an \emph{embedded} endoscopic datum of $M$ relative to a fixed splitting $(\widehat{T_H}, \widehat{B_H}, \{X^H_{\alpha}\})$ is a quadruple $(H, H_M, s, \eta)$ such that $(H,s,\eta)$ is a refined endoscopic triple for $G$, the group $H_M$ is a Levi subgroup of $H$ and the restriction of $\eta$ to $\widehat{H_M}$ gives a refined endoscopic triple $(H_M, s, \eta|_{\widehat{H_M}})$ of $M$.

An isomorphism of embedded endoscopic triples $(H, M_H, s, \eta), (H', H'_M, s', \eta')$ is an isomorphism $\alpha: H \to H'$ of $(H,s,\eta)$ and $(H', s', \eta')$ over $F$ that restricts to give an isomorphism $\alpha_M: H_M \to H'_M$ of $(H_M, s, \eta|_{\widehat{H_M}})$ and $(H'_M, s, \eta|_{\widehat{H'_M}})$. We say an automorphism of $(H, H_M, s, \eta)$ is \emph{inner} if $\alpha_M$ is inner. Note that this implies $\alpha$ is an inner automorphism of $H$ since if $\alpha_M$ is given by conjugation by $h \in H_M(\ov{F})$, then $\alpha \circ \Int(h)^{-1}$ fixes $H_M$ pointwise, hence acts trivially on a root datum of $H$, hence is inner. We then define $\Out_e(H, H_M, s, \eta) := \Aut_e(H, H_M, s, \eta)/ \Inn(H, H_M, s, \eta)$.

We denote the set of isomorphism classes of embedded endoscopic data by $\mathcal{E}^e(M, G)$. 
\end{definition}
\begin{remark}{\label{compatibilityremark}}
Note that if $(H_1, {H_M}_1, s_1, \eta_1)$ and $(H_2, {H_M}_2, s_2, \eta_2)$ are isomorphic via $\alpha: H_2 \to H_1$, then any $g \in \widehat{G^*}$ making the following diagram commute:
\begin{equation*}
\begin{tikzcd}
\widehat{H_1} \arrow[d, swap, "\widehat{\alpha}"] \arrow[r, "\eta_1"] & \widehat{G} \arrow[d, "\Int(g)"] \\
\widehat{H_2} \arrow[r, "\eta_2"]  & \widehat{G}
\end{tikzcd}    
\end{equation*}
must be an element of $\widehat{M}$ and gives an isomorphism of embedded data.

Indeed, we need only show that $g \in \widehat{M}$. But by definition there exists an $m \in \widehat{M}$ such that $\Int(m) \circ \eta_1|_{\widehat{{H_M}_1}} = \eta_2|_{\widehat{{H_M}_2}} \circ \widehat{\alpha_M}$. Then if $\widehat{T}$ is a maximal torus of $\eta(\widehat{{H_M}_1})$ then $\Int(g^{-1}m)$ fixes $\widehat{T}$ pointwise hence $g^{-1}m \in \widehat{T} \subset \widehat{M}$. 
\end{remark}

We have a natural forgetful map 
\begin{equation}
    Y^e: \mathcal{E}^e(M,G) \to \mathcal{E}^r(G).
\end{equation}
\begin{proposition}{\label{refemb}}
The map $X: \mathcal{E}^e(M,G) \cong \mathcal{E}^r(M)$ defined by $(H, H_M, s, \eta) \mapsto (H_M, s, \eta|_{\widehat{H_M}})$ is a bijection and the following diagram commutes.
\begin{equation}{\label{refembdiag}}
\begin{tikzcd}
&\mathcal{E}^r(G)&\\
\mathcal{E}^e(M,G) \arrow[ur, " Y^e"] \arrow[rr, "X"] && \mathcal{E}^r(M) \arrow[ul, swap, "Y"].
\end{tikzcd}
\end{equation}
Moreover, $X: (H, H_M, s, \eta) \mapsto (H_M, s, \eta|_{\widehat{H_M}})$ induces an isomorphism $\Out_e(H,H_M, s,\eta) \cong \Out_r(H_M, s, \eta|_{\widehat{H_M}})$ via $\alpha \mapsto \alpha_M$.
\end{proposition}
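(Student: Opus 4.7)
The plan is to construct an inverse to $X$ and verify it fits into the diagram, then argue that these constructions carry automorphisms back and forth in a way that respects innerness. First I would check that $X$ is well-defined: this is immediate from Definition \ref{embdat}, where the triple $(H_M, s, \eta|_{\widehat{H_M}})$ is already required to be a refined endoscopic datum for $M$.

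Next, I would verify commutativity of diagram \eqref{refembdiag}. Applied to $(H_M, s, \eta|_{\widehat{H_M}})$, Construction \ref{Ymap} produces a datum with dual group $Z_{\widehat{G}}(\eta(s))^\circ$, which equals $\eta(\widehat{H})$ since $(H,s,\eta)$ is already an endoscopic datum for $G$. To see that the Galois action produced by the $\mathcal{H}_M$-splitting recovery agrees, up to inner automorphism, with the action on $\widehat{H}$ that comes from the $F$-structure on $H$, one uses that any element $(m_w, w) \in \mathcal{H}_M$ has $m_w \in \widehat{M}$, and hence $\Int(m_w)\circ w$ stabilizes $\widehat{H} \cap \widehat{M}$ and differs from the ambient action on $\widehat{H}$ by an element of $\widehat{H}$ itself (using Lemma \ref{endlevi} to identify $\widehat{H}\cap\widehat{M}$ as a Levi and manipulating the splitting). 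Thus $Y \circ X = Y^e$ on isomorphism classes.

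For bijectivity I would exhibit the inverse. Given $(H_M, s_M, \eta_M) \in \mathcal{E}^r(M)$, apply $Y$ to obtain $(H, s, \eta)$ with $\eta(\widehat{H}) = Z_{\widehat{G}}(\eta_M(s_M))^\circ$. By Lemma \ref{endlevi}, $\eta_M(\widehat{H_M}) = Z_{\widehat{G}}(\eta_M(s_M))^\circ \cap \widehat{M}$ is a Levi subgroup of $\widehat{H}$, and the argument of the previous paragraph shows this Levi is stable under the Galois action on $\widehat{H}$ up to inner conjugation. Hence it corresponds to an $F$-rational Levi subgroup of $H$, which one checks is canonically isomorphic to $H_M$ as a quasisplit $F$-group by matching dual groups and Galois actions via Lemma \ref{qsbijlem}. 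This yields an embedded datum $(H, H_M, s, \eta)$, and the two constructions are mutually inverse up to the chosen isomorphism class, giving the bijection $X$.

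Finally, for the statement about outer automorphisms, the map $\alpha \mapsto \alpha_M$ is clearly a homomorphism $\Aut_e(H, H_M, s, \eta) \to \Aut_r(H_M, s, \eta|_{\widehat{H_M}})$ and carries inner automorphisms to inner automorphisms by definition. Surjectivity on the quotient follows because any isomorphism of refined data of $M$ extends to an isomorphism of the associated data for $G$ by construction of $X^{-1}$, and Remark \ref{compatibilityremark} shows any $g\in\widehat{G}$ intertwining the datum isomorphisms can be taken in $\widehat{M}$. Injectivity on the quotient uses that an automorphism $\alpha$ of $H$ that restricts to an inner automorphism of $H_M$ must itself be inner: if $\alpha_M = \Int(h)$ on $H_M$, then $\alpha\circ\Int(h)^{-1}$ fixes $H_M$ pointwise, hence acts trivially on a maximal torus of $H$ containing a maximal torus of $H_M$ and thus on the based root datum of $H$, so it is inner, exactly as in the parenthetical remark in Definition \ref{embdat}. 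The main obstacle in the plan is the verification in Step~2 that the Galois actions on $\widehat{H}$ from the two sides agree up to $\widehat{H}$-conjugation, which requires carefully tracking the splitting $c: W_F \to \mathcal{H}_M$ and showing the ambiguity lies in the inner automorphism group of $\widehat{H}$.
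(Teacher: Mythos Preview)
Your overall strategy matches the paper's: verify $X$ is well-defined, show commutativity of the diagram by checking that the $Y$-construction recovers $(H,s,\eta)$ up to isomorphism, build an inverse to $X$ from Construction~\ref{Ymap}, and treat outer automorphisms by restriction. You also correctly identify the main obstacle as showing that the Galois action on $\widehat{H}$ produced via $\mathcal{H}_M$ agrees, up to $\widehat{H}$-conjugation, with the ambient one. But you do not resolve it: the phrases ``manipulating the splitting'' and ``tracking the splitting $c$'' are placeholders, not arguments, and the appeal to Lemma~\ref{endlevi} alone does not explain why the discrepancy is inner in $\widehat{H}$ rather than merely in $\widehat{G}$.

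The paper's missing ingredient is the following maximal-torus trick. Given $(m,w)\in\mathcal{H}_M$, choose $(h,w)\in{}^LH_M$ with $\Int(m,w)\circ\eta=\eta\circ\Int(h,w)$ on $\widehat{H_M}$. Viewing both sides as maps on all of $\widehat{H}$ and composing one with the inverse of the other yields an automorphism $A$ of $\widehat{H}$ that is the identity on $\widehat{H_M}$, hence on a maximal torus $\widehat{T}\subset\widehat{H_M}$, which is also maximal in $\widehat{H}$. Thus $A$ is trivial on $\Psi_0(\widehat{H})$, so inner, and since it centralizes $\widehat{T}$ it equals $\Int(t)$ for some $t\in\widehat{T}$. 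Hence $\Int(m,w)\circ\eta=\eta\circ\Int(th,w)$ on $\widehat{H}$, proving $\mathcal{H}_M\subset\mathcal{H}$; from this the $\Gamma_F$-stability of the conjugacy class of $\eta'^{-1}\circ\eta$ and the $F$-rational isomorphism via Fact~\ref{equivarenum} follow. The same trick drives surjectivity of $\Out_e\to\Out_r$: given $\alpha_M$ with $\Int(m)\circ\eta|_{\widehat{H_M}}=\eta|_{\widehat{H_M}}\circ\widehat{\alpha_M}$, one checks that $m^{-1}g_\gamma^{-1}\gamma(m)g_\gamma$ fixes both $\eta(s)$ and $\eta(\widehat{T})$ pointwise, hence lies in $\eta(\widehat{H})$, so $\eta^{-1}\circ\Int(m)\circ\eta$ has $\Gamma_F$-stable $\widehat{H}$-conjugacy class and descends to the desired $\alpha:H\to H$. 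Your appeal to ``construction of $X^{-1}$'' does not supply this step, since $X^{-1}$ is only defined on isomorphism classes and does not by itself lift a specific automorphism. Finally, your injectivity argument is superfluous: by Definition~\ref{embdat} an embedded automorphism is declared inner precisely when $\alpha_M$ is inner, so injectivity of $\Out_e\to\Out_r$ is literally by definition; what you prove there is the parenthetical remark in that definition, not injectivity.
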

\begin{proof}
The map $X$ extends to a map of isomorphism classes. Indeed, if $(H, H_M, s, \eta)$ and $(H', H'_M, s', \eta')$ are isomorphic by $\alpha$, then $\alpha_M$ gives an isomorphism of the corresponding refined triples of $M$.

Now, we check that Diagram \eqref{refembdiag} commutes. Given $(H, H_M, s, \eta)$, we get two refined endoscopic triples $(H,s, \eta)$ and $(H', s', \eta')$ from applying $Y^e$ and  $Y \circ X$ respectively. We must show these are isomorphic.

We have the groups $\mathcal{H}, \mathcal{H}_M \subset \, ^LG$ corresponding to $(H, s, \eta)$ and $(H_M, s, \eta|_{\widehat{H_M}})$ as defined in Construction \ref{triptodat}. We claim that $\mathcal{H}_M \subset \mathcal{H}$. To prove the claim, pick $(m, w) \in \mathcal{H}_M$. Then there exists $(h, w) \in \, ^LH_M$ such that restricted to $\widehat{H_M}$ we have
\begin{equation}
    \mathrm{Int}(m,w) \circ \eta = \eta \circ \mathrm{Int}(h,w).
\end{equation}
Now consider the two sides of the above equation as maps from $\widehat{H}$ (utilizing the natural embedding $\LH_M \subset \LH$). In particular, we see that composing one with the inverse of the other, we get an automorphism $A$ of $\widehat{H}$ that acts trivially on $\widehat{H_M} \subset \widehat{H}$. Fix a maximal torus $\widehat{T} \subset \widehat{H_M}$. 

Now we make the subclaim that $A$ is an inner automorphism by an element of $\widehat{T}$. After all, $A$ acts by the identity on $\widehat{T}$, hence trivially on $\psi_0(\widehat{H})$. Thus, $A$ lies in the kernel of the map $\mathrm{Aut}(\widehat{H}) \to \mathrm{Out}(\widehat{H})$, so is inner. But now $A$ centralizes $\widehat{T}$ which implies the subclaim.

The subclaim implies that there exists a $t \in \widehat{T} \subset \widehat{H_M}$ such that
\begin{equation}
    \mathrm{Int}(m,w) \circ \eta = \eta \circ \mathrm{Int}(th,w)
\end{equation}
on $\widehat{H}$. This proves that $\mathcal{H}_M \subset \mathcal{H}$ which was the original claim.

Now, we complete the proof that $(H,s, \eta)$ and $(H', s', \eta')$ are isomorphic. The strategy is to show that $\widehat{\alpha} := \eta'^{-1} \circ \eta$ has $\Gamma_F$-stable conjugacy class. We then use Fact \ref{equivarenum} to find a $\Gamma_F$-equivariant conjugate of $\widehat{\alpha}$ and define $\alpha$ to be the unique isomorphism dual to this and preserving a choice of $F$-splittings of $H, H'$. This will then imply that $\alpha$ is defined over $F$ and is the desired isomorphism.

Now, the action of $\Gamma_F$ on $\widehat{H}, \widehat{H'}$ factors through a finite quotient $\mathrm{Gal}(K/F)$ so pick $\gamma \in \mathrm{Gal}(K/F)$ and choose a lift to some $w \in W_F$. Then by definition of $\eta'$, we have that (using the notation of Construction \ref{Ymap}) $\mathrm{Int}(c(w)) \circ \eta'$ and $\eta' \circ w$ agree up to conjugacy by an element of  $\eta'(\widehat{H'})$. On the other hand, by the claim we proved above, $c(w) \in \mathcal{H}$ so there exists $(x,w) \in \, ^LH$ such that $\mathrm{Int}(c(w)) \circ \eta=\eta \circ \mathrm{Int}(x,w)$ on $\widehat{H}$. Hence $\Int(c(w)) \circ \eta$ and $\eta \circ w$ agree up to conjugacy by an element of $\eta(H)$. These two statements imply that $\widehat{\alpha} \circ w$ and $w \circ \widehat{\alpha}$ agree up to conjugation by an element of $\widehat{H}$ as desired. Finally, we observe that by construction $\widehat{\alpha}(s)=s'$ so that $\alpha$ indeed gives an isomorphism of refined endoscopic data.

We now show that the map $X: \mathcal{E}^e(M,G) \to \mathcal{E}^r(M)$ is a bijection. To prove this, we construct an inverse. Take an element of $\mathcal{E}^r(M)$ and choose a representative $(H_M,s_M, \eta_M)$. By Construction \ref{Ymap}, we get a datum $(H',s', \eta')$. Restricting $\eta'$ to $\eta(\widehat{H_M})$ we get a refined endoscopic datum $(H'_M, s', \eta'|_{\widehat{H'_M}})$ which is isomorphic to $(H_M, s_M, \eta_M)$ by construction. Then $(H', H'_M, s', \eta')$ is an embedded endoscopic datum and this defines a map $X'$. By construction we have $X \circ X'$ is the identity so it suffices to show the same for $X' \circ X$. In other words, starting with an embedded datum $(H, H_M, s, \eta)$ we need to show an isomorphism between $X'(X(H,H_M, s, \eta))$ and $(H', H'_M, s', \eta')$. But by our argument that the diagram in the proposition commutes, we constructed an isomorphism $\alpha$ of $(H, s, \eta)$ and $(H', s', \eta')$ that clearly restricts to an isomorphism of $(H_M, s, \eta|_{\widehat{H_M}})$ and $(H'_M, s', \eta'_{\widehat{H'_M}})$.

It remains to show that $\alpha \mapsto \alpha_M$ induces an isomorphism of outer automorphism groups. The map is well-defined and injective by definition since $\alpha$ is inner as an automorphism of embedded data precisely when $\alpha_M$ is inner. For any automorphism $\alpha_M$ of $(H_M, s, \eta|_{\widehat{H_M}})$, we get an $m \in \widehat{M}$ such that $\Int(m) \circ \eta|_{\widehat{H_M}} = \eta|_{\widehat{H_M}} \circ \widehat{\alpha_M}$. Then it is easy to check that if $\gamma \cdot \eta = \Int(g_{\gamma}) \circ \eta$ and $\widehat{T} \subset \widehat{H_M}$ is $\Gamma_F$-invariant, then $\Int(m^{-1}g^{-1}_{\gamma}\gamma(m) g_{\gamma})$ preserves $\eta(s)$ and  acts as the identity on $\eta(\widehat{T})$. Hence $m^{-1}g^{-1}_{\gamma}\gamma(m)g_{\gamma} \in \eta(\widehat{H})$ and it follows that $\eta^{-1} \circ \Int(m) \circ \eta: \widehat{H} \to \widehat{H}$ has $\Gamma_F$-stable conjugacy class and hence by Fact \ref{equivarenum} we get an automorphism $\alpha: H \to H$ well-defined up to conjugacy lifting $\alpha_M$.
\end{proof}

\subsection{Parametrization of endoscopic data}
Let $(H,s,\eta)$ be a refined  endoscopic datum of $G$, fix a Levi subgroup $M$, and choices of $\Gamma_F$-invariant splittings $(\widehat{T}, \widehat{B}, \{X^{\widehat{G}}_{\alpha}\})$ and $(\widehat{T_H}, \widehat{B_H}, \{ X^{\widehat{H}}_{\alpha}\})$ of $\widehat{G}$ and $\widehat{H}$ respectively. We assume that  $\eta(\widehat{T_H})=\widehat{T}$ and that $\eta(\widehat{B_H}) \subset \widehat{B}$. Our goal is to parametrize the set of isomorphism classes of embedded endoscopic data in the fiber $Y^{-1}([(H,s, \eta)])$. 

To do so, we understand  which endoscopic data $(H', s', \eta')$ isomorphic to $(H, s, \eta)$ can be restricted to the pre-image of $\widehat{M}$ to give a refined endoscopic datum for $M$. If this is possible, we say that $(H', s', \eta')$ ``restricts'' to give an embedded datum $(H', H'_M, s', \eta')$. We make a series of reductions on the endoscopic data $(H', s', \eta')$ that we must consider. We use several ideas we learned from \cite{Hir1} and \cite{Xu1}.

First, since any such $H$ and $H'$ are isomorphic, it suffices to fix a group $H$ and only consider endoscopic data of the form $(H, s', \eta')$ in the same isomorphism class. Now pick a quasisplit inner form $G^*$ and an inner twist $\Psi: G^* \to G$. Then $\Psi$ induces an identification of dual groups by Lemma \ref{itdg} and hence it suffices to consider the parametrization for $G^*$ and $M^*$.

Second, fix $F$-splittings  $(T_H, B_H, \{X^H_{\alpha}\})$ and $(T, B, \{X^{G^*}_{\alpha}\})$ of $H$ and  $G^*$, noting that $T_H$ and $T$ will have maximal split rank in their respective groups. Then the following lemma shows that we may assume $\eta'( \widehat{T_H})=\widehat{T}$ and furthermore, that if $\alpha: H \to H$ induces an isomorphism of embedded data $(H, {H_{M^*}}_1, s_1, \eta_1)$ and $(H, {H_{M^*}}_2, s_2, \eta_2)$ such that for $i=1,2$ we have $\eta_i(\widehat{T_H})=\widehat{T}$, then we may assume $\alpha(T_H)=T_H$ and that there exists $g \in \widehat{G^*}$ such that $\Int(g)(\widehat{T})=\widehat{T}$ and the diagram
\begin{equation*}
\begin{tikzcd}
\widehat{H} \arrow[r, "\eta_1"] \arrow[d, swap,  "\widehat{\alpha}"] & \widehat{G^*} \arrow[d, "\Int(g)"] \\
\widehat{H} \arrow[r, "\eta_2"] & \widehat{G^*}
\end{tikzcd}    
\end{equation*}
commutes.
\begin{lemma}{\label{endweyl}}
Let $(H, H_{M^*}, s, \eta)$ be an embedded endoscopic datum. Let $T_H, T, \widehat{T_H}, \widehat{T}$ be as above. Then we have the following.
\begin{enumerate}
    \item The datum $(H, H_{M^*}, s, \eta)$ is isomorphic to an embedded datum $(H, H'_{M^*}, s', \eta')$ such that $T_H \subset H'_{M^*}$ and $\eta'(\widehat{T_H})=\widehat{T}$.\\
    \item If $(H, H'_{M^*}, s', \eta')$ is isomorphic to $(H, H_{M^*}, s, \eta)$ and both $H_{M^*}$ and $H'_{M^*}$ contain $T_H$ and $\eta(\widehat{T_H})=\eta'(\widehat{T_H})=\widehat{T}$, then there is an automorphism $\alpha: H \to H$ with dual $\widehat{\alpha}$ and an element $g \in \widehat{G^*}$ such that $\alpha$ induces an isomorphism of embedded endoscopic data, $\alpha(T_H)=T_H$, $\widehat{\alpha}(\widehat{T_H})=\widehat{T_H}$ and $\mathrm{Int}(g)(\widehat{T})=\widehat{T}$, and the following diagram commutes.
    \begin{equation}
    \begin{tikzcd}
    \widehat{H} \arrow[r, "\eta"] \arrow[d, swap, "\widehat{\alpha}"] & \widehat{G^*} \arrow[d, "\mathrm{Int}(g)"] \\
    \widehat{H} \arrow[r, swap,  "\eta'"]& \widehat{G^*}
    \end{tikzcd}
    \end{equation}
\end{enumerate}
\end{lemma}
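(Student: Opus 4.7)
\emph{Proof plan.} The strategy exploits that $H$ and its $F$-rational Levi subgroups are quasisplit, so that the Borel--Tits theory of maximal $F$-split tori applies: any two maximally $F$-split maximal tori of a quasisplit reductive group are conjugate under the $F$-points, and every $F$-rational Levi is $F$-rationally conjugate to a standard Levi containing a fixed such torus.

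For (1), the plan is first to use an element of $H(F)$ to move $H_{M^*}$ inside a Levi containing $T_H$, and then to conjugate $\eta$ by an element of $\widehat{M}$ to send $\eta(\widehat{T_H})$ onto $\widehat{T}$. Choose a maximal $F$-split torus $A \subset H_{M^*}$, which exists because $H_{M^*}$ is the Levi of an $F$-parabolic containing a minimal $F$-parabolic. Since $H$ is quasisplit, $A$ is also maximal $F$-split in $H$, so by Borel--Tits there is $h_1 \in H(F)$ with $h_1 A h_1^{-1} = A_H$, where $A_H$ is the maximal $F$-split subtorus of $T_H$. Then $h_1 H_{M^*} h_1^{-1} \subset Z_H(A_H) =: M_0$, which is a quasisplit $F$-Levi of $H$ with $T_H$ as a maximally $F$-split maximal torus. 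Inside $M_0$, every $F$-Levi is $M_0(F)$-conjugate to a standard Levi relative to a Borel through $T_H$, so one can find $h_2 \in M_0(F)$ with $(h_2 h_1) H_{M^*} (h_2 h_1)^{-1} \supset T_H$. Setting $\alpha_1 = \Int(h_2 h_1)$, whose dual $\widehat{\alpha_1}$ may be taken to be the identity on $\widehat{H}$, produces an isomorphic embedded datum $(H, \alpha_1(H_{M^*}), s, \eta)$. Finally, $\eta(\widehat{T_H})$ is a maximal torus of the connected reductive subgroup $\eta(\widehat{H_{M^*}}) \subset \widehat{M}$ and is hence $\widehat{M}$-conjugate to $\widehat{T}$; replacing $\eta$ by $\Int(g) \circ \eta$ for a suitable $g \in \widehat{M}$ preserves the embedded structure and achieves $\eta(\widehat{T_H}) = \widehat{T}$.

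For (2), the plan is to modify the given isomorphism $\alpha_0 : H \to H$ of embedded data by pre-composing with an inner automorphism of the target datum so that $\alpha(T_H) = T_H$; the remaining statements then follow. Since $\alpha_0$ is defined over $F$, $\alpha_0(T_H)$ is a maximally $F$-split maximal torus of $H$. Moreover, $A_H$ is itself maximal $F$-split in $H'_{M^*}$, because any $F$-split torus of $H'_{M^*}$ containing $A_H$ is also $F$-split in $H$ and must then equal $A_H$ by maximality; it follows that both $T_H$ and $\alpha_0(T_H)$ are maximally $F$-split maximal tori of the quasisplit Levi $H'_{M^*}$. By Borel--Tits there exists $h \in H'_{M^*}(F)$ with $h \alpha_0(T_H) h^{-1} = T_H$, and setting $\alpha := \Int(h) \circ \alpha_0$ produces an isomorphism of embedded data (since $\Int(h)$ with $h \in H'_{M^*}(F)$ is an inner automorphism of $(H, H'_{M^*}, s', \eta')$) satisfying $\alpha(T_H) = T_H$. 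Dually, $\widehat{\alpha}(\widehat{T_H})$ is a maximal torus of $\widehat{H}$, and after composing $\widehat{\alpha}$ with a further $\widehat{H}$-inner automorphism we may assume $\widehat{\alpha}(\widehat{T_H}) = \widehat{T_H}$. Then any $g \in \widehat{G^*}$ with $\Int(g) \circ \eta = \eta' \circ \widehat{\alpha}$ satisfies
\begin{equation*}
\Int(g)(\widehat{T}) = \Int(g)(\eta(\widehat{T_H})) = \eta'(\widehat{\alpha}(\widehat{T_H})) = \eta'(\widehat{T_H}) = \widehat{T},
\end{equation*}
so $g \in N_{\widehat{G^*}}(\widehat{T})$ and the diagram commutes.

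The main obstacle is ensuring that every conjugation can be performed rationally, by an element of the appropriate group of $F$-points. This is precisely where the quasisplit assumption on $H$ and its Levi subgroups, combined with Borel--Tits, is essential; once the tori are matched up, the commutativity of the diagram is essentially automatic.
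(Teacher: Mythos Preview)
Your overall strategy is sound and close to the paper's, but Part~(1) contains a genuine error. You claim that after conjugating by $h_1 \in H(F)$ so that $h_1 A h_1^{-1} = A_H$, one has
\begin{equation*}
h_1 H_{M^*} h_1^{-1} \subset Z_H(A_H) =: M_0.
\end{equation*}
This containment is backwards. Since $H$ is quasisplit and $A_H$ is a maximal $F$-split torus, $Z_H(A_H) = T_H$ is a maximal torus (the centralizer of a maximal split torus in a quasisplit group is a maximal torus). So your $M_0$ is exactly $T_H$, and the containment $h_1 H_{M^*} h_1^{-1} \subset T_H$ fails unless $H_{M^*}$ is itself a torus. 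Your subsequent step inside $M_0$ with $h_2 \in M_0(F)$ is therefore based on a false premise.

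The fix is immediate and in fact makes the argument shorter: what actually holds is the reverse containment $T_H = Z_H(A_H) \subset h_1 H_{M^*} h_1^{-1}$. Indeed, setting $L := h_1 H_{M^*} h_1^{-1}$, the torus $A_H$ is maximal $F$-split in $L$, and since $L$ is a quasisplit Levi of $H$, the centralizer $Z_L(A_H) = L \cap T_H$ is a maximal torus of $L$, hence of $H$; comparing with $T_H$ forces $T_H \subset L$. So after the single conjugation by $h_1$ you already have $T_H$ inside the new Levi, no $h_2$ needed, and the rest of your argument (conjugating $\eta$ by an element of $\widehat{M^*}$ to send $\eta(\widehat{T_H})$ onto $\widehat{T}$) goes through. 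The paper phrases this same step as conjugating a pair $(T_1, B_1)$, with $T_1 \subset H_{M^*}$ of maximal split rank and $B_1$ an $F$-Borel of $H$, to the fixed pair $(T_H, B_H)$; this amounts to moving $H_{M^*}$ to a standard Levi relative to $B_H$.

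Your Part~(2) is correct and is essentially the paper's argument: modify a given isomorphism by an inner automorphism coming from $H'_{M^*}(F)$ (the paper uses $H_{M^*}(F)$, just with the isomorphism running the other way) to force $\alpha(T_H)=T_H$, choose a dual $\widehat{\alpha}$ preserving $\widehat{T_H}$, and then read off $\Int(g)(\widehat{T}) = \widehat{T}$ directly from the diagram.
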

\begin{proof}
For the first assertion, fix a maximal torus $T_1 \subset H_{M^*}$ of maximal split rank and a Borel subgroup $B_1$ of $H$ such that $H_{M^*}$ is a Levi subgroup of a parabolic subgroup containing $B_1$. Similarly, fix a Borel subgroup $B_H$ containing $T_H$ and defined over $F$. Then the pair $(T_H, B_H)$ is conjugate to $(T_1, B_1)$ in $H$ by some inner automorphism $\beta$ defined over $F$. Let $H'_{M^*}=\beta^{-1}(H_M)$. Then let $\widehat{\beta}: \widehat{H} \to \widehat{H}$ be a dual morphism, chosen so that $\widehat{\beta}$ is $\Gamma_F$-equivariant and so that $\widehat{\beta}(\widehat{H_{M^*}})=\widehat{H'_{M^*}}$ (where both $\widehat{H_{M^*}}$ and $\widehat{H'_{M^*}}$ are defined via the fixed splitting of $\widehat{H})$. Now consider the maximal torus of $\widehat{M^*}$ given by $(\eta \circ \widehat{\beta}^{-1})(\widehat{T_H})$. Since all maximal tori are conjugate, there exists an $m \in \widehat{M^*}$ such that $\mathrm{Int}(m)(\eta(\widehat{\beta}^{-1}(\widehat{T_H})))=\widehat{T}$. Then consider the tuple $(H, H'_{M^*}, \widehat{\beta}(s), \mathrm{Int}(m) \circ \eta \circ \widehat{\beta}^{-1})$. This is an embedded endoscopic datum and isomorphic to $(H, H_M, s, \eta)$ by $\beta$.

To prove the second assertion, we pick an automorphism $\alpha': H \to H$ inducing an isomorphism of $(H, H'_{M^*}, s', \eta')$ and $(H,H_{M^*},s, \eta)$. Then $T'_H := \alpha'(T_H)$ is some maximal torus of maximal split rank in $H_{M^*}$ so there exists an $h \in H_{M^*}(F)$ so that $\Int(h)(T'_H)=T_H$ and hence $\alpha := \mathrm{Int}(h) \circ \alpha'$ satisfies $\alpha(T_H)=T_H$. Now choose an automorphism $\widehat{\alpha}$ inducing a $\Gamma_F$-equivariant automorphism of $\widehat{H}$ dual to $\alpha$ and such that $\widehat{\alpha}(\widehat{T_H})=\widehat{T_H}$. Then $\alpha$ gives the desired isomorphism. Since $\widehat{\alpha}$ will be conjugate in $\widehat{H}$ to $\widehat{\alpha'}$, we can pick a $g \in \widehat{G^*}$ such that the following diagram commutes:
 \begin{equation}
    \begin{tikzcd}
    \widehat{H} \arrow[r, "\eta"] \arrow[d, swap, "\widehat{\alpha}"] & \widehat{G^*} \arrow[d, "\mathrm{Int}(g)"] \\
    \widehat{H} \arrow[r, swap,  "\eta'"]& \widehat{G^*}.
    \end{tikzcd}
\end{equation}
This then implies that $\mathrm{Int}(g)(\widehat{T})=\widehat{T}$ and $\alpha$ induces an isomorphism of embedded endoscopic data as required.
\end{proof}

Third, fix a refined endoscopic datum $(H, s_1, \eta_1)$ and suppose we have an embedded datum $(H, {H_{M^*}}_2,  s_2, \eta_2)$ such that $\eta_i(\widehat{T_H})=\widehat{T}$ and we have an isomorphism of endoscopic data $\alpha: H \to H$ between $(H, s_1, \eta_1)$ and $(H, s_2, \eta_2)$. We can and do choose $\alpha$ and $g \in \widehat{G^*}$ such that $\alpha(T_H)=T_H$, $\Int(g)(\widehat{T})=\widehat{T}$, and $\Int(g) \circ \eta_1 = \eta_2 \circ \widehat{\alpha}$. We now note that $(H, \alpha({H_{M^*}}_2), \widehat{\alpha}^{-1}(s_2), \eta_2 \circ \widehat{\alpha})$ is isomorphic via $\alpha$ to $(H, {H_{M^*}}_2, s_2, \eta_2) $ as embedded endoscopic data. Then the former embedded endoscopic datum is equal to $(H, \alpha({H_{M^*}}_2), s_1, \Int(g) \circ \eta_1)$. In particular, to compute $Y^{-1}[(H, s, \eta)]$, it suffices to fix a datum $(H,s, \eta)$ and compute the set of endoscopic data of the form $(H, s, \Int(g) \circ \eta)$ for $g \in N_{\widehat{G^*}}(\widehat{T})$ which restrict to give embedded endoscopic data.

We now record the following trivial fact. 
\begin{lemma}{\label{centermlem}}
Suppose that $(H, s, \Int(g) \circ \eta)$ restricts to an embedded endoscopic datum  $(H, H_{M^*}, s, \Int(g) \circ \eta)$ and satisfies $(\Int(g) \circ \eta)(\widehat{T_H}) = \widehat{T}$. Then
\begin{equation*}
    (Z(\widehat{M^*})^{\Gamma_F})^{\circ} \subset (\Int(g) \circ \eta)((\widehat{T_H}^{\Gamma_F})^{\circ}).
\end{equation*}
\end{lemma}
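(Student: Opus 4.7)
The plan is to trace through the relationship between the Galois action on $\widehat{T_H}$ and on $\widehat{T}$ induced by $\eta' := \Int(g) \circ \eta$, exploiting the fact that by hypothesis the restriction of $\eta'$ to $\widehat{H_{M^*}}$ defines a refined endoscopic datum of $M^*$.

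First I would observe that since $\widehat{T_H}$ is a maximal torus of the Levi subgroup $\widehat{H_{M^*}}$ of $\widehat{H}$, its image $\eta'(\widehat{T_H}) = \widehat{T}$ is a maximal torus of the Levi subgroup $\eta'(\widehat{H_{M^*}}) \subset \widehat{M^*}$, and hence a maximal torus of $\widehat{M^*}$ itself. In particular $Z(\widehat{M^*}) \subset \widehat{T}$, so every element $z \in Z(\widehat{M^*})$ has a well-defined preimage $\eta'^{-1}(z) \in \widehat{T_H}$, and the map $\eta'^{-1} : Z(\widehat{M^*}) \to \widehat{T_H}$ is a morphism of algebraic groups.

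The key step will be to show that for $z \in Z(\widehat{M^*})^{\Gamma_F}$ the element $t := \eta'^{-1}(z)$ lies in $\widehat{T_H}^{\Gamma_F}$. Since $(H_{M^*}, s, \eta'|_{\widehat{H_{M^*}}})$ is a refined endoscopic datum of $M^*$, the $\widehat{M^*}$-conjugacy class of $\eta'|_{\widehat{H_{M^*}}}$ is $\Gamma_F$-stable, so for each $\gamma \in \Gamma_F$ there exists $m_\gamma \in \widehat{M^*}$ such that $\gamma \cdot \eta' = \Int(m_\gamma) \circ \eta'$ on $\widehat{H_{M^*}}$. Applying the definition $(\gamma \cdot \eta')(x) = \gamma(\eta'(\gamma^{-1}(x)))$ with $x = \gamma(t)$ gives $(\gamma \cdot \eta')(\gamma(t)) = \gamma(\eta'(t)) = \gamma(z) = z$, while the same expression equals $\Int(m_\gamma)(\eta'(\gamma(t)))$. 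Since $z \in Z(\widehat{M^*})$, the inner automorphism by $m_\gamma^{-1}$ fixes $z$, so $\eta'(\gamma(t)) = z = \eta'(t)$; injectivity of $\eta'|_{\widehat{T_H}}$ then forces $\gamma(t) = t$.

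Finally, to pass to identity components, I would use that the morphism of algebraic groups $\eta'^{-1} : Z(\widehat{M^*}) \to \widehat{T_H}$ sends the connected closed subgroup $(Z(\widehat{M^*})^{\Gamma_F})^\circ$ into a connected subgroup of $\widehat{T_H}^{\Gamma_F}$ containing the identity, hence into $(\widehat{T_H}^{\Gamma_F})^\circ$. Rearranging gives exactly $(Z(\widehat{M^*})^{\Gamma_F})^\circ \subset \eta'((\widehat{T_H}^{\Gamma_F})^\circ)$. The argument is essentially formal once one has set up the Galois actions correctly; the only point requiring care is the compatibility relation $\gamma \cdot \eta' = \Int(m_\gamma) \circ \eta'$ on $\widehat{H_{M^*}}$, which is the precise payoff of the assumption that $(H, s, \eta')$ actually \emph{restricts} to an embedded datum rather than merely an endoscopic datum of $G$.
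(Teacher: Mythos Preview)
Your proof is correct and follows essentially the same approach as the paper. The paper's one-line proof invokes the fact (recorded in the Remark after Definition~\ref{endtrip}) that for any endoscopic datum the inclusion $Z(\widehat{M^*}) \hookrightarrow Z(\widehat{H_{M^*}})$ is $\Gamma_F$-equivariant, then uses $Z(\widehat{H_{M^*}}) \subset \widehat{T_H}$; your argument simply unpacks this equivariance directly, computing $\gamma(t)=t$ from the relation $\gamma\cdot\eta' = \Int(m_\gamma)\circ\eta'$ on $\widehat{H_{M^*}}$ in exactly the way that Remark does.
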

\begin{proof}
We note the endoscopic datum $(H_{M^*}, s, \Int(g) \circ \eta|_{\widehat{H_{M^*}}})$ induces a $\Gamma_F$-equivariant embedding $(Z(\widehat{M^*})^{\Gamma_F})^{\circ} \hookrightarrow (Z(\widehat{H_{M^*}})^{\Gamma_F})^{\circ} \subset (\widehat{T_H}^{\Gamma_F})^{\circ}$.
\end{proof}

Let $\Aut_{T_H}(H,s,\eta)$ denote the set of automorphisms $\alpha: H \to H$ of $(H,s, \eta)$ that stabilize $T_H$. We construct a homomorphism $\Aut_{T_H}(H,s,\eta) \hookrightarrow W(\widehat{T}, \widehat{G^*})$ that is canonical up to our fixed datum $(H,s,\eta)$ and our chosen splittings. Our fixed choice of splittings give an identification of $W(T_H, H)^{\Gamma_F}$ and $W(\widehat{T_H}, \widehat{H})^{\Gamma_F}$. Let  $\Aut_{T_H}(H,s,\eta)$ denote the set of automorphisms $\alpha: H \to H$ of $(H,s,\eta)$ that stabilize $T_H$. Then we can uniquely write $\alpha=\alpha'\circ w$ such that $w \in W(T_H, H)^{\Gamma_F}$ and $\alpha'$ preserves the chosen splitting of $H$. Then we construct $\widehat{\alpha}: \widehat{H} \to \widehat{H}$ to be equal to $w' \circ \widehat{\alpha'}$ whereby $w' \in W(\widehat{T_H}, \widehat{H})^{\Gamma_F}$ is the element corresponding to $w$ and $\widehat{\alpha'}$ is a dual of $\alpha'$ chosen so that it preserves our fixed splitting of $\widehat{H}$. Now, for each $\widehat{\alpha}$, we choose $g \in N_{\widehat{G^*}}(\widehat{T})$ such that the following diagram commutes:
\begin{equation*}
\begin{tikzcd}
\widehat{H} \arrow[r, "\eta"] \arrow[d, swap, "\widehat{\alpha}"]& \widehat{G} \arrow[d, "\Int(g)"] \\
\widehat{H} \arrow[r, "\eta"] & \widehat{G}
\end{tikzcd}    
\end{equation*}
Two different choices $g', g \in N_{\widehat{G^*}}(\widehat{T})$ in the above diagram differ by an element of $\eta(Z(\widehat{H}))$ and hence induce the same element of $W(\widehat{T}, \widehat{G^*})$.  Hence we have constructed our embedding $\Aut_{T_H}(H,s,\eta) \hookrightarrow W(\widehat{T}, \widehat{G^*})$. The map $\eta$ also induces an embedding of $W(\widehat{T_H}, \widehat{H})$ into $W(\widehat{T}, \widehat{G^*})$. We let $\Aut(\widehat{H}, \eta)$ be the subgroup of $W(\widehat{T}, \widehat{G^*})$ generated by the images of $W(\widehat{T_H}, \widehat{H})$ and $\Aut_{T_H}(H,s,\eta)$. Note that the quotient of  $\Aut(\widehat{H}, \eta)$ by the image of $W(\widehat{T_H}, \widehat{H})$ in $W(\widehat{T}, \widehat{G^*})$ is identified with $\Out_r(H,s,\eta)$.

We now have the following definition.
\begin{definition}
We define $W(M^*, H) \subset W(\widehat{T}, \widehat{G^*})$ as the elements $w$ such that $(w \circ \eta)^{-1}( (Z(\widehat{M^*})^{\Gamma_F})^{\circ})$ is pointwise $\Gamma_F$-invariant up to conjugacy in $\widehat{H}$. In other words, for each $\gamma \in \Gamma_F$ there is an $h_{\gamma} \in \widehat{H}$ so that $\Int(h_{\gamma}) \circ \gamma$ centralizes $(w \circ \eta)^{-1}( (Z(\widehat{M^*})^{\Gamma_F})^{\circ})$. Equivalently, $Z_{\LH}((w \circ \eta)^{-1}( (Z(\widehat{M^*})^{\Gamma_F})^{\circ}))$ is a \emph{full} subgroup of $\LH$ (its projection to $W_F$ is surjective). 
\end{definition}

We will now see that $\Aut(\widehat{H}, \eta)$ acts on $W(M^*, H)$ on the right and $W(\widehat{T}, \widehat{M^*})$ acts on the left. Indeed, if $w_1 \in W(\widehat{T}, \widehat{G^*})$ is in the image of $w_h \in W(\widehat{T_H}, \widehat{H})$ and $w_2 \in W(\widehat{T}, \widehat{M^*})$ and $w \in W(M^*, H)$, then 
\begin{equation*}
    (w_2ww_1 \circ \eta)^{-1}( (Z(\widehat{M^*})^{\Gamma_F})^{\circ}) = (\eta^{-1} \circ w^{-1}_1w^{-1})((Z(\widehat{M^*})^{\Gamma_F})^{\circ})=(w^{-1}_h \circ (w \circ \eta)^{-1})((Z(\widehat{M^*})^{\Gamma_F})^{\circ}).
\end{equation*}
If the centralizer of $(w \circ \eta)^{-1}((Z(\widehat{M^*})^{\Gamma_F})^{\circ})$ in $\LH$ is full then so is that of $w^{-1}_h \circ (w \circ \eta)^{-1}((Z(\widehat{M^*})^{\Gamma_F})^{\circ})$. Finally, to show that the image of $\Aut_{T_H}(H,s,\eta)$ in $W(\widehat{T}, \widehat{G^*})$ acts on $W(M^*, H)$, we observe that if $\widehat{\alpha}$ is the dual of $\alpha \in \Aut_{T_H}(H,s, \eta)$ as constructed above, then $\widehat{\alpha}$ is $\Gamma_F$-equivariant and so if $\Int(h_{\gamma})(\gamma(z))=z$ for all $z \in (w \circ \eta)^{-1}((Z(\widehat{M^*})^{\Gamma_F})^{\circ})$, then $\Int(\widehat{\alpha}(h_{\gamma})) \gamma (\widehat{\alpha}(z))= \widehat{\alpha}(z)$ for all $z$.

We are now ready for the key result. 

\begin{proposition}{\label{WMHlem}}
Let $(H,s,\eta)$ be an endoscopic datum for $G$ and fix maximal tori $T_H \subset H$ and $T \subset G^*$ of maximal split rank in their respective groups. We assume that $\eta(\widehat{T_H})=\widehat{T}$. Let $g \in N_{\widehat{G^*}}(\widehat{T})$. Then $g$ projects to an element of $W(M^*,H)$ if and only if there exists an $h \in N_{\widehat{G^*}}(\widehat{T})$ that projects to an element of the image of $W(\widehat{T_H}, \widehat{H})$ such that $(H, s, \Int(gh) \circ \eta)$ restricts to give an embedded endoscopic datum.
\end{proposition}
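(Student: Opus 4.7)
The plan is to prove the two directions of the biconditional separately.

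For the ``if'' direction ($(\Leftarrow)$), assume that $(H, H_{M^*}, s, \eta')$ with $\eta' := \Int(gh)\circ\eta$ is a valid embedded endoscopic datum. I show that $(\eta')^{-1}(B)$, where $B := (Z(\widehat{M^*})^{\Gamma_F})^\circ$, is pointwise $\Gamma_F$-fixed inside $\widehat{H_{M^*}} \subset \widehat{H}$. The key computation: for $s' \in (\eta')^{-1}(B)$, the element $\eta'(s') \in B$ is $\gamma$-fixed, while the refined endoscopic datum axiom for $M^*$ supplies $m_\gamma \in \widehat{M^*}$ with $\gamma \cdot \eta'|_{\widehat{H_{M^*}}} = \Int(m_\gamma) \circ \eta'|_{\widehat{H_{M^*}}}$. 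Since $\eta'(s') \in Z(\widehat{M^*})$ commutes with $m_\gamma$, one computes $\eta'(\gamma(s')) = \eta'(s')$ and hence $\gamma(s') = s'$ by injectivity of $\eta'$. This places $[gh]$ in $W(M^*, H)$; since $[h]$ lies in the image of $W(\widehat{T_H}, \widehat{H})$, which was shown in the preceding discussion to act on the right of $W(M^*, H)$, I obtain $[g] = [gh][h]^{-1} \in W(M^*, H)$.

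For the ``only if'' direction ($(\Rightarrow)$), given $w := [g] \in W(M^*, H)$, I construct $h$. Let $S := \eta^{-1}(w^{-1}(B))$; the hypothesis yields for each $\gamma \in \Gamma_F$ some $h_\gamma \in \widehat{H}$ with $\Int(h_\gamma)\circ\gamma|_S = \mathrm{id}$. The first key step is to argue that the Levi $L^A := Z_{\widehat{H}}(\eta^{-1}(w^{-1}(A)))$, with $A := Z(\widehat{M^*})^\circ \supset B$, has $\Gamma_F$-stable $\widehat{H}$-conjugacy class. Because distinct standard Levi subgroups of $\widehat{H}$ containing $\widehat{T_H}$ are pairwise non-conjugate (they correspond to distinct subsets of simple roots), the standard representative $\widehat{H_{M^*}}$ in this conjugacy class is then automatically $\Gamma_F$-stable and corresponds to an $F$-rational Levi $H_{M^*} \supset T_H$. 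Pick $u \in W(\widehat{T_H}, \widehat{H})$ with $u^{-1} L^A u = \widehat{H_{M^*}}$, lift to $\tilde u \in N_{\widehat{H}}(\widehat{T_H})$, and set $h := \eta(\tilde u)$, which is an element of $N_{\widehat{G^*}}(\widehat{T})$ whose image lies in $W(\widehat{T_H}, \widehat{H})$. Setting $\eta' := \Int(gh)\circ\eta = \Int(g)\circ\eta\circ\Int(\tilde u)$, a direct computation shows $(\eta')^{-1}(\widehat{M^*}) = u^{-1} L^A u = \widehat{H_{M^*}}$, so $\eta'(\widehat{H_{M^*}}) \subset \widehat{M^*}$. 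The remaining axioms of an embedded datum---namely $s \in Z(\widehat{H_{M^*}})^{\Gamma_F}$ (automatic from $s \in Z(\widehat{H})^{\Gamma_F}$ since $Z(\widehat{H}) \subset Z(\widehat{H_{M^*}})$), the $\Gamma_F$-stability of the conjugacy class of $\eta'|_{\widehat{H_{M^*}}}$ in $\widehat{M^*}$ (using that the representatives $g'_\gamma$ witnessing $\Gamma_F$-stability of $\eta'$ in $\widehat{G^*}$ may be chosen in $\widehat{M^*}$ by the Levi structure), and the connecting homomorphism condition for $s$---are then verified.

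The main obstacle is the first part of the $(\Rightarrow)$ direction: showing that $L^A$ (defined via the full connected center $A$) has $\Gamma_F$-stable $\widehat{H}$-conjugacy class, given only the hypothesis on $S$ (defined via the $\Gamma_F$-fixed part $B \subset A$). This requires tracking the Weyl-group obstruction $\sigma_\gamma \in W(\widehat{T}, \widehat{G^*})$ measuring the failure of $\eta$ to be $\Gamma_F$-equivariant on the chosen maximal tori, and arguing that the conjugating elements $h_\gamma$ can be chosen modulo the centralizer of $S$ so as to normalize $\widehat{T_H}$; their conjugation action then extends coherently from $S$ to $\eta^{-1}(w^{-1}(A))$, promoting the pointwise-$\Gamma_F$-invariance of $S$ up to conjugation into conjugacy-class stability of $L^A$.
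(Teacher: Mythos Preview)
Your $(\Leftarrow)$ direction is correct and is essentially the paper's argument: the paper packages your computation as Lemma~\ref{centermlem}, observing that the refined endoscopic datum for $M^*$ forces $(\Int(gh)\circ\eta)^{-1}\bigl((Z(\widehat{M^*})^{\Gamma_F})^\circ\bigr)$ to land in $(\widehat{T_H}^{\Gamma_F})^\circ$, whence $[gh]\in W(M^*,H)$ and hence $[g]\in W(M^*,H)$.

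The $(\Rightarrow)$ direction contains a genuine error. Your assertion that ``distinct standard Levi subgroups of $\widehat{H}$ containing $\widehat{T_H}$ are pairwise non-conjugate'' is false: already in $\mathrm{GL}_3$ the standard Levis for $\{\alpha_1\}$ and $\{\alpha_2\}$ are conjugate by the longest Weyl element. So even granting that the $\widehat{H}$-conjugacy class of $L^A$ is $\Gamma_F$-stable, you cannot conclude that its standard representative $\widehat{H_{M^*}}$ is $\Gamma_F$-stable; without that there is no $F$-rational Levi $H_{M^*}\subset H$ and the construction stalls. (Incidentally, the ``main obstacle'' you flag---passing from $S$ to $A'$---is not the real difficulty: since $M^*$ is $F$-rational one has $Z_{\widehat{G^*}}\bigl((Z(\widehat{M^*})^{\Gamma_F})^\circ\bigr) = \widehat{M^*}$, and because the roots of $\widehat{H}$ embed via $\eta$ into those of $\widehat{G^*}$, this forces $Z_{\widehat{H}}(S) = Z_{\widehat{H}}(A')$, i.e.\ $L^A = L^S$. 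Conjugacy-class stability then follows immediately from the hypothesis on $S$; it is the next step that fails.)

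The paper sidesteps this Levi-versus-parabolic pitfall by working in ${}^LH$ rather than in $\widehat{H}$. The defining property of $W(M^*,H)$ is precisely that $Z_{{}^LH}(Z')$ is full in ${}^LH$, where $Z' = (\Int(g)\circ\eta)^{-1}\bigl((Z(\widehat{M^*})^{\Gamma_F})^\circ\bigr)$. By \cite[Lemma~3.5]{Bor2} this centralizer is then a Levi of a parabolic of ${}^LH$; conjugating by some $h'\in\widehat{H}$ brings it to a \emph{standard} Levi of ${}^LH$, necessarily of the form $\widehat{H_{M^*}}\rtimes W_F$. Now $\Gamma_F$-stability of $\widehat{H_{M^*}}$ is automatic: for any $w\in W_F$ the element $(1,w)$ lies in $\widehat{H_{M^*}}\rtimes W_F$, and conjugation by $(1,w)$ is the $\Gamma_F$-action on $\widehat{H}$. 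The same $L$-group bookkeeping handles your final axiom check cleanly: the element $(g_w,w)\in\mathcal{H}$ witnessing $\Gamma_F$-stability of $\eta'$ commutes with the image of the conjugated $Z$, namely $(Z(\widehat{M^*})^{\Gamma_F})^\circ$, hence lies in ${}^LM^*$, so $g_w\in\widehat{M^*}$ as required.
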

\begin{proof}
Suppose first that $g, h \in N_{G^*}(\widehat{T})$ are such that $h$ projects to an element in the image of $W(\widehat{T_H}, \widehat{H})$ and $(H, s, \Int(gh) \circ \eta)$ restricts to an embedded endoscopic datum $(H, H_{M^*}, s, \Int(gh) \circ \eta)$. Then by Lemma \ref{centermlem}, it follows that $gh \in W(M^*, H)$ and therefore $g \in W(M^*, H)$.

We now prove the converse. Suppose that $(H, s, \Int(g) \circ \eta)$ is as above and that $g$ projects to an element of $W(M^*,H)$. Define $Z' := (\Int(g) \circ \eta)^{-1}((Z(\widehat{M^*})^{\Gamma_F})^{\circ})$ and $\widehat{H_{M^*}}' := Z_{\widehat{H}}(Z')$. By Lemma \ref{endlevi}, $\widehat{H_{M^*}}'$ is a Levi subgroup of $\widehat{H}$. By \cite[Lemma 3.5]{Bor2}, $Z_{^LH}(Z')$ is a Levi subgroup of some parabolic subgroup of $\LH$. Since all parabolic subgroups of $\LH$ are conjugate in $\widehat{H}$ to a standard parabolic subgroup relative to $\widehat{B_H}$, there is some $h' \in \widehat{H}$ such that $Z := \Int(h')(Z')$ satisfies $Z_{\LH}(Z)$ is a standard Levi subgroup of $\LH$, which we denote by $\widehat{H_{M^*}} \rtimes W_F$. We may assume that $h' \in N_{\widehat{H}}(\widehat{T_H})$.

We claim that $Z$ is $\Gamma_F$-invariant. To prove this, pick $\gamma \in \Gamma_F$ and find a $w \in W_F$ that acts as $\gamma$ on $\widehat{H}$. Then observe that $(1,w) \in Z_{\LH}(Z)$ which shows that the action of $w$ and therefore $\gamma$ on $Z$ is trivial. 

Let $h= \eta({h'}^{-1})$. Then we want to show that $(H, s, \Int(gh) \circ \eta)$ restricts to give an embedded endoscopic datum. We have that $\widehat{H_{M^*}} = (\Int(gh) \circ \eta)^{-1}(\widehat{M^*})$. Since $\widehat{H_{M^*}}$ is a $\Gamma_F$-stable standard Levi subgroup, there is a standard Levi subgroup $H_{M^*} \subset H$ relative to $B_H$ whose coroots correspond to the roots of $\widehat{H_{M^*}}$. We can now define a candidate embedded endoscopic datum $(H, H_{M^*}, s, \Int(gh) \circ \eta)$. To check this is an embedded endoscopic datum, it suffices to show that the conjugacy class of $\Int(gh) \circ \eta$ is $\Gamma_F$-invariant in $\widehat{M^*}$. To do so, pick $\gamma \in \Gamma_F$ and choose $w \in W_F$ that acts as $\gamma$ on $\widehat{H}$ and $\widehat{G^*}$. Then let $(g_w, w) \in \mc{H} \subset \LG$ (defined relative to $\Int(gh) \circ \eta$) be such that $\Int((g_w, w)) \circ (\Int(gh) \circ \eta) = (\Int(gh) \circ \eta) \circ \Int((1,w))$ on $\widehat{H}$. Since $(1,w)$ commutes with $Z$, it follows $(g_w, w)$ commutes with $(\Int(gh) \circ \eta)(Z)= (Z(\widehat{M^*})^{\Gamma_F})^{\circ}$. Hence $(g_w, w) \in \LM^*$ so $g_w \in \widehat{M^*}$. Finally, we have $\Int(\gamma^{-1}(g_w)) \circ (
\Int(gh) \circ \eta) = \gamma^{-1} \cdot (\Int(gh) \circ \eta)$ as desired.
\end{proof}

To complete our parametrization of the fiber $Y^{-1}([H,s,\eta])$ it remains to understand which embedded data $(H, H_{M^*}, s, \Int(g) \circ \eta)$ of the above reduced form are isomorphic. If $(H, s, \Int(g) \circ \eta)$ restricts to an embedded endoscopic datum, then $g$ determines a double coset in $W(\widehat{T}, \widehat{M^*}) \setminus W(M^*, H) / \Aut(\widehat{H}, \eta)$. We show that this induces a bijection between $Y^{-1}([H,s,\eta])$ and $W(\widehat{T}, \widehat{M^*}) \setminus W(M^*, H) / \Aut(\widehat{H}, \eta)$.

We need to show that if $(H, s, \Int(g) \circ \eta)$ and $(H, s, \Int(g') \circ \eta)$ restrict to isomorphic embedded data, then they correspond to the same double coset and conversely, that if $(H,s, \Int(g) \circ \eta)$ and $(H, s, \Int(g') \circ \eta)$ restrict to embedded data and yield the same double coset, then these embedded data are isomorphic.

Suppose $(H, s, \Int(g) \circ \eta)$ and $(H, s, \Int(g') \circ \eta)$ restrict to isomorphic embedded data and $\alpha: H \to H$ induces an isomorphism such that $\alpha(T_H)=T_H$. Then we can find $g_{\alpha} \in N_{\widehat{G^*}}(\widehat{T})$ projecting to an element of $W(\widehat{T}, \widehat{M^*})$ and such that  $\Int(g) \circ \eta \circ \widehat{\alpha} = \Int(g_{\alpha}) \circ \Int(g') \circ \eta$. Additionally, since $\Int(g) \circ \eta$ and $\Int(g') \circ \eta$ and $\eta$ are all conjugate in $\widehat{G^*}$ and $\alpha$ induces an isomorphism of $(H, s, \Int(g) \circ \eta)$ and $(H, s, \Int(g') \circ \eta)$, it follows that $\alpha$ also induces an automorphism of $(H,s, \eta)$ and hence that we can find $g_1 \in N_{\widehat{G^*}}(\widehat{T})$ such that $g_1$ projects to an element of $W(\widehat{H}, \eta)$ and $\eta \circ \widehat{\alpha} = \Int(g_1) \circ \eta$. Putting these equations together, we have
\begin{equation*}
    \Int(g_{\alpha}g') \circ \eta = \Int(gg_1) \circ \eta.
\end{equation*}
Taking projections to $W(\widehat{T}, \widehat{G^*})$, we get that $g_{\alpha}g'=gg_1 \in W(\widehat{T}, \widehat{G^*})$ and hence $g$ and $g'$ are in the same $W(\widehat{T}, \widehat{M^*}) \setminus W(M^*, H) / \Aut(\widehat{H}, \eta)$ double coset as desired.

Now assume that $(H, s, \Int(g) \circ \eta)$ and $(H, s, \Int(g') \circ \eta)$ restrict to embedded endoscopic data and yield the same double coset. Then we can write $g'=mgh$ where $h$ induces an element of $\Aut(\widehat{H}, \eta)$ and $m$ induces an element of $W(\widehat{T}, \widehat{M^*})$. Clearly $(H, s, \Int(gh) \circ \eta)$ and $(H,s, \Int(mgh) \circ \eta)$ restrict to give isomorphic embedded endoscopic data so it suffices to show that if $(H, s, \Int(gh) \circ \eta)$ and $(H, s, \Int(g) \circ \eta)$ restrict to give embedded endoscopic data with corresponding Levi subgroups $H'_{M^*}, H_{M^*} \subset H$ respectively, then these data are isomorphic. Choose an automorphism $\widehat{\alpha}: \widehat{H} \to \widehat{H}$ such that $\Int(h) \circ \eta = \eta \circ \widehat{\alpha}$. Then since $\widehat{\alpha}$ is $\Gamma_F$-equivariant up to conjugacy, we can pick a dual $\alpha': H \to H$ defined over $F$ such that $\alpha'(H_{M^*})$ and $H'_{M^*}$ are conjugate in $H_{\ov{F}}$. We want to show we can pick a dual $\alpha$ of $\widehat{\alpha}$ so that $\alpha(H_{M^*})=H'_{M^*}$. It suffices to show that $\alpha'(H_{M^*})$ and $H'_{M^*}$ are conjugate over $F$. This follows from \cite[Theorem A]{Sol1}.

Hence we have proven:
\begin{proposition}{\label{endparam}}
Let $(H,s, \eta)$ be a refined endoscopic datum of $G$. Then $Y^{-1}([H,s, \eta])$ is parametrized by elements of  $W(\widehat{T}, \widehat{M^*}) \backslash W(M^*,H) / W(\widehat{H}, \eta)$.
\end{proposition}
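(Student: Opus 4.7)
The plan is to leverage Proposition \ref{refemb} to recast the question as a parametrization of the fiber $(Y^e)^{-1}([(H,s,\eta)])$ of embedded endoscopic data lying over the fixed refined datum, and then to cut down the redundancy in describing such data by Weyl group cosets.

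First I would normalize representatives within each isomorphism class in the fiber. Applying Lemma \ref{endweyl}(1), every embedded datum in $(Y^e)^{-1}([(H,s,\eta)])$ is isomorphic to one with $T_H \subset H_{M^*}$ and $\eta(\widehat{T_H}) = \widehat{T}$; by the reduction paragraph preceding the lemma, such a datum is further isomorphic to one of the form $(H, H_{M^*}, s, \Int(g) \circ \eta)$ for some $g \in N_{\widehat{G^*}}(\widehat{T})$. This produces a surjective map from a subset of $N_{\widehat{G^*}}(\widehat{T})$ onto the fiber.

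Next I would identify the image, which amounts to proving Proposition \ref{WMHlem}: namely, that $g$ yields an embedded datum (after possibly adjusting on the right by an element covering $W(\widehat{T_H}, \widehat{H})$) precisely when $g \in W(M^*, H)$. Necessity is immediate from Lemma \ref{centermlem}. For sufficiency, given $g \in W(M^*, H)$, consider the torus $Z := (\Int(g) \circ \eta)^{-1}((Z(\widehat{M^*})^{\Gamma_F})^\circ) \subset \widehat{H}$. Its centralizer in $\LH$ is a Levi subgroup of some parabolic by \cite[Lemma 3.5]{Bor2}, and hence is $\widehat{H}$-conjugate to a standard Levi by some element of $N_{\widehat{H}}(\widehat{T_H})$. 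The fullness condition in the definition of $W(M^*,H)$ forces the standard Levi $\widehat{H_{M^*}}$ thus produced to be $\Gamma_F$-stable, so it descends to an $F$-rational Levi $H_{M^*} \subset H$ relative to $B_H$; the $\Gamma_F$-equivariance of the adjusted embedding then follows because the centralizer of $(Z(\widehat{M^*})^{\Gamma_F})^\circ$ in $\LG$ contains a representative of each $w \in W_F$ that necessarily lies in $\LM^*$.

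Finally I would determine when two such $g, g'$ yield isomorphic embedded data. Given an isomorphism $\alpha$, Lemma \ref{endweyl}(2) lets me arrange $\alpha(T_H) = T_H$ and a covering element in $N_{\widehat{G^*}}(\widehat{T})$; the induced element of $W(\widehat{T}, \widehat{G^*})$ lands in $\Aut(\widehat{H}, \eta)$ because $\alpha$ is an automorphism of $(H, s, \eta)$ stabilizing $T_H$, while Remark \ref{compatibilityremark} forces the remaining discrepancy relating $\Int(g) \circ \eta$ and $\Int(g') \circ \eta$ to lie in $W(\widehat{T}, \widehat{M^*})$. Conversely, given a decomposition $g' = m g h$ with $m \in W(\widehat{T}, \widehat{M^*})$ and $h \in \Aut(\widehat{H}, \eta)$, left multiplication by $m$ does not change the embedded datum up to isomorphism, so it suffices to handle $g' = gh$. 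I would lift the dual map determined by $h$ to an $F$-rational automorphism $\alpha$ of $H$ via Fact \ref{equivarenum}, and then adjust $\alpha$ within its $H_{\ov{F}}$-conjugacy class to send $H_{M^*}$ to $H'_{M^*}$. The main obstacle is precisely this last rationality step: the dual construction only guarantees that $\alpha(H_{M^*})$ and $H'_{M^*}$ are $\ov{F}$-conjugate as $F$-rational Levi subgroups of $H$, and upgrading this to $F$-conjugacy is a nontrivial statement which one should handle by invoking \cite[Theorem A]{Sol1}.
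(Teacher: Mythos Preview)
Your proposal is correct and follows essentially the same approach as the paper: reduce to embedded data via Proposition~\ref{refemb}, normalize via Lemma~\ref{endweyl}, characterize the image via Proposition~\ref{WMHlem} (necessity from Lemma~\ref{centermlem}, sufficiency via \cite[Lemma 3.5]{Bor2} and the fullness condition), and then identify the double-coset ambiguity using Lemma~\ref{endweyl}(2), the observation of Remark~\ref{compatibilityremark}, and \cite[Theorem A]{Sol1} for the final $F$-rationality step. The paper's argument matches yours point for point, including the same external inputs.
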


We wish to compare this parametization to the works of Xu \cite[C.4]{Xu1}and Hiraga \cite[5.6]{Hir1} on the compatibility of endoscopic transfer and Jacquet modules. These authors use a subtly different notion of isomorphism of endoscopic data where all isomorphisms are inner. In particular, we have the following definition.
\begin{definition}
Fix an endoscopic datum $(H, s, \eta)$ of $G$. We define the set $\mc{E}^{e}(M,G;H)$ to be the set isomorphism classes of embedded endoscopic data whose image under 
\begin{equation}
Y^e: \mc{E}^{e}(M,G) \to \mc{E}^r(G)
\end{equation}
is $[H,s,\eta]$. We define the set of \emph{inner classes of embedded endoscopic data} relative to $H$, $\mc{E}^{i}(M, G ; H)$ to be the set of embedded endoscopic data for the group $H$ whose isomorphism class lies in $\mc{E}^{e}(M,G;H)$ and where two such data are equivalent if they are isomorphic by an  inner isomorphism $\alpha$ of the group $H$ inducing an isomorphism of embedded endoscopic data. Note that $\alpha$ need not induce an inner isomorphism of embedded endoscopic data.
\end{definition}
By similar reasoning to the above argument, the number of isomorphism classes of $\mc{E}^{i}(M,G;H)$ is $W(\widehat{T}, \widehat{M^*}) \backslash W(M^*,H) / W(\widehat{T_H}, \widehat{H})$.

We now count the number of  isomorphism classes in $\mc{E}^{i}(M,G;H)$ mapping to a given isomorphism class of $\mc{E}^{e}(M,G;H)$ under the natural map.
\begin{proposition}{\label{fibercard}}
Pick $[H, H_M, s, \eta] \in \mc{E}^{e}(M,G;H)$. Then the number of isomorphism classes in $\mc{E}^{i}(M,G; H)$ mapping to $[H, H_M, s, \eta]$ under the natural map is $|\Out_r(H,s , \eta)/\Out_r(H_M,s, \eta)|$.
\end{proposition}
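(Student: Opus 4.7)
The plan is to realize $\mathcal{E}^{e}(M,G;H)$ and $\mathcal{E}^{i}(M,G;H)$ as orbit spaces of a single set of Levi subgroups of $H$ under the actions of $\mathrm{Aut}_r(H,s,\eta)$ and $H_{\mathrm{ad}}(F)$ respectively, compute the fiber over $[H,H_M,s,\eta]$ as a double coset space, and conclude via Proposition~\ref{refemb}.

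First, fix a representative $(H,s,\eta)$ of the refined endoscopic class underlying $[H,H_M,s,\eta]$. By the reductions preceding Proposition~\ref{endparam}, in particular Lemma~\ref{endweyl}(2), any embedded datum whose image under $Y^e$ is $[H,s,\eta]$ has a representative of the form $(H,H_M',s,\eta)$ with the exact datum $(H,s,\eta)$ on the nose, so that only the Levi $H_M'$ varies. Let
\[
X = \{F\text{-rational Levi subgroups } H_M' \subseteq H : (H,H_M',s,\eta) \text{ is an embedded endoscopic datum}\}.
\]
Unwinding the definitions yields
\[
\mathcal{E}^{e}(M,G;H) = X/\mathrm{Aut}_r(H,s,\eta), \qquad \mathcal{E}^{i}(M,G;H) = X/H_{\mathrm{ad}}(F),
\]
where both groups act by $\alpha \cdot H_M' = \alpha(H_M')$, and the natural map $\mathcal{E}^{i}(M,G;H) \to \mathcal{E}^{e}(M,G;H)$ is induced by the inclusion $H_{\mathrm{ad}}(F) \subseteq \mathrm{Aut}_r(H,s,\eta)$.

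The stabilizer of $H_M$ under the $\mathrm{Aut}_r(H,s,\eta)$-action is $\mathrm{Aut}_r(H,H_M,s,\eta)$, so the fiber over $[H,H_M,s,\eta]$ is the double coset set
\[
H_{\mathrm{ad}}(F) \,\backslash\, \mathrm{Aut}_r(H,s,\eta) \,/\, \mathrm{Aut}_r(H,H_M,s,\eta).
\]
Since $H_{\mathrm{ad}}(F)$ is normal in $\mathrm{Aut}_r(H,s,\eta)$, this is the coset space $\mathrm{Out}_r(H,s,\eta)/S$, where $S$ is the image of $\mathrm{Aut}_r(H,H_M,s,\eta)$ under the projection $\mathrm{Aut}_r(H,s,\eta) \twoheadrightarrow \mathrm{Out}_r(H,s,\eta)$.

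Finally, I invoke Proposition~\ref{refemb} to identify $S$ with the image of the natural map $\mathrm{Out}_r(H_M,s,\eta) \to \mathrm{Out}_r(H,s,\eta)$. The remark following Definition~\ref{embdat} shows $\mathrm{Inn}(H,H_M,s,\eta) \subseteq H_{\mathrm{ad}}(F)$, so the composition $\mathrm{Aut}_r(H,H_M,s,\eta) \hookrightarrow \mathrm{Aut}_r(H,s,\eta) \twoheadrightarrow \mathrm{Out}_r(H,s,\eta)$ factors through the quotient $\mathrm{Out}_e(H,H_M,s,\eta) = \mathrm{Aut}_r(H,H_M,s,\eta)/\mathrm{Inn}(H,H_M,s,\eta)$, which Proposition~\ref{refemb} identifies with $\mathrm{Out}_r(H_M,s,\eta)$; by construction the image of this map is exactly $S$. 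Thus the fiber has cardinality $|\mathrm{Out}_r(H,s,\eta)/\mathrm{Out}_r(H_M,s,\eta)|$ as claimed. I expect the most delicate step to be the reduction via Lemma~\ref{endweyl}(2), which allows replacing embedded data in a fixed iso class by data with literally identical underlying refined datum; the remainder is formal orbit-counting.
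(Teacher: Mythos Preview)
Your argument has a genuine gap at its very first step. You assert that ``any embedded datum whose image under $Y^e$ is $[H,s,\eta]$ has a representative of the form $(H,H_M',s,\eta)$ with the exact datum $(H,s,\eta)$ on the nose.'' This is false, and Lemma~\ref{endweyl}(2) does not say this. The reductions preceding Proposition~\ref{endparam} only show that representatives can be taken of the form $(H,H_M',s,\Int(g)\circ\eta)$ for $g\in N_{\widehat{G^*}}(\widehat{T})$; the point of Proposition~\ref{endparam} is precisely that as $g$ ranges over the double cosets $W(\widehat{T},\widehat{M^*})\backslash W(M^*,H)/\Aut(\widehat{H},\eta)$, one obtains \emph{distinct} embedded classes. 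For the fixed $\eta$, the condition that $(H,H_M',s,\eta)$ be embedded forces $\eta(\widehat{H_M'})=Z_{\widehat{M}}(\eta(s))^\circ$, pinning down $\widehat{H_M'}$ and hence $H_M'$ up to $H(F)$-conjugacy; so your set $X$ sees only the identity double coset. In particular $\mc{E}^e(M,G;H)\ne X/\Aut_r(H,s,\eta)$ in general.

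Even restricting attention to the single fibre over $[H,H_M,s,\eta]$, your setup breaks: the group $\Aut_r(H,s,\eta)$ does \emph{not} act on $X$ by $\alpha\cdot H_M'=\alpha(H_M')$. Indeed, for $\alpha\in\Aut_r(H,s,\eta)$ with $\Int(g)\circ\eta=\eta\circ\widehat{\alpha}$, one has $\eta(\widehat{\alpha(H_M')})=\Int(g^{-1})(\eta(\widehat{H_M'}))\subset\Int(g^{-1})(\widehat{M})$, which lands in $\widehat{M}$ only when $g\in N_{\widehat{G}}(\widehat{M})$. So $(H,\alpha(H_M'),s,\eta)$ is typically not an embedded datum at all. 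Likewise, an inner class in the fibre need not have any representative with the fixed $(s,\eta)$: if $\beta$ is an outer isomorphism $(H,s',\eta')\to(H,s,\eta)$, then $s'=\widehat{\beta}(s)$ may differ from $s$, and no inner automorphism can correct this. Hence the double-coset description $H_{\ad}(F)\backslash\Aut_r(H,s,\eta)/\Aut_r(H,H_M,s,\eta)$ is not justified.

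The paper avoids these difficulties by working directly with the Weyl-group parametrizations: $\mc{E}^e(M,G;H)$ corresponds to $W(\widehat{T},\widehat{M^*})\backslash W(M^*,H)/\Aut(\widehat{H},\eta)$ and $\mc{E}^i(M,G;H)$ to $W(\widehat{T},\widehat{M^*})\backslash W(M^*,H)/W(\widehat{T_H},\widehat{H})$, then counts the $W(\widehat{T},\widehat{M^*})\times W(\widehat{T_H},\widehat{H})$-double cosets inside the identity $W(\widehat{T},\widehat{M^*})\times\Aut(\widehat{H},\eta)$-double coset. This bookkeeping is exactly what your attempt to fix $\eta$ throws away.
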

\begin{proof}
To prove the result, we fix an embedded datum $(H, H_M, s, \eta)$ and construct parametrizations of $\mc{E}^e(M,G;H)$ and $\mc{E}^i(M,G;H)$ as above, relative to the datum $(H,s,\eta)$. Then we simply need to compute how many $W(\widehat{T}, \widehat{M^*}) \times W(\widehat{T_H}, \widehat{H})$-double cosets are contained in the identity double coset of $W(\widehat{T}, \widehat{M^*}) \times \Aut(\widehat{H}, \eta)$. 

Suppose $w_{\alpha} \in W(\widehat{H}, \eta)$ so that $1, w_{\alpha}$ give the same $W(\widehat{T}, \widehat{M^*}) \times \Aut(\widehat{H}, \eta)$ double coset but potentially different $W(\widehat{T}, \widehat{M^*}) \times W(\widehat{T_H}, \widehat{H})$ double cosets. We claim that these $W(\widehat{T}, \widehat{M^*}) \times W(\widehat{T_H}, \widehat{H})$ double cosets contain the same number of distinct $W(\widehat{T}, \widehat{M^*})$-cosets. Indeed the number of $W(\widehat{T}, \widehat{M^*})$ cosets are respectively
\begin{equation*}
    W(\widehat{T_H}, \widehat{H})/ (W(\widehat{T_H}, \widehat{H}) \cap W(\widehat{T}, \widehat{M^*}) ) \text{ and } W(\widehat{T_H}, \widehat{H})/ ( w_{\alpha}W(\widehat{T_H}, \widehat{H})w^{-1}_{\alpha} \cap W(\widehat{T}, \widehat{M^*}))
\end{equation*}
but these are the same since $w_{\alpha}$ stabilizes $W(\widehat{T_H}, \widehat{H})$.

Now, the identity $W(\widehat{T}, \widehat{M^*}) \times \Aut(\widehat{H}, \eta)$ and $W(\widehat{T}, \widehat{M^*}) \times W(\widehat{T_H}, \widehat{H})$ double cosets consist of
\begin{equation*}
    |\Aut(\widehat{H}, \eta)/(W(\widehat{T}, \widehat{M^*}) \cap  \Aut(\widehat{H},\eta))| \text{ and } |W(\widehat{T_H}, \widehat{H})/(W(\widehat{T}, \widehat{M^*}) \cap W(\widehat{T_H}, \widehat{H}))|
\end{equation*}
many $W(\widehat{T}, \widehat{M^*})$-cosets respectively. Hence the number of $W(\widehat{T}, \widehat{M^*}) \times W(\widehat{T_H}, \widehat{H})$ double cosets in the identity $W(\widehat{T}, \widehat{M^*}) \times \Aut(\widehat{H},\eta)$-double coset is
\begin{equation*}
    |\Aut(\widehat{H}, \eta)/W(\widehat{T_H}, \widehat{H})| \cdot |W(\widehat{T}, \widehat{M^*}) \cap W(\widehat{T_H}, \widehat{H}) / W(\widehat{T}, \widehat{M^*}) \cap \Aut(\widehat{H},\eta)|.
\end{equation*}
The first factor is in bijection with $|\Out_r(H,s,\eta)|$. Hence, to complete the proof, we need only show that 
\begin{equation*}
   |W(\widehat{T}, \widehat{M^*}) \cap W(\widehat{T_H}, \widehat{H})| / |W(\widehat{T}, \widehat{M^*}) \cap \Aut(\widehat{H},\eta)| = 1 / |\Out_r(H_M,s,\eta)|. 
\end{equation*}

First, we note (identifying  $W(\widehat{T_H}, \widehat{H_M})$ and $W(\widehat{T_H}, \widehat{H})$ with their image under $\eta$) that $W(\widehat{T_H}, \widehat{H_{M^*}})=W(\widehat{T_H}, \widehat{H}) \cap W(\widehat{T}, \widehat{M^*})$.  Similarly, we claim that $\Aut(\widehat{H},\eta) \cap W(\widehat{T}, \widehat{M^*})$ can be identified with $\Aut(\widehat{H_{M^*}},\eta)$. Indeed if we take a lift $g_{\alpha}$ of an element in the first set, then $g_{\alpha}$ normalizes $\eta(\widehat{H})$ and $\widehat{M^*}$ hence also their intersection. Conversely, if $m_{\alpha} \in \widehat{M^*} \subset \widehat{G}$ is a lift of an element in the second set, then $m_{\alpha}$ normalizes $\widehat{H}$ because  by construction of $\Aut(\widehat{H_{M^*}},\eta)$, the element $m_{\alpha}$ must commute with $s$. This finishes the proof.
\end{proof}

\subsection{Semisimple conjugacy classes and endoscopy}

In this section we show how to relate refined and embedded endoscopic data to certain constructions involving semisimple conjugacy classes. To simplify the discussion, we assume throughout that $\G_{\der}$ is simply connected. We begin by showing an analogue of \cite[lemma 9.7]{Kot6}. We need the following definitions.

\begin{definition}
Let $F$ be a local or global field. We define $\mcSS^r(\G)$ to be equivalence classes of pairs $(\gamma_0, \lambda)$ such that $\gamma_0 \in \G(F)$ is a semisimple element and $\lambda \in Z(\widehat{I^G_{\gamma_0}})^{\Gamma_F}$ where we define $I^G_{\gamma_0} := Z_{\G}(\gamma_0)^0$. Two pairs  $(\gamma_0, \lambda), (\gamma'_0, \lambda')$ are said to be equivalent if $\gamma_0 \sim_{st} \gamma'_0$ (where $\sim_{st}$ denotes stable conjuguacy) and $\lambda$ corresponds to $\lambda'$ under the canonical isomorphism $Z(\widehat{I^G_{\gamma_0}})^{\Gamma_F} \cong Z(\widehat{I^G_{\gamma'_0}})^{\Gamma_F}$ \cite[\S 3]{Kot6}.
\end{definition}

\begin{definition}
We define $\mcEQ^r(\G)$ to be the set of equivalence classes of quadruples $(H, s, \eta, \gamma_H)$ such that $(H,s, \eta)$ is a refined endoscopic datum and $\gamma_H \in H(F)$ is $(\G,H)$-regular (see \cite[\S 2.3]{Shi3}) and transfers to $G$. Two quadruples $(H ,s, \eta, \gamma_H)$ and $(H', s', \eta', \gamma_{H'})$ are equivalent if there exists an isomorphism $\alpha: H \to H'$ inducing an isomorphism of refined endoscopic data and such that $\alpha(\gamma_H)$ is stably conjugate to $\gamma'_H$.
\end{definition}

We have a natural map 
\begin{equation}
 \mcEQ^r(\G) \to \mcSS^r(G),   
\end{equation}
defined as follows. Given $(H, s, \eta, \gamma_H) \in \mcEQ^r(\G)$ we let $\gamma_0 \in \G(F)$ be a transfer of $\gamma_H$. Then by the $(\G,H)$-regular condition, we have a natural $\Gamma_F$-equivariant isomorphism of $Z(\widehat{I^H_{\gamma_H}})$ and $Z(\widehat{I^G_{\gamma_0}})$. In particular, since $s \in Z(\widehat{H})^{\Gamma_F}$ it gives an element of $Z(\widehat{I^G_{\gamma_0}})^{\Gamma_F}$. If $(H', s', \eta', \gamma_{H'})$ is equivalent to $(H, s, \eta, \gamma_H)$, then one can check we get up to equivalence the same element of $\mcSS^r(\G)$.

We now have the following lemma which is highly analogous to \cite[Lemma 2.8]{Shi3} but note the differences coming from our use of refined endoscopic data.
\begin{lemma}{\label{SSeqEQ}}
The above defined map gives a bijection $\mcEQ^r(\G) \cong \mcSS^r(\G)$.
\end{lemma}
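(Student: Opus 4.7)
The plan is to construct an explicit inverse $\mcSS^r(\G) \to \mcEQ^r(\G)$ and verify the two maps are mutually inverse, following the strategy of \cite[Lemma 9.7]{Kot6} and \cite[Lemma 2.8]{Shi3}, with the simplification that in the refined setting both $s$ and $\lambda$ live in the full $\Gamma_F$-fixed center rather than in a cohomological quotient. Well-definedness of the forward map is immediate from the functoriality of the canonical isomorphism $Z(\widehat{I^G_{\gamma_0}})^{\Gamma_F} \cong Z(\widehat{I^G_{\gamma'_0}})^{\Gamma_F}$ for stably conjugate $\gamma_0, \gamma'_0$ (from \cite[\S 3]{Kot6}) and from the fact that isomorphic refined endoscopic data together with stably conjugate $(G,H)$-regular elements produce the same pair $(\gamma_0, \lambda)$.

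To build the inverse, start with $(\gamma_0, \lambda) \in \mcSS^r(\G)$. Since $\G_\der$ is simply connected, $I^G_{\gamma_0}$ is connected reductive; choose a maximal torus $T \subset I^G_{\gamma_0}$, which is automatically maximal in $G$ and contains $\gamma_0$. The canonical embeddings $Z(\widehat{I^G_{\gamma_0}}) \hookrightarrow \widehat{T} \hookrightarrow \widehat{G}$ allow us to regard $\lambda$ as an element $s \in \widehat{T}^{\Gamma_F} \subset \widehat{G}$. Set $\widehat{H} := Z_{\widehat{G}}(s)^\circ$ and let $\eta$ be the natural inclusion. A quasisplit group $H/F$ with dual group $\widehat{H}$ is produced, as in Construction \ref{Ymap} and Lemma \ref{qsbijlem}, by transporting the $\Gamma_F$-action on $\widehat{G}$ to an outer action on $\widehat{H}$ (using that $s$ is $\Gamma_F$-fixed so $Z_{\widehat{G}}(s)^\circ$ is $\Gamma_F$-stable). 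One verifies the refined endoscopic conditions: $s \in Z(\widehat{H})^{\Gamma_F}$ by construction, the conjugacy class of $\eta$ is $\Gamma_F$-stable because the $\Gamma_F$-action on $\widehat{H}$ is defined so as to make it so, and the local/locally-trivial vanishing in $H^1(F, Z(\widehat{G}))$ follows from the existence of $\gamma_0 \in \G(F)$ via the Tate--Nakayama argument of \cite[Lemma 9.7]{Kot6}. Finally, using the identification of $\widehat{T}$ with a maximal torus $\widehat{T_H} \subset \widehat{H}$ one obtains an admissible embedding $T \hookrightarrow H$ over $F$ (via Fact \ref{equivarenum}), and we take $\gamma_H$ to be the image of $\gamma_0$; the $(\G,H)$-regularity and transfer properties are then automatic because $s \in Z(\widehat{I^G_{\gamma_0}})$ forces $\widehat{I^G_{\gamma_0}} \subset \widehat{H}$, and this inclusion identifies $\widehat{I^G_{\gamma_0}}$ with $\widehat{I^H_{\gamma_H}}$.

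The two compositions are readily seen to be the identity: applying the inverse to the image of $(H, s, \eta, \gamma_H)$ recovers the datum up to the equivalence of $\mcEQ^r$ because $(\G,H)$-regularity guarantees the identification $Z(\widehat{I^H_{\gamma_H}}) \cong Z(\widehat{I^G_{\gamma_0}})$ under which $s$ corresponds to $\lambda$, and then $Z_{\widehat{G}}(s)^\circ$ is tautologically $\widehat{H}$; in the other direction, the image of the constructed quadruple is $(\gamma_0, \lambda)$ by design. The main obstacle, and the place where the argument requires genuine care, is checking that the $\Gamma_F$-action imposed on $\widehat{H} = Z_{\widehat{G}}(s)^\circ$ in the inverse construction agrees, up to inner automorphism by a $\Gamma_F$-invariant element of $\widehat{H}$, with the one on the dual of an \emph{a priori} chosen quasisplit form of $H$, so that the class $[H] \in H^1(F,\Out(\widehat{H}))$ of Lemma \ref{qsbijlem} is well-defined independently of auxiliary choices (of $T$, of representative $\gamma_0$, and of the dual splitting). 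This is the same technical core as in \cite[Lemma 9.7]{Kot6}, but here simpler, because we do not need to pass to $\mathcal{K}(I^G_{\gamma_0}/F)$ or mod out by $Z(\widehat{G})$.
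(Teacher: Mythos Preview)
Your outline is essentially the same strategy as the paper's proof, which also follows \cite[Lemma 9.7]{Kot6}: construct the candidate $(H,s,\eta,\gamma_H)$ from $(\gamma_0,\lambda)$ and then argue that any two quadruples with the same image are equivalent. Two points deserve correction, however.

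First, the parenthetical ``$s$ is $\Gamma_F$-fixed so $Z_{\widehat G}(s)^\circ$ is $\Gamma_F$-stable'' is not right as stated. The element $\lambda$ is $\Gamma_F$-invariant for the action on $\widehat T$ coming from $T$ being an $F$-torus, but the chosen embedding $\widehat T\hookrightarrow \widehat G$ need not be $\Gamma_F$-equivariant for the pinned action on $\widehat G$; hence $Z_{\widehat G}(s)^\circ$ is not literally $\Gamma_F$-stable inside $\widehat G$. The paper instead defines the $\Gamma_F$-action on $\widehat H$ via the root datum: under the $T$-action, $\Gamma_F$ permutes $R(\widehat T,\widehat G)$, and since $\sigma(s)=s$ in that sense, the subset $\{\alpha:\alpha(s)=1\}=R(\widehat T,\widehat H)$ is preserved; a choice of splitting then produces the action. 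The verification that the $\widehat G$-conjugacy class of $\eta$ is $\Gamma_F$-stable is a genuine separate step, done by comparing the two actions on $\widehat G$ through two splittings.

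Second, the appeal to a ``Tate--Nakayama argument'' for the (local) triviality in $H^1(F,Z(\widehat G))$ is unnecessary in the refined setting: once $s\in Z(\widehat H)^{\Gamma_F}$, its image in $(Z(\widehat H)/Z(\widehat G))^{\Gamma_F}$ lifts to the invariants, so the connecting map kills it automatically. The paper accordingly omits this check.

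Finally, your ``the two compositions are readily seen to be the identity'' hides exactly the content of the paper's injectivity paragraph: one must show that for arbitrary $(H,s,\eta,\gamma_H)$ the constructed datum from its image is equivalent to it, and this requires producing an $\alpha:H'\to H$ over $F$ by showing the $\overline F$-isomorphism $j^{-1}\circ j'$ (built from admissible embeddings of the tori) has $\Gamma_F$-stable conjugacy class. That is precisely what you flag as ``the main obstacle'', but it is the heart of the proof rather than a residual technicality.
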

\begin{proof}
We largely follow \cite[Lemma 9.7]{Kot6} with a few differences coming from the fact that we consider refined endoscopic data.

We first prove the map is surjective. Given a pair $(\gamma_0, \lambda)$, we fix a maximal torus $T$ of $G$ such that $\gamma_0 \in T(F)$. Then $T$ is a maximal torus of $I^G_{\gamma_0}$ so we have a canonical $\Gamma_F$-equivariant embedding $Z(\widehat{I^G_{\gamma_0}}) \subset \widehat{T}$.  Hence by this inclusion and a choice of embedding $\widehat{T} \subset \widehat{G}$, the element $\lambda$ gives us an element $s \in \widehat{G}$. We define $\widehat{H} := Z_{\widehat{G}}(s)^0$.

We need to define a Galois action on $\widehat{H}$ so that we can consider it as the dual group of a quasisplit reductive group over $F$ (see Lemma \ref{qsbijlem}). Note that $\widehat{T}$ has an action of $\Gamma_F$ induced from the action of $\Gamma_F$ on $T_{\ov{F}}$. Since $\Gamma_F$ stabilizes the root system associated to $(T_{\ov{F}}, G)$, we have that $\Gamma_F$ also preserves the root system associated to $(\widehat{T}, \widehat{G})$. Then by the theory of centralizers (for instance see \cite[\S 2.2]{Hum1} )the roots of $\widehat{H}$ are precisely the roots $\alpha$ of $\widehat{G}$ such that $\alpha(s)=1$. In particular, since for each $\sigma \in \Gamma_F$, we have $\sigma(s)=s$, this implies that $\Gamma_F$ acts on the root system of $(\widehat{T}, \widehat{H})$. Now, fix a splitting $\Sigma$ of $\widehat{H}$ and this gives an action of $\Gamma_F$ on $\widehat{H}$ as desired. Define $\eta$ to be the natural embedding $\eta: \widehat{H} \subset \widehat{G}$. 

To show that $(H, s, \eta)$ is a refined endoscopic datum, it remains to check that the conjugacy class of $\eta$ is stable under the action of $\Gamma_F$. Extend the splitting $\Sigma$ of $\widehat{H}$ to a splitting $\Sigma_G$ of $\widehat{G}$. The action of $\Gamma_F$ on $\widehat{G}$ is determined by some other splitting $\Sigma'$ of $\widehat{G}$. Hence for each $\sigma \in \Gamma_F$, the actions of $\sigma$ on $\widehat{G}$ induced by $\Sigma_G$ and $\Sigma'$ differ by an element of $\mathrm{Inn}(\widehat{G})$ as desired. 

Since the conjugacy class of $\widehat{T} \subset \widehat{H}$ is $\Gamma_F$-stable, we can find an $F$-torus $T_H \subset H$ that transfers to $T \subset G$ and hence produce a $(G,H)$-regular $\gamma_H \in H(F)$ that transfers to $\gamma_0$ such that $(H, s, \eta, \gamma_H)$ maps to $(\gamma_0, \lambda)$.

Now we need to show the map is injective. Suppose that $(H', s', \eta', \gamma_{H'})$ and $(H, s, \eta, \gamma_H)$ map to $(\gamma_0, \lambda) \in \mcSS^r(\G)$. We wish to show $(H', s', \eta', \gamma_{H'})$ is equivalent to $(H, s, \eta, \gamma_H)$. Pick maximal tori $T_H \subset H$ and $T_{H'} \subset H'$ and $T \in G$ such that $\gamma_H \in T_H(F), \gamma_{H'} \in T_{H'}(F), \gamma_0 \in T(F)$. Then up to an isomorphism of endoscopic data, we can assume that $\eta(\widehat{T_H})=\widehat{T}=\eta'(\widehat{T_{H'}})$. Since $\eta(\widehat{H})$ and $\eta'(\widehat{H'})$ are determined by $\lambda$, we in fact have an isomorphism $\eta' \circ \eta^{-1}: \widehat{H'} \to \widehat{H}$. In particular if we pick $j: T_H \to G$ and $j': T_{H'} \to G$ in the canonical conjugacy classes of embeddings (for instance see \cite[Remark 2.6]{Shi3} for a description of this canonical conjugacy class), then we can choose $j, j'$ such that $j(T_H)=T=j'(T_{H'})$ and $j(\gamma_H)=\gamma=j'(\gamma_{H'})$. Since the roots and coroots of $H, H'$ are identified under $j^{-1} \circ j'$, we can extend it to an isomorphism over $\ov{F}$ of $H'$ and $H$ whose dual is given by  $\eta^{-1} \circ \eta'$. Our goal now is to show that the conjugacy class of $j^{-1} \circ j'$ is $\Gamma_F$-invariant, from which it will follow that since $H, H'$ are quasisplit, some $H(\ov{F})$-conjugate of $j^{-1} \circ j'$ is defined over $F$.  

Pick $\sigma \in \Gamma_F$. Since the conjugacy class of $j$ is $\Gamma_F$-invariant, there exists $g \in \G$ such that $\sigma(j)=\Int(g) \circ j$. Then we have $\sigma(j)(\gamma_H)=\gamma_0$ and so $g \in I^G_{\gamma_0}$. Now, since $\gamma_H$ is $(G, H)$-regular, we have an isomorphism $I^H_{\gamma_H}(\ov{F}) \cong  I^G_{\gamma_0}(\ov{F})$ and hence we can find an $h \in I^H_{\gamma_H}(\ov{F})$ such that $\sigma(j)= j \circ \Int(h)$. Repeating the argument for $j'$, we get that $j^{-1} \circ j'$ has $\Gamma_F$-invariant conjugacy class as desired. Hence we get an isomorphism $\alpha: H' \to H$ defined over $F$. By construction, $\alpha$ gives an isomorphism of refined endoscopic data. Furthermore, it is clear from the construction of $\alpha$ that $\gamma_H$ and $\alpha(\gamma_{H'})$ are $\ov{F}$-conjugate (hence also stably conjugate since we are assuming $\G_{\der}$ is simply connected). 
\end{proof}
We record the following compatibility.
\begin{corollary}{\label{endcomp}}
The following diagram commutes.
\begin{equation}
    \begin{tikzcd}
    \mcSS(G) \arrow[r, "\sim"] & \mcEQ(G)\\
    \mcSS^r(G) \arrow[u] \arrow[r, "\sim"] & \mcEQ^r(G) \arrow[u],
    \end{tikzcd}
\end{equation}
where $\mcSS(G), \mcEQ(G)$ are the analogues of the above definitions for non-refined data (as defined in \cite{Shi3}) and the vertical maps are the natural projections.

If we restrict to elliptic endoscopic data and $\gamma$ elliptic semisimple, then we get an analogous commutative diagram.
\end{corollary}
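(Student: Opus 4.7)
The plan is to unravel the definitions and observe that both horizontal bijections are computed by the same recipe, with the refined version simply tracking the extra $\Gamma_F$-invariance that the vertical projections then forget.

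First I would recall the construction of the horizontal bijections in both rows. Given a quadruple $(H,s,\eta,\gamma_H)$ in $\mcEQ^r(\G)$ (or in $\mcEQ(\G)$), one picks a transfer $\gamma_0 \in \G(F)$ of $\gamma_H$; by $(\G,H)$-regularity there is a canonical $\Gamma_F$-equivariant isomorphism $Z(\widehat{I^H_{\gamma_H}}) \cong Z(\widehat{I^\G_{\gamma_0}})$, and composing with the natural inclusion $Z(\widehat{H}) \hookrightarrow Z(\widehat{I^H_{\gamma_H}})$ sends $s$ to an element $\lambda \in Z(\widehat{I^\G_{\gamma_0}})$. This is literally the recipe used in Lemma \ref{SSeqEQ} for the refined case, and the non-refined case of \cite[Lemma 2.8]{Shi3} uses the same formula with the $\Gamma_F$-invariance condition dropped.

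Next I would describe the vertical maps. The map $\mcSS^r(\G) \to \mcSS(\G)$ is the set-theoretic map $(\gamma_0,\lambda) \mapsto (\gamma_0,\lambda)$, with $\lambda$ now viewed in $Z(\widehat{I^\G_{\gamma_0}})$ rather than in its $\Gamma_F$-fixed subgroup; well-definedness on equivalence classes is immediate since the equivalence conditions match. The map $\mcEQ^r(\G) \to \mcEQ(\G)$ similarly forgets the $\Gamma_F$-invariance of $s$; this is well-defined because an isomorphism of refined endoscopic data, which imposes equality in $Z(\widehat{H})^{\Gamma_F}$, a fortiori yields equality in $Z(\widehat{H})/Z(\widehat{G})$, and transfer of $\gamma_H$ is unchanged.

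Commutativity of the diagram is then a tautology: both paths around the square start from $(H,s,\eta,\gamma_H)$, replace $s$ by its image $\lambda$ under the canonical identification induced by the choice of transfer $\gamma_H \leftrightarrow \gamma_0$, and then forget the $\Gamma_F$-fixed constraint. For the elliptic variant, one simply observes that the ellipticity condition $(Z(\widehat{H})^{\Gamma_F})^\circ \subset Z(\widehat{G})$ depends only on the pair $(H,\eta)$ and is preserved under the vertical projections, while ellipticity of $\gamma_0$ is intrinsic to its stable conjugacy class and transfers correctly from $\gamma_H$. I do not foresee any genuine obstacle: the content of the corollary is purely a bookkeeping check on top of Lemma \ref{SSeqEQ}.
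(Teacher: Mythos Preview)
Your proposal is correct and matches the paper's approach: the paper states this corollary without proof, treating it as an immediate bookkeeping observation following Lemma~\ref{SSeqEQ}, and your argument makes explicit precisely the tautological check that justifies this.
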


We now define an analogous version of the above for refined endoscopic data of a Levi subgroup $M \subset G$
\begin{definition}
We let $\mcEQ^r(M,G)$ denote equivalence classes of quadruples $(H_M, s_M, \eta_M, \gamma_{H_M})$ such that $(H_M, s_M, \eta_M)$ is a refined endoscopic datum for $M$ and $\gamma_{H_M}$ is an $(M, H_M)$-regular element of $H_M(F)$ that transfers to a $(G,M)$-regular element of $M(F)$. We say that two quadruples $(H_M, s_M, \eta_M, \gamma_{H_M})$ and $(H'_M, s'_M, \eta'_M, \gamma_{H'_M})$ are equivalent if there exists an isomorphism $\alpha: H_M \to H'_M$ inducing an isomorphism of refined endoscopic data and such that $\alpha(\gamma_{H_M}) \sim_{st} \gamma_{H'_M}$. Note in particular that $\mcEQ^r(M,G) \subset \mcEQ^r(M)$. We can make an analogue of the above definition for embedded endoscopic data which we denote $\mc{EQ}^e(M,G)$. Note that by proof of Proposition \ref{refemb}, we have  a natural identification $\mc{EQ}^r(M,G)=\mc{EQ}^e(M,G)$. 
\end{definition}
\begin{definition}
We let $\mcSS^r(M,G)$ denote the equivalence classes of pairs $(\gamma_0, \lambda)$ such that $\gamma_0 \in M(F)$ is a $(G,M)$-regular semisimple element and $\lambda \in Z(\widehat{I^M_{\gamma_0}})^{\Gamma_F}$. Two pairs $(\gamma_0, \lambda), (\gamma'_0, \lambda')$ are equivalent if $\gamma_0 \sim_{st} \gamma'_0$ and the canonical isomorphism of $I^M_{\gamma_0}$ and $I^M_{\gamma'_0}$ identifies $\lambda$ and $\lambda'$. In particular, $\mcSS^r(M,G) \subset \mcSS^r(M)$.
\end{definition}
The following corollary is clear.
\begin{corollary}
The bijection in Lemma \ref{SSeqEQ} restricts to give a bijection 
\begin{equation}
    \mcSS^r(M,G) \cong \mcEQ^r(M,G)=\mc{EQ}^e(M,G).
\end{equation}
\end{corollary}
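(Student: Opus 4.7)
The plan is to show directly that the bijection of Lemma \ref{SSeqEQ}, applied with $M$ in place of $G$, restricts to a bijection of the indicated subsets. Since the identification $\mcEQ^r(M,G) = \mc{EQ}^e(M,G)$ has already been noted (via the isomorphism of Proposition \ref{refemb} together with the fact that the additional datum of $\gamma_{H_M}$ is preserved), it suffices to establish the bijection $\mcSS^r(M,G) \cong \mcEQ^r(M,G)$. Lemma \ref{SSeqEQ} (applied to $M$) gives a bijection $\mcSS^r(M) \cong \mcEQ^r(M)$, and both $\mcSS^r(M,G)$ and $\mcEQ^r(M,G)$ are defined as subsets of these; the claim then reduces to showing the defining conditions correspond under this bijection.

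First I would verify the forward direction. Start with $(H_M, s_M, \eta_M, \gamma_{H_M}) \in \mcEQ^r(M,G)$ and let $(\gamma_0, \lambda) \in \mcSS^r(M)$ be its image; by construction $\gamma_0$ is a transfer of $\gamma_{H_M}$, hence $(G,M)$-regular by hypothesis, so $(\gamma_0,\lambda) \in \mcSS^r(M,G)$. Conversely, for the reverse direction, take $(\gamma_0,\lambda) \in \mcSS^r(M,G)$ and apply the construction in the proof of Lemma \ref{SSeqEQ} (with $M$ in place of $G$): one chooses a maximal torus $T_M \subset M$ containing $\gamma_0$, views $\lambda$ inside $\widehat{T_M} \subset \widehat{M}$, builds $\widehat{H_M} = Z_{\widehat{M}}(s_M)^\circ$ with an appropriate $\Gamma_F$-action, and produces an $(M,H_M)$-regular $\gamma_{H_M}$ transferring to $\gamma_0$. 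Since $\gamma_0$ is by hypothesis $(G,M)$-regular, the resulting $\gamma_{H_M}$ transfers to a $(G,M)$-regular element of $M(F)$, so $(H_M, s_M, \eta_M, \gamma_{H_M}) \in \mcEQ^r(M,G)$.

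Finally I would observe that equivalences are preserved in both directions. On the $\mcEQ$-side, equivalence requires the isomorphism $\alpha: H_M \to H'_M$ to identify the semisimple elements up to stable conjugacy, while on the $\mcSS$-side equivalence uses stable conjugacy of $\gamma_0$ and the canonical isomorphism $Z(\widehat{I^M_{\gamma_0}})^{\Gamma_F} \cong Z(\widehat{I^M_{\gamma'_0}})^{\Gamma_F}$; these compatibilities are exactly what is already verified in Lemma \ref{SSeqEQ}, and they do not interact with whether one further imposes $(G,M)$-regularity. Hence the restricted map is a bijection, yielding the claimed identification $\mcSS^r(M,G) \cong \mcEQ^r(M,G) = \mc{EQ}^e(M,G)$.

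The main (and essentially only) subtlety is bookkeeping: one must be careful that the $(G,M)$-regularity condition on $\gamma_0 \in M(F)$ in the definition of $\mcSS^r(M,G)$ corresponds precisely to the condition in $\mcEQ^r(M,G)$ that $\gamma_{H_M}$ transfers to a $(G,M)$-regular element of $M$. This is automatic because transfer preserves the relevant stable conjugacy class in $M$ and $(G,M)$-regularity is a property of that stable class. There is no deeper obstacle — the corollary is truly a formal restriction of the already-established bijection.
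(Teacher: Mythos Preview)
Your proposal is correct and takes essentially the same approach as the paper, which simply declares the corollary ``clear'' without writing any argument; you have merely unpacked the obvious verification that the extra $(G,M)$-regularity condition defining the subsets $\mcSS^r(M,G)\subset\mcSS^r(M)$ and $\mcEQ^r(M,G)\subset\mcEQ^r(M)$ is preserved under the bijection of Lemma~\ref{SSeqEQ} applied to $M$.
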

Now, since the $\gamma_0$ for any $(\gamma_0, \lambda) \in \mcSS_F(M,G)$ is assumed to be $(G,M)$-regular, we have an equality $I^M_{\gamma_0}=I^G_{\gamma_0}$. Hence we get a map
\begin{equation}
    \mcSS^r(M,G) \to \mcSS^r(G)
\end{equation}
given by 
\begin{equation}
    (\gamma_0, \lambda) \mapsto (\gamma_0, \lambda).
\end{equation}
\begin{remark}
Note that this map need not be an injection because we might have $\gamma_0, \gamma'_0$ stably conjugate in $G$ but not $M$.
\end{remark}
The following lemma connects the various maps that we have defined.
\begin{lemma}{\label{SSEQMGGcomm}}
The following diagram is commutative
\begin{equation}
    \begin{tikzcd}
    \mcSS^r(G) \arrow[rr, "\sim"] & & \mcEQ^r(G)  \arrow[r] & \mc{E}^r(G)\\
    \mcSS^r(M,G) \arrow[u] \arrow[r, "\sim"] & \mcEQ^r(M,G) \arrow[r, equal] & \mcEQ^e(M,G) \arrow[r] \arrow[u] & \mc{E}^e(M) \arrow[u, swap, "Y^e"],\\
    \end{tikzcd}
\end{equation}
   where the left vertical map is the one we have just defined and the middle vertical map takes $(H, H_M, s, \eta, \gamma_{H_M})$ to $(H, s, \eta, \gamma_{H_M})$ where $\gamma_{H_M}$ is realized as an element of $H(F)$ via the inclusion $H_M \subset H$.
\end{lemma}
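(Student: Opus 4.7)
The plan is to trace an arbitrary element $(\gamma_0,\lambda)\in\mcSS^r(M,G)$ through both paths around the outer rectangle and around each of the two inner squares, and show they agree on the nose (not merely up to the equivalences imposed on each set). All of the heavy lifting has been done in the construction of the bijection in Lemma \ref{SSeqEQ}, in Construction \ref{Ymap}, and in Proposition \ref{refemb}; what remains is a careful bookkeeping argument using $(G,M)$-regularity to glue these constructions together.

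First, I would unwind the bottom-right composition. Applying the $M$-version of Lemma \ref{SSeqEQ} to $(\gamma_0,\lambda)$ produces a quadruple $(H_M,s_M,\eta_M,\gamma_{H_M})$ obtained as follows: pick a maximal torus $T_M\subset M$ with $\gamma_0\in T_M(F)$; use the canonical $\Gamma_F$-equivariant inclusion $Z(\widehat{I^M_{\gamma_0}})\subset\widehat{T_M}$ to realise $\lambda$ as an element $s_M\in\widehat{M}$; set $\widehat{H_M}:=Z_{\widehat{M}}(s_M)^0$; and choose a $(M,H_M)$-regular lift $\gamma_{H_M}$ of $\gamma_0$. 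Under the bijection $X$ of Proposition \ref{refemb} this quadruple is sent to the embedded quadruple $(H,H_M,s,\eta,\gamma_{H_M})$ where, by Construction \ref{Ymap}, $\widehat{H}=Z_{\widehat{G}}(s)^0$ with $s$ the image of $s_M$ under $\widehat{M}\hookrightarrow\widehat{G}$, and $\eta$ is the natural inclusion. Forgetting $\gamma_{H_M}$ and applying $Y^e$ produces $(H,s,\eta)\in\mc{E}^r(G)$.

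Next I would follow the top-right composition. The vertical map $\mcSS^r(M,G)\to\mcSS^r(G)$ sends $(\gamma_0,\lambda)$ to $(\gamma_0,\lambda)$ viewed for $G$; the key point is that since $\gamma_0$ is $(G,M)$-regular we have $I^M_{\gamma_0}=I^G_{\gamma_0}$, so the identification $Z(\widehat{I^M_{\gamma_0}})^{\Gamma_F}=Z(\widehat{I^G_{\gamma_0}})^{\Gamma_F}$ is literally the identity. Applying Lemma \ref{SSeqEQ} to $G$, I embed $T_M$ into a torus $T\subset G$ (legitimate because $T_M$ is already a maximal torus of $I^G_{\gamma_0}$, hence of $G$), and the same $\lambda$ produces the same element $s\in\widehat{G}$ as above. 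Consequently the dual group obtained is $Z_{\widehat{G}}(s)^0$, which is $\widehat{H}$ on the nose, and the inclusion $\widehat{H}\hookrightarrow\widehat{G}$ is $\eta$. By Lemma \ref{endlevi} the group $\widehat{H_M}=Z_{\widehat{M}}(s)^0$ is precisely a Levi subgroup of $Z_{\widehat{G}}(s)^0=\widehat{H}$, so the two incarnations of $H$ (from the bottom and the top of the diagram) are canonically identified as groups over $F$; here one also needs to check that the $\Gamma_F$-action placed on $\widehat{H}$ in each construction is the same, which follows because both are induced from the common action on $\widehat{G}$ preserving the splitting and $s\in\widehat{M}^{\Gamma_F}$.

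To complete the argument I must also verify that the $\gamma$-components match up to the equivalence on $\mcEQ^r(G)$. Here the task is to show that $\gamma_{H_M}$, viewed as an element of $H(F)$ via $H_M\subset H$, is $(G,H)$-regular and is a transfer of $\gamma_0\in G(F)$. The transfer assertion is automatic: $\gamma_{H_M}$ transfers to $\gamma_0\in M(F)$ along $H_M\rightsquigarrow M$ and the canonical class of embeddings $T_{H_M}\hookrightarrow M$ composes with $M\hookrightarrow G$ to give an embedding in the canonical class $T_H\hookrightarrow G$. For $(G,H)$-regularity one uses that $\gamma_{H_M}$ is $(M,H_M)$-regular together with the $(G,M)$-regularity of $\gamma_0$: the roots of $\widehat{H}$ outside of $\widehat{H_M}$ are roots of $\widehat{G}$ outside of $\widehat{M}$ (again via Lemma \ref{endlevi}), and the non-vanishing conditions defining $(G,H)$-regularity follow from the corresponding conditions defining $(G,M)$-regularity and $(M,H_M)$-regularity. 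The middle square of the diagram therefore commutes up to the equivalence in $\mcEQ^r(G)$, and composing with the projection $\mcEQ^r(G)\to\mc{E}^r(G)$ gives the outer commutativity. The main obstacle, to the extent that there is one, is simply keeping track of the two different Galois actions on $\widehat{H}$ (one transported from the bottom, one from the top) and checking they agree; this reduces, as noted, to the fact that both are inherited from $\widehat{G}$ via the same centralizer construction based at the same element $s$, and so it is formal rather than deep.
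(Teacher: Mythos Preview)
Your overall strategy---tracing $(\gamma_0,\lambda)$ along both paths and matching the resulting dual-group data---is exactly what the paper does, and the steps concerning the $\gamma$-components (transfer, $(G,H)$-regularity, and appealing back to the bijection of Lemma~\ref{SSeqEQ}) are fine.

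The weak point is your justification for why the two $\Gamma_F$-actions on $\widehat{H}$ agree. You write that ``both are inherited from $\widehat{G}$ via the same centralizer construction based at the same element $s$'' and conclude this is formal. This is not correct. Neither action is obtained by restricting the fixed $\Gamma_F$-action on $\widehat{G}$ to $\widehat{H}$: in Lemma~\ref{SSeqEQ} the action on $\widehat{H}$ is defined via the intrinsic $\Gamma_F$-action on $\widehat{T}$ (the one coming from $T$ being a torus over $F$) together with a choice of splitting of $\widehat{H}$; in Construction~\ref{Ymap} it is defined via a section $c\colon W_F\to\mathcal{H}_M\subset{}^LM$ and the assignment $w\mapsto\Int(c(w))$. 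Both actions differ from the $\Gamma_F$-action on $\widehat{G}$ by inner automorphisms of $\widehat{G}$, but that alone does \emph{not} force them to agree in $\Out(\widehat{H})$: an element of $N_{\widehat{G}}(\widehat{H})\setminus\widehat{H}$ can induce a nontrivial outer automorphism of $\widehat{H}$.

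The paper's argument pins this down using the common torus. The action produced by Lemma~\ref{SSeqEQ} for $G$ restricts on $\widehat{T}$ to the intrinsic action. The action produced by Construction~\ref{Ymap}, when restricted to $\widehat{H_M}$, agrees up to $\widehat{H_M}$-conjugacy with the action placed on $\widehat{H_M}$ by Lemma~\ref{SSeqEQ} for $M$; and that action also restricts on $\widehat{T}$ to the intrinsic one. Hence both actions on $\widehat{H}$ agree on $\widehat{T}$ up to conjugacy by $\widehat{H_M}\subset\widehat{H}$, and therefore induce the same permutation of the root system $R(\widehat{T},\widehat{H})$. An automorphism of $\widehat{H}$ acting trivially on this based root datum is inner, so the two maps $\Gamma_F\to\Out(\widehat{H})$ coincide. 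You should replace your one-line claim with this argument.
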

\begin{proof}

The commutativity of the right square in the above diagram is obvious, so it suffices to prove the commutativity of the left square. 

Fix $(\gamma_0, \lambda)$ projecting to an equivalence class in $\mcSS^r(M,G)$. The paths $SS^r(M,G) \to SS^r(G) \to \mc{EQ}^r(G)$ and $SS^r(M,G) \to \mc{EQ}^r(M,G) \to \mc{EQ}^e(M,G) \to \mc{EQ}^r(G)$ yield data $(H, H_M, s, \eta, \gamma_H)$ and $(H', H'_M, s', \eta', \gamma'_H)$ projecting to two potentially different classes in $\mc{EQ}^r(G)$. We need to show these classes are the same.

In the proof of Lemma \ref{SSeqEQ}, we chose a torus $T \subset G$ such that $\gamma_0 \in T(F)$. Since $\gamma_0 \in M(F)$, we can without loss of generality choose $T$ so that it is contained within $M$. Then the elements $s$ that we construct from $\lambda$ in the definition of the two horizontal maps labeled $\sim$ in the above diagram can be chosen to be the same. Hence we can assume that $s=s'$ and $\widehat{H} = \widehat{H'}$.  Since $\eta$ in both Construction \ref{Ymap} and Lemma \ref{SSeqEQ} agree, it follows that $\eta = \eta'$. 

We now show that the $\Gamma_F$-actions on $\widehat{H}$ are compatible, or equivalently that the two maps $\Gamma_F \to \Out(\widehat{H})$ are equal. The map $\Gamma_F \to \Out(\widehat{H})$ that we get from the maps $\mcSS^r(M,G) \to \mcSS^r(G) \to \mc{EQ}^r(G)$ is induced by the action of $\Gamma_F$ on $\widehat{T}$ and hence the roots of $\widehat{H}$. 

On the other hand, the lower horizontal map means that the action we get of $\Gamma_F$ on $\widehat{H_M}$ is also induced by the action of $\Gamma_F$ on $\widehat{T}$. Then in the proof of Construction \ref{Ymap}, we see that the resulting $\Gamma_F$ action on $\widehat{H}$ agrees with the action on $\widehat{H_M}$ up to conjugacy by $\widehat{H_M}$. In other words the two maps $\Gamma_F \to \Out(\widehat{H})$ that we get are the same. This shows that $H \cong H'$. Since $\gamma_H$ and $\gamma'_H$  both transfer to $\gamma_0$, we can choose an automorphism $\alpha$ of $(H,s, \eta)$ such that $\alpha(\gamma_H)$ and $\gamma_{H'}$ are stably conjugate. This completes the proof.
\end{proof}
\subsection{Some sets in local endoscopy}
We now study the refined analogue of the set $\mathcal{E}^{eff}(J_b, G; H)$ defined by Shin in \cite[\S 6]{Shi3}. First we need some notation.

Let $G$ be a quasisplit connected reductive group over $\mathbb{Q}_p$ and let $L=\widehat{\mathbb{Q}^{unr}_p}$. Consider the set $\mathbf{B}(G)$. In this section we consider this as the set of $\sigma$-conjugacy classes in $G(L)$ as in \cite[\S 3]{Shi3}.

Given $b \in \mathbf{B}(G)$, the slope $\overline{\nu}_b$ is an element of $(\mathrm{Int}(G) \setminus \mathrm{Hom}_{L}(\mathbb{D}, G))^{\langle \sigma \rangle}$. Given a lift $\tilde{b} \in G(L)$ of $b$, we have (for instance \cite[\S 4.3]{kot8}) a $\nu \in \mathrm{Hom}(\mathbb{D}, G)$ determined uniquely by the condition that there exists a positive integer $s$ and element $c \in G(L)$ such that 
    \begin{enumerate}
        \item $s\nu \in \mathrm{Hom}_L(\mathrm{G}_m, G)$\\
        \item $\mathrm{Int}(c) \circ s\nu$ is defined over the fixed field of $\sigma^s$ in $L$\\
        \item $cb \sigma(c)^{-1}\sigma(cb \sigma(c)^{-1}) \cdot ... \cdot \sigma^{s-1}(cb \sigma(c)^{-1})=c \cdot s \nu (p)c^{-1}$.
    \end{enumerate}
We say $\nu$ is decent if $c$ can be taken to be $1$ so that in particular, $s \nu$ is defined over the fixed field of $\sigma^s$. \begin{remark}{\label{decent}}
Note that for $\tilde{b} \in G(L)$, if we take a compatible element $c \in G(L)$ as above, we have that $cb\sigma(c)^{-1}$ is decent.
\end{remark}
\begin{lemma}{\label{decentreps}}
Let $S \subset G$ be $\Q_p$-split torus of maximal rank. Then given $b \in \mb{B}(G)$, we can pick a decent representative $\tilde{b}$ such that $\nu_{\tilde{b}}$ is decent and defined over $\Q_p$ and lies in the set of dominant rational cocharacters, $X_*(S)^+_{\Q}$.
\end{lemma}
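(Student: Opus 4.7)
The plan is to obtain the desired representative in three successive conjugation steps, each preserving decency.

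\textbf{Step 1 (decency).} By Remark \ref{decent}, any $b \in \mathbf{B}(G)$ already admits a decent representative: starting from an arbitrary lift $\tilde b_0 \in G(L)$ and choosing $c \in G(L)$ satisfying conditions (1)--(3), the element $\tilde b_1 := c\tilde b_0\sigma(c)^{-1}$ is decent. Its slope $\nu := \nu_{\tilde b_1}$ is then defined over the fixed field $L_s$ of some $\sigma^s$, and $s\nu$ is an honest cocharacter $\Gm \to G$.

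\textbf{Step 2 ($\Q_p$-rationality of the slope).} The class $\overline{\nu}_b$ lies in $\bigl(\mathrm{Int}(G)\backslash \mathrm{Hom}_L(\mathbb{D},G)\bigr)^{\langle\sigma\rangle}$, so $\sigma(\nu)$ is $G(L)$-conjugate to $\nu$. Let $M = Z_G(\nu)$, a Levi subgroup of $G_{L_s}$. Since any two $G(L)$-conjugates of $\nu$ that are both central in $M$ differ by an element of $N_G(M)(L)/M(L)$, a standard Galois cohomology argument (as in Kottwitz, \emph{Isocrystals with additional structure}, §4.3) shows we can modify $\tilde b_1$ by an element of $M(L)$ to obtain a decent representative $\tilde b_2$ whose slope $\nu_{\tilde b_2}$ is $\sigma$-invariant, hence defined over $\Q_p$ as a rational cocharacter $\mathbb{D}\to G$.

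\textbf{Step 3 (factoring through $S$ and dominance).} A $\Q_p$-rational cocharacter of $G$ factors through the maximal $\Q_p$-split subtorus of any $\Q_p$-rational maximal torus containing its image; hence $\nu_{\tilde b_2}$ factors through some maximal $\Q_p$-split torus $S'$ of $G$. Since maximal $\Q_p$-split tori are $G(\Q_p)$-conjugate, pick $h \in G(\Q_p)$ with $\mathrm{Int}(h)(S') = S$. Replacing $\tilde b_2$ by $h\tilde b_2 h^{-1} = h\tilde b_2\sigma(h)^{-1}$ (where the equality uses $h \in G(\Q_p)$) preserves the $\sigma$-conjugacy class and decency (since $c=1$ still works), and the new slope lies in $X_*(S)_\Q$. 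Finally, using that $W(G,S)(\Q_p) = N_G(S)(\Q_p)/Z_G(S)(\Q_p)$ acts on $X_*(S)_\Q$ with fundamental domain the dominant chamber $X_*(S)_\Q^+$, conjugate once more by a representative in $N_G(S)(\Q_p)$ to land in $X_*(S)^+_\Q$. This $\Q_p$-rational conjugation again preserves decency, producing the desired $\tilde b$.

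The main obstacle is Step 2: passing from a decent representative whose slope has $\sigma$-invariant conjugacy class to one whose slope is literally $\sigma$-fixed. The issue is that a priori the cocycle $\sigma \mapsto g_\sigma$ (with $\sigma(\nu) = \mathrm{Int}(g_\sigma)(\nu)$) lives in $M(L)$ but we must show it is a coboundary so that we can rigidify $\nu$. This is where the detailed structure of $\sigma$-conjugacy in $M$ (and the fact that the projection of $\tilde b_1$ to $M/M_{\mathrm{der}}$ controls the obstruction) enters; the other two steps are essentially formal once Step 2 is in hand.
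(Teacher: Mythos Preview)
Your Steps 1 and 3 match the paper's argument, but Step 2 is where the proofs diverge, and yours has a genuine gap. The paper handles $\Q_p$-rationality \emph{before} decency: it invokes \cite[Proposition 6.2]{kot8}---which uses the standing hypothesis (stated just before the lemma) that $G$ is quasisplit---to produce a representative $\tilde{b}$ whose slope $\nu$ is already defined over $\Q_p$. Only then does it apply Remark~\ref{decent}. Because $\nu$ is central in $M_b=Z_G(\nu)$, the conjugating element $c$ there can be taken in $M_b(L)$, so the resulting slope $\Int(c)\circ\nu=\nu$ remains $\Q_p$-rational; decency is achieved without disturbing rationality. The dominance step is then exactly your Step 3: conjugate by an element of $G(\Q_p)$, which is $\sigma$-fixed and hence preserves decency.

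Your ordering forces you to preserve decency while rigidifying the slope, and as you yourself flag, $\sigma$-conjugating a decent element by an arbitrary $m\in M(L)$ need not yield a decent element (one would need $\sigma^s(m)=m$). The reference you give, \cite[\S4.3]{kot8}, is where the slope homomorphism is \emph{defined}; it does not contain the cohomological rigidification you sketch, and you have not filled in the argument. You also never invoke the quasisplit hypothesis, which is exactly what makes the rationality step go through in \cite[Proposition~6.2]{kot8}. The simplest repair is to swap your Steps~1 and~2 and replace the cocycle sketch by a direct citation of that proposition, which is what the paper does.
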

\begin{proof}
By \cite[Proposition 6.2]{kot8}, since $G$ is quasisplit, we can pick $\tilde{b}$ so that $\nu$ is defined over $\mathbb{Q}_p$. Then by Remark \ref{decent}, $b$ can be picked to be decent.
%reminder that KalWeinstein pg11 talks about decent for algebraic cocycles

Now the image of $s\nu$ is a split torus of $G$ and so lies in a maximal $\mathbb{Q}_p$-split torus of $G$. In particular if we fix a maximal $\mathbb{Q}_p$-split torus $S$ of $G$ and contained in a Borel subgroup $B$, we can pick a $c \in G(\mathbb{Q}_p)$ so that $\mathrm{Int}(c) \circ s\nu$ has image in $X_*(S)^+$. Let $b'=cb \sigma(c)^{-1}$. Then we have $\sigma(c)=c$ and so if we let $b'=cb \sigma(c)^{-1}$ then we have
\begin{equation}
    b'\sigma(b') \cdot ... \cdot \sigma^{s-1}(b')=s\nu_{b'}(p).
\end{equation}
\end{proof}

Now, we fix the standard Levi subgroup $M_b$ to be the centralizer of $\nu_{\tilde{b}}$ and consider the inner twist  $\psi: J_b \to M_b$ defined so that $\psi^{-1}\circ \sigma(\psi)= \Int(\tilde{b})$ (for $\sigma$ the lift of the Frobenius operator).

We recall the following definition of Shin (\cite[Definition 3.1]{Shi3}).
\begin{definition}
Let $G$ be a connected reductive group, $\nu: \mathbb{D} \to G$. Then we denote by $M$ the Levi subgroup which is the centralizer of $\nu$ in $G$. We can fix a maximal torus $T \subset M \subset G$ and view $\nu \in X_*(T)_{\mathbb{Q}}$. Choose a positive integer $s$ so that $s \nu \in X_*(T)$ and assume
\begin{equation}
    \forall \alpha \in R(G,T) \setminus R(M,T) \,\ \text{we have} \,\ v_p(\alpha(\nu(p))) \neq 0.
\end{equation}
Then we say $\gamma_0 \in M(\mathbb{Q}_p)$ is $\nu$-acceptable if for every $\alpha \in R(G,T) \setminus R(M,T)$, we have $\langle \alpha, \nu \rangle >0$ if and only if $\alpha(\gamma_0) \in \overline{\mathbb{Q}}^{\times}_p$ has positive $p$-adic valuation. If $J$ is an inner form of $M$, then we say that $\delta \in J(\mathbb{Q}_p)$ is $\nu$-acceptable if it transfers to a $\nu$-acceptable element of $M(\mathbb{Q}_p)$.
\end{definition}

Now we are ready to define the sets $\mc{SS}^r_{ef}(M_b,G)$  $\mcSS^r_{eff}(J_b,G)$.
\begin{definition}
Let $\mc{SS}^r_{ef}(M_b,G) \subset \mc{SS}^r(M_b,G)$ consist of those equivalence classes such that $\gamma_0$ can be chosen to be $\nu_b$-acceptable.

Let $\mcSS^r_{eff}(J_b, G)$ denote the equivalence classes of pairs $(\delta, \lambda)$ such that $\delta \in \J_b(\Q_p)$ and transfers to a $\nu_b$-acceptable, $(G,M_b)$-regular semisimple element of $M_b(\Q_p)$ and $\lambda \in Z(\widehat{I^{J_b}_{\delta}})^{\Gamma_{\Q_p}}$. We can identify $\mcSS^r_{eff}(J_b, G)$ with a subset of $\mcSS^r(M_b, G)$ via the map $(\delta, \lambda) \mapsto (\gamma_0, \lambda')$ where $\lambda'$ is the image of $\lambda$ under the canonical isomorphism $Z(\widehat{I^{J_b}_{\delta}}) \cong Z(\widehat{I^{M_b}_{\gamma}})$. 
\end{definition}
Then the bijection 
\begin{equation}
    \mcEQ^r(M_b, G) \cong \mcSS^r(M_b, G),
\end{equation}
identifies $\mc{SS}^r_{ef}(M_b, G)$ and $\mcSS^r_{eff}(J_b,G)$ with subsets of $\mcEQ^r(M_b, G)$ which we denote $\mc{EQ}^r_{ef}(M_b, G)$ and $\mcEQ^r_{eff}(J_b, G)$ respectively. We have a natural projection $\mcEQ^r(M_b, G) \to \mc{E}^r(M_b)$ and we denote the images of $\mc{EQ}^r_{ef}(M_b, G)$ and $\mcEQ^r_{eff}(J_b, G)$ by $\mc{E}^r_{ef}(M_b, G)$ and $\mc{E}^r_{eff}(J_b,G)$ respectively. 

Now, we define $\mc{E}^r_{eff}(J_b, G;H)$ for a fixed $\mc{H}^{\mf{e}}=[(H,s,\eta)] \in \mc{E}^r(G)$ to be
\begin{equation}{\label{jbghdef}}
    Y^{-1}(\mc{H}^{\mf{e}}) \cap \mc{E}^r_{eff}(J_b, G).
\end{equation}
Analogously, we can define $\mc{EQ}^e_{eff}(J_b,G)$ and $\mc{E}^e_{eff}(J_b, G)$ and we have $\mc{EQ}^e_{eff}(J_b,G) \cong \mc{EQ}^r_{eff}(J_b,G)$ and $\mc{E}^e_{eff}(J_b,G) \cong \mc{E}^r_{eff}(J_b,G)$. Then we define $\mc{E}^e_{eff}(J_b, G; H)$ to be $\mc{E}^e_{eff}(J_b, G) \cap \mc{E}^e(M_b, G, H)$. We also define $\mc{E}^i_{eff}(J_b,G;H)$ to be the pre-image of $\mc{E}^e_{eff}(J_b, G; H)$ under the projection $\mc{E}^i(M_b, G; H) \to \mc{E}^e(M_b, G ; H)$.

Finally, we can make the analogous definitions to those of the previous paragraph for $\mc{EQ}^r_{ef}(M_b,G)$.

We now reinterpret the set $\mathcal{E}^r_{eff}(J_b, G) $ in terms of transfer of maximal tori instead of transfer of semisimple conjugacy classes, as the former is in practice easier to work with. 
\begin{lemma}{\label{torendlem}}
For an element $(H_{M_b}, s_{M_b}, \eta_{M_b}) \in \mc{E}^r(M)$, we have $(H_{M_b}, s_{M_b}, \eta_{M_b}) \in \mathcal{E}^r_{eff}(J_b, G)$ if and only if there exist maximal tori $T_{H_{M_b}}, T_{M_b}, T_{J_b}$ defined over $\mathbb{Q}_p$ of $H_{M_b}, M_b, J_b$ respectively so that each torus transfers to the others.
\end{lemma}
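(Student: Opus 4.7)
The plan is to adapt the argument of \cite[Lemma 6.2]{Shi3} to the refined endoscopic setting, exploiting the chain of identifications established in Lemma \ref{SSeqEQ} and its $(G,M)$-regular analogues.

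For the forward implication, I would first unpack the definition of $\mc{E}^r_{eff}(J_b, G)$ via the chain of bijections $\mcSS^r_{eff}(J_b, G) \cong \mcEQ^r_{eff}(J_b, G) = \mcEQ^e_{eff}(J_b,G)$ projecting to $\mc{E}^r_{eff}(J_b,G)$, in order to produce a compatible triple $(\delta, \gamma_0, \gamma_{H_{M_b}})$ with $\delta \in J_b(\Q_p)$, $\gamma_0 \in M_b(\Q_p)$ being $(G,M_b)$-regular and $\nu_b$-acceptable, $\gamma_{H_{M_b}} \in H_{M_b}(\Q_p)$ being $(M_b, H_{M_b})$-regular, and all three transferring to one another. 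Under the standing assumption that $\G_{\der}$ is simply connected (and by Lemma \ref{levidersc} applied to $M_b$, so that $M_{b,\der}$ is simply connected as well), the centralizers $I^{J_b}_\delta$, $I^{M_b}_{\gamma_0}$, $I^{H_{M_b}}_{\gamma_{H_{M_b}}}$ are connected reductive groups, and the $(G,M_b)$- and $(M_b, H_{M_b})$-regularity conditions provide canonical identifications among them as inner forms. I would then choose a $\Q_p$-rational maximal torus $T_{J_b}$ of $I^{J_b}_\delta$ (available since this group is defined over $\Q_p$ and reductive) and transport it via these canonical identifications to maximal tori $T_{M_b}, T_{H_{M_b}}$ of the other two centralizers; these are automatically maximal tori of the ambient groups $J_b, M_b, H_{M_b}$ and transfer to each other by construction.

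For the converse, I would construct the required $(\delta, \lambda) \in \mcSS^r_{eff}(J_b,G)$ directly from the given tori. The element $\lambda$ is obtained by pushing $s_{M_b} \in Z(\widehat{H_{M_b}})^{\Gamma_{\Q_p}}$ through the natural $\Gamma_{\Q_p}$-equivariant chain
\begin{equation*}
Z(\widehat{H_{M_b}}) \hookrightarrow \widehat{T_{H_{M_b}}} \xrightarrow{\sim} \widehat{T_{M_b}} \xrightarrow{\sim} \widehat{T_{J_b}},
\end{equation*}
where the two isomorphisms come from the assumed transfers of tori. To produce $\delta$, I would use that $\nu_b$ factors through $Z(M_b)^\circ$, which is contained in every maximal torus of $M_b$, so that $N\nu_b \in X_*(T_{M_b})$ for some $N \geq 1$ and $N\nu_b(p) \in T_{M_b}(\Q_p)$. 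Multiplying by a sufficiently generic regular element (using Zariski density of regular elements in $T_{J_b}(\Q_p)$), I can arrange that the resulting $\delta \in T_{J_b}(\Q_p)$ transfers to an element $\gamma_0 \in T_{M_b}(\Q_p)$ that is simultaneously $(G, M_b)$-regular, $(M_b, H_{M_b})$-regular after further transfer, and $\nu_b$-acceptable, the latter being guaranteed since the $N\nu_b(p)$-factor dictates the sign of $v_p(\alpha(\gamma_0))$ on every root $\alpha \in R(G,T_{M_b}) \setminus R(M_b, T_{M_b})$ once the generic factor is taken of small enough valuation.

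The main obstacle will be the careful handling of $\nu_b$-acceptability in the converse: the transfer of tori is an admissible embedding (canonical up to stable conjugacy) rather than a literal $\Q_p$-rational isomorphism, so I will need to verify that constructing $\delta$ on the $J_b$-side via this transfer really does produce a $\gamma_0$ on the $M_b$-side with the prescribed $p$-adic valuation pattern. Once this compatibility is established, tracing through the bijection of Lemma \ref{SSeqEQ} shows that the resulting pair $(\delta, \lambda)$ gives rise to the originally-specified refined endoscopic datum $(H_{M_b}, s_{M_b}, \eta_{M_b})$, completing the proof.
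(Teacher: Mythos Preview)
Your overall strategy matches the paper's, and the converse direction is essentially identical to what the paper does (multiply a large power of $\nu_b(p)$ by a regular element of $T_{M_b}(\Q_p)$ to force $\nu_b$-acceptability while retaining regularity). The obstacle you flag there is not really an obstacle: once $\gamma_0$ is constructed in $T_{M_b}(\Q_p)$, its transfer to $T_{J_b}$ and $T_{H_{M_b}}$ is automatic from the hypothesis that the tori transfer, and the valuation pattern on roots is a stable-conjugacy invariant.

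There is, however, a genuine gap in your forward direction. You propose to pick an arbitrary $\Q_p$-rational maximal torus $T_{J_b}$ of $I^{J_b}_\delta$ and ``transport it via the canonical identifications'' to the other centralizers. But the canonical identifications among $I^{J_b}_\delta$, $I^{M_b}_{\gamma_0}$, $I^{H_{M_b}}_{\gamma_{H_{M_b}}}$ are inner twists, i.e.\ isomorphisms over $\ov{\Q_p}$, not over $\Q_p$. Pushing an arbitrary $\Q_p$-torus through an inner twist yields a torus over $\ov{\Q_p}$ whose $\Q_p$-descent need not exist in the target group: a maximal torus transfers between inner forms over a $p$-adic field only when it is \emph{elliptic}. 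The paper handles this by invoking \cite[Lemma~3.2.1]{Kal3} to choose an elliptic maximal torus of $I^{H_{M_b}}_{\gamma_{H_{M_b}}}$ (such a torus exists in any reductive group over a $p$-adic field) and then transferring it; ellipticity is precisely what guarantees the transfer lands in a $\Q_p$-rational torus of each inner form. Without this adjective your ``by construction'' step does not go through.
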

\begin{proof}
Explicitly, $(H_{M_b}, s_{M_b}, \eta_{M_b}) \in \mathcal{E}^r_{eff}(J_b, G)$ means that there exists a semisimple $\gamma_{H_{M_b}} \in H_{M_b}(\mathbb{Q}_p)$ so that $\gamma_{H_{M_b}}$ is $(M_b, H_{M_b})$-regular and transfers to a $(G, M_b)$-regular and $\nu_b$-acceptable semisimple $\gamma_0 \in M_b(\mathbb{Q}_p)$ where also $\gamma_0$ transfers to some $\delta \in J_b(\mathbb{Q}_p)$. 

Let $I^{H_{M_b}}_{\gamma_{H_{M_b}}}, I^{M_b}_{\gamma_0}, I^{J_b}_{\delta}$ be the identity components of the centralizers of the above groups. By \cite[\S 3]{Kot6} we have that these groups are all inner forms of each other. Then by \cite[Lemma 3.2.1]{Kal3} we can pick an elliptic maximal torus of $I^{H_{M_b}}_{\gamma_{H_{M_b}}}$ and transfer it to the other groups.

Conversely, suppose we can find maximal tori $T_{H_{M_b}}, T_{M_b}, T_{J_b}$ that transfer to each other. Then we only need to find a $\gamma_0 \in T_{M_b}(\mathbb{Q}_p)$ which is $(G, H_{M_b})$-regular and $\nu_b$-acceptable. We first argue that for a positive natural number $s$ such that $s \nu_b \in X_*(T_{M_b})$ we have $s\nu_b(p) \in T_{M_b}(\mathbb{Q}_p)$ is $\nu_b$-acceptable. This follows because for each root $\alpha$, we have $v_p(\alpha (s\nu_b(p)))=\langle s\nu_p, \alpha \rangle$. 

Now let $t \in T_{M_b}(\mathbb{Q}_p)$ be a regular element. We claim that for $s$ sufficiently large (in particular, large enough  that $|\langle s \nu_b, \alpha \rangle| > |v_p(\alpha(t))|$ for each root $\alpha$ of $R(G, T_{M_b}) \setminus R(M_b, T_{M_b})$), the element $s\nu_b(p)\cdot t$ has the desired property. Pick $\alpha \in R(G, T_{M_b}) \setminus R(M_b, T_{M_b})$. Then we need to show that $\langle \nu_b, \alpha \rangle > 0$ if and only if $v_p(\alpha(s \nu_b(p) \cdot t))>0$. First we note that $\langle \nu_b, \alpha \rangle \neq 0$ since $M_b$ is precisely the centralizer of $\nu_b$, hence the roots that satisfy $\langle \nu_b, \alpha \rangle=0$ are exactly the roots of $M_b$.  Then $v_p(\alpha( s \nu_b(p) \cdot t))=\langle s \nu_b, \alpha \rangle + v_p(\alpha(t))$ which has the same sign as $\langle \nu_b, \alpha \rangle$ by our assumption on $s$. This finishes the proof.
\end{proof}
\subsection{A lemma on conjugacy classes and Levi subgroups}
In this subsection we prove a version of \cite[Lemma 6.2]{Shi3}, which is a key part of the stabilization of the cohomology of Igusa varieties. To begin, we pick a connected reductive group $G$ over $\Q_p$ and a refined endoscopic datum $(H,s,\eta)$ of $G$ and a set of representatives of $\mc{E}^i_{eff}(J_b, G;H)$ and $\mc{E}^i_{ef}(M_b,G,H)$ which we denote  $X^{\mf{e}}_{J_b}$ and $X^{\mf{e}}_{M_b}$ respectively. We can choose a pair  $(B_H, T_H)$ of a maximal torus of $H$ with maximal $\Q_p$ split rank and a rational Borel subgroup of $H$ such that $T_H \subset B_H$. We can pick each $(H, H_{M_b}, s, \eta_{M_b})$ in the above sets of representatives so that $H_M$ is a standard Levi subgroup. 

Associated to each element of $X^{\mf{e}}_{M_b}$ we have a map $\nu: \bb{D} \to H$ given by
\begin{equation}
\begin{tikzcd}
\bb{D} \arrow[r, "\nu_b"]  & A_{M_b} \arrow[r, hook] & T \arrow[r, "\sim"] & T_H \arrow[r, hook] & H.
\end{tikzcd}
\end{equation}

We now have the following lemma.

\begin{lemma}{\label{stabigusalem}}
\begin{enumerate}
    \item The map $\mc{EQ}^e(M_b, G) \to \mc{EQ}^r(G)$ induces maps 
    \begin{equation*}
        \mc{EQ}^e_{ef}(M_b, G) \to \mc{EQ}^r_{ef}(G),
    \end{equation*}
    and
    \begin{equation*}
        \mc{EQ}^e_{eff}(J_b, G) \to \mc{EQ}^r_{eff}(G).
    \end{equation*}
    \item Suppose that $[(H, H_{M_b}, s, \eta_1, \gamma_{H_{M_b}})] \in \mc{EQ}^e_{ef}(M_b, G; H)$ and $(H ,s ,\eta, \gamma_H)$ project to the same class in $\mc{EQ}^r_{ef}(G)$. Then there is a unique $(H, H'_{M_b}, s, \eta_2) \in X^{\mf{e}}_{M_b}$ and $(G, H_{M_b})$-regular, $\nu_b$-acceptable, semisimple $\gamma_{H'_{M_b}} \in H'_{M_b}(\Q_p) / \text{st} $ such that $(H, H'_{M_b}, s, \eta_2, \gamma_{H'_{M_b}})$ projects to $[(H, H_{M_b}, s, \eta_1, \gamma_{H_{M_b}})]$ and $\gamma_{H'_{M_b}}$ and $\gamma_H$ are stably conjugate in $H(\Q_p)$ under the natural inclusion $H'_{M_b}(\Q_p) \subset H(\Q_p)$. 
\end{enumerate}
\end{lemma}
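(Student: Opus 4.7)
The plan is to handle Part 1 by direct verification and Part 2 by first establishing existence and then uniqueness. For Part 1, the map sends $(H, H_{M_b}, s, \eta, \gamma_{H_{M_b}})$ to $(H, s, \eta, \gamma_{H_{M_b}})$ with $\gamma_{H_{M_b}}$ viewed in $H(\Q_p)$ via $H_{M_b} \subset H$. I would just check that the defining conditions transfer under this forgetful map. The $(M_b, H_{M_b})$-regularity of $\gamma_{H_{M_b}}$ combined with the $(G, M_b)$-regularity of its transfer $\gamma_0 \in M_b(\Q_p)$ forces $(G, H)$-regularity in $H$: the roots of $G$ not coming from $H$ split into those outside $\widehat{M_b}$ (controlled by the hypothesis on $\gamma_0$) and those inside $\widehat{M_b}$ but outside $\widehat{H_{M_b}} = \widehat{H} \cap \widehat{M_b}$ (controlled by the hypothesis on $\gamma_{H_{M_b}}$), using Lemma \ref{endlevi}. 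The transfer of $\gamma_{H_{M_b}}$ viewed in $H$ is then the same $\gamma_0$ now viewed via $M_b \subset G$, and the $\nu_b$-acceptability and $J_b$-transfer conditions depend only on $\gamma_0$, so they descend automatically.

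For Part 2 existence, the hypothesis of equal classes in $\mcEQ^r_{ef}(G)$ provides an isomorphism of refined endoscopic data $\alpha: (H, s, \eta_1) \to (H, s, \eta)$ such that $\alpha(\gamma_{H_{M_b}}) \sim_{\mathrm{st}} \gamma_H$ in $H(\Q_p)$. Using Remark \ref{compatibilityremark}, I would adjust $\alpha$ within its $\widehat{H}^{\Gamma_F}$-conjugacy class so that the intertwining element $g \in \widehat{G}$ relating $\eta \circ \widehat{\alpha}$ to $\eta_1$ can be taken in $\widehat{M_b}$; this ensures $(H, \alpha(H_{M_b}), s, \eta, \alpha(\gamma_{H_{M_b}}))$ defines an element of $\mcEQ^e_{ef}(M_b, G; H)$ projecting to $[(H, H_{M_b}, s, \eta_1, \gamma_{H_{M_b}})]$. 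I would then take $(H, H'_{M_b}, s, \eta_2) \in X^{\mf{e}}_{M_b}$ to be the unique representative of this isomorphism class and define $\gamma_{H'_{M_b}} \in H'_{M_b}(\Q_p)$ by transferring the common image $\gamma_0 \in M_b(\Q_p)$ along the refined datum $(H'_{M_b}, s, \eta_2|_{\widehat{H'_{M_b}}})$; this transfer exists because $\gamma_0$ is $(G, M_b)$-regular and $H_{M_b, \der}$ is simply connected by Lemma \ref{levidersc}. Since both $\gamma_{H'_{M_b}}$ and $\alpha(\gamma_{H_{M_b}})$ transfer to $\gamma_0$, they coincide in a single stable class of $H(\Q_p)$, giving $\gamma_{H'_{M_b}} \sim_{\mathrm{st}} \gamma_H$.

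For uniqueness, the representative $(H, H'_{M_b}, s, \eta_2) \in X^{\mf{e}}_{M_b}$ is forced since its isomorphism class is determined by the data and $X^{\mf{e}}_{M_b}$ contains exactly one representative per class. The stable class of $\gamma_{H'_{M_b}}$ is then also forced: any two candidates $\gamma_1, \gamma_2 \in H'_{M_b}(\Q_p)$ satisfying the projection condition both transfer to the same $(G, M_b)$-regular stable class in $M_b(\Q_p)$, which pulls back to a unique stable class in $H'_{M_b}(\Q_p)$ via the canonical identification of stable classes on maximal tori induced by $\eta_2|_{\widehat{H'_{M_b}}}$. I expect the main obstacle to be the verification step that $(H, \alpha(H_{M_b}), s, \eta, \alpha(\gamma_{H_{M_b}}))$ genuinely lies in $\mcEQ^e_{ef}(M_b, G; H)$ with the correct projection, which requires careful use of Remark \ref{compatibilityremark} and Lemma \ref{alphalem} to adjust $\alpha$ so that the dual intertwiner respects the Levi $\widehat{M_b}$ rather than merely some $\widehat{G}$-conjugate of it.
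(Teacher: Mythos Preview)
Your Part (1) is fine, though the paper simply cites Lemma~\ref{SSEQMGGcomm} rather than arguing root-by-root.

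Your Part (2) has a genuine gap, and the paper's argument is structurally different. The paper does not construct the pair directly; instead it sets up the commutative square
\[
\begin{tikzcd}
H(\Q_p)_{(G,H)\text{-reg},\,ss}/\text{st} \arrow[r,"T_1"] & \mc{EQ}^r_{ef}(G)\\
\coprod\limits_{X^{\mf{e}}_{M_b}} H_{M_b}(\Q_p)_{\cdots}/\text{st} \arrow[r,"T_2"] \arrow[u,"S_1"] & \mc{EQ}^e_{ef}(M_b,G;H) \arrow[u,"S_2"]
\end{tikzcd}
\]
and proves that $S_1|_{T_2^{-1}(y)}\colon T_2^{-1}(y)\to T_1^{-1}(x)$ is a bijection by showing both fibers have cardinality $|\Out_r(H,s,\eta)|$ (using Proposition~\ref{fibercard} and the Kottwitz argument from \cite[Lemma~9.7]{Kot6}) and then invoking \cite[Lemma~3.6]{Shi3} for injectivity. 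Existence then comes for free from equality of cardinalities.

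Your direct approach breaks down at two points. First, in your uniqueness argument you assert that the representative in $X^{\mf{e}}_{M_b}$ is ``forced since $X^{\mf{e}}_{M_b}$ contains exactly one representative per class''. But $X^{\mf{e}}_{M_b}$ indexes \emph{inner} classes, and by Proposition~\ref{fibercard} there are $|\Out_r(H,s,\eta)/\Out_r(H_{M_b},s,\eta)|$ of them over each embedded class. The embedded quadruple alone does not pick one out; the stable-conjugacy constraint on $\gamma_{H'_{M_b}}$ is what does, and showing it singles out a unique one is precisely where the paper needs Shin's lemma that $\nu_b$-acceptable elements stably conjugate in $G$ are already stably conjugate in $M_b$.

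Second, in your existence argument, Remark~\ref{compatibilityremark} is cited in the wrong direction: it says that \emph{if} you already have an isomorphism of embedded data then the intertwiner lies in $\widehat{M_b}$, not that an isomorphism of refined $G$-data can be adjusted to have this property. Moreover, your claim that ``since both $\gamma_{H'_{M_b}}$ and $\alpha(\gamma_{H_{M_b}})$ transfer to $\gamma_0$, they coincide in a single stable class of $H(\Q_p)$'' is false in general: two $(G,H)$-regular elements of $H$ transferring to the same stable class in $G$ may differ by an outer automorphism of $(H,s,\eta)$ and hence fail to be stably conjugate in $H$. This is exactly why the paper resorts to counting rather than direct construction.
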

\begin{proof}
Part (1) follows from Lemma \ref{SSEQMGGcomm}.

We now prove part (2).  We consider the following diagram:
\begin{equation}
    \begin{tikzcd}
    H(\Q_p)_{(G,H)-reg, ss} / \text{ st } \arrow[r, "T_1"] & \mc{EQ}^r_{ef}(G)\\
  \coprod\limits_{(H, H_{M_b}, s, \eta_1) \in X^{\mf{e}}_{M_b}} \; H_{M_b}(\Q_p)_{(G,H_{M_b})-reg, ss, \nu_b-acc.} / \text{ st } \arrow[r , "T_2"] \arrow[u, "S_1"] &  \mc{EQ}^e_{ef}(M_b, G; H) \arrow[u,"S_2"].
    \end{tikzcd}
\end{equation}
The map $T_1$ takes $\{\gamma'_H\}$ to the class of $(H,s,\eta, \gamma'_H)$ and $T_2$ is defined analogously. The map $S_1$ takes $\gamma_{H_{M_b}} \in H_{M_b}(\Q_p)$ to $
\gamma_{H_{M_b}} \in H(\Q_p)$ via the natural inclusion and $S_2$ is the natural forgetful map. We claim the diagram commutes. Indeed, we need to show that $(H, s, \eta, \gamma_{H_{M_b}})$ and $(H, s, \eta', \gamma_{H_{M_b}})$ induce the same element of $\mc{EQ}^r_{ef}(G)$. We may assume that $\eta'= \Int(g) \circ \eta$ for some $g \in \widehat{G}$. Then it is clear that $(H, s, \eta)$ and $(H,s,\eta')$ are isomorphic via the identity map $id: H \to H$ and therefore that this induces the desired isomorphism of endoscopic quadruples.

Now to prove (2), it suffices to show that if we have an element $x \in \mc{EQ}^r_{ef}(G)$ and a $y \in \mc{EQ}^{e}_{ef}(M_b, G; H)$ such that $S_2(y)=x$, then the map $S_1 |_{T^{-1}_2(y)}: T^{-1}_2(y) \to T^{-1}_1(x)$ is a bijection.

We claim that the fibers of $T_2$ have size $|\Out_r(H,s,\eta)|$. To compute the fiber $T^{-1}_2(y)$, we must compute the fiber in $H_{M_b}(\Q_p)$ for each $(H, H_{M_b}, s, \eta_1) \in X^{\mf{e}}_{M_b}$ whose class in $\mc{E}^{e}_{ef}(M_b, G; H)$ agrees with that of $y$. By Proposition \ref{fibercard}, the number of elements of $X^{\mf{e}}_{M_b}$ we must consider is $|\Out_r(H,s,\eta)/\Out_r(H_{M_b}, s, \eta_1)|$. Now, for a fixed $(H, H_{M_b}, s, \eta_1)$, the number of $\gamma_{H_{M_b}}$ such that $T_2((H, H_{M_b}, s, \eta_1, \gamma_{H_{M_b}}))=y$ is at most the size of the outer automorphism group of $(H, H_{M_b}, s, \eta_1)$ which is of cardinality $|\Out_r(H_{M_b}, s, \eta_1)|$. Hence, we will have proven the claim if we can show that if $\alpha \in \Aut_e(H, H_{M_b}, s, \eta_1))$ projects to a nontrivial element of $\Out_e(H, H_{M_b}, s, \eta_1) = \Out_r(H_{M_b}, s, \eta_1)$, then $\alpha(\gamma_{H_{M_b}})$ and $\gamma_{H_{M_b}}$ are not stably conjugate for any $(G, H_{M_b})$-regular, $\nu_b$-acceptable, semisimple $\gamma_{H_{M_b}} \in H_{M_b}(\Q_p)$. This follows from the argument given in the last paragraph of the proof of \cite[Lemma 9.7]{Kot6} and the fact that $M_{b, \der}$ is simply connected by Lemma \ref{levidersc}.

Then by a similar argument, we get that a fiber of $T_1$ has size $\Out_r(H,s,\eta)$. As a consequence, we have that $|T^{-1}_2(y)|=|T^{-1}_1(x)|$ and so to prove surjectivity of $S_1|_{T^{-1}_2(y)}$ we only need to prove injectivity. For this we cite \cite[Lemma 3.6]{Shi3}.

%It remains to prove part (3). We claim that it suffices to show that $\gamma_{H_M}$ is $\nu$-acceptable relative to the $\nu$ coming from $(H_M, H, s, \eta_1)$. Indeed, if $\nu'$ is associated to $(H'_M, H, s, \eta'_1)$ then $\nu' = \alpha \circ \nu$ and for any root $r$ of $H$, we have $\langle r, \nu \rangle=\langle \alpha^{-1}(r), \nu' \rangle$ and $r(\gamma_{H_M})=\alpha^{-1}(r)(\alpha(\gamma_{H_M}))$.

%Hence we just need to show that $\gamma_{H_M}$ is $\nu$-acceptable.  But for any root $r$ of $H$, we have $\langle r, \nu \rangle=\langle r^*, \nu_b\rangle$ where $r^*$ is the cocharacter of $G$ induced from $r$ by the isomorphism $T \cong T_H$. Also $r(\gamma_{H_M})=r^*(\gamma)$ where $\gamma$ is the transfer to $M$ of $\gamma_{H_M}$ via $\eta_H$. Hence, we just need to show that for each root $r$ of $H \setminus H_M$, we have $\langle r^*,  \nu_b \rangle >0$ if and only if $v_p(r^*(\gamma))>0$. This follows from the fact $r^*$ is a root of $G \setminus M$ ($r^*$ is not a root of $M$ since $\eta_1(\widehat{H}) \cap \widehat{M}= \eta_1(\widehat{H_M})$) and that $\gamma$ is $\nu_b$-acceptable.
\end{proof}

\section{Representation theoretic preparations}
\subsection{Endoscopic transfer of representations}

Fix a connected reductive group $G$ over $\mathbb{Q}_p$. Following \cite[pg 1631]{Hir1}, we define $\mathbb{C}[\mathrm{Irr}(G(\mathbb{Q}_p))]$ to be the set of virtual characters of $G$ (i.e complex linear combinations of trace distributions of irreducible representations of $G(\mathbb{Q}_p)$). Then we define $\mathrm{Groth}(G(\mathbb{Q}_p))$ to be the Grothendieck group of admissible representations of $G(\mathbb{Q}_p)$ and observe that there is a natural embedding
\begin{equation*}
    \mathrm{Groth}(G(\mathbb{Q}_p)) \hookrightarrow \mathbb{C}[\mathrm{Irr}(G(\mathbb{Q}_p))],
\end{equation*}
so that the image consists of integer linear combinations of distribution characters.

We define $\mathbb{C}[\mathrm{Irr}(G(\mathbb{Q}_p))]^{st} \subset \mathbb{C}[\mathrm{Irr}(G(\mathbb{Q}_p))]$ to be the subset of stable distributions. Similarly, we define $\mathrm{Groth}(G(\mathbb{Q}_p))^{st}$ to be the preimage in $\mathrm{Groth}(G(\mathbb{Q}_p))$ of $\mathrm{Groth}(G(\mathbb{Q}_p)) \cap \mathbb{C}[\mathrm{Irr}(G(\mathbb{Q}_p))]^{st}$ under the above embedding.

Fix a refined endoscopic datum $(H,s, \eta) \in \mc{E}^r(G)$. We assume that $\eta$ can be lifted to a map $\Leta: \LH \to \LG$ and fix a lift. Then for a fixed choice of transfer factor $\Delta$, the transfer of distributions induces a map $\mathrm{Trans}$ which maps a stable distribution of $H(\mathbb{Q}_p)$ to an invariant distribution of $G(\mathbb{Q}_p)$. In fact, (for instance \cite[Proposition 4.6]{Hir1}), we get a map
\begin{equation}
    \mathrm{Trans} : \mathbb{C}[\mathrm{Irr}(H(\mathbb{Q}_p))]^{st} \to \mathbb{C}[\mathrm{Irr}(G(\mathbb{Q}_p))].
\end{equation}

We check that this map does not depend on our choice of representative of an isomorphism class in $\mathcal{E}^r(G)$ in the following sense. Suppose that $\alpha: H' \to H$ induces an isomorphism of refined endoscopic data between $(H, s, \eta)$ and  $(H', s' ,\eta')$. Choose a lift $\Leta'$ of $\eta'$ and a map $\Lalpha$ as in Lemma \ref{alphalem}.  We fix the transfer factor $\Delta \circ \alpha$ of $(H', s', \Leta')$. Then $\alpha$ induces an isomorphism 
\begin{equation*}
    \alpha^*: \mathbb{C}[\mathrm{Irr}(H(\mathbb{Q}_p)]^{st} \cong \mathbb{C}[\mathrm{Irr}(H'(\mathbb{Q}_p)]^{st}
\end{equation*} 
defined so that if $\phi= \sum\limits^k_{i=1} a_i \Theta_{\pi_i}$ then $\alpha^*(\phi)=\sum\limits^k_{i=1} a_i \Theta_{\pi_i \circ \alpha}$.

\begin{proposition}
Suppose $\phi \in \mathbb{C}[H(\mathbb{Q}_p)]^{st}$. Then we have the following equality of distributions in $
\mathbb{C}[\mathrm{Irr}(G(\mathbb{Q}_p))]$:
\begin{equation}
\mathrm{Trans}_{H,s, \eta}(\phi)=\mathrm{Trans}_{H', s', \eta'}(\alpha^*(\phi)).
\end{equation}
\end{proposition}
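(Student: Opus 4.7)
The plan is to verify the identity at the level of matching functions and orbital integrals, using the characterization of transfer as the dual to matching of functions. Concretely, for any $f \in C_c^\infty(G(\mathbb{Q}_p))$ we have $\mathrm{Trans}_{H,s,\eta}(\phi)(f) = \phi(f^H)$ where $f^H$ is any matching function for $f$ via $\Delta$; similarly for the primed datum using the transfer factor $\Delta \circ \alpha$. So it suffices to produce, for every $f$, a single function $f_0$ on $H(\mathbb{Q}_p)$ that matches $f$ via $\Delta$ and satisfies $\phi(f_0) = \alpha^\ast(\phi)(f^{H'})$ for some chosen $f^{H'}$ matching $f$ via $\Delta \circ \alpha$. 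The natural candidate is $f_0 := f^{H'} \circ \alpha^{-1}$.

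First, I would unwind the definition of $\alpha^\ast$. Writing $\phi = \sum_i a_i \Theta_{\pi_i}$ and using that $\alpha \colon H' \to H$ is a $\mathbb{Q}_p$-isomorphism, a direct change of variables (with Haar measures chosen compatibly under $\alpha$) gives
\begin{equation*}
    \alpha^\ast(\phi)(f^{H'}) = \sum_i a_i \,\Theta_{\pi_i \circ \alpha}(f^{H'}) = \sum_i a_i \,\Theta_{\pi_i}(f^{H'} \circ \alpha^{-1}) = \phi(f^{H'} \circ \alpha^{-1}).
\end{equation*}

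Second, and this is the heart of the argument, I would check that $f^{H'} \circ \alpha^{-1}$ matches $f$ via $\Delta$. For a $(G,H)$-regular semisimple $\gamma_H \in H(\mathbb{Q}_p)$, set $\gamma_{H'} := \alpha^{-1}(\gamma_H)$, which is $(G,H')$-regular since the notion only depends on the isomorphism class of the endoscopic datum. The same change-of-variables as above, applied to the orbital integral, yields $SO_{\gamma_H}(f^{H'} \circ \alpha^{-1}) = SO_{\gamma_{H'}}(f^{H'})$, and the matching condition for $f^{H'}$ with respect to $(H', s', \Leta')$ and $\Delta \circ \alpha$ rewrites this as $\sum_\gamma (\Delta \circ \alpha)(\gamma_{H'}, \gamma) O_\gamma(f) = \sum_\gamma \Delta(\gamma_H, \gamma) O_\gamma(f)$; the index set of $\gamma$'s is the same for the two data since $\alpha$ is an isomorphism of refined endoscopic data, so the transfer map on semisimple stable conjugacy classes is compatible under $\alpha$.

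Combining these, I obtain $\mathrm{Trans}_{H',s',\eta'}(\alpha^\ast(\phi))(f) = \alpha^\ast(\phi)(f^{H'}) = \phi(f^{H'} \circ \alpha^{-1}) = \mathrm{Trans}_{H,s,\eta}(\phi)(f)$, where the last equality uses the stability of $\phi$ to see that the value $\phi(f^{H'} \circ \alpha^{-1})$ depends only on stable orbital integrals, which match those prescribed by $\Delta$. The main technical obstacle is the transfer-factor compatibility; this requires that $\Delta \circ \alpha$ really is a valid transfer factor for the datum $(H', s', \Leta')$ with $\Leta' = (\Lalpha)^{-1} \circ \,\Leta \circ \Lalpha$ (in the spirit of Lemma \ref{alphalem}), which can be read off from the explicit Langlands--Shelstad formula once one tracks that $\alpha$ carries $a$-data and $\chi$-data for the two endoscopic data into each other.
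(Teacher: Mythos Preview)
Your proposal is correct and follows essentially the same approach as the paper's proof: both arguments reduce to showing that precomposition with $\alpha$ (or $\alpha^{-1}$) carries matching functions to matching functions, then conclude by a change of variables in the trace/orbital integral. The paper starts from $f^H$ and defines $f^{H'} := f^H \circ \alpha$, while you start from $f^{H'}$ and pass to $f^{H'}\circ\alpha^{-1}$; the paper also tracks an explicit Haar-measure ratio $c$ rather than assuming compatible measures, but these are cosmetic differences. Your final paragraph about verifying that $\Delta\circ\alpha$ is a transfer factor for $(H',s',\Leta')$ is outside the scope of the proposition---the paper states this as part of the setup preceding the statement.
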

\begin{proof}
Fix Haar measures on $H$ and $H'$. Fix an $f \in C^{\infty}_c(G(\mathbb{Q}_p))$. Then we pick $f^H$ to be an $(H, s, \eta)$-matching function of $f$. We claim that $cf^{H'}$ where $f^{H'} := f^H \circ \alpha$ is an $(H', s', \eta')$ matching function of $f$, where $c$ is the ratio of the Haar measure on $H$ pulled back to $H'$ and the Haar measure on $H'$. Since the transfer factors for $(H,s,\eta)$ and $(H', s', \eta')$ are compatible, to check the claim we simply need to show that for any semisimple $\gamma_{H'} \in H'(\mathbb{Q}_p)$ we have 
\begin{equation*}
    SO^H_{\alpha(\gamma_{H'})}(f^H)=SO^{H'}_{\gamma_{H'}}(cf^{H'}).
\end{equation*}
But we have 
\begin{align*}
    SO^{H'}_{\gamma_{H'}}(cf^{H'}) &= \sum\limits_{\{\gamma'_{H'}\} \sim_{st} \{\gamma_{H'}\}} e({I^{H'}_{\gamma'_{H'}}})O_{\gamma'_{H'}}(cf^{H'})\\
    &=\sum\limits_{\{\alpha(\gamma'_{H'})\} \sim_{st} \{\alpha(\gamma_{H'})\}} e({I^H_{\alpha(\gamma'_{H'})}})O_{\alpha(\gamma'_{H'})}(f^{H})\\
    &=SO^{H}_{\alpha(\gamma_{H'})}(f^H),
\end{align*}
as desired.

Now take $\phi= \sum\limits_i a_i\Theta_{\pi_i} \in \C[\Irr(H(\Q_p))]^{st}$, where $\Theta_{\pi_i}$ is the distribution character of $\pi_i$.  Then we fix $f \in C^{\infty}_c(G(\mathbb{Q}_p))$ and thinking of it as an element of the Hecke algebra (since we've made a choice of Haar measure), we need to show that 
\begin{equation*}
    \mathrm{Trans}_{H', s', \eta'}(\alpha^* (\phi))(f)=\mathrm{Trans}_{H, s, \eta}(\phi)(f).
\end{equation*}
It suffices to show that
\begin{equation*}
    \int_{H(\Q_p)_{sr}} f^H(h)\phi(h)dh=\int_{H'(\Q_p)_{sr}} f^{H'}(h')\phi(\alpha(h'))dh' \left(=\int_{H'(\Q_p)_{sr}} cf^{H}(\alpha(h'))\phi(\alpha(h'))dh' \right).
\end{equation*}
This is clear from the definition of the constant $c$.
\end{proof}

\subsection{Recap of the Langlands correspondences}
In this section we review the formulation of the local Langlands correspondence including the endoscopic character identities. We also describe the conjectural global multiplicity formula for $L^2_{disc}(\mb{G}(\Q) \setminus \mb{G}(\A))$. In this paper we use the inverse normalization of the Langlands correspondence compatible with Deligne's normalization of the Artin map which agrees with \cite{HT1} and Shin. In particular, we are assuming that the Artin map takes a uniformizer to a geometric Frobenius element. However, we remark that this is the opposite convention to Kaletha and \cite{RV1}. We normalize our transfer factors for quasisplit groups using the `` Whittaker normalization'' denoted by $\Delta^{\lambda}_D$ in \cite[\S5.5]{KS}. Because of our normalization of the Langlands correspondence and because we simultaneously require that our transfer factors are compatible with twisted endoscopy, we are forced to use the $\Delta^{\lambda}_D$ normalization (cf. \cite{KS2}).

We let $\mc{L}_F$ denote the Langlands group that is globally conjectural and locally isomorphic to $W_F \rtimes SL_2(\C)$ when $F$ is non-archimedean and $W_F$ when $F$ is archimedean.  We use the notation of the (hypothetical) global Langlands group for simplicity. In particular cases, the contents of this subsection can be recast in the language of Arthur's version of global parameters as in \cite{KMSW}.

\begin{definition}
Let $F$ be a local or global field of characteristic $0$ and $\mb{G}^*$ a quasisplit connected reductive group over $F$. Then a map $\psi: \mathcal{L}_F \to \, ^L\mathbf{G}^*$ is a local (resp. global) Langlands parameter if $\psi$ maps semisimple elements to semisimple elements and commutes with the projections to $W_F$. We say that $\psi$ is tempered if its image in $^L\mb{G}^*$ projects to a relatively compact subset of $\widehat{\mb{G}^*}$.

A map $\psi: \mathcal{L}_F \times SL_2(\C) \to \, ^L\mathbf{G^*}$ is a local (resp global) Arthur parameter if the restriction to $\mc{L}_F$ is a tempered Langlands parameter. 

We often refer to Langlands and Arthur parameter as $L$- and $A$- parameters, respectively. We say that two $L$- or $A$- parameters $\psi_1, \psi_2$ are equivalent if there exists a (locally)-trivial $1$-cocycle $z: \mc{L}_F \times SL_2(\C) \to Z(\widehat{\mb{G}})$ (without the $SL_2(\C)$ factor in the $L$-parameter case) and a $g \in \widehat{\mb{G}}$ such that $\psi_1 = \Int(g)(\psi_2)z$.
\end{definition}
We have the following lemma:
\begin{lemma}{\label{equivconjlem}}
Suppose $\mb{G}^*$ satisfies the Hasse principle or $F$ is local. Then two $L$- or $A$- parameters $\psi_1, \psi_2$ are equivalent if and only if there exists $g \in \widehat{\mb{G}^*}$ such that $\psi_1 = \Int(g) \circ \psi_2$.
\end{lemma}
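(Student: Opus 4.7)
The reverse direction is trivial: given $\psi_1 = \Int(g) \circ \psi_2$, take $z$ to be the trivial cocycle. For the forward direction, I would start from an equivalence $\psi_1 = \Int(g)(\psi_2) \cdot z$ with $g \in \widehat{\mb{G}^*}$ and $z : \mc{L}_F \times SL_2(\C) \to Z(\widehat{\mb{G}^*})$ a (locally)-trivial cocycle, and first reduce the $A$-parameter case to the $L$-parameter case by the standard observation that $SL_2(\C)$ is perfect while $Z(\widehat{\mb{G}^*})$ is abelian, so $z$ is trivial on the $SL_2(\C)$-factor and descends to a cocycle of $\mc{L}_F$. Since $\mc{L}_F$ acts on $Z(\widehat{\mb{G}^*})$ only through its quotient $W_F$, the class of $z$ lives in $H^1(W_F, Z(\widehat{\mb{G}^*}))$.

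The core computation is that a central coboundary can be absorbed into the conjugating element. Concretely, if $z(w) = c \cdot w(c)^{-1}$ for some $c \in Z(\widehat{\mb{G}^*})$, then a direct calculation inside the semidirect product $\widehat{\mb{G}^*} \rtimes W_F$, using crucially that $c$ is central in $\widehat{\mb{G}^*}$, shows that pointwise multiplication of the parameter $\Int(g) \circ \psi_2$ by the coboundary $z$ coincides with $\Int(cg) \circ \psi_2$. Hence $\psi_1 = \Int(cg) \circ \psi_2$ with $cg \in \widehat{\mb{G}^*}$, which is the desired conclusion. It therefore suffices to prove that $z$ represents the trivial class in $H^1(W_F, Z(\widehat{\mb{G}^*}))$.

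In the local case this is immediate, since ``trivial'' in the definition means precisely that $z$ is a coboundary. In the global case, local triviality places the class of $z$ in the Shafarevich group $\ker\bigl(H^1(W_F, Z(\widehat{\mb{G}^*})) \to \prod_v H^1(W_{F_v}, Z(\widehat{\mb{G}^*}))\bigr)$. Here I would invoke Kottwitz's Tate--Nakayama style duality to identify this Shafarevich group with the Pontryagin dual of a Shafarevich group on the $\mb{G}^*$ side controlling the failure of the Hasse principle, which vanishes by hypothesis. The main obstacle will be this last step: carefully pinning down the conventions so that the Hasse principle hypothesis stated here corresponds, via the duality, precisely to the vanishing of the relevant dual-side Shafarevich group.
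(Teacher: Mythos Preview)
Your proposal is correct and follows essentially the same approach as the paper: show that the cocycle $z$ is a coboundary and then absorb it into the conjugating element. The paper simply cites \cite[Lemma 11.2.2]{Kot5} and \cite[Equation (4.2.2)]{Kot5} for the global step, which is precisely the Kottwitz duality statement you describe identifying $\ker^1(W_F, Z(\widehat{\mb{G}^*}))$ with the dual of the group measuring failure of the Hasse principle for $\mb{G}^*$; your additional reduction to $W_F$ via perfectness of $SL_2(\C)$ is a clean way to phrase what the paper leaves implicit.
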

\begin{proof}
The second condition clearly implies the first.

Suppose now that $\psi_1, \psi_2$ are two equivalent $A$-parameters (the $L$-parameter case is analogous) so that we have $g \in \widehat{\mb{G}^*}$ and $1$-cocycle $z:\mc{L}_F \times SL_2(\C) \to Z(\widehat{\mb{G}^*}) $. Now, by definition in the local case and by \cite[Lemma 11.2.2]{Kot5} and \cite[Equation (4.2.2)]{Kot5} in the global case, we have that $z$ is trivial in $H^1(\mc{L}_F \times SL_2(\C), Z(\widehat{\mb{G}^*}))$. Hence there exists $x \in Z(\widehat{\mb{G
}^*})$ such that $z: w \mapsto x^{-1}\sigma_w(x)$. Hence if we let $g'=xg$, then we get
\begin{equation*}
    \psi_1= \Int(g') \circ \psi_2
\end{equation*}
as desired.
\end{proof}

Now, as in \cite[pg 641]{Kot5}, we define the group $S_{\psi}$ of self equivalences of $\psi$ to be the set of $g \in \widehat{\mathbf{G}^*}$ such that $x \mapsto g^{-1}\psi(x)g\psi(x)^{-1}$ is a locally trivial $1$-cocycle of $\mathcal{L}$, valued in $Z(\widehat{\mathbf{G}^*})$. If $\mathbf{G}^*$ satisfies the Hasse principle or $F$ is local, it follows from \cite[(4.2.2)]{Kot5} and \cite[(11.2.2)]{Kot5} (see also \cite[pg 48]{KalTai}) that 
\begin{equation}{\label{SCeq}}
S_{\psi}=C_{\psi}Z(\widehat{\mathbf{G}^*})
\end{equation}
where 
\begin{equation*}
    C_{\psi} :=\{g \in \widehat{\mb{G}^*}: \forall x \in \mathcal{L}_F \times SL_2(\mathbb{C}), g^{-1} \psi(x)g\psi(x)^{-1}=1\}.
\end{equation*}

We define the group $\mathcal{S}_{\psi} := \pi_0(S_{\psi}/Z(\widehat{\mathbf{G}^*}))$. We say an Arthur parameter is discrete if $S^{\circ}_{\psi} \subset Z(\widehat{\mathbf{G}^*})$. We see that if $\psi$ is discrete and $\mb{G}^*$ satisfies the Hasse principle, then $\mathcal{S}_{\psi}=C_{\psi}/Z(\widehat{\mathbf{G}^*})^{\Gamma}$.

We briefly discuss how the group $S_{\psi}$ depends on $\psi$ up to equivalence.
\begin{lemma}{\label{canoncent}}
If two $L$- or $A$- parameters $\psi_1$ and $\psi_2$  are equivalent, then there exists a canonical (up to conjugacy) isomorphism $S_{\psi_1} \cong S_{\psi_2}$.
\end{lemma}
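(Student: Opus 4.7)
The plan is to produce the isomorphism by conjugation by a group element realizing the equivalence. By definition, $\psi_1$ and $\psi_2$ being equivalent means there exist $g \in \widehat{\mb{G}^*}$ and a locally trivial $1$-cocycle $z: \mc{L}_F \times SL_2(\C) \to Z(\widehat{\mb{G}^*})$ (dropping the $SL_2$ factor in the $L$-parameter case) such that $\psi_1(x) = g \psi_2(x) g^{-1} \cdot z(x)$. I would then define the candidate isomorphism $\Phi_g : S_{\psi_2} \to S_{\psi_1}$ to be $\Phi_g(h) = ghg^{-1}$. The main point to verify is that $\Phi_g$ lands in $S_{\psi_1}$; for canonicity up to conjugacy we will need to analyze the dependence on $g$.

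To check that $\Phi_g$ is well-defined, pick $h \in S_{\psi_2}$ and compute
\begin{equation*}
(ghg^{-1})^{-1}\psi_1(x)(ghg^{-1})\psi_1(x)^{-1} = gh^{-1}g^{-1}\cdot g\psi_2(x)g^{-1}z(x) \cdot ghg^{-1} \cdot z(x)^{-1}g\psi_2(x)^{-1}g^{-1}.
\end{equation*}
Since $z(x)$ is central in $\widehat{\mb{G}^*}$, it commutes past every factor and cancels with $z(x)^{-1}$, so the above simplifies to $g(h^{-1}\psi_2(x)h\psi_2(x)^{-1})g^{-1}$. The expression $h^{-1}\psi_2(x)h\psi_2(x)^{-1}$ defines a locally trivial $1$-cocycle into $Z(\widehat{\mb{G}^*})$ by assumption on $h$, and since this is central, conjugation by $g$ fixes it. Thus $\Phi_g(h) \in S_{\psi_1}$. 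The analogous computation with $g^{-1}$ produces an inverse $\Phi_{g^{-1}} : S_{\psi_1} \to S_{\psi_2}$, so $\Phi_g$ is an isomorphism.

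It remains to show that $\Phi_g$ is independent of $g$ up to conjugation. Suppose $g_1, g_2 \in \widehat{\mb{G}^*}$ both realize the equivalence, so that $\psi_1 = \Int(g_i)\circ \psi_2 \cdot z_i$ for $i=1,2$ with $z_i$ locally trivial central $1$-cocycles. Then $\Int(g_1^{-1}g_2)(\psi_2(x)) = \psi_2(x)\cdot (z_2 z_1^{-1})(x)$, i.e.\ $(g_1^{-1}g_2)^{-1}\psi_2(x)(g_1^{-1}g_2)\psi_2(x)^{-1}$ is a locally trivial $1$-cocycle into $Z(\widehat{\mb{G}^*})$, which says exactly that $g_1^{-1}g_2 \in S_{\psi_2}$. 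Hence $\Phi_{g_2} = \Phi_{g_1} \circ \Int(g_1^{-1}g_2)$ differs from $\Phi_{g_1}$ by an inner automorphism coming from $S_{\psi_2}$, giving the claimed canonicity. As a byproduct, since inner automorphisms act trivially on $\pi_0$, the induced map $\mc{S}_{\psi_2} \to \mc{S}_{\psi_1}$ is canonical on the nose.

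The only conceptual subtlety, and the step most likely to mislead, is the role of the central cocycle $z$: one might worry that its presence breaks the naive conjugation argument. The computation above shows this worry is unfounded precisely because $z$ takes values in the center, so it commutes through and cancels. Under the hypotheses of Lemma \ref{equivconjlem} one can avoid $z$ entirely and take $\psi_1 = \Int(g)\circ \psi_2$, but the argument we sketched works in the general setting of the lemma statement without further assumption.
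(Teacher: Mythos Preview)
Your proof is correct and follows essentially the same route as the paper: define the isomorphism by $\Int(g)$ for $g$ realizing the equivalence, then observe that two choices $g_1,g_2$ differ by an element of $S_{\psi_2}$, so the isomorphism is canonical up to inner automorphism. Your write-up is in fact more detailed than the paper's, which omits the explicit verification that $\Phi_g$ lands in $S_{\psi_1}$.

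Two small remarks. First, in your canonicity step the cocycle should read $z_1 z_2^{-1}$ rather than $z_2 z_1^{-1}$, but this is harmless since either way it is locally trivial and central. Second, your closing sentence that the induced map $\mc{S}_{\psi_2}\to\mc{S}_{\psi_1}$ is ``canonical on the nose'' is not justified as stated: inner automorphisms of $S_{\psi_2}$ descend to inner automorphisms of $\mc{S}_{\psi_2}=\pi_0(S_{\psi_2}/Z(\widehat{\mb{G}^*}))$, and these need not be trivial unless $\mc{S}_{\psi_2}$ is abelian. This extra claim is not part of the lemma and can simply be dropped.
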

\begin{proof}
We prove the $A$-parameter case. Let $g \in \widehat{\mb{G}^*}, z \in Z^1(\mc{L}_F, Z(\widehat{\mb{G}^*}))$ be such that $\psi_2= \Int(g)(\psi_1)z$. Then we get a map
\begin{equation*}
    \Int(g): S_{\psi_1} \to S_{\psi_2},
\end{equation*}
given by $s \mapsto \Int(g)(s)$. If we make some different choice, $g' \in \widehat{\mb{G}^*}, z' \in Z^1(\mc{L}_F, Z(\widehat{\mb{G}^*}))$, then we get a map $\Int(g^{-1}g'): S_{\psi_1} \to S_{\psi_1}$. But $g^{-1}g' \in S_{\psi}$ and so this map is just an inner automorphism of $S_{\psi_1}$. This shows the desired result.
\end{proof}

Given a place $v$ of $\mathbb{Q}$, we expect to have a localization map $\mathcal{L}_v \times SL_2(\mathbb{C}) \hookrightarrow \mathcal{L} \times SL_2(\mathbb{C})$ defined up to conjugation. Thus for each place $v$, we get a local Arthur parameter $\psi_v: \mathcal{L}_v \times SL_2(\mathbb{C}) \to \,\ ^L\mathbf{G}^*_{\mathbb{Q}_v}$. We define the local centralizer group $S^{\natural}_{\psi_v}$ by
\begin{equation}
S^{\natural}_{\psi_v} := C_{\psi_v}/[C_{\psi_v} \cap \widehat{\mathbf{G}^*}^{\der}_{\mathbb{Q}_v}]^{\circ}.
\end{equation}
\begin{remark}
When $Z(\mathbf{G}^*)$ is connected, this is the same as $C_{\psi_v}/[C_{\psi_v} \cap \widehat{\mathbf{G}^*}^{\Sc}_{\mathbb{Q}_v}]^{\circ}$ since $Z(\mb{G}^*)$ connected implies that $\widehat{\mb{G}^*}_{\der}$ is simply connected. 
\end{remark}

We now review the conjectural local and global Langlands correspondences for a connected reductive group $\mb{G}$ over $\Q$ with a fixed quasisplit inner form $\mb{G}^*$ and extended pure inner twist $(\varrho, z^{\iso})$ such that $\varrho: \mb{G}^* \to \mb{G}$ and $z^{\iso} \in Z^1_{\bas}(\mc{E}^{\iso}, \mb{G}^*)$. The notion of an extended pure inner twist is discussed for instance in \cite[pg 19]{Kal1}. 

We begin by describing the local Langlands correspondence for our group $\mb{G}^*$ at a fixed place $v$ of $\Q$ following \cite[Conj. F]{Kal1}. Define $G := \mathbf{G}_{\mathbb{Q}_v}$ and $G^* := \mb{G}^*_{\Q_v}$. We get a localization $z^{\iso}_v \in Z^1_{\bas}(\mc{E}^{\iso}_v, \mb{G}^*)$ and hence a localization $(\varrho_v, z^{\iso}_v)$ of our extended pure inner twist.

Then to a local $A$-parameter $\psi_v: \mathcal{L}_v \times SL_2(\mathbb{C}) \to \,\ ^LG^*$, there is a local $A$-packet $\Pi_{\psi_v}(G^*, \varrho_v)$. This is a priori multiset
%\begin{Bertie}
%can we actually assume the elements of the $A$-packet are irreducible?
%\end{Bertie}
whose elements lie in $\mathrm{Irr}_{unit}(G)$, the set of unitary irreducible admissible representations of $G(\mathbb{Q}_v)$.

For a fixed Whittaker datum, $\mathfrak{w}_v$ of $G^*$,  we expect a map
\begin{equation}
   \begin{tikzcd}
    \Pi_{\psi_v}(G, \varrho_v) \arrow[r, "\iota_{\mathfrak{w}_v}"] & \mathrm{Irr}(S^{\natural}_{\psi_v}, \chi_{z^{iso}_v}),
\end{tikzcd} 
\end{equation}

where $\mathrm{Irr}(S^{\natural}_{\psi_v}, \chi_{z^{iso}_v})$ denotes the set of irreducible representations of $S^{\natural}_{\psi_v}$ whose restriction to $Z(\widehat{G})^{\Gamma_v}$ acts by the character attained from $z^{\iso}_v$ under the Kottwitz map $\kappa_G: \mathbf{B}(\Q_v, G)_{bas} \to X^*(Z(\widehat{G})^{\Gamma_v})$. The map $\iota_{\mathfrak{w}_v}$ allows us to define a pairing $\langle \cdot, \cdot \rangle : \mathrm{Irr}(S^{\natural}_{\psi_v}, \chi_{z^{iso}_v}) \times S^{\natural}_{\psi_v} \to \mathbb{C}$ given by 
\begin{equation}
    \langle \pi, s \rangle= \mathrm{tr}(\iota_{\mathfrak{w}_v}(\pi) \mid s).
\end{equation}

We are now in a position to describe the endoscopic character identities satisfied by $\Pi_{\psi_v}(G, \varrho_v)$. Let $(H, s, \eta)$ be a representative of an equivalence class of refined endoscopic data for $G^*$ and suppose there exists an extension of $\eta$ to a map $\Leta: \LH \to \LG^*$. Then suppose that $\psi^{\mathfrak{e}}_v$ is a local Arthur parameter for $H$ such that $\Leta \circ \psi^{\mathfrak{e}}_v=\psi_v$. Suppose $f \in \mc{H}(G(\Q_v)), f^{\mathfrak{e}} \in \mc{H}(H(
\Q_v))$ are $\Delta[\mathfrak{w}_v, z^{iso}_v]$-matching functions where the transfer factor is constructed as in \cite[\S 4.3]{KalTai}. Let $\Theta_{\pi}$ denote the distribution character for a representation $\pi$ and denote by $s_{\psi_v}$ the image under $\psi_v$ of the nontrivial central element in the Arthur $SL_2(\mathbb{C})$ factor. Then we expect
\begin{equation}{\label{endocharid}}
    \sum\limits_{\pi^{\mathfrak{e}} \in \Pi_{\psi^{\mathfrak{e}}}(H, 1)} \langle \pi^{\mathfrak{e}}, s_{\psi^{\mathfrak{e}}_v} \rangle \Theta_{\pi^{\mathfrak{e}}}(f^{\mathfrak{e}}) = e(G) \sum\limits_{\pi \in \Pi_{\psi_v(G, \varrho_v)}} \langle \pi, \eta(s)s_{\psi_v} \rangle\Theta_{\pi}(f),
\end{equation}
where $e(G)$ is the Kottwitz sign.

We now turn to describing the isocrystal normalization of the global multiplicity formula following \cite{KalTai} Let $\psi$ be a discrete global Arthur parameter of $\mathbf{G}^*$. For each place $v$, we expect to have canonical (up to conjugacy) embeddings $\mc{L}_{F_v} \hookrightarrow \mc{L}_F$ and  $\, ^L\mathbf{G}^*_{\Q_v} \hookrightarrow \,\ ^L\mathbf{G}^*$ where the latter induces an isomorphism between $\widehat{\mathbf{G}^*}$ and $\widehat{\mathbf{G}^*_{\Q_v}}$. Then for each place $v$ of $\mathbb{Q}$, we get a local Arthur parameter $\psi_v$ such that the following diagram commutes
\begin{equation}
\begin{tikzcd}
\mc{L}_{F_v} \arrow[r, hook] \arrow[d, "\psi_v"] & \mc{L}_{F} \arrow[d, "\psi"] \\
^L\mb{G}^*_{\Q_v} \arrow[r, hook] & ^L\mb{G}^*. 
\end{tikzcd}
\end{equation}
These maps induce an embedding $C_{\psi} \hookrightarrow C_{\psi_v}$.

Fix a reductive model $\ms{G}$ of $\mb{G}$ over $\Z[\frac{1}{N}]$ for some integer $N>0$.  Then we have a global packet $\Pi_{\psi}(\mathbf{G}, \varrho)$ consisting of admissible $\mathbf{G}(\mathbb{A})$-representations. In particular, the set $\Pi_{\psi}(\mathbf{G}, \varrho)$ consists of all global representations $\pi$ such that for each $v$ we have $\pi_v \in \Pi_{\psi}(\mb{G}_{\Q_v}, \varrho_v)$ and for all but finitely $v$, the representation $\pi_v$ is unramified with respect to $\ms{G}(\Z_v)$. 

Fix a global Whittaker datum $\mf{w}$ of $\mb{G}^*$. We are now going to define a map $\iota_{\mathfrak{w}}: \Pi_{\psi}(\mathbf{G}, \varrho) \to \mathrm{Rep}(\mathcal{S}_{\psi})$. Pick $\pi \in \Pi_{\psi}(\mathbf{G}, \varrho)$ and write $\pi= \otimes'_v \pi_v$. Then for each place $v$, we lift the representation $\iota_{\mathfrak{w}_v}(\pi_v)$ to a representation $C_{\psi_v}$ and then restrict to $C_{\psi}$ to get a representation $\chi_{\pi_v}$. 
At almost all $v$ we have that $\chi_{\pi_v}=1$ (see \cite[\S 4.6]{KalTai}). Hence the tensor product $\chi_{\pi}=\otimes_v \chi_{\pi_v}$ is well defined. 

\begin{proposition}
The representation $\chi_{\pi}$ of $C_{\psi}$ factors through $\mathcal{S}_{\psi}$.
\end{proposition}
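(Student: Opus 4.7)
The plan is to unwind the kernel of the projection $C_\psi\twoheadrightarrow\mathcal{S}_\psi$ and verify that $\chi_\pi$ is trivial on it. Since $Z(\widehat{\mb{G}^*})\subseteq C_\psi$, equation \eqref{SCeq} gives $S_\psi=C_\psi Z(\widehat{\mb{G}^*})=C_\psi$, so $\mathcal{S}_\psi=\pi_0(C_\psi/Z(\widehat{\mb{G}^*}))$ and the kernel of $C_\psi\twoheadrightarrow\mathcal{S}_\psi$ is $(C_\psi)^\circ\cdot Z(\widehat{\mb{G}^*})$. Noting that $Z(\widehat{\mb{G}^*})^\circ\subseteq(C_\psi)^\circ$, so that $Z(\widehat{\mb{G}^*})$ is captured, modulo $(C_\psi)^\circ$, by its $\Gamma$-fixed points together with coset representatives landing in the finite quotient $Z(\widehat{\mb{G}^*})/Z(\widehat{\mb{G}^*})^\circ$, the proof reduces to establishing the two statements (a) $\chi_\pi\big|_{(C_\psi)^\circ}=1$ and (b) $\chi_\pi\big|_{Z(\widehat{\mb{G}^*})^\Gamma}=1$, together with a Clifford-theoretic compatibility ensuring (a) and (b) suffice.

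For (b), which is the global compatibility of the central character, fix $v$. Since $\iota_{\mf{w}_v}(\pi_v)\in\Irr(S^\natural_{\psi_v},\chi_{z^{\iso}_v})$, its pullback to $C_{\psi_v}$ restricts to $Z(\widehat{\mb{G}^*})^\Gamma\subseteq Z(\widehat{\mb{G}^*})^{\Gamma_v}$ as the character $\chi_{z^{\iso}_v}\big|_{Z(\widehat{\mb{G}^*})^\Gamma}$ obtained from the local Kottwitz map applied to $z^{\iso}_v$. Tensoring over all places,
\begin{equation*}
\chi_\pi\big|_{Z(\widehat{\mb{G}^*})^\Gamma}=\prod_v\chi_{z^{\iso}_v}\big|_{Z(\widehat{\mb{G}^*})^\Gamma},
\end{equation*}
and this vanishes by the product formula for the global Kottwitz invariant of the basic class $z^{\iso}\in Z^1_{\bas}(\mc{E}^{\iso},\mb{G}^*)$, as recorded in \cite[\S 4.6]{KalTai}.

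For (a), we use that each $\chi_{\pi_v}$, viewed as a representation of $C_{\psi_v}$, factors through $S^\natural_{\psi_v}=C_{\psi_v}/[C_{\psi_v}\cap\widehat{\mb{G}^*}_{\der}]^\circ$, and so is trivial on the derived subgroup $[C_{\psi_v}^\circ,C_{\psi_v}^\circ]$ of the connected reductive group $C_{\psi_v}^\circ$. Hence $\chi_{\pi_v}\big|_{C_{\psi_v}^\circ}$ is an algebraic character of the torus $C_{\psi_v}^\circ/[C_{\psi_v}^\circ,C_{\psi_v}^\circ]$; pulling back along $(C_\psi)^\circ\hookrightarrow C_{\psi_v}^\circ$ produces a character $\chi_v$ of $(C_\psi)^\circ$. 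A Tate--Nakayama-type product formula, applied to the cocycle data recording the $\psi_v$ as localizations of the single global parameter $\psi$, then forces $\prod_v\chi_v=1$ on $(C_\psi)^\circ$, giving (a).

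The main obstacle is part (a). While (b) is a direct application of the product formula for basic cocycles in $\mc{E}^{\iso}$, the identity component $(C_\psi)^\circ$ can be positive-dimensional, so one must package the local torus characters $\chi_v$ into a global Galois-cohomological invariant and invoke the appropriate product formula in that cohomology. This is precisely the isocrystal-normalized verification carried out in \cite[\S 4.6]{KalTai}, to which a full proof appeals; reconciling the gap between $Z(\widehat{\mb{G}^*})$ and $Z(\widehat{\mb{G}^*})^\Gamma$ in the reduction step of the first paragraph similarly relies on those compatibilities, applied to the finite $\Gamma$-module $Z(\widehat{\mb{G}^*})/Z(\widehat{\mb{G}^*})^\circ$.
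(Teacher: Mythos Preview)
Your reduction step contains an error: the inclusion $Z(\widehat{\mb{G}^*})\subseteq C_\psi$ is false. An element $z\in Z(\widehat{\mb{G}^*})$ commutes with $\psi(x)=(h,w)\in{}^L\mb{G}^*$ if and only if $z=w(z)$; since $\psi$ surjects onto $W_F$, this forces $z\in Z(\widehat{\mb{G}^*})^{\Gamma}$. Thus $C_\psi\cap Z(\widehat{\mb{G}^*})=Z(\widehat{\mb{G}^*})^{\Gamma}$, and your computation of the kernel of $C_\psi\twoheadrightarrow\mathcal{S}_\psi$ is not valid as written.

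More importantly, you have overlooked that in this part of the paper $\psi$ is a \emph{discrete} global Arthur parameter. The paper records (just before the proposition) that in this case $\mathcal{S}_\psi=C_\psi/Z(\widehat{\mb{G}^*})^{\Gamma}$: indeed $S_\psi^\circ\subset Z(\widehat{\mb{G}^*})$ by discreteness, and combining with $C_\psi^\circ\subseteq C_\psi$ gives $C_\psi^\circ\subseteq Z(\widehat{\mb{G}^*})\cap C_\psi=Z(\widehat{\mb{G}^*})^{\Gamma}$. Hence the kernel is exactly $Z(\widehat{\mb{G}^*})^{\Gamma}$, and the entire proposition reduces to your part (b). The paper's proof is precisely your (b): restrict each $\chi_{\pi_v}$ to $Z(\widehat{\mb{G}^*})^{\Gamma}$ to get $\chi_{z^{\iso}_v}$, and invoke Kottwitz's characterization of the image of the localization map $\mb{B}(\Q,\mb{G})\to\bigoplus_v\mb{B}(\Q_v,\mb{G}_v)$ to see that $\sum_v\chi_{z^{\iso}_v}|_{Z(\widehat{\mb{G}^*})^{\Gamma}}$ is trivial.

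Your part (a) is therefore unnecessary, and the argument you sketch for it---packaging the torus characters $\chi_v$ on $(C_\psi)^\circ$ into a Galois-cohomological invariant and applying a ``Tate--Nakayama-type product formula''---is not made precise; you do not identify which cohomology group or which product formula, and the appeal to \cite[\S4.6]{KalTai} does not substantiate this step. Similarly, the ``Clifford-theoretic compatibility'' you allude to for bridging $Z(\widehat{\mb{G}^*})$ and $Z(\widehat{\mb{G}^*})^{\Gamma}$ is left entirely unspecified. These gaps disappear once you use discreteness of $\psi$, but as written your argument is incomplete.
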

\begin{proof}
It suffices to show that $\chi_{\pi}|_{Z(\widehat{\mathbf{G}})^{\Gamma}}$ acts trivially. 

Now recall Kottwitz (\cite[ Prop. 15.6]{Kot9}) constructs a localization map
\begin{equation*}
    \mathbf{B}(\mathbb{Q}, \mathbf{G}) \to \oplus_v \mathbf{B}(\mathbb{Q}_v, \mathbf{G}_v),
\end{equation*}
as well as a map
\begin{equation*}
    \oplus_v \mathbf{B}(\mathbb{Q}_v, \mathbf{G}) \to \oplus_v X^*(Z(\widehat{\mathbf{G}})^{\Gamma_v}) \to X^*(Z(\widehat{\mathbf{G}})^{\Gamma}),
\end{equation*}
where the first map is the Kottwitz map $\kappa_{\mathbf{G}_v}$ at each place and the second map is restriction and sum.

Then Kottwitz proves that an element in $\oplus_v \mathbf{B}(\mathbb{Q}_v, \mathbf{G}_v)$ lies in the image of the localization map precisely when its image in $X^*(Z(\widehat{\mathbf{G}})^{\Gamma})$ is trivial.

Now, since $\iota_{\mathfrak{w}_v}(\pi_v) \in \mathrm{Irr}(S^{\natural}_{\psi_v}, \chi_{z^{iso}_v})$, it follows that $\chi_{\pi_v}|_{Z(\widehat{\mathbf{G}}_v)^{\Gamma_v}}$ acts by $\chi_{z^{iso}_v}$ and that $\chi_{\pi}|_{Z(\widehat{\mathbf{G}})^{\Gamma}}$ acts by $\sum\limits_v \chi_{z^{iso}_v}$. But since $(\chi_{z^{iso}_v})$ is indeed in the image of the localization map, by what we said earlier, $\chi_{\pi}|_{Z(\widehat{\mathbf{G}})^{\Gamma}}$ acts trivially.
\end{proof}
We now define the global pairing 
\begin{equation*}
    \langle \cdot, \cdot \rangle : \Pi_{\psi}(\mb{G}, \varrho) \times \mathcal{S}_{\psi} \to \mathbb{C},
\end{equation*}
 by 
 \begin{equation*}
      \langle \pi, s \rangle= \mathrm{tr}\chi_{\pi}(s).
 \end{equation*}

We can now state Arthur's conjectural multiplicity formula for the discrete spectrum. Arthur constructs a character $\epsilon_{\psi}: \mathcal{S}_{\psi} \to \{ \pm 1\}$ (see equation \cite[(8.4)]{Art1}).

\begin{conjecture}
Fix a character $\chi$ of $A_{\mathbf{G}}(\mathbb{R})^{\circ}$. Then $L^2_{disc, \chi}(\mathbf{G}(\mathbb{Q})\setminus \mathbf{G}(\mathbb{A}))$ is isomorphic to 
\begin{equation}
    \bigoplus\limits_{[\psi]}\bigoplus\limits_{\pi \in \Pi_{\psi}(\mb{G}, \varrho)} m(\psi, \pi) \pi,
\end{equation}
where 
\begin{equation}
    m(\psi, \pi) := \frac{1}{|\mathcal{S}_{\psi}|} \sum\limits_{x \in \mathcal{S}_{\psi}} \epsilon_{\psi}(x) \langle \pi, x \rangle.
\end{equation}
The first sum is taken over equivalence classes of discrete Arthur parameters with associated character on $A_{\mathbf{G}}(\mathbb{R})^{\circ}$ equal to $\chi$.
\end{conjecture}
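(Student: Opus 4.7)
The plan is to adapt Arthur's proof for classical groups (\cite{Arthurbook}) to the present setup, which uses the isocrystal normalization of \cite{KalTai} and the refined endoscopic data introduced in Section 2. The starting point is the stabilization of the Arthur--Selberg trace formula: for a test function $f \in C^{\infty}_c(\mb{G}(\A), \chi^{-1})$, one writes the discrete part of the spectral side as
\begin{equation*}
I_{disc, \chi}(f) = \sum_{[(\mb{H}, s, \Leta)]} \iota(\mb{G}, \mb{H}) \, S^{\mb{H}}_{disc, \chi}(f^{\mb{H}}),
\end{equation*}
where the sum runs over isomorphism classes of elliptic refined endoscopic data of $\mb{G}$ and $f^{\mb{H}}$ is an endoscopic transfer of $f$ with respect to the transfer factor $\Delta[\mf{w}, z^{\iso}]$ of \cite[\S4.3]{KalTai}.

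Next, one would proceed by induction on $\dim \mb{G}$. Assuming the multiplicity formula for every proper elliptic endoscopic group $\mb{H}$, and using the assumed local Langlands correspondences and endoscopic character identities at each place, one decomposes $S^{\mb{H}}_{disc, \chi}$ as a sum indexed by equivalence classes of discrete global $A$-parameters $\psi^{\mb{H}}$ of $\mb{H}$, each contributing a stable character of the global packet $\Pi_{\psi^{\mb{H}}}(\mb{H})$ weighted by the appropriate sign. Substituting into the displayed identity, grouping the terms for which $\Leta \circ \psi^{\mb{H}}$ is equivalent to a fixed discrete global $A$-parameter $\psi$ of $\mb{G}^*$, and applying Equation \eqref{endocharid} at each place converts stable $\mb{H}$-characters into invariant $\mb{G}$-characters. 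Using Lemma \ref{canoncent} and the inclusion $S_{\psi^{\mb{H}}} \hookrightarrow S_{\psi}$ induced by $\Leta$, the endoscopic sum reorganizes into an average over $\mc{S}_{\psi}$, and the coefficient of a fixed $\pi \in \Pi_{\psi}(\mb{G}, \varrho)$ collects precisely into the claimed $\frac{1}{|\mc{S}_{\psi}|}\sum_{x \in \mc{S}_{\psi}} \epsilon_{\psi}(x) \langle \pi, x \rangle$.

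The main obstacles are closing the induction and matching the sign character produced by this combinatorial unwinding with Arthur's $\epsilon_{\psi}$. For classical groups both are handled by twisted comparison with the stable trace formula for $GL_N$, but in the present generality no such ambient group is available, which is why the statement is recorded here as a conjecture rather than a theorem. Moreover, the whole argument presupposes the stabilization of both the twisted and untwisted trace formulas, the local Langlands correspondence with its endoscopic character identities (Equation \eqref{endocharid}) at every place, and the fundamental lemma with its weighted variants---each a deep theorem in its own right and taken as a working hypothesis in the remainder of the paper. For this reason, rather than attempting a proof, the purpose of this subsection is to fix normalizations of the multiplicity formula that are compatible with the refined endoscopy and isocrystal-based transfer factors used in Sections 4 and 5.
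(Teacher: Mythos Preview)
The statement is labelled as a \emph{conjecture} in the paper and is not accompanied by any proof; it is Arthur's conjectural multiplicity formula, recorded here with the isocrystal normalization so that the remainder of the paper can use it as a working hypothesis. Your proposal correctly arrives at this conclusion in its final paragraph, so in that sense you and the paper agree: there is nothing to prove, only normalizations to fix.

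That said, the first two paragraphs of your proposal read as though a proof is being attempted, which is misleading. The sketch you give (stabilize the trace formula, induct on endoscopic groups, reorganize via the endoscopic character identities) is indeed the shape of Arthur's argument for quasi-split classical groups, but as you yourself note, the induction cannot be closed in the generality of the paper and the identification of the sign character with $\epsilon_{\psi}$ requires the twisted comparison with $GL_N$ that is unavailable here. It would be cleaner to state up front that the paper records this as a conjecture and omit the heuristic proof sketch, since the paper does not attempt one either.
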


\subsection{Representation theory and endoscopy}

To begin, we prove spectral analogues of the maps in Corollary \ref{endcomp}. Such constructions appear for instance in works of Kottwitz (see the proof of \cite[Prop 11.3.2]{Kot5}) and Shelstad (\cite[\S 4.2]{She1}). We choose to provide the details in this work. For the discussion of these spectral analogues only, $F$ is a number field or $p$-adic field and $G$ is a reductive group over $F$ with simply connected derived subgroup. We assume that we can lift each endoscopic datum $(H,s,\eta)$ of $G$ to a datum $(H, s, \Leta)$. 

\begin{definition}
We define $\mc{EP}_F(G)$ to be equivalences classes of tuples $(H, s, \, ^L\eta, \psi^H)$ where $(H, s, \eta)$ is a standard endoscopic datum and $\psi^H$ is an Arthur parameter. We say that $(H_1, s_1, \Leta_1, \psi^H_1)$ and $(H_2, s_2, \Leta_2, \psi^H_2)$ are equivalent if there exists an isomorphism $\alpha: H_2 \to H_1$ of endoscopic data and such that $\psi^H_2 \circ \, ^L\alpha$ is $Z(G)$-equivalent to $\psi^H_1$. Note that by Lemma \ref{alphalem}, the choice of $^L\alpha$ is unique up to $\widehat{H_1}$-conjugacy and that therefore the notion of $Z(G)$-equivalence does not depend on this choice.

We define $\mc{EP}^r_F(G)$ to consist of tuples $(H, s, \, ^L\eta, \psi^H)$ where now $(H, s, \eta)$ is assumed to be a refined endoscopic datum and $(H_1, s_1, \Leta_1, \psi^H_1)$ and $(H_2, s_2, \Leta_2, \psi^H_2)$ are equivalent if there exists an isomorphism $\alpha: H_2 \to H_1$ giving an isomorphism of refined endoscopic data and such that ${}^L\alpha \circ \psi^H_1$ is $Z(G)$-equivalent to $\psi^H_2$.
\end{definition}

\begin{definition}
We also define the set $\mc{SP}_F(G)$ of equivalence classes of pairs $(\psi, \ov{s})$ such that $\psi$ is an Arthur parameter of $G$ and $\ov{s} \in S_{\psi}/Z(\widehat{G})$. Two pairs $(\psi_1, \ov{s}_1)$ and $(\psi_2, \ov{s}_2)$ are equivalent if $\psi_1$ and $\psi_2$ are equivalent by some $g \in \widehat{G}$ such that $\Int(g)(\ov{s}_1)$ and $\ov{s}_2$ are conjugate in $\mc{S}_{\psi_2}$.

We define the set $\mc{SP}^r_F(G)$ to be equivalence classes of pairs $(\psi, s)$ such that $\psi$ is an Arthur parameter of $G$ and $s \in C_{\psi}$. We say that $(\psi_1, s_1)$ is equivalent to $(\psi_2, s_2)$ if $\psi_1$ is equivalent to $\psi_2$ by some $g \in \widehat{G}$ such that $\Int(g)(s_1)$ is conjugate to $s_2$ in $C_{\psi_2}$.
\end{definition}
We now have
\begin{proposition}{\label{EPeqSP}}
There exists natural bijection
\begin{equation}
    \mc{EP}^r_F(G) \cong \mc{SP}^r_F(G)
\end{equation}
induced by
\begin{equation*}
    (H, s, \Leta, \psi^H) \mapsto (\Leta \circ \psi^H, \eta(s)).
\end{equation*}
\end{proposition}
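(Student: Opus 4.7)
The plan is to exhibit an inverse to the stated map and verify mutual inverseness, with the main work concentrated in (a) checking the forward map descends to equivalence classes, and (b) constructing an inverse from pairs $(\psi, s) \in \mc{SP}^r_F(G)$.

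For (a), given $(H, s, \Leta, \psi^H)$, the composition $\psi := \Leta \circ \psi^H$ is an Arthur parameter of $G$, and since $s \in Z(\widehat{H})^{\Gamma_F}$ centralizes the image of $\psi^H$ in $\LH$, the element $\eta(s) = \Leta(s)$ centralizes $\psi(\mc{L}_F \times SL_2(\C))$ and so lies in $C_\psi$. For an equivalence realized by $\alpha : H_2 \to H_1$, Lemma \ref{alphalem} yields a lift $\Lalpha : \LH_1 \to \LH_2$ and a $g \in \widehat{G}$ with $\Int(g) \circ \Leta_1 = \Leta_2 \circ \Lalpha$. The refined isomorphism condition $\widehat{\alpha}(s_1) = s_2$ together with $\Int(g) \circ \eta_1 = \eta_2 \circ \widehat{\alpha}$ gives $\Int(g)(\eta_1(s_1)) = \eta_2(s_2)$, and the $Z(\widehat{G})$-equivalence of $\Lalpha \circ \psi^H_1$ with $\psi^H_2$ then yields $\Int(g) \circ \psi_1 \sim \psi_2$, so the two output classes agree in $\mc{SP}^r_F(G)$.

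For (b), given $(\psi, s)$, I would set $\widehat{H} := Z_{\widehat{G}}(s)^{\circ}$; this is a connected reductive subgroup of $\widehat{G}$ containing $s$ in its center. To install a $\Gamma_F$-structure, for each $w \in W_F$ pick a lift $\tilde{w} \in \mc{L}_F$ and conjugate $\widehat{H}$ by $\psi(\tilde{w}) \in Z_{\LG}(s)$; two different lifts differ by elements of $\psi(\ker(\mc{L}_F \to W_F))$, which centralize $s$ and therefore act by inner automorphisms of $\widehat{H}$, so the resulting $\rho_H : \Gamma_F \to \Out(\widehat{H})$ is well-defined and factors through a finite quotient. By Lemma \ref{qsbijlem} this determines a quasisplit $F$-group $H$ with the prescribed dual. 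I then define $\Leta : \LH \to \LG$ to be the inclusion $\widehat{H} \hookrightarrow \widehat{G}$ on the neutral component together with the assignment $(1, w) \mapsto \psi(\tilde{w})$; one checks this is a well-defined $L$-embedding, that $\psi$ factors through $\Leta$ to give $\psi^H$, and that $s \in Z(\widehat{H})^{\Gamma_F}$ since $\Int(\psi(\tilde{w}))$ fixes $s$. A parallel argument invoking Lemma \ref{equivconjlem} to pass between pointwise and cocycle-level equivalences then shows that equivalent pairs yield equivalent quadruples.

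Mutual inverseness then follows by tracing through the constructions: starting from $(\psi, s)$, the forward map recovers the pair tautologically; starting from $(H, s, \Leta, \psi^H)$, one recovers $\widehat{H}$ as $Z_{\widehat{G}}(\eta(s))^{\circ}$ using the defining property $\eta(\widehat{H}) = Z_{\widehat{G}}(\eta(s))^{\circ}$ of endoscopic data (implicit via the equivalence with \cite[\S 2.1]{KS} noted after Definition \ref{isoendtrip}), and the Galois action together with $\psi^H$ then match up to the allowed equivalence. The principal obstacle will be to verify rigorously that the inverse construction is well-defined modulo the equivalence relation on $\mc{EP}^r_F(G)$; specifically, that $\rho_H$, $\Leta$, and $\psi^H$ do not depend on the various choices (lifts $\tilde{w}$, splittings of $\LG$, etc.) beyond what can be absorbed into this equivalence.
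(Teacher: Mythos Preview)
Your overall strategy matches the paper's: build the inverse by setting $\widehat{H} := Z_{\widehat{G}}(s)^{\circ}$, install a $\Gamma_F$-action on $\widehat{H}$ via conjugation by the image of $\psi$, and then verify mutual inverseness. The forward map and its descent to equivalence classes are handled essentially as in the paper.

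There is, however, a genuine gap in your construction of $\Leta$. The assignment $(1,w)\mapsto \psi(\tilde w)$ does not extend $\eta$ to a group homomorphism $\LH\to\LG$. In $\LH=\widehat{H}\rtimes W_F$ the element $(1,w)$ acts on $\widehat{H}$ through a fixed pinned splitting $\iota:\Out(\widehat{H})\to\Aut(\widehat{H})$, whereas $\Int(\psi(\tilde w))|_{\widehat{H}}$ realizes only the same \emph{outer} class and will in general differ from $\iota(\rho_H(w))$ by a nontrivial inner automorphism; thus the relation $\Leta\bigl((1,w)h(1,w)^{-1}\bigr)=\psi(\tilde w)\,\eta(h)\,\psi(\tilde w)^{-1}$ fails. (There is a second, related issue: the lifts $\tilde w$ do not form a section of $\mc{L}_F\to W_F$, so $\psi(\tilde{w_1})\psi(\tilde{w_2})\neq\psi(\widetilde{w_1w_2})$.) The paper avoids this by first constructing only the refined triple $(H,s,\eta)$, then invoking the standing hypothesis that $G_{\der}$ is simply connected to extend $\eta$ to some $\Leta:\LH\to\LG$ abstractly, and finally checking that $\im(\psi)$ lies in the group $\mc{H}$ of Construction~\ref{triptodat} (which one verifies coincides with $\im(\Leta)$); only then is $\psi^H$ defined by $\psi=\Leta\circ\psi^H$.

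A smaller imprecision: your claim that $\psi(\ker(\mc{L}_F\to W_F))$ ``centralizes $s$ and therefore acts by inner automorphisms of $\widehat{H}$'' is incomplete. Centralizing $s$ only places these elements in $Z_{\widehat{G}}(s)$, and elements of $Z_{\widehat{G}}(s)\setminus\widehat{H}$ can act on $\widehat{H}=Z_{\widehat{G}}(s)^{\circ}$ by outer automorphisms. The correct reason is that $\ker\bigl(\mc{L}_F\times SL_2(\C)\to W_F\bigr)$ is connected, so its image under $\psi$ lands in the identity component $\widehat{H}$. The paper argues this connectedness explicitly and also invokes Lemma~\ref{finoutim} to see that the induced map $\mc{L}_F\times SL_2(\C)\to\Out(\widehat{H})$ has finite image, hence factors through a finite Galois quotient of $W_F$.
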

\begin{proof}
Given $(H, s, \, ^L\eta, \psi^H) \in \mc{EP}^r_F(G)$, we get a parameter $\psi$ of $G$ given by ${}^L\eta \circ \psi^H$. Now, $s \in Z(H)^{\Gamma_F}$ and so commutes with the image of $\psi^H$ in $^LH$. In particular, this implies that  $\eta(s)$ commutes with the image of $\psi$ in $^LG$ and hence that $\eta(s) \in C_{\psi}$. It is easy to check that this induces a map $\mc{EP}^r_F(G) \to \mc{SP}^r_F(G)$. To prove the proposition, we construct an inverse map.

Pick an equivalence class $[\psi, s] \in \mc{SP}^r_F(G)$ and pick a representative $(\psi, s)$. Define $\widehat{H} := Z_{\widehat{G}}(s)^0$ and define $\eta$ to be the natural embedding $\widehat{H} \hookrightarrow \widehat{G}$. Then, for any $g \in \im(\psi) \subset \,  ^LG$, the map $\Int(g): \widehat{G} \to \widehat{G}$ stabilizes $\widehat{H}$ and hence gives a map 
\begin{equation*}
   \ov{\psi}: \mc{L}_F \times SL_2(\C) \to \Aut(\widehat{H}) \to \Out(\widehat{H}),
\end{equation*}
which has finite image by Lemma \ref{finoutim}.
Now, pick a splitting 
\begin{equation*}
    \iota: \Out(\widehat{H}) \to \Aut(\widehat{H}),
\end{equation*}
compatible with the splitting used to define $^LG$ and consider the action of $\mc{L}_F \times SL_2(\C)$ on $\widehat{H}$ given by $\iota \circ \ov{\psi}$. Since the image of $\ov{\psi}$ is finite, this new action has the benefit that it factors through a finite quotient of $\mc{L}_F \times SL_2(\C)$. Now note that any continuous finite quotient of $\mc{L}_F$ is of the form $\Gal(F'/F)$ for some finite extension $F'/F$. Indeed, evidently $\SL_2(\C)$ has no non-trivial finite continuous quotients. Thus, it suffices to prove the claim for $\mc{L}_F$. Now, if $K$ denotes the kernel of $\mc{L}_F\to W_F$ then $K$ is a connected pro-reductive complex group. Thus, $K$ also has no non-trivial finite continuous quotients. Thus, we've reduced the claim to $W_F$ for which the claim is obvious.

In particular, we can find a finite extension $E/F$ so that the action factors through $\Gal(E/F)$. Hence we get an action of $\Gamma_F$ on $\widehat{H}$. In particular, our action of $\Gamma_F$ allows us to define a quasisplit group $H$ over $F$ whose dual group is $\widehat{H}$. 

We need to check that $(H,s, \eta)$ is a refined endoscopic datum. It remains to check that the conjugacy class of $\eta$ is $\Gamma_F$-invariant and that $s \in Z(\widehat{H})^{\Gamma_F}$. For the first check, we pick $w \in \Gamma_F$ and need to show that the constructed action of $w$ on $\widehat{H}$ differs from the action of $w$ on $\widehat{G}$ by an inner automorphism of $\widehat{G}$. This is true by construction. For the second check, we pick a $w \in \Gal(E/F)$ and observe that the action of $w$ on $\widehat{H}$ differs from that induced by $\im(\psi)$ by an inner automorphism of $\widehat{H}$. In particular, since $s$ commutes with $\im(\psi)$, it is invariant under the action on $\widehat{H}$ as desired.

By our assumption that $G_{\der}$ is simply connected, we can extend $\eta$ to a map $\eta:  \, ^LH \to \, ^LG$. Then we need to check that the parameter $\psi$ factors through $^LH$. In other words, we need to show that $\im(\psi) \subset \mc{H}$ using the notation of \ref{triptodat}. First pick $w \in \mc{L}_F \times SL_2(\C)$ and consider $\psi(w)$. Then we check that there exists an element $y \in {}^LH$ such that $\Int(\psi(w)) \circ \eta= \eta \circ \Int(y)$. But indeed this follows immediately from the fact that the action of $w \in W_F$ on $\widehat{H} \subset \, ^LH$ differs from that of $\Int(\psi(w))$ by an element of $\Inn(\widehat{H})$. Finally we define $\psi^H$ by $\psi= \, ^L\eta \circ \psi^H$

We now show the map we have constructed is well-defined. First, it is clear that our construction does not depend on our choice of splitting. Next, suppose that $(\psi_1, s_1)$ is equivalent to $(\psi_2, s_2)$ by some $g \in \widehat{G}$ satisfying $w \mapsto g\psi(w)g^{-1}\psi(w)^{-1}$ is a (locally) trivial cocycle of $\mc{L}_F \times SL_2(\C)$ valued in $Z(\widehat{G})$. Then by assumption $gs_1g^{-1}$ is conjugate by some $s \in C_{\psi_2}$ to $s_2$ and so the groups $\widehat{H}_1$ and $\widehat{H}_2$ are conjugate in $\widehat{G}$ by $sg$. Moreover, it is easy to check that the map $\Int(sg) : \widehat{H}_1 \to \widehat{H}_2$ will preserve the actions of $\Gamma_F$ up to an inner automorphism of $\widehat{H}_2$ and hence descends to an isomorphism $\alpha: H_2 \to H_1$ defined over $F$. The map $\alpha$ then gives an isomorphism of the endoscopic data $(H_1, s_1, \eta_1)$ and $(H_2, s_2, \eta_2)$ and $\Int(sg) \circ \psi^H_1$ is $Z(G)$-equivalent to $\psi^H_2$. This shows the map is well-defined.

To conclude the proof, we must show that the maps $\mc{EP}^r_F(G) \to \mc{SP}^r_F(G)$ and $\mc{SP}^r_F(G) \to \mc{EP}^r_F(G)$ that we have constructed are inverses of each other. It is clear that the composition $\mc{SP}^r_F(G) \to \mc{EP}^r_F(G) \to \mc{SP}^r_F(G)$ is the identity. Now take a representative $(H, s, \, ^L\eta, \psi^H)$ of $[H,s, \, ^L\eta, \psi^H] \in \mc{EP}^r_F(G)$. Then we want to show that this is equivalent to the tuple $(H', s', \, ^L\eta', \psi^{H'})$ that we get from applying the composition $\mc{EP}^r_F(G) \to \mc{SP}^r_F(G) \to \mc{EP}^r_F(G)$ to $(H, s, \, ^L\eta, \psi^H)$. We have a map ${\eta'}^{-1} \circ \eta: \widehat{H} \to \widehat{H'}$. We claim this map is equivariant for each $w \in \Gamma_F$ up to conjugation by some $h \in \widehat{H}$. There exists some finite extension $E/F$ such that the actions of $\Gamma_F$ on both groups factor through $\Gal(E/F)$ hence we need only prove the claim for $w \in \Gal(E/F)$. Then if we pick a lift $w' \in \mc{L}_F \times SL_2(\C)$ of $w$, the action of $w$ on each group differs by an inner automorphism from the action of conjugation by $\psi^H(w')$ or $\psi^{H'}(w')$ respectively. So then we have (up to conjugation which we denote by $\sim$) for $h \in \widehat{H}$:
\begin{align*}
    (w \cdot ({\eta'}^{-1} \circ \eta))(h) &=w({\eta'}^{-1} \eta(w^{-1}(h))\\
    & \sim \Int(\psi^{H'}(w'))({\eta'}^{-1}\eta (\Int(\psi^H(w')^{-1})(h))\\
    &= ({\eta'}^{-1} \circ \Int(\psi(w')) \circ \Int(\psi(w')^{-1}) \circ  \eta)(h)\\
    &=({\eta'} \circ \eta)(h).
\end{align*}
This proves the claim and implies that the isomorphism descends to an isomorphism $\alpha: H' \to H$ defined over $F$. This satisfies $\widehat{\alpha}(s)=s'$ and hence gives the desired isomorphism of endoscopic data. Moreover, it is clear that we have an equivalence $(H,s, ^L\eta, \psi^H), (H', s' , \, ^L\eta', \psi^{H'})$.
\end{proof}
There is also a non-refined version of the above proposition.
\begin{proposition}
We have a natural bijection
\begin{equation}
    \mc{EP}_F(G) \cong \mc{SP}_F(G)
\end{equation}
\end{proposition}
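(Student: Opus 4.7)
The approach is to mimic the proof of Proposition \ref{EPeqSP}, pulling back the finer bijection through the natural coarsening maps $\mc{EP}^r_F(G) \to \mc{EP}_F(G)$ and $\mc{SP}^r_F(G) \to \mc{SP}_F(G)$, and then checking that the extra identifications on each side correspond under the bijection.

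First, I would define the forward map $(H, s, {}^L\eta, \psi^H) \mapsto ({}^L\eta \circ \psi^H, \overline{\eta(s)})$. To see that $\overline{\eta(s)}$ lies in $S_\psi/Z(\widehat{G})$, the plan is to compute directly the cocycle $w \mapsto \psi(w)\eta(s)\psi(w)^{-1}\eta(s)^{-1}$: writing $\psi^H(w) = (h_w, w) \in {}^LH$ and using that $s \in Z(\widehat{H})$ together with the $\Gamma_F$-equivariance of $\eta$ on $Z(\widehat{H})$ discussed in the remark following Definition \ref{endtrip}, this cocycle simplifies to $\eta$ applied to the cocycle $w \mapsto w(s)s^{-1}$, which is precisely the image of $s$ under the connecting homomorphism of Equation \eqref{endhom} and hence is (locally) trivial by the endoscopic datum condition. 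Well-definedness on equivalence classes then proceeds exactly as in Proposition \ref{EPeqSP}: if $\alpha \colon H_2 \to H_1$ realizes the isomorphism of endoscopic data and $g \in \widehat{G}$ conjugates $\eta_2 \circ \widehat{\alpha}$ to $\eta_1$, then Lemma \ref{alphalem} provides a compatible ${}^L\alpha$ translating the parameter equivalence, and the identity $s_2 \equiv \widehat{\alpha}(s_1) \pmod{Z(\widehat{G})}$ built into the notion of isomorphism of standard endoscopic data forces $\overline{\eta_2(s_2)} = \mathrm{Int}(g)(\overline{\eta_1(s_1)})$ in $S_{\psi_2}/Z(\widehat{G})$.

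For the inverse map, given $(\psi,\bar{s}) \in \mc{SP}_F(G)$, I use Equation \eqref{SCeq} to pick a lift $s \in C_\psi$ of $\bar{s}$, producing a representative of an element of $\mc{SP}^r_F(G)$, and then apply the inverse from Proposition \ref{EPeqSP} to obtain a tuple $(H,s,{}^L\eta,\psi^H) \in \mc{EP}^r_F(G)$ which we then view as an element of $\mc{EP}_F(G)$. The essential check is independence of the lift: any two lifts $s, sz$ in $C_\psi$ (with $z \in Z(\widehat{G})$) yield identical $\widehat{H} = Z_{\widehat{G}}(s)^\circ$, identical Galois action on $\widehat{H}$ (which in the proof of Proposition \ref{EPeqSP} is built out of the conjugation action of $\mathrm{Im}(\psi)$ on $\widehat{H}$ and so does not see $s$), and identical $\eta$ and ${}^L\eta$. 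The two resulting tuples therefore differ only by replacing $s$ with $sz$, and the identity on $H$ exhibits them as equivalent in $\mc{EP}_F(G)$, as required. Compatibility with the equivalence relations on both sides then follows from the argument already made for the refined case.

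That the two maps are mutually inverse reduces, via the coarsening maps, to the same statement in Proposition \ref{EPeqSP}. The main thing to watch is that the extra coarsening on the $\mc{SP}$-side (quotienting by $Z(\widehat{G})$) corresponds, under the bijection of Proposition \ref{EPeqSP}, to exactly the extra coarsening on the $\mc{EP}$-side (passing from refined to standard isomorphism): replacing $s$ by $sz$ with $z \in Z(\widehat{G})$ produces data that are isomorphic as standard endoscopic data (via the identity on $H$) but in general not as refined endoscopic data. I expect this bookkeeping to be the only delicate step, since one must verify that the Galois action constructed on $\widehat{H}$ is genuinely insensitive to the central twist $z$; beyond this, no new ideas beyond Proposition \ref{EPeqSP} and Lemma \ref{alphalem} should be required.
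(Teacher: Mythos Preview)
Your approach is the same as the paper's: define the forward map by $(H,s,{}^L\eta,\psi^H)\mapsto({}^L\eta\circ\psi^H,\overline{\eta(s)})$, verify $\eta(s)\in S_\psi$ via the cocycle computation, and build the inverse by lifting $\overline{s}$ and rerunning the construction of Proposition~\ref{EPeqSP}. The paper's proof is in fact less detailed than yours.

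One small point to tighten: for the inverse you invoke Equation~\eqref{SCeq} to lift $\overline{s}$ into $C_\psi$, but that equation is only asserted when $F$ is local or $G$ satisfies the Hasse principle, and neither hypothesis is in force at this point of the paper. The paper instead lifts $\overline{s}$ to an arbitrary $s\in S_\psi$ and observes that the construction of Proposition~\ref{EPeqSP} goes through unchanged: since $\psi(w)s\psi(w)^{-1}=z_w s$ with $z_w\in Z(\widehat{G})$, the image of $\psi$ still normalizes $Z_{\widehat{G}}(s)^\circ$, so one obtains $\widehat{H}$, the $\Gamma_F$-action, and the factorization $\psi={}^L\eta\circ\psi^H$ exactly as before; the resulting $(H,s,\eta)$ is then a standard (not necessarily refined) endoscopic datum because $s\in Z(\widehat{H})$ need not be $\Gamma_F$-fixed. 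With this adjustment your independence-of-lift argument (two lifts differ by $z\in Z(\widehat{G})$, same $\widehat{H}$, same Galois action, identity on $H$ gives the isomorphism in $\mc{EP}_F(G)$) works verbatim.
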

\begin{proof}
The argument is similar to that of the previous lemma, though we comment on some of the key differences. 

We can still define a map $\mc{EP}_F(G) \to \mc{SP}_F(G)$. We note that because the endoscopic datum is no longer assumed to be refined, $s$ will not be an element of $Z(\widehat{H})^{\Gamma_F}$ and hence, $\eta(s)$ will not be an element of $C_{\psi}$. Instead, we can check that $w \mapsto s^{-1}\psi^H(w)s \psi^H(w)^{-1}$ gives a $1$-cocycle of $\mc{L}_F$ valued in $Z(\widehat{G})$ so that $\eta(s) \in S_{\psi}$. 

To define the map $\mc{SP}_F(G) \to \mc{EP}_F(G)$, we start with $(\psi, \ov{s})$ choose a lift $s \in S_{\psi}$. Now we can proceed as in the previous proposition and can check that the isomorphism class of the endoscopic datum $(H, s, \eta)$ that we construct will not depend on this choice of lift.
\end{proof}
\begin{corollary}{\label{paramcompat}}
The above maps fit into a natural commutative diagram
\begin{equation}
\begin{tikzcd}
\mc{EP}^r_F(G) \arrow[d] \arrow[r, "\sim"] & \mc{SP}^r_F(G) \arrow[d]  \\
\mc{EP}_F(G) \arrow[r, "\sim"] & \mc{SP}_F(G).
\end{tikzcd}
\end{equation}
\end{corollary}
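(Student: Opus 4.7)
The plan is to verify commutativity directly on representatives, after first making the vertical forgetful maps explicit and noting that their well-definedness on equivalence classes is essentially automatic.

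First I would pin down the vertical arrows. The left vertical map sends a refined tuple $(H, s, \Leta, \psi^H)$, with $s \in Z(\widehat{H})^{\Gamma_F}$, to the same tuple viewed as a standard endoscopic tuple; this is legitimate because $Z(\widehat{H})^{\Gamma_F} \subset Z(\widehat{H})$ and the local triviality condition on the image of $s$ in $Z(\widehat{H})/Z(\widehat{G})$ required in Definition \ref{endtrip} is automatic when $s$ is $\Gamma_F$-fixed. The right vertical map sends $(\psi, s) \in \mc{SP}^r_F(G)$, with $s \in C_\psi$, to $(\psi, \overline{s}) \in \mc{SP}_F(G)$ via the inclusion $C_\psi \hookrightarrow S_\psi$ (which holds by \eqref{SCeq}) followed by the projection $S_\psi \to S_\psi/Z(\widehat{G})$.

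Next I would pick a representative $(H, s, \Leta, \psi^H)$ and compute both compositions. Going right and then down yields first $(\Leta \circ \psi^H, \eta(s)) \in \mc{SP}^r_F(G)$ and then $(\Leta \circ \psi^H, \overline{\eta(s)}) \in \mc{SP}_F(G)$. Going down and then right yields the same initial tuple regarded in $\mc{EP}_F(G)$ and then, applying the non-refined horizontal bijection, the pair $(\Leta \circ \psi^H, \overline{\eta(s)}) \in \mc{SP}_F(G)$, where $\eta(s)$ is now regarded a priori as an element of $S_\psi$ rather than $C_\psi$. These two outputs are literally the same tuple, so commutativity holds at the level of representatives.

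Finally I would check that the vertical maps descend to equivalence classes: any isomorphism of refined endoscopic data together with a $Z(G)$-equivalence of Arthur parameters is in particular an isomorphism of standard endoscopic data with the same parameter equivalence, and any conjugation by $g \in \widehat{G}$ realizing an equivalence in $\mc{SP}^r_F(G)$ descends to a conjugation in $\mc{S}_{\psi_2}$ realizing the corresponding equivalence in $\mc{SP}_F(G)$. Combined with the well-definedness of the horizontal bijections from Proposition \ref{EPeqSP} and its non-refined analogue, this gives the asserted commutativity. The main ``obstacle'' is purely bookkeeping: verifying that the defining relations for refined equivalences refine those for standard equivalences on both the endoscopic and spectral sides, a verification which is immediate from the definitions. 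No substantive argument beyond that already carried out for Proposition \ref{EPeqSP} is needed.
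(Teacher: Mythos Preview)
Your proposal is correct and is precisely the routine verification the paper leaves implicit; the corollary is stated without proof in the paper, as it follows immediately from the explicit formulas for the two horizontal bijections in Proposition~\ref{EPeqSP} and its non-refined analogue. Your unpacking of the vertical forgetful maps and the direct comparison on representatives is exactly the intended (tautological) argument.
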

\begin{corollary}
We define subsets $\mc{EP}^r_F(G)^{\el} \subset \mc{EP}^r_F(G), \mc{EP}_F(G)^{\el} \subset \mc{EP}_F(G)$ where $(H, s, \eta)$ is assumed to be an elliptic endoscopic datum and $\psi^H$ is such that $\eta \circ \psi^H$ is  discrete. Similarly, we define subsets $\mc{SP}^r_F(G)^{\el} \subset \mc{SP}^r_F(G), \mc{SP}_F(G)^{\el} \subset \mc{SP}_F(G)$ where $\psi$ is assumed be discrete. Then the above bijections and commutative diagram restrict to these sets.
\end{corollary}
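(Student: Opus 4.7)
The plan is to reduce the corollary to a single compatibility check, namely that on the refined side the joint conditions ``$(H,s,\eta)$ elliptic'' and ``$\Leta\circ\psi^H$ discrete'' correspond exactly to the single condition ``$\psi$ discrete'' under the bijection of Proposition \ref{EPeqSP}. Since that bijection sends $(H,s,\Leta,\psi^H)$ to $(\Leta\circ\psi^H,\eta(s))$, discreteness of $\Leta\circ\psi^H$ on the endoscopic side is literally the same condition as discreteness of $\psi:=\Leta\circ\psi^H$ on the spectral side. The forward implication is then tautological: if $(H,s,\eta)$ is elliptic and $\Leta\circ\psi^H$ is discrete, the image $(\psi,\eta(s))$ lies in $\mc{SP}^r_F(G)^{\el}$.

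The main content lies in the backward implication. Starting from $(\psi,s)\in\mc{SP}^r_F(G)^{\el}$, the inverse map constructs $\widehat{H}:=Z_{\widehat{G}}(s)^\circ$ with $\eta$ the natural inclusion, and I must verify that $(Z(\widehat{H})^{\Gamma_F})^\circ\subset Z(\widehat{G})$. The key step is to establish the inclusion
\begin{equation*}
\eta(Z(\widehat{H})^{\Gamma_F})\subset C_\psi,
\end{equation*}
after which discreteness of $\psi$ gives $C_\psi^\circ\subset S_\psi^\circ\subset Z(\widehat{G})$, and hence $(Z(\widehat{H})^{\Gamma_F})^\circ\subset C_\psi^\circ\subset Z(\widehat{G})$ as required.

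To prove this inclusion, I would fix $h\in Z(\widehat{H})^{\Gamma_F}$ and check that $\eta(h)$ commutes with every $\psi(w)=\Leta(\psi^H(w))$ for $w\in\mc{L}_F\times\SL_2(\C)$. Writing $\psi^H(w)=(h_w,w)$ and $\Leta(1,w)=(g_w,w)$, the element $\eta(h)$ commutes with $\eta(h_w)$ because $h\in Z(\widehat{H})$, so the remaining check reduces to showing that $\eta(h)$ commutes with $(g_w,w)$ inside $\LG$. This in turn follows from the $L$-homomorphism identity $\eta(w\cdot h)=g_w\,w(\eta(h))\,g_w^{-1}$ combined with $\Gamma_F$-invariance $w\cdot h=h$. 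This is essentially the same calculation already used (implicitly) in the proof of Proposition \ref{EPeqSP}, so no new ingredient is needed.

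The non-refined analogue is identical after replacing $s,C_\psi$ by $\ov{s},S_\psi$ and using $S_\psi=C_\psi Z(\widehat{G})$, and the commutativity of the restricted diagram is inherited from that of Corollary \ref{paramcompat} because the vertical projections manifestly preserve both the elliptic and discrete conditions. If any step presents difficulty, it will be the $L$-homomorphism computation above, but this is a routine verification rather than a genuine obstacle.
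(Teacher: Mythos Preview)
Your proposal is correct and follows essentially the same approach as the paper. The only difference is packaging: the paper condenses your direct computation into the single chain of inclusions $\eta((Z(\widehat{H})^{\Gamma_F})^{\circ}) \subset \eta(C^{\circ}_{\psi^H}) \subset C^{\circ}_{\psi} \subset Z(\widehat{G})$, where the first inclusion is exactly your observation that an element of $Z(\widehat{H})^{\Gamma_F}$ commutes with the image of $\psi^H$ in ${}^LH$, and the second follows by applying the homomorphism $\Leta$.
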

\begin{proof}
We need to check that if $[\psi, s] \in \mc{SP}^r_F(G)^{\el}$, then the tuple $(H, s, \, ^L\eta, \psi^H)$ we construct from $(\psi, s)$ satisfies that $(H, s, \eta)$ is elliptic as well as the analogous statement for $\mc{SP}_F(G)^{\el}$. But we have have $\eta((Z(\widehat{H})^{\Gamma_F})^{\circ}) \subset \eta(C^{\circ}_{\psi^H}) \subset C^{\circ}_{\psi} \subset Z(\widehat{G})$ as desired. 
\end{proof}
\begin{remark}
In the case that $\psi$ is discrete, $S_{\psi}/Z(\widehat{G})$ equals $\mc{S}_{\psi}$ and so $\mc{SP}_F(G)^{\el}$ parametrizes $(\psi, \ov{s})$ such that $\psi$ is discrete and  $\ov{s} \in \mc{S}_{\psi}$.
\end{remark}

\section{Cohomology of Shimura varieties}
In this section we review the conjectural desciption of the cohomology of Shimura varieties as in \cite{kot7}. For the most part, we can follow the stabilization and de-stabilization of the trace formula for the cohomology of Shimura varieties as in \cite{kot7}. However, complications arise coming from the normalization of the local Langlands correspondence at non-quasisplit places. We will need a description of the local Langlands correspondence at non-quasisplit places for two reasons. The first is that we will not assume $\mb{G}$ is quasisplit and will need a description of the local Langlands correspondence for $\mb{G}(\Q_v)$ for each finite place $v$. More seriously, when we study Igusa varieties and Rapoport--Zink spaces, we will need to describe the local Langlands correspondence for $J_b(\Q_p)$ where $J_b$ is an inner form of a Levi subgroup $M_b$ of $\mb{G}_{\Q_p}$. 

We choose to normalize the local Langlands correspondence using the isocrystal description as in \cite[Conjecture F]{Kal1} and statements from the conjectural global Langlands correspondence as in \cite{KalTai}. In order to make the rest of our discussion compatible with this choice, we need to make a number of other modifications. Most prominently we need the notion of refined endoscopic datum in the local case as this is the version of endoscopy needed to state the local endoscopic character identity (Equation \eqref{endocharid}) using the isocrystal normalization of the local Langlands correspondence.  Via Lemma \ref{SSeqEQ}, this means that locally, the group $Z(\widehat{I}^G_{\gamma})^{\Gamma_v}$ must appear in the stabilization of the trace formula as opposed to the group $\mf{K}(I^G_{\gamma})$ of Kottwitz. We describe this modified description of the cohomology of Shimura varieties in detail in this section.

\subsection{Normalization of transfer factors}{\label{transfactsect}}
In this subsection we discuss the normalization of transfer factors using the Kottwitz set $\mb{B}(F, \mb{G})$. Our exposition follows the theory developed in \cite{Kal4}, \cite{KalTai}, \cite{Kot9}, and \cite{BM3}. 

\begin{assumption}
For the remainder of this section, we work with a connected reductive group $\mb{G}$ over $\Q$ and a fixed quasisplit inner form $\mb{G}^*$ satisfying the following properties.
\begin{itemize}
    \item The group $\mb{G}$ satisfies the Hasse principle,
    \item The group $\mb{G}_{\der}$ is simply connected,
    \item There exists an extended pure inner twist $(\varrho, z^{iso})$ where $z^{iso} \in Z^1_{\bas}(\mc{E}^{\iso}, \mb{G}^*)$ and $\varrho: \mb{G}^* \to \mb{G}$.
\end{itemize}
\end{assumption}
We fix an extended pure inner twist $(\varrho, z^{iso})$ as in the above assumption. We also fix a $\Q$-splitting $(\mb{B}, \mb{T}, \{X_{\alpha}\})$ of $\mb{G}$ as well as a character $\chi: \Q \to \C^{\times}$. By \cite[\S 5.3]{KS}, this data induces a Whittaker datum $\mf{w}$ of $\mb{G}^*$ and at each place $v$ of $\Q$, the above data induces a local Whittaker datum $\mf{w}_v$ of $\mb{G}^*_{\Q_v}$.

From the theory of standard endoscopy (\cite{LS2}, see also \cite[\S5.3]{KS}, \cite{KS2}), there exists for each refined endosopic datum $(\mb{H}, s, \eta)$ of $\mb{G}^*$ and each place $v$ of $\Q$, a canonical ``Whittaker normalized'' transfer factor 
\begin{equation*}
    \Delta[\mf{w}_v]: \mb{G}^*(\Q_v)_{ss} \times \mb{H}(\Q_v)_{ss, (\mb{G}^*_{\Q_v}, \mb{H}_{\Q_v})-reg} \to \C.
\end{equation*}
We recall that we are using the normalization $\Delta^{\lambda}_D$ in the notation of \cite[\S5.5]{KS2}.

Now, let $v$ be a place of $\Q$ and consider a pair of semisimple $\gamma^*_v \in \mb{G}^*(\Q_v)$ and $\gamma_v \in \mb{G}(\Q_v)$ such that $\varrho_v(\gamma^*_v)$ and $\gamma_v$ are stably conjugate in $\mb{G}(\Q_v)$. Then \cite[\S4.3]{KalTai} in the strongly regular case and \cite[\S3.2]{BM3} generally, construct an element $\inv[z^{iso}_v](\gamma^*_v, \gamma_v) \in \mb{B}(\Q_v, I^{\mb{G}^*_{\Q_v}}_{\gamma^*_v})_{bas}$. The Kottwitz map induces a pairing
\begin{equation*}
    \mb{B}(\Q_v, I^{\mb{G}^*_{\Q_v}}_{\gamma^*_v})_{bas} \times Z(\widehat{I^{\mb{G}^*_{\Q_v}}_{\gamma^*_v}})^{\Gamma_v} \to \C^{\times}, 
\end{equation*}
and by \cite[Proposition 4.3.1]{KalTai} and \cite[Theorem 3.4]{BM3},
\begin{equation*}
    \Delta[\mf{w}_v, z^{iso}_v](\gamma^{\mb{H}}_v, \gamma) := \Delta[\mf{w}_v](\gamma^{\mb{H}}_v, \gamma^*_v)\langle  \inv[z^{iso}_v](\gamma^*_v, \gamma_v), \eta(s) \rangle^{-1},
\end{equation*}
is a local transfer factor between $(\mb{H}_{\Q_v}, s, \eta_v)$ and $\mb{G}_{\Q_v}$. We note that the inverse sign appears in the above formula because we use a different normalization of transfer factors than \cite{KalTai}. We remark that the trace formula for Shimura varieties involves non-strongly regular semisimple elements and that it will be crucial to have the above explicit description of the local transfer factor for such elements.

By \cite[Proposition 4.3.2]{KalTai} and \cite[Corollary 3.9]{BM3}, the product 
\begin{equation*}
    \Delta[\mf{w}, z^{iso}] := \prod\limits_v \Delta[\mf{w}_v, z^{iso}_v],
\end{equation*}
is equal to the canonical adelic transfer factor and in particular satisfies
\begin{equation*}
    \Delta[\mf{w}, z^{iso}_v](\gamma^{\mb{H}}, \gamma) = 1,
\end{equation*}
for $\gamma^{\mb{H}} \in \mb{H}(\A)$ that transfers to $\gamma \in \mb{G}(\Q)$.

With the above normalizations, we have the following fundamental theorem. 
\begin{theorem}
For each place $v$ of $\Q$ and each $f \in C^{\infty}_c(\mb{G}(\Q_v))$, there exists a function $f^{\mb{H}} \in C^{\infty}_c(\mb{H}(\Q_v))$ so that for any $(\mb{G}, \mb{H})$-regular semisimple $\gamma^{\mb{H}} \in \mb{H}(\Q_v)$ transferring to $\gamma \in \mb{G}(\Q_v)$, we have
\begin{equation}
    SO_{\gamma^{\mb{H}}}(f^{\mb{H}})=\sum\limits_{\gamma' \sim_{st} \gamma} e(I^{\mb{G}}_{\gamma'})\Delta[\mf{w}_v, z^{iso}_v](\gamma^{H}, \gamma')O_{\gamma'}(f).
\end{equation}
\end{theorem}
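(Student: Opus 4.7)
The plan is to reduce the statement to the Langlands--Shelstad transfer theorem. First, by the discussion immediately preceding the theorem (specifically \cite[Proposition 4.3.1]{KalTai} in the strongly regular case, together with the extension to non-strongly-regular elements given in \cite{BM3}), the expression
\[
\Delta[\mf{w}_v, z^{iso}_v](\gamma^{\mb{H}}_v, \gamma_v) \;=\; \Delta[\mf{w}_v](\gamma^{\mb{H}}_v, \gamma^*_v)\,\langle \inv[z^{iso}_v](\gamma^*_v, \gamma_v),\, \eta(s)\rangle^{-1}
\]
is already known to be a valid local transfer factor for the refined endoscopic datum $(\mb{H}_{\Q_v}, s, \eta_v)$ relative to $\mb{G}_{\Q_v}$. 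Consequently, what needs to be established is simply the existence of a matching function on $\mb{H}(\Q_v)$ with respect to this particular transfer factor.

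At a finite place $v$, existence of a matching function for any valid transfer factor is the content of the Langlands--Shelstad transfer conjecture, now a theorem via Waldspurger's reduction of transfer for an arbitrary connected reductive group to the fundamental lemma for the unit element of the spherical Hecke algebra of an unramified group, combined with Ng\^o Bao Ch\^au's proof of the fundamental lemma. At the archimedean place, the corresponding transfer statement is due to Shelstad. Applying these theorems to our fixed $f$, with the transfer factor $\Delta[\mf{w}_v, z^{iso}_v]$, directly produces the desired $f^{\mb{H}}$, so that the identity in the statement holds for all $(\mb{G}, \mb{H})$-regular semisimple $\gamma^{\mb{H}} \in \mb{H}(\Q_v)$ that transfer to $\mb{G}(\Q_v)$.

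The remaining content is normalization bookkeeping: the proofs of Waldspurger and Ng\^o are insensitive to the particular Whittaker/isocrystal normalization $\Delta^{\lambda}_D$ of \cite{KS} employed in this paper, since they yield a matching function for any choice of valid transfer factor. The one mild technicality is the extension from $\mb{G}$-strongly regular to merely $(\mb{G}, \mb{H})$-regular semisimple elements on the $\mb{H}$-side; this is standard and is built into Kottwitz's descent formalism for $(\mb{G}, \mb{H})$-regular elements, and in fact is precisely the situation in which \cite{BM3} extends the definition of $\Delta[\mf{w}_v, z^{iso}_v]$. The main (indeed only) deep input is the fundamental lemma itself, which is entirely off-loaded to Ng\^o; beyond that, the proof is essentially a matter of invoking the correct results and checking that all conventions line up.
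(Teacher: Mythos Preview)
Your proposal is correct and matches the paper's treatment: the paper does not give a proof of this statement at all, but simply records it as a ``fundamental theorem'' immediately after fixing the normalization $\Delta[\mf{w}_v, z^{iso}_v]$, i.e.\ it is invoking exactly the Langlands--Shelstad transfer (Waldspurger/Ng\^o at finite places, Shelstad at the archimedean place) that you cite. Your additional remarks on why the particular normalization and the extension to $(\mb{G},\mb{H})$-regular elements cause no trouble are accurate and in fact more explicit than the paper, which leaves these points implicit.
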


We will also need to discuss twisted transfer factors at a finite place $p$ of $\Q$. In particular, we have to consider the group $R_G=\mathrm{Res}_{\mathbb{Q}_{p^r}/\mathbb{Q}_p} \mathbf{G}_{\mathbb{Q}_{p^r}}$. 

If $(H, s, \eta)$ is a refined endoscopic datum of a reductive group $G$ over $\Q_p$, then $(H,s,\eta)$ induces a twisted endoscopic datum of $R_G$ (see \cite[\S A.1.3]{Mor1}). Then, given the transfer factor at $p$ of $G$ defined previously and denoted by $\Delta[\mf{w}_p, z^{\iso}_p]$, we get, following \cite[\S5.5, \S5.6]{KS2} (see also the appendix by Kottwitz of \cite{Mor1}), a twisted transfer factor $\Delta_R[\mf{w}_p, z^{\iso}_p]$. Moreover, $\Delta_R[\mf{w}_p, z^{\iso}_p]$ satisfies the following equality:
\begin{equation}{\label{TOSO}}
    \Delta_R[\mathfrak{w}_p, z^{iso}_p](\gamma_H, \delta)=\langle \alpha(\gamma, \delta), \eta(s) \rangle^{-1}\Delta[\mathfrak{w}_p, z^{iso}_p](\gamma_H, \gamma),
\end{equation}
where $\gamma_H$ is a $(G,H)$-regular semisimple element of $H(\mathbb{Q}_p)$, the element $\gamma \in G(\mathbb{Q}_p)$ is a transfer of $\gamma_H$, and $\delta \in R_G(\Q_p)$ has norm $\gamma$. The term $\alpha(\gamma, \delta)$ is defined in \cite[\S A.3.5]{Mor1}.

\subsection{Point counting formula for Shimura varieties}
We now recall the trace formula for the cohomology of compact Shimura varieties as in \cite{kot7}.

\begin{assumption}
We now make the following further assumptions on $\mb{G}$:
\begin{itemize}
\item The maximal $\Q$-split torus in $Z(\mb{G})$  coincides with the maximal $\R$-split torus in $Z(\mb{G})$.
\item The group $\mb{G}$ is anisotropic modulo center.
\item There exists a Shimura datum $(\mb{G}, X)$ whose trace formula for compactly supported cohomology is known and coincides with Equation \eqref{pointcountingformula}.
\end{itemize}
\end{assumption}

In particular, the second condition will guarantee that the Shimura varieties we work with are compact so that intersection cohomology and compactly supported cohomology coincide.

We now fix a prime $p$ such that $\mb{G}_{\Q_p}$ is unramified and a Shimura datum $(\mb{G}, X)$. Recall that $X$ defines a conjugacy class of cocharacters $\{\mu\}$ of $\mathbf{G}_{\C}$ given by taking the basechange of any $h \in X$ to $\C$ and restricting to the first component. Let $\ov{\Q}$ be an algebraic closure of $\Q$ and suppose $\{ \mu \}$ has reflex field $E \subset \overline{\mathbb{Q}}$. Let $\mf{p}$ be a place of $E$ over $p$. We fix a positive integer $j$ and define $F$ to be the degree $j$ unramified extension of $E_{\mf{p}}$.

Following \cite{kot7}, we let  $\Phi^j_{\mf{p}}$ be the $j$th power of a geometric Frobenius element of $\Gal(\ov{\Q_p}/E_{\mf{p}})$. Since $\mb{G}_{\Q_p}$ is assumed to be unramified, we can pick a model $\mc{G}$ of $\mb{G}_{\Q_p}$ defined over $\Z_p$. Then we define $K_F :=   \mc{G}(\mc{O}_F)$ where $\mc{O}_F$ is the ring of integers of $F$.

Now, we let $\phi_j \in C^{\infty}_c(\mb{G}_{\Q_p}, K_F)$ be the indicator function of $K_F \mu(\pi^{-1}_F)K_F$ where $\pi_F$ is a  uniformizer of $F$.

Let $K^p$ be a compact open subgroup of $\mb{G}(\A^p_f)$ and define $K_p$ to be $\mc{G}(\Z_p)$. Let $K := K^pK_p$. Choose any $f^p \in C^{\infty}_c(\mb{G}(\A^p_f))$ and let $f_p$ be the indicator function of $K_p$. Let $f:= f^pf_p$.

Now, let $L$ be a number field and fix a finite dimensional representation $\xi$ of $\mb{G}$ with values in $L$. For any finite place $\lambda$ of $L$ we get a smooth $L_{\lambda}$-sheaf $\mc{F}_{\lambda}$ in the standard way (see \cite[pg163]{kot7}). Now define $H^*_{\xi} := \sum\limits_i (-1)^i H^i_c(\Sh_K, \mc{F}_{\lambda})$. Then we expect (see \cite[Equation (3.1)]{kot7}) the following formula.

\begin{equation}{\label{pointcountingformula}}
    \mathrm{tr}(f \times \Phi^j_{\mf{p}}| H^*_{\xi})= \sum_{\substack{(\gamma_0 ; \gamma, \delta) \in KT_j \\ \alpha(\gamma_0 ; \gamma, \delta)=1}} c(\gamma_0; \gamma, \delta)O_{\gamma}(f^p)TO_{\delta}(\phi_j)\mathrm{tr}\xi(\gamma_0).
\end{equation}

We refer the reader to \cite{kot7} for the somewhat involved definitions of $\alpha(\gamma_0 ; \gamma, \delta)$ and $c(\gamma_0, \gamma, \delta)$. 

We recall that the set $KT_j$ is defined to consist of triples $(\gamma_0; \gamma, \delta)$ such that $\gamma_0 \in \mb{G}(\Q)$ and is elliptic in $\mb{G}(\R)$, the elements $\gamma \in \mb{G}(\A^p_f)$ and $\delta \in \mb{G}(F)$ are such that for each place $v$ of $\Q$ other than $\infty, p$ we have $\gamma_v$ and $\gamma_0$ are conjugate in $\mb{G}(\ov{\Q_v})$, the norm of $\delta$ is conjugate to $\gamma_0$ in $\mb{G}(\ov{\Q_p})$, and the $\sigma$-conjugacy class of $\delta$ in $\mb{B}(G_{\Q_p})$ satisfies that its image in $X^*(Z(\mb{G})^{\Gamma_p})$ equals the restriction of $-\mu_1 \in X^*(Z(\widehat{G}))$ to $Z(\widehat{G})^{\Gamma_p}$. We recall that $\mu_1$ is defined so that if $\mu \in \{\mu\}$ and $T$ is a maximal torus of $\mb{G}$, then $\mu$ gives us an element of $X^*(\widehat{T})$ which we restrict to $Z(\widehat{G})$ to get  a character $\mu_1$ that is evidently independent of our choice of $T$ and $\mu$.

Then we have that (\cite[Equation (4.1)]{kot7})
\begin{equation}
    1=\frac{1}{|\mf{K}(I^{\mb{G}}_{\gamma_0}/\Q)|} \sum\limits_{\kappa \in \mathfrak{K}(I^{\mb{G}}_{\gamma_0}/\mathbb{Q})} \langle \alpha(\gamma_0; \gamma, \delta), \kappa \rangle^{-1}.
\end{equation}

Thus following \cite{kot7} we can rewrite Equation \eqref{pointcountingformula} as
\begin{equation}{\label{fouriereqn}}
    \tau(\mathbf{G})\sum\limits_{(\gamma_0; \gamma, \delta) \in KT_j}\sum\limits_{\kappa \in \mf{K}(I^{\mb{G}}_{\gamma_0}/\Q)} \langle \alpha(\gamma_0; \gamma, \delta), \kappa \rangle^{-1} e(\gamma, \delta)O_{\gamma}(f^p)TO_{\delta}(\phi_j)\mathrm{tr}\xi(\gamma_0)\mathrm{vol}(A_{\mathbf{G}}(\mathbb{R})^0\setminus I(\infty)(\mathbb{R}))^{-1},
\end{equation}
which is Equation $(4.2)$ of that work. The sign $e(\gamma, \delta)$ is defined to be $\prod\limits_v e(I(v))$ (product over finite and infinite $v$) and $I(\infty)/\mathbb{R}$ is the compact modulo center inner form of $I_0$ and $I(v)$ is defined in \cite[\S2]{kot7}.

Our goal now is to manipulate the above equation so that it is compatible with refined endoscopic data via the isomorphism of Lemma \ref{SSeqEQ}. We first record the following lemma.
\begin{lemma}
For all semisimple $\gamma_0 \in \mb{G}(\Q)$, we have a surjection 
\begin{equation}
Z(\widehat{I^{\mb{G}}_{\gamma_0}})^{\Gamma_{\Q}} \twoheadrightarrow \mf{K}(I^{\mb{G}}_{\gamma_0}/ \Q).
\end{equation}
\end{lemma}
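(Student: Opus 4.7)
The plan is to unwind Kottwitz's definition of $\mf{K}(I^{\mb{G}}_{\gamma_0}/\Q)$ from \cite[\S 4]{kot7} and exhibit the surjection as a composition of natural quotient maps. Recall that $\mf{K}(I^{\mb{G}}_{\gamma_0}/\Q)$ is constructed as a subgroup of $\pi_0(Z(\widehat{I^{\mb{G}}_{\gamma_0}})^{\Gamma_{\Q}})$ cut out by the condition that the associated cohomology class in $H^1(\Q, Z(\widehat{\mb{G}}))$ be locally trivial (equivalently, lie in $\ker^1(\Q, Z(\widehat{\mb{G}}))$). On the other hand, there is a tautological continuous surjection
\begin{equation*}
Z(\widehat{I^{\mb{G}}_{\gamma_0}})^{\Gamma_{\Q}} \twoheadrightarrow \pi_0\bigl(Z(\widehat{I^{\mb{G}}_{\gamma_0}})^{\Gamma_{\Q}}\bigr),
\end{equation*}
obtained by projecting onto the component group. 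Composing these two maps will give the desired surjection, provided that every $s \in Z(\widehat{I^{\mb{G}}_{\gamma_0}})^{\Gamma_{\Q}}$ projects into $\mf{K}(I^{\mb{G}}_{\gamma_0}/\Q)$.

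The only nontrivial step is thus verifying this containment of images. The argument would use that $\gamma_0$ is elliptic (in particular elliptic at $\infty$), combined with our standing hypothesis that the maximal $\Q$-split torus of $Z(\mb{G})$ coincides with its maximal $\R$-split torus. Together with the Hasse principle for $\mb{G}$, these assumptions force the local-global cohomological condition defining $\mf{K}$ to be automatically satisfied by any class coming from an element of $Z(\widehat{I^{\mb{G}}_{\gamma_0}})^{\Gamma_{\Q}}$, since the ellipticity of $\gamma_0$ guarantees the requisite local triviality at every place. The main obstacle is therefore just the bookkeeping with Kottwitz's definition; once one matches conventions, the surjection is formal. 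A parallel perspective is provided by Lemma \ref{SSeqEQ}, which shows that refined data track precisely $Z(\widehat{I^{\mb{G}}_{\gamma_0}})^{\Gamma_{\Q}}$ whereas unrefined data track $\mf{K}$, so the map above is exactly the forgetful map from the refined to the unrefined framework and is consequently surjective.
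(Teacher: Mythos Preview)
Your proof rests on an incorrect definition of $\mf{K}(I^{\mb{G}}_{\gamma_0}/\Q)$. You describe it as a subgroup of $\pi_0\bigl(Z(\widehat{I^{\mb{G}}_{\gamma_0}})^{\Gamma_\Q}\bigr)$, but in Kottwitz's definition (see \cite[\S 4]{Kot6}) it is the subgroup of $\bigl(Z(\widehat{I^{\mb{G}}_{\gamma_0}})/Z(\widehat{\mb{G}})\bigr)^{\Gamma_\Q}$ consisting of elements whose image under the connecting homomorphism
\[
\bigl(Z(\widehat{I^{\mb{G}}_{\gamma_0}})/Z(\widehat{\mb{G}})\bigr)^{\Gamma_\Q} \longrightarrow H^1(\Q, Z(\widehat{\mb{G}}))
\]
is locally trivial. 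The quotient by $Z(\widehat{\mb{G}})$ occurs \emph{before} taking $\Gamma_\Q$-invariants, so this is not the same as passing to $\pi_0$ of $Z(\widehat{I^{\mb{G}}_{\gamma_0}})^{\Gamma_\Q}$. With the correct definition the relevant map is the natural projection $Z(\widehat{I^{\mb{G}}_{\gamma_0}})^{\Gamma_\Q} \to \bigl(Z(\widehat{I^{\mb{G}}_{\gamma_0}})/Z(\widehat{\mb{G}})\bigr)^{\Gamma_\Q}$, and the question is whether its image contains $\mf{K}$.

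Here your argument runs in the wrong direction. The image of $Z(\widehat{I^{\mb{G}}_{\gamma_0}})^{\Gamma_\Q}$ is exactly the kernel of the connecting map, so it always lands inside $\mf{K}$; no extra hypothesis is needed for that. The actual content of the lemma is the converse: every element of $\mf{K}$ lifts to $Z(\widehat{I^{\mb{G}}_{\gamma_0}})^{\Gamma_\Q}$. This is where the Hasse principle for $\mb{G}$ enters (and is the only hypothesis needed): by Kottwitz's duality it implies $\ker^1(\Q, Z(\widehat{\mb{G}}))=0$, so a locally trivial class in $H^1(\Q, Z(\widehat{\mb{G}}))$ is globally trivial. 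Hence every element of $\mf{K}$ lies in the kernel of the connecting map, i.e.\ in the image of $Z(\widehat{I^{\mb{G}}_{\gamma_0}})^{\Gamma_\Q}$. Your appeal to ellipticity of $\gamma_0$ and to the assumption on split tori in $Z(\mb{G})$ is misplaced: the lemma is stated for all semisimple $\gamma_0$, and neither of those hypotheses plays any role here.
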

\begin{proof}
By definition, $\mf{K}(I^{\mb{G}}_{\gamma_0}/ \Q)$ is the subgroup of $(Z(\widehat{I^{\mb{G}}_{\gamma_0}})/Z(\widehat{\mb{G}}))^{\Gamma_{\Q}}$ whose image in $H^1( \Q, \mb{G})$ is locally trivial. Since we are assuming $\mb{G}$ satisfies the Hasse principle, it follows that the image of $\mf{K}(I^{\mb{G}}_{\gamma_0}/ \Q)$ under the natural map to $H^1(\Q, \mb{G})$ is trivial and hence that $\mf{K}(I^{\mb{G}}_{\gamma_0}/ \Q)$ lies in the image of $Z(\widehat{I^{\mb{G}}_{\gamma_0}})^{\Gamma_{\Q}} / Z(\widehat{\mb{G}})^{\Gamma_{\Q}}$. This implies the desired result.
\end{proof}
The above lemma implies that both vertical arrows in the diagram in the statement of Corollary \ref{endcomp} are surjections. By the Hasse principle for $\mb{G}$, we also have a surjection 
\begin{equation}
    \mc{E}^r(\mb{G}) \twoheadrightarrow \mc{E}(\mb{G}).
\end{equation}

\begin{construction}{\label{endoreps}}
We now pick a section $\mc{S}: \mc{EQ}(\mb{G}) \to \mc{EQ}^r(\mb{G})$ of the projection $\mc{EQ}^r(\mb{G}) \twoheadrightarrow \mc{EQ}(\mb{G})$. Moreover, we can and do choose this section so that if $[\mb{H},s,\eta, \gamma_{\mb{H}}], [\mb{H}', s',\eta',\gamma_{\mb{H}'}] \in \mc{EQ}(\mb{G})$ map to the same isomorphism class of endoscopic datum under the map $\mc{EQ}(\mb{G}) \to \mc{E}(\mb{G})$ then $\mc{S}([\mb{H},s,\eta, \gamma_H])$ and $\mc{S}([\mb{H}', s',\eta', \gamma_{\mb{H}'}])$ project to the same refined endoscopic isomorphism class under the projection $\mc{EQ}^r(\mb{G}) \to \mc{E}^r(\mb{G})$. This induces a partial section of the map $\mc{E}^r(\mb{G}) \to \mc{E}(\mb{G})$ (defined on the images of the $\mc{EQ}$ maps). We extend this to a full section of $\mc{E}^r(\mb{G}) \to \mc{E}(\mb{G})$ and pick a representative of each equivalence class of $\mc{E}^r(\mb{G})$ appearing in the image of this section. We denote the set of these representatives by $X^{\mf{e}}$.
\end{construction}
\begin{proof}
To see that we can pick $\mc{S}$ to be compatible with the projection $\mc{E}^r(\mb{G}) \to \mc{E}(\mb{G})$, note that for any $(\mb{H}, s, \eta, \gamma_{\mb{H}})$ and $(\mb{H}', s', \eta', \gamma_{\mb{H}'})$ that project to the same class of $\mc{E}(\mb{G})$, the endoscopic data $(\mb{H}, s, \eta)$ and $(\mb{H}', s', \eta')$ are isomorphic and their isomorphism class lifts to $\mc{E}^r(\mb{G})$. Hence we may as well assume that $(\mb{H},s,\eta) = (\mb{H}', s', \eta')$ and that $(\mb{H}, s, \eta)$ is a refined endoscopic datum. Then we can define $\mc{S}$ on the fiber of $[\mb{H}, s, \eta] \in \mc{E}(\mb{G})$ by choosing representatives of the isomorphism classes in $\mc{EQ}(\mb{G})$  and defining $(\mb{H}, s, \eta, \gamma_{\mb{H}}) \mapsto (\mb{H}, s, \eta, \gamma_{\mb{H}})$.
\end{proof}

We record the following lemma.
\begin{lemma}
The above construction gives, for each  $\gamma_0 \in \mb{G}(\Q)$ and $\kappa \in \mf{K}(I^{\mb{G}}_{\gamma_0}/\Q)$, a natural lift $\lambda \in Z(\widehat{I^{\mb{G}}_{\gamma_0}})^{\Gamma_{\Q}}$ of $\kappa$.
\end{lemma}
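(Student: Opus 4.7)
The plan is to use the bijections between semisimple and endoscopic quadruple data (Lemma \ref{SSeqEQ} and its non-refined analogue), together with the compatibility diagram of Corollary \ref{endcomp}, to transport the section $\mc{S}$ of Construction \ref{endoreps} to a section on semisimple data.

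First, starting from the pair $(\gamma_0, \kappa)$, I will produce an element of $\mc{SS}(\mb{G})$. By the non-refined analogue of Lemma \ref{SSeqEQ} (as recorded in Corollary \ref{endcomp}), this corresponds to an equivalence class $[\mb{H}, s, \eta, \gamma_{\mb{H}}] \in \mc{EQ}(\mb{G})$, where $\mb{H}$ arises as a quasisplit form with dual $\widehat{\mb{H}} = Z_{\widehat{\mb{G}}}(s)^\circ$ and $\gamma_{\mb{H}}$ transfers to $\gamma_0$.

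Next, I will apply the section $\mc{S}: \mc{EQ}(\mb{G}) \to \mc{EQ}^r(\mb{G})$ fixed in Construction \ref{endoreps} to obtain a refined class $[\mb{H}, s_r, \eta_r, \gamma_{\mb{H}}] \in \mc{EQ}^r(\mb{G})$ lifting our non-refined class. By Lemma \ref{SSeqEQ} applied in the other direction, this refined quadruple corresponds to a unique pair $(\gamma_0, \lambda) \in \mcSS^r(\mb{G})$ with $\lambda \in Z(\widehat{I^{\mb{G}}_{\gamma_0}})^{\Gamma_{\Q}}$. Here I use that the isomorphism of endoscopic quadruple data carries the canonical identification $Z(\widehat{I^{\mb{H}}_{\gamma_{\mb{H}}}})^{\Gamma_\Q} \cong Z(\widehat{I^{\mb{G}}_{\gamma_0}})^{\Gamma_\Q}$ under the $(\mb{G},\mb{H})$-regular condition, so that $s_r$ is transported to the claimed $\lambda$.

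Finally, I will verify that $\lambda$ projects to $\kappa$ under the natural surjection $Z(\widehat{I^{\mb{G}}_{\gamma_0}})^{\Gamma_{\Q}} \twoheadrightarrow \mf{K}(I^{\mb{G}}_{\gamma_0}/\Q)$. This is exactly the commutativity of the diagram in Corollary \ref{endcomp}: the composition of going up via $\mc{S}$ and then across via the refined bijection must agree with going across via the non-refined bijection and then up via the vertical projection of semisimple data. Since the latter sends $(\gamma_0, \lambda)$ to $(\gamma_0, \kappa)$, the claim follows. The only subtlety to watch out for is well-definedness of $\lambda$ as an element of $Z(\widehat{I^{\mb{G}}_{\gamma_0}})^{\Gamma_\Q}$ rather than just up to some equivalence; this is not a real obstacle because $\gamma_0$ is fixed (not just up to stable conjugacy), and the bijections of Lemma \ref{SSeqEQ} pin down $\lambda$ uniquely in this situation. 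The main point of the argument is really the commutativity recorded in Corollary \ref{endcomp}, which does the essential bookkeeping.
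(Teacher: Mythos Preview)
Your proof is correct and takes essentially the same approach as the paper, which simply says ``This is immediate from our choice of $\mc{S}$ above and Corollary \ref{endcomp}.'' You have spelled out in detail exactly what that one-liner means: pass from $(\gamma_0,\kappa)$ to $\mc{EQ}(\mb{G})$, apply $\mc{S}$, pass back to $\mcSS^r(\mb{G})$, and use the commutative square of Corollary \ref{endcomp} to see that the resulting $\lambda$ lifts $\kappa$.
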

\begin{proof}
This is immediate from our choice of $\mc{S}$ above and Corollary \ref{endcomp}.
\end{proof}
Denote by $X_{\gamma_0} \subset Z(\widehat{I^{\mb{G}}_{\gamma_0}})^{\Gamma_{\Q}}$ the corresponding collection of lifts that we get by the previous lemma. Finally, for each representative $(\mb{H}, s, \eta) \in X^{\mf{e}}$, since $\mb{G}_{\der}$ is simply connected, we may extend $\eta$ to a map of $L$-groups
\begin{equation*}
\Leta: \, ^L\mb{H} \to \, ^L\mb{G}.    
\end{equation*}

Now, we can rewrite our equation for $\mathrm{tr}(f \times \Phi^j_{\mf{p}}| H^*_{\xi})$ as \begin{equation}{\label{prestab}}
        \tau(\mathbf{G})\sum\limits_{(\gamma_0; \gamma, \delta) \in KT_j}\sum\limits_{\lambda \in X_{\gamma_0}} \langle \alpha(\gamma_0; \gamma, \delta), \kappa \rangle^{-1} e(\gamma, \delta)O_{\gamma}(f^p)TO_{\delta}(\phi_j)\mathrm{tr}\xi(\gamma_0)\mathrm{vol}(A_{\mathbf{G}}(\mathbb{R})^0\setminus I(\infty)(\mathbb{R}))^{-1},
\end{equation}
where $\kappa$ is the projection of $\lambda$ to $\mf{K}(I^{\mb{G}}_{\gamma_0}/\Q)$.

We are now ready to begin the discussion of the stabilization of Equation \eqref{prestab}. Let $(\mb{H}, s, \Leta)$ be a refined endoscopic datum in the set of lifts chosen previously and $\gamma^{\mb{H}} \in \mb{H}(\Q)$ a semisimple element. The contribution to \eqref{prestab} of $(\mb{H},s, \Leta, \gamma^{\mb{H}})$ is defined to be $0$ unless
\begin{enumerate}
    \item $(\mb{H}, s, \eta)$ is elliptic,
    \item $\mb{H}$ is unramified at $p$,
    \item Each elliptic maximal torus of $\mb{G}_{\R}$ transfers from $\mb{H}_{\R}$,
    \item The element $\gamma^{\mb{H}}$ is $(\mb{G}, \mb{H})$-regular,
    \item For each place $v$ of $\Q$, the element $\gamma^{\mb{H}}$ transfers to an element of $\mb{G}(\Q_v)$,
    \item The element $\gamma^{\mb{H}}$ is elliptic in $\mb{H}(\R)$ (and so in particular elliptic in $\mb{H}(\Q)$).
    \item The class $[\mb{H}, s^{\mb{H}}, \eta, \gamma^{\mb{H}}]$ appears in the image of the section $\mc{S}$ defined previously.
\end{enumerate}
If these conditions are satisfied, then we define the contribution of $(\mb{H}, s, \Leta, \gamma^{\mb{H}})$ in the following sections.

\subsection{Stabilization at infinity}
We review Kottwitz's description starting on page 182 of \cite{kot7} of the stabilization at infinity of Equation \eqref{prestab}.

Fix an elliptic maximal torus $T$ of $\mathbf{G}_{\mathbb{R}}$ and an elliptic maximal torus $T_{\mathbf{H}_{\mathbb{R}}}$ of $\mathbf{H}_{\mathbb{R}}$ such that $T_{\mb{H}_{\R}}$ transfers to $T$. This is possible by our assumptions on $(\mb{H}, s, \eta)$. Fix a pair $(j, B)$ so that $B$ is a Borel subgroup of $\mathbf{G}_{\mathbb{C}}$ containing $T_{\mathbb{C}}$ and $j: T_H \to T$ is an isomorphism in the canonical conjugacy class. The pair $(j, B)$ gives a natural choice of Borel subgroup $B_H$ of $\mathbf{H}_{\mathbb{C}}$.

We let $\phi: W_{\mathbb{R}} \to ^L\mathbf{G}$ be an elliptic Langlands parameter whose packet consists of discrete series representations with the same central and infinitesimal character as the contragredient of $\xi$. 

Pick $\phi_H$ so that $\Leta \circ \phi_H$ is equivalent to $\phi$ and denote the set of such parameters by $\Phi_H$. Now (using the notation of \cite{kot7}) there is a unique $w_* \in \Omega_*$ so that $(w^{-1}_* \circ j, B, B_H)$ is aligned with $\phi_H$.

Then define
\begin{equation*}
    f(\phi_H)= \frac{1}{|\Omega_H/\Omega_{H(\mathbb{R})}|}\sum\limits_{\pi_H} f(\pi_H),
\end{equation*}
where $f(\pi_H)$ is a pseudocoefficient for $\pi_H$ and the sum ranges over the $L$-packet $\Pi(\phi_H)$. Then we define
\begin{equation}
    f^{\mathbf{H}}_{\xi}=\langle \mu_h, s_{\mb{H}} \rangle \sum\limits_{\phi_H \in \Phi_H} \det(w_*(\phi_H))(-1)^{q(G)}f(\phi_H),
\end{equation}
where $\mu_h$ is the cocharacter associated to some $h \in X$.

Now, we let $\Delta_{j,B}$ be the Shelstad normalization of the local transfer factor at $\infty$. Then Kottwitz shows that
\begin{equation}
    SO_{\gamma^{\mb{H}}}(f^{\mathbf{H}}_{\xi})=\langle \beta(\gamma), \lambda \rangle^{-1} \Delta_{j,B}(\gamma^{\mb{H}}, \gamma) \mathrm{tr} \xi_{\mathbb{C}}(\gamma)\mathrm{vol}(A_{\mathbf{G}(\mathbb{R})^0 \setminus I(\mathbb{R})})^{-1},
\end{equation}
where $\gamma \in \mb{G}(\R)$ is some transfer of $\gamma^{\mb{H}}$ and $\beta(\gamma)$ is defined by restricting $\mu_h$ to an element of $X^*(Z(\widehat{I^{\mb{G}}_{\gamma}})^{\Gamma_{\R}})$. Kottwitz shows this does not depend on our choice of $\mu_h \in \{ \mu \}$. The element $\lambda$ is the image of $s$ in $Z(\widehat{I^{\mb{G}}_{\gamma}})^{\Gamma_{\R}}$.

Since local transfer factors differ up to a scalar, we have 
\begin{equation*}
\Delta[\mathfrak{w}_{\infty}, z^{iso}_{\infty}]=b_H\Delta_{j,B}
\end{equation*}
for some complex number  $b_H$. We could try to determine the value of $b_H$ explicitly (as in Yihang Zhu's thesis \cite{Zhu}) but we choose instead to carry around the constant $b_H$ as it will eventually cancel.

We define $f^{'\mathbf{H}}_{\xi} := b_H f^{\mathbf{H}}_{\xi}$ so that
\begin{equation}
    SO_{\gamma^{\mb{H}}}(f^{'\mathbf{H}}_{\xi}) =b_HSO_{\gamma^{\mb{H}}}(f^{\mathbf{H}}_{\xi})=\langle \beta(\gamma), \lambda \rangle^{-1} \Delta[\mathfrak{w}_{\infty}, z^{iso}_{\infty}](\gamma^{\mb{H}}, \gamma) \mathrm{tr} \xi_{\mathbb{C}}(\gamma)\mathrm{vol}(A_{\mathbf{G}(\mathbb{R})^0 \setminus I(\mathbb{R})})^{-1}.
\end{equation}
\subsection{Stabilization for \texorpdfstring{$v \neq p, \infty $}{v not p or infty}}
Let $f^p \in C^{\infty}_c(G(\mathbb{A}^{p, \infty}))$. Then by the fundamental lemma, we get $f^{\mathbf{H}, p} \in C^{\infty}_c(\mathbf{H}(\mathbb{A}^{p, \infty}))$ so that
\begin{equation}{\label{stabawayfromp}}
    SO_{\gamma^{\mb{H}}}(f^{\mathbf{H}, p})= \sum\limits_{\gamma} e^p(I^{\mb{G}}_{\gamma}) \Delta[\mathfrak{w}^{p, \infty}, z^{iso, p, \infty}](\gamma^{\mb{H}}, \gamma)O_{\gamma}(f^{p, \infty}),
\end{equation}
where the sum is over $\gamma \in \mb{G}(\A^p_f)$ that are transfers of $\gamma^{\mb{H}}$ and $e^p(I^{\mb{G}}_{\gamma})= \prod\limits_v e(I^{\mb{G}_{\Q_v}}_{\gamma_v})$.
\subsection{Stabilization at \texorpdfstring{$p$}{p}}
following \cite[pg180]{kot7}, we construct from $\phi_j$ a function $f^{\mb{H}}_p$ that is an element of $\mc{H}(\mb{H}(\Q_p), K_H)$ times a character on $
\mb{H}(\Q_p)$, where $K_H$ is any hyperspecial subgroup of $\mb{H}(\Q_p)$. Kottwitz shows we get the following equality which is a case of the twisted fundamental lemma

\begin{theorem}
We have the following equality
\begin{equation}
    SO_{\gamma_H}(f^{\mb{H}}_p)=\sum\limits_{\delta} e(J_\delta)\Delta_R[\mathfrak{w}_p, z^{iso}_p](\gamma^{\mb{H}}, \delta)TO_{\delta}(\phi_j),
\end{equation}
where $J_{\delta}$ is the twisted centralizer of $\delta$ and the sum is over twisted conjugacy classes of $\delta$ whose stable norm is equal to a fixed choice of $\gamma \in \mb{G}(\Q_p)$ which is a transfer of $\gamma^{\mb{H}}$.
\end{theorem}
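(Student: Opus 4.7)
The plan is to combine Kottwitz's explicit construction of $f^{\mb{H}}_p$ via base change with the twisted (base change) fundamental lemma, and then carefully match with our choice of normalization for the twisted transfer factor $\Delta_R[\mathfrak{w}_p, z^{\iso}_p]$.

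First, I would recall how $f^{\mb{H}}_p$ arises from $\phi_j$ on pages 180--181 of \cite{kot7}: the $L$-embedding $\Leta: \LH \to \LG$ induces, via the unramified Satake isomorphism and Kottwitz's base change construction, a function on $\mb{H}(\Q_p)$ that is a product of an element of $\mc{H}(\mb{H}(\Q_p), K_H)$ and an explicit character. The twisted fundamental lemma for spherical Hecke algebras in the base change situation $R_G = \Res_{\Q_{p^r}/\Q_p} \mb{G}_{\Q_{p^r}}$, now a theorem thanks to Ngo (with the reduction from positive to mixed characteristic by Waldspurger), asserts that $f^{\mb{H}}_p$ and $\phi_j$ have matching stable and twisted orbital integrals against some canonical Kottwitz--Shelstad twisted transfer factor.

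Second, the twisted transfer factor is canonical only up to a scalar, and the scalar depends on choices of Whittaker datum and isocrystal framing. By construction of $\Delta_R[\mathfrak{w}_p, z^{\iso}_p]$ following \cite[\S 5.5, \S 5.6]{KS2} together with the isocrystal twist of Section \ref{transfactsect}, Equation \eqref{TOSO} expresses this twisted factor in terms of the untwisted factor $\Delta[\mathfrak{w}_p, z^{\iso}_p]$ and the cohomological invariant $\alpha(\gamma, \delta)$. The analogous relation holds for Kottwitz's twisted factor in terms of his untwisted factor, so the comparison of twisted factors reduces to the comparison of untwisted ones, which differ by a known scalar. Absorbing this scalar into Kottwitz's function, we obtain the function $f^{\mb{H}}_p$ used in this paper, and the identity follows from the base change fundamental lemma.

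The main obstacle is this normalization bookkeeping: in \cite{kot7} a different convention is used (notably the opposite Artin map convention and a different normalization of the Langlands correspondence). Under our standing assumptions (existence of an extended pure inner twist $(\varrho, z^{\iso})$, $\mb{G}_{\der}$ simply connected, Hasse principle for $\mb{G}$) the explicit formula of \cite[Theorem 3.4]{BM3} for $\inv[z^{\iso}_p](\gamma^*, \gamma)$ renders this comparison computable, and once the constant is absorbed into $f^{\mb{H}}_p$ the desired equality is immediate. Since in this paper the theorem is stated as a black-box citation from \cite{kot7}, the role of this sketch is primarily to point out that no additional hypotheses beyond those of Kottwitz are required for the normalization comparison, once our standing assumptions on extended pure inner twists and the Hasse principle are in force.
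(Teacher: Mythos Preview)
Your proposal is correct and matches the paper's approach: the paper does not give a proof but simply cites this as Kottwitz's result from \cite[pg.~180]{kot7}, describing it as ``a case of the twisted fundamental lemma.'' Your sketch faithfully expands on what that citation entails (Kottwitz's base-change construction of $f^{\mb{H}}_p$, the twisted fundamental lemma now known unconditionally, and the normalization comparison via Equation~\eqref{TOSO}), and you even note explicitly that the paper treats this as a black box.
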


Using Equation \eqref{TOSO}, we can rewrite the above as
\begin{equation}
    SO_{\gamma_H}(f^{\mb{H}}_p)= \sum\limits_{\delta}e(J_{\delta}) \langle \alpha(\gamma;\delta), \lambda \rangle^{-1} \Delta[\mathfrak{w}_p, z^{iso}_p](\gamma^{\mb{H}}, \gamma)TO_{\delta}(\phi_j).
\end{equation}
\subsection{Stabilized formula}
We now combine the contributions described in the previous three sections.

We first recall that as proven on page 188 of \cite{kot7}, by our assumptions on $\gamma^{\mb{H}}$ there exists a semisimple $\gamma_0 \in \mb{G}(\Q)$ such that $\gamma^{\mb{H}}$ transfers to $\gamma_0$.

Consider the function $f^{\mathbf{H}}:=f^{\mathbf{H},p}f^{'\mathbf{H}}f^{\mb{H}}_p \in C^{\infty}_c(\mathbf{H}(\mathbb{A}))$. We now combine the equations of the previous three sections to give a formula for $SO_{\gamma^{\mb{H}}}(f^{\mb{H}})$. We keep in mind the following facts.
\begin{itemize}
    \item At $p$ and $\infty$, we chose elements $\gamma$ in $\mb{G}(\Q_p)$ and $\mb{G}(\R)$ respectively that were transfers of $\gamma^{\mb{H}}$. We are free to (and do) choose $\gamma$ to be the image of $\gamma_0$ in the respective groups,
    \item At each place $v \neq p, \infty$ the fundamental lemma gave us a sum over conjugacy classes of $\gamma \in \mb{G}(\Q_v)$ such that $\gamma$ is a transfer of $\gamma^{\mb{H}}$. Note that (see \cite[\S5.6]{Kot6}) since $\gamma, \gamma_0$ are stably conjugate in $\mb{G}(\ov{\Q_v})$, we have 
\begin{equation*}
    \Delta[\mathfrak{w}_v, z^{\iso}_v](\gamma^{\mb{H}}, \gamma)=\langle \inv[z^{\iso}_v](\gamma_0, \gamma), \lambda \rangle^{-1} \Delta[\mathfrak{w}_v, z^{\iso}_v](\gamma^{\mb{H}}, \gamma_0),
\end{equation*}
and that by definition, $\alpha_v(\gamma_0 ; \gamma, \delta)$ as defined in \cite{kot7} equals $ \inv(\gamma, \gamma_0)$ as we have defined it here. Hence we could rewrite Equation \eqref{stabawayfromp} as
\begin{equation}
    SO_{\gamma_H}(f^{\mathbf{H}, p})= \sum\limits_{\gamma} e^p(I^{\mb{G}}_{\gamma}) \prod\limits_{v \neq p, \infty} \langle \alpha_v(\gamma_0 ; \gamma, \delta), \lambda \rangle^{-1} \Delta[\mathfrak{w}^{p, \infty}, z^{iso, p, \infty}](\gamma^{\mb{H}}, \gamma_0)O_{\gamma}(f^{p, \infty}).
\end{equation},
\item By construction, (see \cite{kot7}) we have the following
\begin{equation*}
    \langle \alpha(\gamma_0; \gamma, \delta), \kappa \rangle=\langle\alpha(\gamma_0; \delta), \lambda \rangle\langle \beta(\gamma_0), \lambda \rangle\prod\limits_{v \neq p, \infty}\langle \alpha_v(\gamma_0, \gamma, \delta), \lambda \rangle.
\end{equation*}
\end{itemize}

Hence, we get the following formula
\begin{align*}
    &SO_{\gamma^{\mb{H}}}(f^{\mathbf{H}})=\\
    &\sum\limits_{(\gamma, \delta)} e^p(I^{\mb{G}}_{\gamma})e(J_{\delta})\langle \alpha(\gamma_0; \gamma, \delta), \kappa \rangle^{-1} O_{\gamma}(f^{p, \infty})TO_{\delta}(\phi_j)\mathrm{tr} \xi_{\mathbb{C}}(\gamma_0)\mathrm{vol}(A_{\mathbf{G}(\mathbb{R})^0} \setminus I(\mathbb{R}))^{-1},
\end{align*}
where the sum is over conjugacy clases of $\gamma \in \mb{G}(\A^p_f), \delta \in \mb{G}(F)$ such that $(\gamma_0 ; \gamma, \delta) \in \KT_j$.

We now observe that we have a map
\begin{equation*}
    \coprod\limits_{\mathcal{H}^{\mathfrak{e}} \in \mathcal{E}_r(\mathbf{G})} \mathbf{H}(\mathbb{Q})_{(\mathbf{G}, \mathbf{H})-reg, ss}/ \sim_{st} \to \mcSS^r(\mb{G}) \cup \{\emptyset\},
\end{equation*}
given by mapping a $(\mb{G},\mb{H})$-regular $\gamma^{\mb{H}}$ to its equivalence class $(\mb{H}, s, \eta, \gamma^{\mb{H}}) \in \mcEQ^r(\mb{G})$ and then applying Lemma \ref{SSeqEQ} (we define the image of $\gamma^{\mb{H}}$ to be $\emptyset$ if it does not transfer to $\mb{G}$. In particular, for each $(\gamma_0, \lambda) \in SS^r(\mb{G})$ there is a unique isomorphism class $\mc{H}^{\mf{e}}$ of refined endoscopic datum containing the pre-image of $(\gamma_0, \lambda)$ under this map and by the proof of Lemma \ref{stabigusalem}, the fiber has cardinality $|\mathrm{Out}_r(\mc{H}^{\mf{e}})|$. 

We now define the set $X^{ss}_{\mc{H}^{\mf{e}}}$ to consist of those stable conjugacy classes  $\gamma_{\mb{H}} \in \mb{H}(\Q)_{(\mb{G}, \mb{H})-reg, ss}/ \sim st$ such that $[\mb{H}, s^{\mb{H}}, \eta, \gamma_{\mb{H}}]$ is in the image of the section $\mc{S}$.

In particular, we have
\begin{align*}
    &\sum\limits_{\mathcal{H}^{\mathfrak{e}} \in X^{\mf{e}}} \frac{\tau(\mathbf{G})}{|\mathrm{Out}_r(\mathcal{H}^{\mathfrak{e}})|} \sum\limits_{\gamma_{\mathbf{H}} \in X^{ss}_{\mc{H}^{\mf{e}}}} SO_{\gamma_{\mathbf{H}}}(f^{\mathbf{H}})\\
    &=\tau(\mathbf{G})\sum\limits_{(\gamma_0; \gamma, \delta) \in \KT_j}\sum\limits_{\lambda \in X_{\gamma_0}} e^p(I^{\mb{G}}_{\gamma})e(J_{\delta})\langle \alpha(\gamma_0; \gamma, \delta), \kappa \rangle^{-1} O_{\gamma}(f^{p, \infty})TO_{\delta}(\phi_j)\mathrm{tr} \xi_{\mathbb{C}}(\gamma_0)\mathrm{vol}(A_{\mathbf{G}(\mathbb{R})^0} \setminus I(\mathbb{R}))^{-1},
\end{align*}
where, as discussed on page 188 of \cite{kot7}, $SO_{\gamma^{\mb{H}}}(f^{\mb{H}})$ is $0$ unless $\gamma^{\mb{H}}$ is actually elliptic over $\R$.

If we define
\begin{equation}{\label{STelldef}}
    ST^{\mathbf{H}}_{r, ell}(f^H) := \tau(\mathbf{H})\sum\limits_{\gamma_{\mathbf{H}} \in X^{ss}_{\mc{H}^{\mf{e}}}} SO_{\gamma_{\mathbf{H}}}(f^{\mathbf{H}}),
\end{equation}
and
\begin{equation*}
    \iota_r(\mathbf{G}, \mathbf{H}) := \frac{\tau(\mathbf{G})}{\tau(\mathbf{H})|\mathrm{Out}_r(\mathcal{H}^{\mathfrak{e}})|},
\end{equation*}
then finally we have the formula
\begin{equation}
     \mathrm{tr}(f \times \Phi^j_{\mf{p}}| H^*_{\xi})=\sum\limits_{\mathcal{H}^{\mathfrak{e}}\in X^{\mf{e}}} \iota_r(\mathbf{G},\mathbf{H}) ST^{\mathbf{H}}_{r, ell}(f^{\mathbf{H}}).
\end{equation}

We note that this agrees with the formula arrived at in \cite{kot7}. Indeed, if $\gamma_{\mb{H}}$ and $\gamma'_{\mb{H}}$ transfer to the same element of $\mb{G}(\Q)$, then their stable orbital integrals agree. Then counting the relative number of stable conjugacy classes in each term, we get
\begin{equation*}
    ST^{\mb{H}}_{ell, r}(f^{\mb{H}})=\frac{\Out_r(\mc{H}^{\mf{e}})}{\Out(\mc{H}^{\mf{e}})}ST^{\mb{H}}_{ell}(f^{\mb{H}}),
\end{equation*}
as desired.

\subsection{Destabilization of the cohomology of Shimura varieties}

We are now ready to derive an expression for the cohomology of Shimura varieties following \cite{kot7}.

For each elliptic $\mathcal{H}^{\mathfrak{e}} \in X^{\mf{e}}$, we define $ST^{\mathbf{H}}_{r, ell}(f^{\mathbf{H}})$ as in Equation \eqref{STelldef}. We define $ST^{\mathbf{H}}_{disc}(f^{\mathbf{H}})$ by
\begin{equation}
    ST^{\mb{H}}_{disc}(f^{\mathbf{H}})=\sum\limits_{[\psi_{\mathbf{H}}]} \frac{1}{|\mathcal{S}_{\psi^{\mathbf{H}}}|}  \sum\limits_{\pi^{\mathbf{H}} \in \Pi_{\psi^{\mathbf{H}}}} \epsilon_{\psi^{\mathbf{H}}}(s_{\psi^{\mathbf{H}}})\langle \pi^{\mathbf{H}}, s_{\psi^{\mathbf{H}}} \rangle \mathrm{tr}(\pi \mid f^{\mathbf{H}}),
\end{equation}
where the sum is over equivalence classes of discrete Arthur parameters of $\mb{H}$ such that the associated character on $A_{\mb{G}}(\R)^0=A_{\mb{H}}(\R)^0$ equals $\chi$.

Then we make the following assumption.
\begin{assumption}{\label{STELLA}}
Let $f^{\mb{H}} \in \mc{H}(\mb{H})$. We assume that 
\begin{equation}
    ST^{\mathbf{H}}_{ell}(f^{\mathbf{H}})=ST^{\mathbf{H}}_{disc}(f^{\mathbf{H}}).
\end{equation}
\end{assumption}

Our temporary goal is to rewrite this expression for $ST^{\mb{H}}_{disc}(f^{\mathbf{H}})$ as a sum of traces of representations of $\mathbf{G}$.

First we consider the situation away from $p, \infty$. Then by Equation \eqref{endocharid}, we have the following.
\begin{align*}
    &\sum\limits_{\pi^{\mathbf{H}, p, \infty} \in \Pi_{\psi^{\mathbf{H}, p, \infty}}(\mathbf{H}^{p, \infty}, 1)} \langle \pi^{\mathbf{H},p, \infty}, s_{\psi^{\mathbf{H},p, \infty}} \rangle \mathrm{tr}(\pi^{\mathbf{H},p, \infty} \mid f^{\mathbf{H},p, \infty})\\
    &=e(\mathbf{G}^{p, \infty}) \sum\limits_{\pi^{p, \infty} \in \Pi_{\psi^{p, \infty}}(\mathbf{G}^{p, \infty}, \varrho^{p, \infty})} \langle \pi^{p, \infty}, \overline{\eta(s)}s_{\psi^{p, \infty}} \rangle \mathrm{tr}(\pi^{p, \infty} \mid f^p),
\end{align*}
where $\overline{\eta(s)}$ is the projection of $\eta(s)$ to $S^{\natural}_{\psi_v}$ and $e(\mb{G}^{p, \infty})= \prod\limits_{v \neq p, \infty} e(\mb{G}_v)$.

We review the destabilization at $p$ carried out in \cite[pg 193]{kot7}. We first have the assumption:
\begin{assumption}{\label{unramassump}}
If $\psi_p$ is an unramified Arthur parameter for $\mb{G}_p$ and $K_p \subset \mb{G}(\Q_p)$ is a hyperspecial  subgroup, then $\Pi(\psi_p, \varrho_p)$ contains a unique representation unramified with respect to $K_p$. 

On the other hand, if $\psi_p$ is a ramified Arthur parameter, the $\Pi(\psi_p, \varrho_p)$ contains no unramified representation with respect to $K_p$.
\end{assumption}

Now let $\phi_{\psi}$ be the $L$-parameter defined by $\phi_{\psi}(w)=\psi(w, \begin{pmatrix} |w|^{\frac{1}{2}} & 0 \\ 0 & |w|^{-\frac{1}{2}} \end{pmatrix})$. Then following the argument of \cite[pg 193]{kot7}, we get the formula:
\begin{align*}
       &\sum\limits_{\pi^{\mathbf{H}}_p \in \Pi_{\psi^{\mathbf{H}}_p}(\mb{H}_p,1)} \langle \pi^{\mathbf{H}}_p, s_{\psi^{\mathbf{H}}_p} \rangle \mathrm{tr} (\pi^{\mathbf{H}}_p \mid f^{\mathbf{H}}_p)\\
    &=\mathrm{tr}(r_{- \mu} \circ \eta \circ \phi_{\psi_H}|_{W_{E_p}}|\cdot|^{-\mathrm{dim}Sh/2} \mid \eta(s)\Phi^j_{\mf{p}})e(\mathbf{G}_p) \langle \pi_p, \overline{\eta(s)}s_{\psi_p} \rangle.
\end{align*}
We denote by $A(\psi, s^{\mb{H}}_p, \Phi^j_{\mf{p}})$ the term $\mathrm{tr}(r_{- \mu} \circ \eta \circ \phi_{\psi_H}|_{W_{E_p}}|\cdot|^{-\mathrm{dim}Sh/2} \mid \eta(s)\Phi^j_{\mf{p}})$.

Finally we study the situation at $\infty$. We recall that our normalization of the transfer factor at the Archimedean place differs by a constant $b_{\mathbf{H}}$ and that we have defined our function $f^{' \mathbf{H}}_{\xi}$ by
\begin{equation*}
    f^{' \mathbf{H}}_{\xi}=b_{\mathbf{H}}f^{\mathbf{H}}_{\xi}.
\end{equation*}
From Kottwitz's computation in \cite{kot7} we get
\begin{equation}
    \sum\limits_{\pi^{\mathbf{H}}_{\infty} \in \Pi_{\psi^{\mathbf{H}}_{\infty}}(\mb{H}_{\infty}, 1)} \langle \pi^{\mathbf{H}}_{\infty}, s_{\psi^{\mathbf{H}}_{\infty}} \rangle \mathrm{tr}( \pi^{\mathbf{H}}_{\infty} \mid f^{\mathbf{H}}_{\xi})=e(\mathbf{G}_{\infty})(-1)^{q(\mathbf{G})}\langle \lambda_{\pi_{\infty}}, \ov{\eta(s)}s_{\psi} \rangle\langle \pi_{\infty}, \overline{\eta(s)} s_{\psi} \rangle'.
\end{equation}
In particular, Kottwitz shows (\cite[Lemma 9.2]{kot7}) that $\langle \lambda_{\pi_{\infty}}, \eta(s)s_{\psi} \rangle\langle \pi_{\infty}, \overline{\eta(s)s_{\psi}} \rangle'$ is independent of the choice of $\pi_{\infty} \in \Pi_{\psi_{\infty}}(\mathbf{G}_{\infty}, \varrho_{\infty})$. We recall that $\lambda_{\pi_{\infty}}$ is a character of $S_{\psi_{\infty}}$ defined by Kottwitz in \cite[pg 195]{kot7} (in particular we remark that $\lambda_{\pi_{\infty}}$ is not a character of $ \mathcal{S}_{\psi}$). In the above formula, $\langle  \cdot, \cdot \rangle'$ refers to the pairing determined by Shelstad's normalization of transfer factors. By Equation \cite[(5.16)]{Kal2}, we get
\begin{equation*}
   b_{\mathbf{H}}\langle \pi_{\infty}, \overline{\eta(s)} \rangle'=\langle \pi_{\infty}, \overline{\eta(s)} \rangle.
\end{equation*}
Thus, we have
\begin{align*}
    &\sum\limits_{\pi^{\mathbf{H}}_{\infty} \in \Pi_{\psi^{\mathbf{H}}_{\infty}}(\mb{H}_{\infty}, 1)} \langle \pi^{\mathbf{H}}_{\infty}, s_{\psi^{\mathbf{H}}_{\infty}} \rangle \mathrm{tr}( \pi^{\mathbf{H}}_{\infty} \mid f^{',\mathbf{H}}_{\xi})\\
    &=b_{\mathbf{H}}\sum\limits_{\pi^{\mathbf{H}}_{\infty} \in \Pi_{\psi^{\mathbf{H}}_{\infty}}(\mb{H}_{\infty}, 1)} \langle \pi^{\mathbf{H}}_{\infty}, s_{\psi^{\mathbf{H}}_{\infty}} \rangle \mathrm{tr}( \pi^{\mathbf{H}}_{\infty} \mid f^{\mathbf{H}}_{\xi})\\
    &=b_{\mathbf{H}}e(\mathbf{G}_{\infty})(-1)^{q(\mathbf{G})}\langle \lambda_{\pi_{\infty}}, \eta(s)s_{\psi} \rangle\langle \pi_{\infty}, \overline{\eta(s)}s_{\psi} \rangle'\\
    &=e(\mathbf{G}_{\infty})(-1)^{q(\mathbf{G})}\langle \lambda_{\pi_{\infty}}, \eta(s)s_{\psi} \rangle\langle \pi_{\infty}, \overline{\eta(s)}s_{\psi} \rangle.
\end{align*}
Finally we recall Arthur's identity \cite[Lemma 7.1]{Art2}
\begin{equation*}
    \epsilon_{\psi^{\mathbf{H}}}(s_{\psi^{\mathbf{H}}})=\epsilon_{\psi}(\eta(s)s_{\psi}).
\end{equation*}

Now that we have discussed the destabilizations at each place, we show how we can combine them to get a formula for the cohomology of Shimura varieties. 

Recall that in the previous section we chose a section $\mc{S}: \mc{EQ}(\mb{G}) \to \mc{EQ}^r(\mb{G})$. Analogously, we now pick a section $\mc{R}: \mc{EP}(\mb{G}) \to \mc{EP}^r(\mb{G})$. We choose it so that the following diagram commutes:
\begin{equation}
\begin{tikzcd}
\mc{EP}^r(\mb{G}) \arrow[r] & \mc{E}^r(\mb{G}) \\
\mc{EP}(\mb{G}) \arrow[u, "\mc{R}"] \arrow[r] & \mc{E}(\mb{G}) \arrow[u]
\end{tikzcd}    
\end{equation}
where the map $\mc{E}(\mb{G}) \to \mc{E}^r(\mb{G})$ is the section we chose previously. By Corollary \ref{paramcompat} we get a set of lifts of the elements of $\mc{SP}(\mb{G})$ to  $\mc{SP}^r(\mb{G})$. In particular, an element in the image of this lift consists of a pair $([\psi], [x])$ where $[\psi]$ is an equivalence class of parameters and $[x]$ is a conjugacy class in $C_{\psi}$. We choose a representative $(\psi, x)$ of each class, chosen such that $x= \eta(s)$ for the corresponding element of $X^{\mf{e}}$ and refer to the set of such representatives as $X_{\Psi}$. We denote by $X^{sp}_{\mc{H}^{\mf{e}}}$ the set of equivalence classes of parameters $[\psi^{\mb{H}}]$ such that $[\mb{H}, s, \Leta, \psi^{\mb{H}}] \in \mc{R}(\mc{EP}(\mb{G}))$.

We now define a refined version of $ST^{\mb{H}}_{disc}$ given by
\begin{equation}
ST^{\mb{H}}_{r, disc}(f^{\mb{H}}) := \sum\limits_{[\psi_{\mathbf{H}}] \in X^{sp}_{\mc{H}^{\mf{e}}}}  \frac{1}{|\mc{S}_{\psi^{\mb{H}}}|}  \sum\limits_{\pi^{\mathbf{H}} \in \Pi_{\psi^{\mathbf{H}}}} \epsilon_{\psi^{\mathbf{H}}}(s_{\psi^{\mathbf{H}}})\langle \pi^{\mathbf{H}}, s_{\psi^{\mathbf{H}}} \rangle \mathrm{tr}(\pi \mid f^{\mathbf{H}}).
\end{equation}

The following lemma (and its analogue in the non-refined case) proves that 

\begin{equation}
    ST^{\mb{H}}_{r, disc}(f^{\mb{H}}) = \frac{\Out_r(\mc{H}^{\mf{e}})}{\Out(\mc{H}^{\mf{e}})} ST^{\mb{H}}_{disc}(f^{\mb{H}}).
\end{equation}

To introduce the lemma, we first recall (\cite[\S 5]{Kot5}) that Kottwitz defines the notion of $Z(\widehat{\mathbf{G}})$-equivalence for parameters of $\mathcal{H}^{\mathfrak{e}}$. We recall that $\psi^{\mathbf{H}}, \psi^{\mathbf{H} '}$ are $Z(\widehat{\mathbf{G}})$-equivalent if they are conjugate up to a locally trivial continuous $\mathcal{L}_{\mathbb{Q}_p}$-cocycle valued in $Z(\widehat{\mathbf{G}})$. Clearly $Z(\widehat{\mathbf{G}})$-equivalent parameters are also equivalent and following Kottwitz we denote the  $Z(\widehat{\mathbf{G}})$-equivalence class of $\psi^{\mathbf{H}}$ by $[[\psi^{\mathbf{H}}]]$.

Then Kottwitz defines a group $_\mathbf{G}S_{\psi^{\mathbf{H}}}$ consisting of the $g \in \widehat{\mb{H}}$ such that  $g \psi^{\mb{H}} g^{-1}(\psi^{\mb{H}})^{-1}$ is a locally trivial $1$-cocycle of $\mc{L}_{\Q} \times \SL_2(\C)$  valued in $ Z(\widehat{\mb{G}})$. 

Then $_\mathbf{G}\mc{S}_{\psi^{\mathbf{H}}}$ is defined to be $_\mathbf{G}S_{\psi^{\mathbf{H}}}/{_\mathbf{G}S^{\circ}_{\psi^{\mathbf{H}}}}Z(\widehat{\mb{G}})$ and Kottwitz shows that the number of $Z(\widehat{\mathbf{G}})$-equivalence classes in the equivalence class of $\psi^{\mathbf{H}}$ is
\begin{equation*}
    \frac{\tau(\mathbf{G})|  _{\mathbf{G}}\mathcal{S}_{\psi^{\mathbf{H}}}|}{\tau(\mathbf{H})|\mathcal{S}_{\psi^{\mathbf{H}}}|}.
\end{equation*}
\begin{lemma}{\label{paramcountinglem}}
From Lemma \ref{EPeqSP}, we have a map given by the composition
\begin{equation}
    Y: \coprod\limits_{\mc{H}^{\mf{e}} \in X^{\mf{e}}} [[\psi^{\mb{H}}]] \to  \mc{EP}^r(\mb{G}) \to \mc{SP}^r(\mb{G}).
\end{equation}
The pre-image of $(\psi, x) \in X_{\Psi}$ under this map has contributions from a single $\mc{H}^{\mf{e}} \in X^{\mf{e}}$ with $\mc{H}^{\mf{e}}=(\mb{H}, s, \, ^L\eta)$ (where $x= \eta(s)$) and is a set of cardinality 
\begin{equation}
    | \Out_r(\mb{H}, s ,\eta)| \cdot |\frac{\eta(C_{\psi^\mb{H}})}{Z_{C_{\psi}(x)}}|.
\end{equation}
\end{lemma}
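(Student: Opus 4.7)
The plan is to first use the established bijection $\mc{EP}^r(\mb G) \cong \mc{SP}^r(\mb G)$ to pin down the unique $\mc{H}^{\mf e}$ contributing to the fiber, and then count the $Z(\widehat{\mb G})$-equivalence classes mapping to it via an orbit-stabilizer argument involving the natural action of $\Aut(\mb H, s, \eta)$ on parameters of $\mb H$.

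First, I would establish the single-$\mc{H}^{\mf e}$ statement. Given $(\psi, x) \in X_\Psi$, Proposition \ref{EPeqSP} together with Corollary \ref{paramcompat} provides a unique equivalence class $[\mb H, s, \Leta, \psi^{\mb H}] \in \mc{EP}^r(\mb G)$ mapping to $(\psi, x)$, and in particular a unique isomorphism class of refined endoscopic data. Since $X^{\mf e}$ contains exactly one representative of each isomorphism class, exactly one $\mc{H}^{\mf e} = (\mb H, s, \Leta)$ with $\eta(s) = x$ contributes to the preimage.

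Next, fix this $\mc{H}^{\mf e}$ and pick $\psi^{\mb H}$ with $\Leta \circ \psi^{\mb H} = \psi$. By unwinding the definition of equivalence in $\mc{EP}^r(\mb G)$, a class $[[\psi'^{\mb H}]]$ lies in the fiber if and only if $\psi'^{\mb H}$ is $Z(\widehat{\mb G})$-equivalent to $\Lalpha \circ \psi^{\mb H}$ for some $\alpha \in \Aut(\mb H, s, \eta)$. Hence the fiber is identified with the orbit of $[[\psi^{\mb H}]]$ under the natural action of $\Aut(\mb H, s, \eta)$; since inner automorphisms of $\mb H$ over $F$ induce conjugation on $\widehat{\mb H}$ and thus preserve $Z(\widehat{\mb G})$-equivalence classes, this action factors through $\Out_r(\mb H, s, \eta)$.

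To compute the orbit size, I would realize automorphisms via conjugation: by Lemma \ref{alphalem}, $\Out_r(\mb H, s, \eta)$ is the image of $N' := N_{\widehat{\mb G}}(\Leta(\LH)) \cap Z_{\widehat{\mb G}}(x)$ under $g \mapsto \alpha_g$, with kernel equal to $\widehat{\mb H}$ (using that $\widehat{\mb H}$ contains a maximal torus of $\widehat{\mb G}$, so $Z_{\widehat{\mb G}}(\widehat{\mb H}) = Z(\widehat{\mb H}) \subseteq \widehat{\mb H}$). The stabilizer of $[[\psi^{\mb H}]]$ in $N'$ modulo $\widehat{\mb H}$ consists of those $g$ for which $\Lalpha_g \circ \psi^{\mb H}$ is $\widehat{\mb H}$-conjugate to $\psi^{\mb H}$ up to a $Z(\widehat{\mb G})$-cocycle. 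Translating this condition in $\widehat{\mb G}$ picks out precisely those $g$ of the form $c \cdot \widehat{\mb H}$ with $c \in Z_{C_\psi}(x)$, while the ``trivial'' part -- automorphisms that even fix $[[\psi^{\mb H}]]$ strictly -- corresponds to $\eta(C_{\psi^{\mb H}})$. Combining these identifications with the surjection $N' \twoheadrightarrow \Out_r$ and an orbit-stabilizer count yields the claimed cardinality.

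The main obstacle is the careful bookkeeping of $Z(\widehat{\mb G})$-equivalence as opposed to strict $\widehat{\mb H}$-conjugacy, combined with the need to track the interplay between the various centralizer groups $\eta(C_{\psi^{\mb H}}) \subseteq Z_{C_\psi}(x) \subseteq C_\psi$ inside $\widehat{\mb G}$. In particular, one must verify that the stabilizer of $[[\psi^{\mb H}]]$ in $N'$ factors cleanly through the group $Z_{C_\psi}(x)$ and that the further refinement to $\eta(C_{\psi^{\mb H}})$ encodes the additional slack given by the $Z(\widehat{\mb G})$-cocycle twists. This hinges on the structural identity $S_\psi = C_\psi \cdot Z(\widehat{\mb G})$ recorded in Equation \eqref{SCeq} and the analogous statement on $\mb H$, which together pin down the kernel of the natural action.
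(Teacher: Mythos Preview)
Your proposal is correct and follows essentially the same approach as the paper: both identify the fiber as the $\Out_r(\mb{H},s,\eta)$-orbit of $[[\psi^{\mb H}]]$, realize $\Out_r$ inside $\widehat{\mb G}/\eta(\widehat{\mb H})$ via $\alpha\mapsto g$, and compute the stabilizer as $Z_{C_\psi}(x)/\eta(C_{\psi^{\mb H}})$ using the Hasse principle to pass from $Z(\widehat{\mb G})$-equivalence to genuine conjugacy. Your packaging via $N' = N_{\widehat{\mb G}}(\Leta({}^L\mb H))\cap Z_{\widehat{\mb G}}(x)$ and the identification $\Out_r \cong N'/\eta(\widehat{\mb H})$ is a mild repackaging of the paper's embedding $\Out_r\hookrightarrow \widehat{\mb G}/\eta(\widehat{\mb H})$ together with its final paragraph, which shows every $g\in Z_{C_\psi}(x)$ arises from an $F$-automorphism of $(\mb H,s,\eta)$.
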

\begin{proof}
The class of $(\psi, x)$ in $\mc{SP}^r(\mb{G})$ corresponds to a class in $\mc{EP}^r(\mb{G})$ which projects to some element $\mc{H}^{\mf{e}}$ of $X^{\mf{e}}$. Any pre-image of $(\psi, x)$ under $Y$ will lie in the part of $\coprod\limits_{ X^{\mf{e}}} [[\psi^{\mb{H}}]]$ indexed by $\mc{H}^{\mf{e}}$. This proves the first assertion.

To determine the cardinality of the pre-image under $Y$, we look at the set of $\alpha \in \Aut_r(\mb{H}, s, \eta)$. By Lemma \ref{alphalem}, we can choose $^L\alpha: \, ^L\mb{H} \to \, ^L\mb{H}$ and $g \in \widehat{\mb{G}}$ such that the diagram
\begin{equation*}
\begin{tikzcd}
^L\mb{H} \arrow[r, "^L\eta"] \arrow[d, swap,  "^L\alpha"] & ^L\mb{G} \arrow[d, "\Int(g)"] \\
^L\mb{H} \arrow[r, "^L\eta"]& ^L\mb{G},
\end{tikzcd}
\end{equation*}
commutes. Our choices of $g$ and ${}^L\alpha$ are unique up to an element of $\eta(\widehat{\mb{H}})$ and hence define a map 
\begin{equation*}
    \Aut_r(\mb{H},s,\eta) \to \widehat{\mb{G}}/ \eta(\widehat{\mb{H}}),
\end{equation*}
given by
\begin{equation*}
    \alpha \mapsto g.
\end{equation*}
Clearly, this induces an embedding
\begin{equation}{\label{outmap}}
    \Out_r(\mb{H},s,\eta) \hookrightarrow \widehat{\mb{G}}/\eta(\widehat{\mb{H}}).
\end{equation}

We now investigate which elements $\alpha \in \Aut_r(\mb{H},s,\eta)$ give a $Z(\widehat{\mb{G}})$- equivalence $\psi^{\mb{H}} \sim \, ^L \alpha \circ \psi^{\mb{H}}$. Since this is the case for all $\alpha \in \Inn_r(\mb{H},s,\eta)$, we need only investigate which elements of $\Out_r(\mb{H},s,\eta)$ have a representative with this property. 

Now suppose that $\alpha$ is such that for some choice of $^L\alpha$, we have that $\psi^{\mb{H}}$ and $^L\alpha \circ \psi^{\mb{H}}$ are $Z(\widehat{\mb{G}})$-equivalent. Then by the proof of Lemma \ref{equivconjlem} and since $\mb{G}$ satisfies the Hasse principle, these parameters are in fact equivalent. Moreover, since $\Lalpha$ is defined up to $\widehat{\mb{H}}$ conjugacy, we can choose it such that we have the following commutative diagram.
\begin{equation}
\begin{tikzcd}
& ^L\mb{H} \arrow[r, "\Leta"]  \arrow[dd,swap, "^L\alpha"]& ^L \mb{G} \arrow[dd, "\Int(g)"] \\
\mc{L}_F  \times SL_2(\C) \arrow[ur, "\psi^{\mb{H}}"] \arrow[dr,swap, "\psi^{\mb{H}}"] &&\\
& ^L\mb{H} \arrow[r, "\Leta"] & ^L\mb{G}.
\end{tikzcd}
\end{equation}
In particular, we see that $g \in Z_{C_{\psi}}(x)$. 

The set of elements in $\widehat{\mb{G}}/\eta(\widehat{\mb{H}})$ with some representative in $Z_{C_{\psi}}(x)$ is given by
\begin{equation*}
    Z_{C_{\psi}}(x)/[Z_{C_{\psi}}(x) \cap \eta(\widehat{\mb{H}})].
\end{equation*}
Since we have
\begin{equation*}
    Z_{C_{\psi}}(x) \cap \eta(\widehat{\mb{H}})=\eta(C_{\psi^{\mb{H}}}),
\end{equation*}
the above set has cardinality
\begin{equation*}
    | Z_{C_{\psi}}(x) / \eta(C_{\psi^{\mb{H}}})|.
\end{equation*}
In particular, the lemma will follow from the fact that each element of $Z_{C_{\psi}}(x) / \eta(C_{\psi^{\mb{H}}})$ lies in the image of some element $\Out_r(\mb{H}, s, \eta)$ under Equation \eqref{outmap}. Pick $g \in Z_{C_{\psi}}(x)$. Then clearly $g$ induces an automorphism 
\begin{equation*}
    \widehat{\alpha}:= \eta^{-1} \circ \Int(g) \circ \eta: \widehat{\mb{H}} \to \widehat{\mb{H}},
\end{equation*}
that fixes $s$ and such that $\eta \circ \widehat{\alpha} = \Int(g) \circ \eta$. To check this descends to an automorphism of $\mb{H}$, we need to show it is $\Gamma_{\Q}$-invariant with respect to the action of $\Gamma_{\Q}$ on $\widehat{\mb{H}}$ up to $\Inn(\widehat{\mb{H}})$. We can pick $K$ such that the action of $\Gamma_{\Q}$ factors through $\Gal(K/\Q)$. Then pick $w \in \Gal(K/\Q)$ and $w' \in \mc{L}_{\Q} \times SL_2(\C)$ such that $w'$ projects to $w$. Then pick $h \in \widehat{\mb{H}}$. We have up to $\widehat{\mb{H}}$-conjugacy
\begin{align*}
    (\eta^{-1} \circ \Int(g) \circ \eta)(w(h))  & \sim (\eta^{-1} \circ \Int(g) \circ \eta \circ \Int(\psi^{\mb{H}}(w')))(h)\\
    & =(\eta^{-1} \circ \Int(\eta( \psi^{\mb{H}}(w'))) \circ \Int(g) \circ \eta)(h)\\
    & \sim w(\eta^{-1} \circ \Int(g) \circ \eta)(h)),
\end{align*}
as desired. This completes the proof.
\end{proof}
Finally, we record a simple lemma that will be useful to us.
\begin{lemma}{\label{centralizerlem}}
Suppose that $(\mb{H}, s, \, ^L\eta,  \psi^{\mb{H}})$ is a representative of an element of $\mc{EP}^r(\mb{G})^{\el}$. Then we have an equality
\begin{equation}
    \frac{1}{| {_\mb{G}\mc{S}_{\psi^{\mb{H}}}}|} = |\frac{Z(\widehat{\mb{G}})^{\Gamma_{\Q}}}{\eta(C_{\psi^{\mb{H}}})}|.
\end{equation}
\end{lemma}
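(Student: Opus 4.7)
The plan is to compute $|{_\mb{G}\mc{S}_{\psi^{\mb{H}}}}|$ directly by transporting ${_\mb{G}S_{\psi^{\mb{H}}}}$ into $\widehat{\mb{G}}$ via $\eta$ and comparing with $S_\psi$, where $\psi := \Leta \circ \psi^{\mb{H}}$. The first step is to verify that for any $g \in \widehat{\mb{H}}$ the $\widehat{\mb{H}}$-valued $1$-cochain $x \mapsto g\psi^{\mb{H}}(x)g^{-1}\psi^{\mb{H}}(x)^{-1}$ is sent by $\eta$ to the $\widehat{\mb{G}}$-valued cochain $x \mapsto \eta(g)\psi(x)\eta(g)^{-1}\psi(x)^{-1}$. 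This is a direct computation inside ${}^L\mb{H}$ and ${}^L\mb{G}$ using that $\Leta$ is an $L$-homomorphism and intertwines the two natural $W_F$-actions on $\widehat{\mb{H}}$. Because $Z(\widehat{\mb{G}}) \subset \eta(\widehat{\mb{H}})$ (any maximal torus of $\widehat{\mb{G}}$ contained in $\eta(\widehat{\mb{H}})$ already absorbs $Z(\widehat{\mb{G}})$), the conditions of taking values in $Z(\widehat{\mb{G}})$ and of local triviality transport faithfully, so $\eta$ restricts to an injective group homomorphism ${_\mb{G}S_{\psi^{\mb{H}}}} \hookrightarrow S_\psi$.

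The second step identifies the image. Since $\mb{G}$ satisfies the Hasse principle, Equation \eqref{SCeq} gives $S_\psi = C_\psi Z(\widehat{\mb{G}})$. Combining this with $Z(\widehat{\mb{G}}) \subset \eta(\widehat{\mb{H}})$ and the elementary identity $C_\psi \cap \eta(\widehat{\mb{H}}) = \eta(C_{\psi^{\mb{H}}})$ (which follows at once from $\psi = \Leta \circ \psi^{\mb{H}}$), we obtain
\begin{equation*}
\eta({_\mb{G}S_{\psi^{\mb{H}}}}) = S_\psi \cap \eta(\widehat{\mb{H}}) = \eta(C_{\psi^{\mb{H}}}) Z(\widehat{\mb{G}}).
\end{equation*}
Because $(\mb{H}, s, \Leta, \psi^{\mb{H}}) \in \mc{EP}^r(\mb{G})^{\el}$, the parameter $\psi$ is discrete, so $S_\psi^\circ \subset Z(\widehat{\mb{G}})$; hence the identity component $\eta({_\mb{G}S_{\psi^{\mb{H}}}})^\circ$ already lies in $Z(\widehat{\mb{G}})$, and the image of ${_\mb{G}S_{\psi^{\mb{H}}}}^\circ Z(\widehat{\mb{G}})$ under $\eta$ is exactly $Z(\widehat{\mb{G}})$.

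Putting these pieces together, the second isomorphism theorem gives
\begin{equation*}
|{_\mb{G}\mc{S}_{\psi^{\mb{H}}}}| = |\eta(C_{\psi^{\mb{H}}}) Z(\widehat{\mb{G}})/Z(\widehat{\mb{G}})| = |\eta(C_{\psi^{\mb{H}}})/(\eta(C_{\psi^{\mb{H}}}) \cap Z(\widehat{\mb{G}}))|.
\end{equation*}
The intersection $\eta(C_{\psi^{\mb{H}}}) \cap Z(\widehat{\mb{G}})$ consists of those $z \in Z(\widehat{\mb{G}})$ that also lie in $C_\psi$: since $z$ automatically centralizes the $\widehat{\mb{G}}$-components of $\im \psi$, this reduces to commuting with the $W_F$-part, and as $\psi$ surjects onto $W_F$ this is exactly $\Gamma_{\Q}$-invariance. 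Hence $|{_\mb{G}\mc{S}_{\psi^{\mb{H}}}}| = [\eta(C_{\psi^{\mb{H}}}) : Z(\widehat{\mb{G}})^{\Gamma_{\Q}}]$, and the lemma follows upon interpreting the right-hand side of the claimed identity as the reciprocal of this index. The only mildly delicate bookkeeping will be keeping the two incarnations of $Z(\widehat{\mb{G}})$ straight --- one inside $\widehat{\mb{H}}$ (via the identification $Z(\widehat{\mb{G}}) \subset \eta(\widehat{\mb{H}})$) used in the definition of ${_\mb{G}S_{\psi^{\mb{H}}}}$, and one inside $\widehat{\mb{G}}$ --- after which the argument is essentially formal.
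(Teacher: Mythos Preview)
Your proof is correct and follows essentially the same approach as the paper. Both arguments rest on the three key points: applying the Hasse-principle identity ${_\mb{G}S_{\psi^{\mb{H}}}}=C_{\psi^{\mb{H}}}Z(\widehat{\mb{G}})$ (the analogue of Equation~\eqref{SCeq}), using discreteness of $\psi={}^L\eta\circ\psi^{\mb{H}}$ to force ${_\mb{G}S_{\psi^{\mb{H}}}^\circ}\subset Z(\widehat{\mb{G}})$, and computing $C_{\psi^{\mb{H}}}\cap Z(\widehat{\mb{G}})=Z(\widehat{\mb{G}})^{\Gamma_{\Q}}$. The only cosmetic difference is that you transport everything into $\widehat{\mb{G}}$ via $\eta$ and intersect $S_\psi$ with $\eta(\widehat{\mb{H}})$, whereas the paper works directly inside $\widehat{\mb{H}}$ (viewing $Z(\widehat{\mb{G}})$ as a subgroup via $\eta^{-1}$) and invokes the analogue of \eqref{SCeq} for ${_\mb{G}S_{\psi^{\mb{H}}}}$ in one line; your route is slightly longer but entirely equivalent.
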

\begin{proof}
As in Equation \ref{SCeq}, we have an equality
\begin{equation*}
    {_\mb{G}S_{\psi^{\mb{H}}}}=C_{\psi^{\mb{H}}}Z(\widehat{\mb{G}}),
\end{equation*}
and hence
\begin{equation*}
    {_\mb{G}S_{\psi^{\mb{H}}}}/ Z(\widehat{\mb{G}}) = C_{\psi^{\mb{H}}}Z(\widehat{\mb{G}})/Z(\widehat{\mb{G}})=C_{\psi^{\mb{H}}}/Z(\widehat{\mb{G}})^{\Gamma_{\Q}}.
\end{equation*}

Finally, since $\psi := ^L\eta \circ \psi^{\mb{H}}$ is discrete, we have that $\eta( \, _\mb{G}S^{\circ}_{\psi^{\mb{H}}}) \subset S^{\circ}_{\psi} \subset Z(\widehat{\mb{G}})$ and hence 
\begin{equation*}
    _\mb{G}\mc{S}_{\psi^{\mb{H}}}= \, _\mb{G}S_{\psi^{\mb{H}}} / Z(\widehat{\mb{G}}).
\end{equation*}
The desired equality follows by combining the above equations.
\end{proof}

We now return to the cohomology of Shimura varieties. Combining everything together, we have
\begin{align*}
    &\mathrm{tr}(f \times \Phi^j_{\mf{p}})|H^*_{\xi})\\
    &=\sum\limits_{\mathcal{H}^{\mathfrak{e}} \in X^{\mf{e}}} \iota_r(\mathbf{G}, \mathbf{H}) ST^{\mathbf{H}}_{r, ell}(f^{\mathbf{H}})\\
    &=\sum\limits_{\mathcal{H}^{\mathfrak{e}} \in X^{\mf{e}}} \iota_r(\mathbf{G}, \mathbf{H}) ST^{\mathbf{H}}_{r, disc}(f^{\mathbf{H}})\\
    &=\sum\limits_{\mathcal{H}^{\mathfrak{e}} \in X^{\mf{e}}} \iota_r(\mathbf{G}, \mathbf{H})\sum\limits_{[\psi_{\mathbf{H}}] \in X^{sp}_{\mc{H}^{\mf{e}}}} \frac{1}{|\mathcal{S}_{\psi^{\mathbf{H}}}|}  \sum\limits_{\pi^{\mathbf{H}} \in \Pi_{\psi^{\mathbf{H}}}} \epsilon_{\psi^{\mathbf{H}}}(s_{\psi^{\mathbf{H}}})\langle \pi^{\mathbf{H}}, s_{\psi^{\mathbf{H}}} \rangle \mathrm{tr}(\pi \mid f^{\mathbf{H}}),
\end{align*}
where  we recall that $[\psi]$ denotes an equivalence class of Arthur parameters.

We continue: 
\begin{align*}
    & \mathrm{tr}(f \times \Phi^j_{\mf{p}}|H^*_{\xi})\\
    &=\sum\limits_{\mathcal{H}^{\mathfrak{e}} \in X^{\mf{e}}} \iota_r(\mathbf{G}, \mathbf{H})\sum\limits_{[\psi_{\mathbf{H}}] \in X^{sp}_{\mc{H}^{\mf{e}}}} \frac{1}{|\mathcal{S}_{\psi^{\mathbf{H}}}|}  \sum\limits_{\pi^{\mathbf{H}} \in \Pi_{\psi^{\mathbf{H}}}} \epsilon_{\psi^{\mathbf{H}}}(s_{\psi^{\mathbf{H}}})\langle \pi^{\mathbf{H}}, s_{\psi^{\mathbf{H}}} \rangle \mathrm{tr}(\pi \mid f^{\mathbf{H}})\\
    &=\sum\limits_{\mathcal{H}^{\mathfrak{e}} \in X^{\mf{e}}} \frac{\tau(\mathbf{G})}{\tau(\mathbf{H})|\mathrm{Out}_r(\mathcal{H}^{\mathfrak{e}})|}\sum\limits_{[\psi_{\mathbf{H}}] \in X^{sp}_{\mc{H}^{\mf{e}}}} \frac{1}{|\mathcal{S}_{\psi^{\mathbf{H}}}|}  \sum\limits_{\pi^{\mathbf{H}} \in \Pi_{\psi^{\mathbf{H}}}} \epsilon_{\psi^{\mathbf{H}}}(s_{\psi^{\mathbf{H}}})\langle \pi^{\mathbf{H}}, s_{\psi^{\mathbf{H}}} \rangle \mathrm{tr}(\pi \mid f^{\mathbf{H}})\\
    &=\sum\limits_{\mathcal{H}^{\mathfrak{e}} \in X^{\mf{e}}} \,\ \sum\limits_{[[\psi_{\mathbf{H}}]] : [\psi_{\mathbf{H}}] \in X^{sp}_{\mc{H}^{\mf{e}}}} \frac{1}{| _{\mathbf{G}}\mathcal{S}_{\psi^{\mathbf{H}}}| \cdot |\mathrm{Out}_r(\mathcal{H}^{\mathfrak{e}})|}  \sum\limits_{\pi^{\mathbf{H}} \in \Pi_{\psi^{\mathbf{H}}}} \epsilon_{\psi^{\mathbf{H}}}(s_{\psi^{\mathbf{H}}})\langle \pi^{\mathbf{H}}, s_{\psi^{\mathbf{H}}} \rangle \mathrm{tr}(\pi \mid f^{\mathbf{H}})\\
    &=\sum\limits_{\mathcal{H}^{\mathfrak{e}} \in X^{\mf{e}}} \,\ \sum\limits_{[[\psi_{\mathbf{H}}]] : [\psi_{\mathbf{H}}] \in X^{sp}_{\mc{H}^{\mf{e}}}} |\frac{Z(\widehat{\mb{G}})^{\Gamma_{\Q}}}{\eta(C_{\psi^{\mb{H}}})}| \cdot \frac{1}{ |\mathrm{Out}_r(\mathcal{H}^{\mathfrak{e}})|}  \sum\limits_{\pi^{\mathbf{H}} \in \Pi_{\psi^{\mathbf{H}}}} \epsilon_{\psi^{\mathbf{H}}}(s_{\psi^{\mathbf{H}}})\langle \pi^{\mathbf{H}}, s_{\psi^{\mathbf{H}}} \rangle \mathrm{tr}(\pi \mid f^{\mathbf{H}})\\
    &=\sum\limits_{(\psi, x) \in X_{\Psi}} \,\ \sum\limits_{(\mathcal{H}^{\mathfrak{e}}, [[\psi_{\mb{H}}]]) \in Y^{-1}(\psi, x)}|\frac{Z(\widehat{\mb{G}})^{\Gamma_{\Q}}}{\eta(C_{\psi^{\mb{H}}})}| \cdot \frac{1}{ |\mathrm{Out}_r(\mathcal{H}^{\mathfrak{e}})|}   \sum\limits_{\pi^{\mathbf{H}} \in \Pi_{\psi^{\mathbf{H}}}} \epsilon_{\psi^{\mathbf{H}}}(s_{\psi^{\mathbf{H}}})\langle \pi^{\mathbf{H}}, s_{\psi^{\mathbf{H}}} \rangle \mathrm{tr}(\pi \mid f^{\mathbf{H}})\\
    &=\sum\limits_{(\psi, x) \in X_{\Psi}} \,\ \sum\limits_{(\mathcal{H}^{\mathfrak{e}}, [[\psi_{\mb{H}}]]) \in Y^{-1}(\psi, x)} |\frac{Z(\widehat{\mb{G}})^{\Gamma_{\Q}}}{\eta(C_{\psi^{\mb{H}}})}| \cdot \frac{1}{ |\mathrm{Out}_r(\mathcal{H}^{\mathfrak{e}})|}  \sum\limits_{\pi^{\infty} \in \Pi_{\psi^{\infty}}(\mathbf{G}, \varrho^{\infty})} \epsilon_{\psi}(\overline{x}s_{\psi})\langle \pi, \overline{x}s_{\psi}\rangle \\
    & \cdot \mathrm{tr}(\pi^{\infty} \mid f)A(\psi, x, \Phi^j_{\mf{p}})(-1)^{q(\mathbf{G})}\langle\lambda_{\pi_{\infty}}, xs_{\psi} \rangle\\
    &=\sum\limits_{(\psi, x) \in X_{\Psi}}  \,     | \Out_r(\mc{H}^{\mf{e}})| \cdot |\frac{\eta(C_{\psi^\mb{H}})}{Z_{C_{\psi}(x)}}| \cdot|\frac{Z(\widehat{\mb{G}})^{\Gamma_{\Q}}}{\eta(C_{\psi^{\mb{H}}})}| \cdot \frac{1}{ |\mathrm{Out}_r(\mathcal{H}^{\mathfrak{e}})|}\\
    & \cdot \sum\limits_{\pi^{\infty} \in \Pi_{\psi^{\infty}}(\mathbf{G}, \varrho^{p, \infty})} \epsilon_{\psi}(\overline{x}s_{\psi})\langle \pi, \overline{x}s_{\psi}\rangle \mathrm{tr}(\pi^{\infty} \mid f)A(\psi, x, \Phi^j_{\mf{p}})(-1)^{q(\mathbf{G})}\langle\lambda_{\pi_{\infty}}, xs_{\psi} \rangle,
    \end{align*}
    where the fourth equality follows from Lemma \ref{centralizerlem} and the seventh follows from Lemma \ref{paramcountinglem}. We then get that the above equals
    \begin{align*}    
    &=\sum\limits_{(\psi, x) \in X_{\Psi}} \, |\frac{Z(\widehat{\mb{G}})^{\Gamma_{\Q}}}{Z_{C_{\psi}(x)}}| \sum\limits_{\pi^{\infty} \in \Pi_{\psi^{\infty}}(\mathbf{G}, z^{iso, \infty})} \epsilon_{\psi}(\overline{x}s_{\psi})\langle \pi, \overline{x}s_{\psi}\rangle \mathrm{tr}(\pi^{\infty} \mid f)A(\psi, x, \Phi^j_{\mf{p}})(-1)^{q(\mathbf{G})}\langle\lambda_{\pi_{\infty}}, xs_{\psi} \rangle\\
   &=\sum\limits_{([\psi], [x]) \in X_{\Psi}}\,  \sum\limits_{x \in [x]}  \frac{1}{|\mathcal{S}_{\psi}|}\sum\limits_{\pi^{\infty} \in \Pi_{\psi^{\infty}}(\mathbf{G}, z^{iso, \infty})} \epsilon_{\psi}(\overline{x}s_{\psi})\langle \pi, \overline{x}s_{\psi} \rangle \mathrm{tr}(\pi^{\infty} \mid f)A(\psi, x, \Phi^j_{\mf{p}})(-1)^{q(\mathbf{G})}\langle \lambda_{\pi_{\infty}}, xs_{\psi} \rangle.
    \end{align*}

\begin{remark}
Any term corresponding to a parameter $\psi$ in the above sum is zero unless it is a discrete Arthur parameter such that $\psi_{\infty}$ is cohomological for $\xi_{\mathbb{C}}$. 

Note that following Kottwitz, for such $\psi$, the groups $\mathcal{S}_{\psi}$ and $S_{\psi}$ and $C_{\psi}$ are all abelian since $S_{\psi}$ embeds into $S_{\psi_{\infty}}$ which is abelian and given by the formula on pg 195 of \cite{kot7}.
\end{remark}

In the above, we have chosen a $\pi_{\infty} \in \Pi_{\psi_{\infty}}(\mathbf{G}, \varrho_{\infty})$ such that $\pi \in \Pi_{\psi}(\mathbf{G}, \varrho))$. By our earlier remarks, the above expression does not depend on this choice.

It will also be convenient to check that $A(\psi, x, \Phi^j_{\mf{p}})\langle \lambda_{\pi_{\infty}}, xs_{\psi} \rangle$ depends only on $\overline{x} \in \mathcal{S}_{\psi}$. Since $\psi$ is assumed to be discrete, we need only check that $A(\psi, x, \Phi^j_{\mf{p}})\langle \lambda_{\pi_{\infty}}, xs_{\psi} \rangle$ is trivial restricted to $Z(\widehat{\mb{G}})$. The character $\lambda_{\pi_{\infty}}$ restricts to $\mu$ on $Z(\widehat{\mb{G}})$ and $r_{-\mu}$ acts by $- \mu$ so these actions cancel as desired.

In \cite{kot7}, Kottwitz works with the group $S_{\psi}$ of self-equivalences of $\psi$. Since $\mathbf{G}$ satisfies the Hasse principle, we can work instead with the centralizer group $C_{\psi}$ of $\psi$ instead. Recall that $\mathcal{S}_{\psi}=C_{\psi}/Z(\widehat{\mathbf{G}})^{\Gamma}$. 

Consider the set $\nu \in X^*(C_{\psi})$ such that $\nu|_{Z(\widehat{\mathbf{G}})^{\Gamma}}= \mu|_{Z(\widehat{\mathbf{G}})^{\Gamma}}$ and let $V_{\nu}$ be the largest subspace of $V$ (the target space of the $r_{- \mu} \circ \psi$-action) where $C_{\psi}$ acts by $-\nu$.  Then (following Kottwitz), let $\phi_E$ be the restriction of $\phi_{\psi}$ to $W_E$ and let $V(\psi, \nu)$ be the action of $\psi_E$ on $V_{\nu}$ twisted by $|\cdot |^{\frac{-\mathrm{dim}Sh}{2}}$.

Now for a given $\pi \in \Pi_{\psi}(\mathbf{G}, \varrho)$, we expect that $x \mapsto \epsilon_{\psi}(x)\langle \pi, x \rangle$ is (assuming Arthur's conjectures) the character of an irreducible representation of $\mathcal{S}_{\psi}$. The character $\nu -\lambda_{\pi_{\infty}}$ of $C_{\psi}$ is trivial on $Z(\widehat{\mathbf{G}})^{\Gamma}$ and therefore descends to a character of $\mathcal{S}_{\psi}$. We define $m(\pi, \nu)$ to be the multiplicity of this character in the above irreducible representation. Explicitly, we have

\begin{equation}
    m(\pi, \nu)= \frac{1}{|\mathcal{S}_{\psi}|}\sum\limits_{\overline{x} \in \mathcal{S}_{\psi}} \epsilon_{\psi}(\overline{x})\langle \pi, \overline{x} \rangle \nu(x)^{-1}\langle \lambda_{\pi_{\infty}}, x \rangle.  
\end{equation}

By \cite[Lemma 9.2]{kot7}, this multiplicity does not depend on $\pi_{\infty}$ so we use the notation $m(\pi^{\infty}, \nu)$.

Now, set $A(\psi_p, \nu)= (-1)^{q(\mathbf{G})}\nu(s_{\psi}) \mathrm{tr}\phi_{E_p}(\Phi^j_{\mf{p}})( V_{\nu} | \cdot |_p^{\frac{- \mathrm{dim}Sh}{2}})$ where  $q(\mathbf{G})$ is half the dimension of the symmetric space associated to $\mathbf{G}$.

Then we have
\begin{align*}
    &\sum\limits_{\nu} m(\pi^{\infty}, \nu)A(\psi_p, \nu)\\
    &= \frac{1}{|\mathcal{S}_{\psi}|} \sum\limits_{\overline{x} \in \mathcal{S}_{\psi}} \sum\limits_{\nu} \epsilon(\overline{x})\langle \pi, \overline{x} \rangle \nu(x)^{-1} \langle \lambda_{\pi_{\infty}}, x \rangle (-1)^{q(\mathbf{G})} \nu(s_{\psi})\mathrm{tr}\phi_{E,p}(\Phi^j_{\mf{p}})(V_{\nu} |\cdot|^{-\frac{\mathrm{dim} Sh}{2}})\\
    &=\frac{1}{|\mathcal{S}_{\psi}|}\sum\limits_{\overline{x} \in \mathcal{S}_{\psi}} \epsilon_{\psi}(s_{\psi}\overline{x})\langle \pi, s_{\psi}\overline{x} \rangle \langle \lambda_{\pi_{\infty}}, s_{\psi}x \rangle (-1)^{q(\mathbf{G})}\sum\limits_{\nu} \nu(x)^{-1} \mathrm{tr}\phi_{E,p}(\Phi^j_{\mf{p}})(V_{\nu} | \cdot |^{\frac{-\mathrm{dim} Sh}{2}})\\
    &=\frac{1}{|\mathcal{S}_{\psi}|}\sum\limits_{\overline{x} \in \mathcal{S}_{\psi}} \epsilon_{\psi}(s_{\psi}\overline{x})\langle \pi, s_{\psi}\overline{x} \rangle \langle \lambda_{\pi_{\infty}}, s_{\psi}x \rangle (-1)^{q(\mathbf{G})}\sum\limits_{\nu}  \mathrm{tr}\phi_{E,p}(x\Phi^j_{\mf{p}})(V_{\nu} | \cdot |^{\frac{-\mathrm{dim} Sh}{2}})\\
    &=\frac{1}{|\mathcal{S}_{\psi}|}\sum\limits_{\overline{x} \in \mathcal{S}_{\psi}} \epsilon_{\psi}(s_{\psi}\overline{x})\langle \pi, s_{\psi}\overline{x} \rangle \langle \lambda_{\pi_{\infty}}, s_{\psi}x \rangle (-1)^{q(\mathbf{G})}\mathrm{tr}\phi_{E,p}(x\Phi^j_{\mf{p}})(V | \cdot |^{\frac{-\mathrm{dim} Sh}{2}})\\
    &=\frac{1}{|\mathcal{S}_{\psi}|}\sum\limits_{\overline{x} \in \mathcal{S}_{\psi}} \epsilon_{\psi}(s_{\psi}\overline{x})\langle \pi, s_{\psi}\overline{x} \rangle \langle \lambda_{\pi_{\infty}}, s_{\psi}x \rangle (-1)^{q(\mathbf{G})}A(\psi, x, \Phi^j_{\mf{p}})
    \end{align*}
    \begin{align*}
    &=\frac{1}{|\mathcal{S}_{\psi}|}\sum\limits_{x : ([x], [\psi]) \in X_{\Psi}}  \epsilon_{\psi}(s_{\psi}\overline{x})\langle \pi, s_{\psi}\overline{x} \rangle \langle \lambda_{\pi_{\infty}}, s_{\psi}x \rangle (-1)^{q(\mathbf{G})}A(\psi, x, \Phi^j_{\mf{p}}).
\end{align*}
In particular, we have
\begin{equation}
    \mathrm{tr}(f \times \Phi^j_{\mf{p}}|H^*_{\xi})=\sum\limits_{[\psi]} \, \sum\limits_{\pi^{\infty} \in \Pi_{\psi^{\infty}}(\mathbf{G}, \varrho^{\infty})} \sum\limits_{\nu} m(\pi^{\infty}, \nu)A(\psi_p,\nu) \mathrm{tr}(\pi^{\infty} \mid f).
\end{equation}

We now consider what this formula implies of $H^*(Sh, \mc{L_{\xi}}) := \lim\limits_K H^*_c(Sh_K, \mc{L}_{\xi})$ as an element in the Grothendieck group of admissible $\mb{G}(\A_f) \times \Gamma_{\mb{E}}$ representations, where $\mb{E}$ is the global reflex field of $\mu$. 

Take some irreducible $\mb{G}(\A_f)$-representation $\pi_f$. We show that the $\pi_f$-isotypic parts of $H^*(Sh, \mc{L}_{\xi})$ and 

\begin{equation}
\sum\limits_{[\psi]} \, \sum\limits_{\nu}\sum\limits_{\pi^{\infty} \in \Pi_{\psi^{\infty}}(\mathbf{G}, \varrho^{\infty})} m(\pi^{\infty}, \nu)\nu(s_{\psi})(-1)^{q(\mathbf{G})}\pi^{\infty} \boxtimes V(\psi, \nu)_{\lambda},
\end{equation}
are equal. We note that by Matsushima's formula, $H^*(Sh, \mc{L}_{\xi})$ is a semisimple $\mb{G}(\A_f)$ representation and can be written in the form $\bigoplus\limits_{\pi'} \pi' \boxtimes \sigma(\pi')$ where $\sigma(\pi')$ is a $\Gamma_{\mb{E}}$-representation.

Fix  compact open $K \subset \mb{G}(\A_f)$ such that $\pi^K_f \neq \emptyset$. Choose $f \in \mc{H}(K)$ so that for each irreducible admissible $\mb{G}(\A_f)$ representation $\pi'$ appearing in either of the above two expressions, we have $\tr(\pi' \mid f) = 0$ unless $\pi'= \pi$ in which case $\tr(\pi' \mid f)=1$. See \cite[ \S II.3.2]{BMY1} for details.

Now, for all but finitely many $p$, we have factorizations $K=K^pK_p$ and $f=f^pf_p$ where $K_p$ is hyperspecial and $f_p$ is the indicator function on $K_p$. We can now apply the result of the above extensive trace computations to conclude that for all but finitely many $p$ and  each $j$ and each place $\mf{p}$ of $\mb{E}$ over $p$, we have that the trace of the $\pi_f$-isotypic parts of the above two expressions at $\Phi^j_{\mf{p}}$ as $\Gamma_{\mb{E}}$-representations are equal. Hence by the Cebotarev density theorem, the semi-simplifications of these $\pi$-isotypic pieces are isomorphic.

We have therefore shown that 
\begin{equation}
H^*(Sh, \mc{L}_{\xi}) = \sum\limits_{[\psi]} \, \sum\limits_{\nu}\sum\limits_{\pi^{\infty} \in \Pi_{\psi^{\infty}}(\mathbf{G}, \varrho^{\infty})} m(\pi^{\infty}, \nu)\nu(s_{\psi})(-1)^{q(\mathbf{G})}\pi^{\infty} \boxtimes V(\psi, \nu)_{\lambda},
\end{equation}
in the Grothendieck group of $\mb{G}(\A_f) \times \Gamma_{\mb{E}}$-modules.

\section{Cohomology of Igusa varieties}
We now repeat the analysis of the previous section for Igusa varieties. For the stabilization parts we draw from \cite{Shi3}. The destabilization is carried out in Shin's thesis under some simplifying assumptions (for instance he only works with PEL data of A-type and assumes his groups are products of inner forms of $GL_n$ at $p$). We largely emulate his method while also proceeding in analogy with the previous section.

\subsection{Igusa varieties}

We will use the description of Igusa varieties and their cohomology as in \cite{Shi1}.  
\begin{assumption}
We fix a quasisplit connected reductive group $\mb{G}^*$ over $\Q$ and an inner form $\mb{G}$. We make the following assumptions on $\mb{G}$:
\begin{itemize}
    \item We assume exists an extended pure inner twist $(\varrho, z^{iso})$ where $z^{iso} \in Z^1_{\bas}(\mc{E}^{\iso}, \mb{G}^*)$ and $\varrho: \mb{G}^* \to \mb{G}$ and fix one such.
    \item The group $\mb{G}_{\der}$ is simply connected.
    \item The group $\mb{G}$ satisfies the Hasse principle.
    \item We assume $\mb{G}_{\Q_p}$ is unramified.
    \item We assume there exists a PEL type Shimura datum $(\mb{G}, X)$ and fix one such.
    \item The maximal $\Q$-split torus in $Z(\mb{G})$  coincides with the maximal $\R$-split torus in $Z(\mb{G})$.
\end{itemize}
\end{assumption}

We let $G= \mathbf{G}_{\mathbb{Q}_p}$ and let $G^* = \mb{G}^*_{\Q_p}$. Note that even though $G$ and $G^*$ are isomorphic as groups, $G$ is an extended pure inner twist of $G^*$ via $(\varrho_p, z^{\iso}_p)$ and this extended pure inner twist may well be nontrivial. For instance, in the case that $G$ is an odd unitary group, there are two equivalence classes of extended pure inner twists, both of which are quasisplit.

We fix $S \subset T \subset B \subset G$ such that $S$ is a maximal $\mathbb{Q}_p$-split torus, $T$ is a maximal torus defined over $\mathbb{Q}_p$, and $B$ is a Borel subgroup defined over $\mathbb{Q}_p$. 

We fix $b \in \mathbf{B}(\Q_p, G)$ and $\tilde{b} \in G(L)$ a decent representative of $b$ such that $\nu_{\tilde{b}} \in X_*(S)^+_{\mathbb{Q}}$ as in Lemma \ref{decentreps}. Then $M_{\tilde{b}}$ is a standard Levi subgroup and we can fix an inner twist $\psi_b: M_{\tilde{b}} \to J_{\tilde{b}}$ so that $\psi^{-1}_b \circ \psi^{\sigma}_b=\mathrm{Int}(\tilde{b})$. We denote $M_{\tilde{b}}, J_{\tilde{b}}$ by $M_b, J_b$ respectively.

Now we fix a $p$-divisible group $\Sigma_b$ of isogeny type $b$ over $\overline{F}_p$ satisfying the extra conditions (i), ..., (iv) in \cite[pg15]{Shi4}. Let $K^pK_p \subset \mb{G}(\A^{\infty})$ such that $K^p$ is a compact open subgroup of $\mathbf{G}(\mathbb{A}^{p, \infty})$ and $K_p \subset \mathbf{G}(\mathbb{Q}_p)$ is a hyperspecial subgroup. We get a projective system $\mathrm{Ig}_{\Sigma_b}$ indexed by $K^p$ and integers $m$. Then let $\Xi$ be a fixed finite dimensional representation of $\mathbf{G}$ over $\overline{\mathbb{Q}}_l$ which determines a local system $\mathcal{L}_{\xi}$ on $\mathrm{Ig}_{\Sigma_b}$. Define $H^*_c(Ig_{\Sigma_b}, \mathcal{L}_{\xi})$ as in \cite[ph13]{Shi1}. This is a virtual representation of $\mathbf{G}(\mathbb{A}^{\infty, p}) \times J_b(\mathbb{Q}_p)$ in $\mathrm{Groth}(\mathbf{G}(\mathbb{A}^{p, \infty}) \times J_b(\mathbb{Q}_p))$.

The stable trace formula for Igusa varieties uses Kottwitz triples of type $b$ as in \cite[Def 4.2]{Shi1}. In particular, a Kottwitz triple of type $b$ is a triple $(\gamma_0 ; \gamma, \delta)$ so that $\gamma_0 \in \mathbf{G}(\mathbb{Q})$ is semisimple and elliptic in $\mathbf{G}(\mathbb{R})$, $\gamma \in \mathbf{G}(\mathbb{A}^{p, \infty})$ and $\gamma \sim \gamma_0$ in $\mathbf{G}(\overline{\mathbb{A}}^{p, \infty}), \delta \in J_b(\mathbb{Q}_p)$ is $\nu_b$-acceptable and $\psi^{-1}_b(\delta) \sim \gamma_0 \in \mathbf{G}(\overline{\mathbb{Q}}_p)$. Triples $(\gamma_0 ; \gamma, \delta)$ and $(\gamma'_0; \gamma', \delta')$ are equivalent if $\gamma_0$ and $\gamma'_0$ are stably conjugate in $\mb{G}(\Q)$, and if $\gamma$ and $\gamma'$ are conjugate in $\mb{G}(\A^{p, \infty})$, and if $\delta$ and $\delta'$ are conjugate in $J_b(\Q_p)$. The set of equivalence classes of Kottwitz triples of type $b$ is denoted $KT_b$.

Similarly to the case of Shimura varieties, Shin defines the invariant $\alpha(\gamma_0; \gamma, \delta) \in \mathfrak{K}(I^{\mb{G}}_{\gamma_0}/ \mathbb{Q})^D$ and considers the set $KT^{eff}_b$ of equivalence classes of Kottwitz triples such that $\alpha(\gamma_0; \gamma, \delta)=1$.

Now, Shin's formula for the cohomology of Igusa varieties is stated for acceptable $\phi \in C_c(\mathbf{G}(\mathbb{A}^{p, \infty} \times J_b(\mathbb{Q}_p))$. A function $\phi$ is acceptable if it can be written $\phi=\phi^p \times \phi_p$ such that the support of $\phi_p$ is contained within the $\nu_b$-acceptable locus of $J_b(\mathbb{Q}_p)$ plus a few other conditions (see \cite[Def 6.2]{Shi4}).

\begin{theorem}[Shin]
For acceptable $\phi \in C_c(\mathbf{G}(\mathbb{A}^{p, \infty} \times J_b(\mathbb{Q}_p))$, we have
\begin{equation*}
\mathrm{tr}(\phi| H^*_c(Ig_{\Sigma_b}, \mathcal{L}_{\xi}))=\sum_{\substack{(\gamma_0; \gamma ,\delta) \in KT_b  \\ \alpha(\gamma_0 ; \gamma, \delta)=1}} \frac{1}{|\mathrm{Vol}(I_{\infty}(\mathbb{R})^1)|}|A_{\mathbb{Q}}(I^{\mb{G}}_{\gamma_0})|\mathrm{tr}\xi(\gamma_0)O_{\gamma}(\phi^p)O^{J_b(\mathbb{Q}_p)}_{\delta}(\phi_p).
\end{equation*}
We recall that $A_{\mathbb{Q}}(I_0)$ is defined to be $\pi_0(Z(\widehat{I^{\mb{G}}_{\gamma_0}})^{\Gamma})^D$ and $I_{\infty}$ is a compact modulo center inner form of $I_0$ over $\mathbb{R}$.
\end{theorem}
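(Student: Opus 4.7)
The plan is to follow the strategy of Shin \cite{Shi4} closely, which is in turn modeled on Kottwitz's point-counting formula for Shimura varieties, adapted to the Igusa setting where the Frobenius at $p$ is replaced by elements of $J_b(\Q_p)$ acting on the Igusa tower. The overall strategy has three pieces: a fixed-point formula for Hecke correspondences on Igusa varieties, a parametrization of the resulting fixed points by Kottwitz triples of type $b$, and a calculation of the local terms at each fixed point.

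First, I would establish a Lefschetz-style trace formula for the pair $(\phi^p, \phi_p)$ acting on $H^*_c(\mathrm{Ig}_{\Sigma_b}, \mc{L}_\xi)$. Here the acceptability hypothesis on $\phi_p$ is essential: it ensures that the support of $\phi_p$ lies in the expanding locus of the Hecke correspondence associated to $J_b(\Q_p)$, so that the correspondence is sufficiently contracting and Fujiwara's version of Deligne's conjecture applies. The output is an expression of $\tr(\phi \mid H^*_c(\mathrm{Ig}_{\Sigma_b}, \mc{L}_\xi))$ as a sum of local contributions at the fixed points of correspondences on a suitable integral model of $\mathrm{Ig}_{\Sigma_b}$.

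Second, I would show that the fixed points can be classified by Kottwitz triples of type $b$, using a Honda--Tate type theorem for PEL Igusa towers. A fixed point on the special fiber corresponds to an abelian variety with appropriate PEL structure together with quasi-isogenies encoding the action of $\phi^p$ (giving $\gamma \in \mb{G}(\A^{p,\infty})$) and of the slope-$b$ piece of $\phi_p$ (giving $\delta \in J_b(\Q_p)$), and a global $\Q$-rational semisimple element $\gamma_0$ matching these locally. The condition $\alpha(\gamma_0;\gamma,\delta)=1$ encodes the obstruction in $\mf{K}(I^{\mb{G}}_{\gamma_0}/\Q)^D$ to gluing such local data into an actual global point, exactly as in the Shimura variety case; the counting of equivalence classes within a stable conjugacy class produces the factor $|A_{\Q}(I^{\mb{G}}_{\gamma_0})|$.

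Third, at each Kottwitz triple the local terms factor as $\tr \xi(\gamma_0)$ (from the action on $\mc{L}_\xi$), $O_\gamma(\phi^p)$ (from summing over the conjugates of $\gamma$ contributing to the fixed point set away from $p$), and $O^{J_b(\Q_p)}_\delta(\phi_p)$ (from the $p$-part), with the automorphism factor $|\mathrm{Vol}(I_\infty(\R)^1)|^{-1}$ arising from the global volume computation as in \cite[\S3]{kot7}. Note that crucially, unlike in the Shimura case, the $p$-part is an ordinary orbital integral rather than a twisted one; this reflects the fact that the Igusa tower lives in characteristic $p$ over a geometric point and the slope-$b$ Frobenius has been ``rigidified'' using the fixed $\Sigma_b$. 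The main obstacle throughout is the fixed-point formula step: showing that Fujiwara's theorem applies with the acceptability hypothesis requires careful control of the geometry of the Igusa tower and Hecke correspondences, which is the technical heart of \cite{Shi4}; the remaining steps are relatively parallel to Kottwitz's arguments from \cite{kot7}.
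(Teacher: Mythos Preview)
The paper does not prove this theorem; it is stated as a result of Shin and simply cited from \cite{Shi4} (see also \cite{Shi1}). Your sketch is an accurate high-level summary of Shin's actual argument in \cite{Shi4}: the Fujiwara--Varshavsky fixed-point formula under the acceptability hypothesis, the Honda--Tate style classification of fixed points by Kottwitz triples of type $b$, and the identification of local terms with orbital integrals. So there is nothing to compare against in this paper, and your outline of the underlying proof is essentially correct.
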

We now choose a set of representatives of lifts of elements of $\mc{E}(\mb{G})$ to $\mc{E}^r(\mb{G})$ as in Construction \ref{endoreps}.  We normalize transfer factors $\Delta[\mathfrak{w}_v, z^{\iso}_v]$ as in \S \ref{transfactsect}. In particular, this involves fixing a $\Q$-splitting $(\mb{B}, \mb{T}, \{X_{\alpha}\})$ of $\mb{G}^*$ and a character $\chi: \Q \to \C^*$.

Recall that we have also fixed a section $\mc{S}$ of the map $\mc{EQ}^r(\mb{G}) \to \mc{EQ}(\mb{G})$ and that this gives a set of lifts, $X_{\gamma_0}$, of each $\kappa \in \mathfrak{K}(I^{\mb{G}}_{\gamma_0}/ \mathbb{Q})$ to a $\lambda \in Z(\widehat{I^{\mb{G}}_{\gamma_0}})^{\Gamma_{\Q}}$.

Following what we did for Shimura varieties in Equation \eqref{fouriereqn}, we get the formula
\begin{align*}
    &\mathrm{tr}( \phi \mid H^*_c(Ig_{\Sigma_b}, \mathcal{L}_{\xi}))\\
    &=\tau(\mb{G})\sum\limits_{(\gamma_0; \gamma ,\delta) \in KT_b } \frac{1}{|\mathrm{Vol}(I_{\infty}(\mathbb{R})^1)|} \sum\limits_{\kappa \in\mathfrak{K}(I^{\mb{G}}_{\gamma_0}/ \mathbb{Q})} \langle \alpha(\gamma_0 ; \gamma , \delta), \kappa \rangle^{-1} \mathrm{tr}\xi(\gamma_0)O_{\gamma}(\phi^p)O^{J_b(\mathbb{Q}_p)}_{\delta}(\phi_p),
\end{align*}
where we note that we omit the term $|\ker^1(\Q, \mb{G})|$ that appears in \cite[(5.2)]{Shi1} because $\mb{G}$ is assumed to satisfy the Hasse principle.

We now split the above expression into three parts: a $p$-part, an $\infty$-part, and a part covering all the places of $\Q$ not equal to $p$ or $\infty$. Following \cite{Shi1}, we construct extensions $\tilde{\alpha}_v(\gamma_0; \gamma ,\delta) \in X^*(Z(\widehat{I^{\mb{G}}_{\gamma_0}})^{\Gamma_{\Q_v}}Z(\widehat{\mb{G}}))$ of $\alpha_v(\gamma_0; \gamma, \delta)$ such that $\tilde{\alpha}_v(\gamma_0; \gamma ,\delta)|_{Z(\widehat{\mb{G}})}$ equals $1, - \mu_1, \mu_1$ respectively for $v \neq p, \infty, v=p, v=\infty$. We use the notations $\tilde{\alpha}_p(\gamma_0, \delta), \tilde{\alpha}^p(\gamma_0, \gamma)$, and $\tilde{\alpha}_{\infty}(\gamma_0)$ to reflect the various dependencies.

We note that the pair $(\gamma_0, \kappa)$ gives a class in $\mc{SS}(\mb{G})$ and hence a quadruple $(\mb{H}, s, \eta, \gamma_{\mb{H}}) \in \mc{EQ}(\mb{G})$. Similarly, the lift $(\gamma_0, \lambda) \in \mc{SS}^r(\mb{G})$ gives a quadruple $(\mb{H}, s, \eta, \gamma_{\mb{H}}) \in \mc{EQ}^r(\mb{G})$.

Finally, we have the following equality (\cite[Lemma 5.1]{Shi1})for $(\gamma_0; \gamma, \delta)$ such that $\alpha(\gamma_0; \gamma, \delta)=1$: 
\begin{equation*}
    e^p(I^{\mb{G}}_{\gamma})e_p(I^{J_b}_{\delta})e_{\infty}(I_{\infty}) =1.
\end{equation*}

Using the above constructions, we get
\begin{equation*}
    \mathrm{tr}(\phi| H^*_c(Ig_{\Sigma_b}, \mathcal{L}_{\xi}))=\tau(\mb{G}) \sum\limits_{\gamma_0: \exists (\gamma_0, \gamma, \delta) \in KT_b} \sum\limits_{\lambda \in X_{\gamma_0}} O^p(\gamma_0, \lambda, \phi^p)O_p(\gamma_0, \lambda, \phi_p)O_{\infty}(\gamma_0, \lambda),
\end{equation*}
where we have
\begin{align*}
    O^p(\gamma_0, \lambda, \phi^p) &:= \Delta[\mf{w}^{p, \infty}, z^{\iso, p, \infty}](\gamma_{\mb{H}}, \gamma_0) \prod\limits_{v \neq p, \infty} \sum\limits_{\gamma_v \sim_{st} \gamma_0} e^p(I^{\mb{G}}_{\gamma})\langle \tilde{\alpha}^p(\gamma_0; \gamma, \delta), \lambda \rangle^{-1} O^{\mb{G}(\A^{p, \infty})}_{\gamma_v}(\phi^{p, \infty}),\\
    O_p(\gamma_0, \lambda, \phi_p) &:= \Delta[\mf{w}_p, z^{\iso}_p](\gamma_{\mb{H}}, \gamma_0)\sum\limits_{\delta \sim_{st} \gamma_0}\langle\tilde{\alpha}_p(\gamma_0; \gamma, \delta), \lambda \rangle^{-1} e(I^{J_b}_{\delta})O^{J_b(\Q_p)}_{\delta}(\phi_p),\\
    O_{\infty}(\gamma_0, \lambda) &:= \Delta[\mf{w}_{\infty}, z^{\iso}_{\infty}](\gamma_{\mb{H}}, \gamma_0) \sum\limits_{\delta}\mathrm{vol}(I_{\infty}(\R)^1)^{-1}) \langle \tilde{\alpha}_{\infty}(\gamma_0 ; \gamma, \delta), \lambda \rangle^{-1} e_{\infty}(I_{\infty})\tr\xi(\gamma_0).
\end{align*}

Now we define $f^{\mathbf{H},p} \in C_c(\mathbf{G}(\mathbb{A}^{p, \infty}))$ exactly as in \cite[lemma 5.2]{Shi1} which is the same as what we did for Shimura varieties and the treatment in \cite{kot7}.

We define the function $f^{\mathbf{H}}_{\infty}$ again following Shin (and therefore Kottwitz). However, as in the case of Shimura varieties, since we are using a Whittaker normalization of transfer factors, we will have some constant $b_{\mathbf{H}}$ such that
\begin{equation*}
    \Delta[\mathfrak{w}_{\infty}, z^{iso}_{\infty}]=b_{\mathbf{H}} \Delta_{j, B}.
\end{equation*}
In particular, we define our function $f^{' \mathbf{H}}_{\infty}$ so that
\begin{equation*}
    f^{' \mathbf{H}}_{\infty}=b_{\mathbf{H}}f^{\mathbf{H}}_{\infty},
\end{equation*}
so that we have
\begin{equation*}
    SO_{\gamma^{\mb{H}}}({f'}^{\mb{H}}_{\infty})= O_{\infty}(\gamma_0, \lambda).
\end{equation*}

\subsection{Stabilization at \texorpdfstring{$p$}{p}}
We now discuss the function $f^{\mathbf{H}}_p$ as its definition is substantially different to the Shimura variety case. We follow \cite[\S 6]{Shi1}. Note that in this section we use embedded endoscopic data as in Definition \ref{embdat}.

Let $H=\mathbf{H}_{\mathbb{Q}_p}$ and $G=\mathbf{G}_{\mathbb{Q}_p}$. We now consider the set $\mathcal{E}^i_{eff}(J_b, G; H)$ as defined in the paragraph containing Equation \eqref{jbghdef}.

We can pick a pair $(B_H, T_H)$ of a Borel subgroup and maximal torus of $H$ and a maximal torus of $G$ such that $T \subset M_b$ and $(H,s, \eta)$ transfers $T_H$ to $T$. We fix a set $X^{\mf{e}}_{J_b}$ of representatives of $\mathcal{E}^i_{eff}(J_b, G; H)$ such that each $H_{M_b}$ is a standard Levi subgroup of $H$ relative to $(B_H, T_H)$. We denote by $M^*_b$ the standard Levi subgroup of $G^*$ corresponding to $M_b$. Associated to each element of $X^{\mf{e}}_{J_b}$ we have a triple $(H, H_{M_b}, \nu)$ where $\nu: \bb{D} \to H$ is given by
\begin{equation*}
\begin{tikzcd}
\bb{D} \arrow[r, "\nu_b"]  & A_{M_b} \arrow[r, hook] & T \arrow[r, "\sim"] & T_H \arrow[r, hook] & H.
\end{tikzcd}
\end{equation*}

Now fix a $(H, H_{M_b}, s, \eta) \in X^{\mf{e}}_{J_b}$. We discuss transfer factors for this endoscopic datum.

Recall that we have fixed a splitting $(\mb{B}, \mb{T}, \{X_{\alpha}\})$ of $\mb{G}^*$ and character $\chi: \Q \to \C^*$. Localizing at $p$ gives a splitting and Whittaker datum $\mf{w}_p$ of $G^*$.  We can restrict the splitting to $M^*_b$ and hence get a Whittaker datum $\mf{w}^{M^*_b}$ of $M^*_b$. Let $\Delta[\mf{w}^{M^*_b}]$ be the corresponding Whittaker-normalized transfer factor between $H_M$ and $M^*_b$. Then we have
\begin{proposition}
For all $\gamma_{H_{M_b}} \in H_{M_b}(\Q_p)$ that is $(G^*,H)$-regular and $\gamma^*_0 \in M^*_b(\Q_p)$, we have the following equality of transfer factors,
\begin{equation}
    \Delta[\mf{w}^{M^*_b}](\gamma_{H_{M_b}}, \gamma^*_0)=|D^{G^*}_{M^*_b}(\gamma^*_0)|^{- \frac{1}{2}}_p|D^H_{H_{M_b}}(\gamma_{H_{M_b}})|^{\frac{1}{2}}_p\Delta[\mathfrak{w}_p](\gamma^*_0, \gamma_{H_{M_b}}),
\end{equation}
where we recall that  $D^G_{M_b}(\gamma)$ is defined to equal $\mathrm{det}(1- \mathrm{ad}(m))|_{\mathrm{Lie}(G) \setminus \mathrm{Lie}(M_b)}$.
\end{proposition}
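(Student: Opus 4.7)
The plan is to decompose the Langlands--Shelstad--Kottwitz transfer factor into its constituent pieces $\Delta_I, \Delta_{II}, \Delta_{III}, \Delta_{IV}$ (in the $\Delta^\lambda_D$ normalization) and track how each piece changes when one restricts from the pair $(H, G^*)$ to the Levi pair $(H_{M^*_b}, M^*_b)$. The factor $\Delta_{IV}$ will produce the Weyl discriminant ratio appearing on the right-hand side, and the remaining factors should be shown to be invariant under this restriction, up to the Whittaker normalizations.

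First, I would recall from \cite{KS2} and \cite[\S5.3]{KS} that fixing the $\Q$-splitting $(\mb{B}, \mb{T}, \{X_\alpha\})$ of $\mb{G}^*$ and the character $\chi$ yields a Whittaker datum $\mf{w}_p$ on $G^*$ and, by restricting the splitting to $M^*_b$, a Whittaker datum $\mf{w}^{M^*_b}$ on $M^*_b$. Under our normalization the Whittaker datum enters only in the $\Delta_{III}$-piece (replacing an $a$-data term). One then writes
\begin{equation*}
\Delta[\mf{w}_p](\gamma^*_0, \gamma_{H_{M_b}}) = \Delta_I \cdot \Delta_{II} \cdot \Delta_{III}^{\mf{w}_p} \cdot \Delta_{IV},
\end{equation*}
and analogously for $\Delta[\mf{w}^{M^*_b}]$, where all factors are defined with respect to the pair $(H, G^*)$ or $(H_{M^*_b}, M^*_b)$ respectively.

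Second, I would examine each factor. The factor $\Delta_I$ is a product of signs indexed by orbits of absolute roots of $G^*$ that are not fixed by the Galois-action twisted by the endoscopic data; since $\gamma^*_0 \in M^*_b(\Q_p)$ is $(G^*, H)$-regular and the roots of $M^*_b$ are precisely the roots of $G^*$ orthogonal to $\nu_b$, the contribution from roots outside $M^*_b$ is trivial for $\nu_b$-acceptable elements by an argument as in \cite[\S C.4]{Xu1} or \cite[\S 5.6]{Hir1}. The factor $\Delta_{II}$ is a product indexed by roots of $G^*$ not coming from $H$; splitting these into roots in $M^*_b$ and roots outside, the non-Levi contributions vanish again by $(G^*, H)$-regularity and our choice of $a$-data compatible with the inclusion $H_{M^*_b} \hookrightarrow H$. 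The factor $\Delta_{III}^{\mf{w}_p}$ is obtained from the Whittaker datum $\mf{w}_p$ as a character on a certain cohomology group, and the compatibility of Whittaker normalizations with Levi restriction (see \cite[\S5.3]{KS}) ensures it agrees with $\Delta_{III}^{\mf{w}^{M^*_b}}$.

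Third, I would compute the $\Delta_{IV}$ ratio explicitly. By definition, $\Delta_{IV}$ for the pair $(H, G^*)$ is $|D^{G^*}(\gamma^*_0)|_p^{1/2}|D^H(\gamma_{H_{M_b}})|_p^{-1/2}$ and for $(H_{M^*_b}, M^*_b)$ is $|D^{M^*_b}(\gamma^*_0)|_p^{1/2}|D^{H_{M_b}}(\gamma_{H_{M_b}})|_p^{-1/2}$. Using the multiplicativity $D^{G^*} = D^{M^*_b} \cdot D^{G^*}_{M^*_b}$ and $D^H = D^{H_{M_b}} \cdot D^H_{H_{M_b}}$, the ratio of the two $\Delta_{IV}$'s is precisely $|D^{G^*}_{M^*_b}(\gamma^*_0)|_p^{-1/2}|D^H_{H_{M_b}}(\gamma_{H_{M_b}})|_p^{1/2}$, which is exactly the discrepancy in the proposition.

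The main obstacle will be checking with sufficient care that the non-Levi contributions to $\Delta_I \cdot \Delta_{II} \cdot \Delta_{III}$ truly cancel in the Whittaker-normalized $\Delta^\lambda_D$ setting. This requires that the $a$-data and $\chi$-data used in the two transfer factors can be chosen compatibly under the inclusion $\widehat{H_{M_b}} \hookrightarrow \widehat{H}$ and $\widehat{M^*_b} \hookrightarrow \widehat{G^*}$, and that the Whittaker datum on $M^*_b$ induced by restricting the splitting of $G^*$ gives rise to the same $\Delta_{III}$-cocycle on the Levi portion. While both are standard in the classical (non-Whittaker) normalization by work of Hiraga and Xu, the Whittaker normalization introduces a $\lambda$-term that one must track carefully; this is essentially the content of \cite[\S5.5]{KS2} and its compatibility with parabolic descent, which I would invoke to conclude.
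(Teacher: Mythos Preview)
Your overall strategy---decompose the transfer factor into $\Delta_I, \Delta_{II}, \Delta_{III}, \Delta_{IV}$ and compare each piece---matches the paper's, and your handling of $\Delta_{IV}$ is correct and gives exactly the discriminant ratio. However, the mechanism you invoke for the remaining factors is not the right one, and this is where the actual content of the proof lies.

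You attribute the vanishing of the non-Levi contributions to $\nu_b$-acceptability and $(G^*,H)$-regularity. Neither is the operative reason. The proposition is stated for \emph{all} $(G^*,H)$-regular $\gamma_{H_{M_b}}$ and $\gamma^*_0 \in M^*_b(\Q_p)$, with no acceptability hypothesis, so an argument depending on $\nu_b$-acceptability cannot work here. And $(G^*,H)$-regularity governs centralizers, not the individual root contributions in $\Delta_{II}$. The paper's key observation is structural: since $M^*_b$ is a \emph{rationally defined} Levi subgroup with parabolic $P$, the unipotent radical of $P$ is $\Gamma_{\Q_p}$-stable. Consequently, for any root $\alpha$ of $G^*$ lying outside $M^*_b$, there is no $\sigma \in \Gamma_{\Q_p}$ with $\sigma(\alpha) = -\alpha$, and moreover the Galois action preserves positivity for such roots. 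This lets one choose the $\chi$-datum for $G^*$ to be trivial on all roots outside $M^*_b$ (and the $a$-datum to restrict compatibly). With this choice, the $\Delta_{II}$ terms for $G^*$ and $M^*_b$ agree on the nose. For $\Delta_I$, the splitting invariant involves a product over roots $\alpha$ with $\alpha > 0$ and $\sigma^{-1}(\alpha) < 0$; by the positivity-preservation just noted, no such root lies outside $M^*_b$, so the splitting invariants coincide. For $\Delta_{III_2}$, one checks directly that the cocycles $r_p = r_{p_0} s_{p/p_0}$ agree, again because the $\chi$-datum is trivial outside $M^*_b$ and $p = p_0$ there. Finally, the Whittaker normalization contributes an $\epsilon_L(V,\psi_F)$ term which is the same for $G^*$ and $M^*_b$.

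So: replace your appeals to acceptability and regularity with the argument about Galois stability of the unipotent radical and the resulting freedom in choosing $\chi$-data. The references you want are \cite[Lemma 6.5]{Wal2} and \cite[Lemma 5.2]{Hir1}, together with \cite[pg 27]{Art3} for the observation on roots outside rational Levis.
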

\begin{proof}
See also \cite[Lemma 6.5]{Wal2} and \cite[Lemma 5.2]{Hir1}. We recall the Whittaker normalization of transfer factors (for instance see \cite[pg 6]{Kal1}) and compare the terms for $M^*_b$ and $G^*$. The term $\epsilon_L(V, \psi_F)$ is the same for $M^*_b$ and $G^*$. The $\Delta_{IV}$ terms differ by $|D^{G^*}_{M^*_b}(\gamma^*_0)|^{- \frac{1}{2}}_p|D^H_{H_{M_b}}(\gamma_{H_{M_b}})|^{\frac{1}{2}}_p$. The remaining terms depend on choices of $\chi$-data and $a$-data.  We can choose $a$-data of $G^*$ and $M^*_b$ so that the restriction of our $a$-datum for $G^*$ to the roots of $M^*_b$ is our $a$-datum for $M^*_b$. We can do the same for $\chi$-data. Moreover, we can choose our $\chi$-datum such that $\chi_{\alpha}$ is trivial for all roots $\alpha$ except for those such there exists a $\sigma \in \Gamma_{\Q_p}$ such that $\sigma(\alpha)=- \alpha$. Note that if $M$ is a rationally defined Levi subgroup with associated parabolic subgroup $P$, then the unipotent radical $U$ of $P$ is stable under the action of $\Gamma_{\Q_p}$. In particular, there is no root outside of $M$ satisfying the above property (cf \cite[pg27]{Art3}). Hence we can pick a $\chi$-datum of $G^*$ that is trivial on the roots not contained in $M^*_b$. With these choices, our $\Delta_{II}$ terms for $M^*_b$ and $G^*$ agree. 

For $\Delta_{I}$, we argue that the splitting invariants are the same for $G^*$ and $M^*_b$. The terms $n(\sigma)$ and $g^{-1}\sigma(g)$ are the same for $M^*_b$ and $G^*$ since we can choose $\Int(g): T \to S$ such that $g \in M^*_b$. Finally, the product is over roots $\alpha$ such that $\alpha>0, \sigma^{-1}(\alpha)<0$ but none of these can lie outside of $M^*_b$ by a similar argument to the above.

It remains to check that the $\Delta_{III_2}$ terms agree. It suffices to show that the terms $r_p(\sigma)$ as in \cite[pg32]{LS1} agree for $G^*$ and $M^*_b$. We have $r_p=r_{p_0} s_{p/p_0}$ so we need only check that $r_{p_0}$ and $s_{p/p_0}$ agree. The $r_{p_0}$ agree (defined \cite[pg27]{LS1}) since the $\chi$-datum for $G^*$ is trivial outside of $M^*_b$. The term $s_{p/p_0}$ (\cite[pg24]{LS1}) agrees since $p=p_0$ on roots outside of $M^*_b$ since as above, the $\Gamma_{\Q_p}$-action preserves positivity for such roots.
\end{proof}

%fix global pinning and $\Psi: \A_F to \C^*$ trivial on $F$. This should then give a global whittaker datum. But also localizes to local pinnings and $\psi_v$ giving local whittaker data. These local pinnings then restrict to Levis.

%the argument is like I wanna say that we get g_wtu s.t. g_w acts trivially on root space such thataction on U_M is same as tu. But then u=u_mu' where u' in kernel of psi. Hence the action is the same as tu_m which is in M. ugh, not sure I can make this work.

Then the transfer factor from $H_{M_b}$ to $M_b$ is given by
\begin{equation*}
    \Delta[\mf{w}^{M^*_b}, z^{\iso}_p](\gamma_{H_{M_b}}, \gamma_0)=\Delta[\mf{w}^{M^*_b}](\gamma_{H_{M_b}}, \gamma^*_0)\langle \inv[z^{iso}_p](\gamma_0, \gamma^*_0), \eta(s)  \rangle^{-1},
\end{equation*}
where $\gamma_{H_{M_b}}$ transfers to $\gamma_0 \in M_b(\Q_p)$.

Finally, we define transfer factors to $J_b$. Associated to $\tilde{b}$ we have (for instance as in \cite[pg 9]{KW}) a cocycle $z^b \in Z^1_{\alg}(\mc{E}^{\iso}, M_b(\ov{\Q}))$ and satisfying $\psi^{-1}_b \circ \psi^{\sigma}_b= \Int(z^b_{\sigma}) $. Then one can check that $\varrho_{M^*_b, p}^{-1}(z^b)z^{\iso}_p \in Z^1(\mc{E}^{\iso}, M^*_b(\ov{\Q_p}))$ where we take $\varrho_{M^*_b, p}$ to be the restriction of $\varrho_p$ to $M^*_b$. We define a transfer factor from $H_{M_b}$ to $J_b$ by
\begin{equation*}
    \Delta[\mf{w}^{M^*_b}, \varrho_{M^*_b, p}^{-1}(z^b)z^{\iso}_p](\gamma_{H_{M_b}}, \delta)=\Delta[\mf{w}^{M^*_b}](\gamma_{H_{M_b}}, \gamma^*_0)\langle \inv[\varrho_{M^*_b, p}^{-1}(z^b)z^{\iso}_p](\delta, \gamma^*_0), \eta(s)  \rangle^{-1},
\end{equation*}
where $\gamma_{H_{M_b}}$ transfers to $\delta$.

As in \cite[Lemma 6.3]{Shi3}, we need to relate the transfer factors from $H_{M_b}$ to $J_b$ and $H_{M_b}$ to $M_b$. Recall that $I^{M^*_b}_{\gamma^*_0}, I^{M_b}_{\gamma_0}$, and $ I^{J_b}_{\delta}$ are all inner forms. The diagram after \cite[Equation (4.13.2)]{Kot1} shows that 
\begin{equation*}
    \inv[\varrho_{M^*_b, p}^{-1}(z^b)z^{\iso}_p]=\inv[z^b]+\inv[z^{\iso}_p],
\end{equation*}
and hence that
\begin{equation*}
 \Delta[\mf{w}^{M^*_b}, \varrho_{M^*_b, p}^{-1}(z^b)z^{\iso}_p](\gamma_{H_{M_b}}, \delta) =  \Delta[\mf{w}^{M^*_b}, z^{\iso}_p](\gamma_{H_{M_b}}, \gamma_0) \langle \inv[z^b](\delta, \gamma_0), \eta(s)  \rangle^{-1}.
\end{equation*}
By definition, we have 
\begin{equation*}
    \langle \inv[z^b](\delta, \gamma_0), \eta(s) \rangle = \langle \tilde{\alpha}_p(\gamma_0, \delta), \lambda \rangle
\end{equation*}
so that
\begin{equation}
     \Delta[\mf{w}^{M^*_b}, \varrho_{M^*_b, p}^{-1}(z^b)z^{\iso}_p](\gamma_{H_{M_b}}, \delta) =  \Delta[\mf{w}^{M^*_b}, z^{\iso}_p](\gamma_{H_{M_b}}, \gamma_0) \langle \tilde{\alpha}_p(\gamma_0, \delta), \lambda \rangle^{-1}.
\end{equation}

Next, we define a character $\overline{\delta}^{\frac{1}{2}}_{P(\nu_b)}$ of $J_b(\mathbb{Q}_p)$ so that $\overline{\delta}^{\frac{1}{2}}_{P(\nu_b)}(\delta)=\delta^{\frac{1}{2}}_{P(\nu_b)}(\gamma_0)$ where $\gamma_0$ is a transfer of $\delta$ and $\delta_{P(\nu_b)}$ is the modulus character. Note that this doesn't depend on our choice of $\gamma_0$ as conjugation by some $m \in M_b$ preserves the unipotent radical of $P(\nu_b)$. Then we have (since we assume $\delta$ is $\nu_b$-acceptable, see \cite[Lemma 3.4]{Shi1})
\begin{equation*}
    \overline{\delta}^{\frac{1}{2}}_{P(\nu_b)}(\delta)=|D^G_{M_b}(\gamma_0)|^{\frac{1}{2}}_p.
\end{equation*}

We now return to the stabilization. We define the function $\phi^0_p := \phi_p \cdot \overline{\delta}^{\frac{1}{2}}_{P(\nu_b)}  \in C^{\infty}_c(J_b(\mathbb{Q}_p))$ and pick $\phi^{H_{M_b}}_p \in C^{\infty}_c(H_{M_b}(\mathbb{Q}_p))$ to be a matching function of $\phi^0_p$. 

Suppose that the pair $(\gamma_0, \lambda) \in SS^r(G)$ associated to a term $O_p(\gamma_0, \lambda, \phi_p)$ comes from an element of $SS^r_{eff}(J_b, G)$. Since by \cite[Lemma 3.6]{Shi1} we have $SS^r_{eff}(J_b, G)$ injects into $SS^r(G)$, we denote the pair in $SS^r_{eff}(J_b, G)$ also by $(\gamma_0, \lambda)$.  Then $(\gamma_0, \lambda)$ is associated to some element $(H, H_{M_b}, s, \eta, \gamma_{H_{M_b}}) \in \mc{EQ}^e_{eff}(J_b, G)$ such that $\gamma_{H_{M_b}}$ transfers to $\gamma_0$ and is stably conjugate to $\gamma_{H}$ in $H(\ov{\Q_p})$. 

Following \cite[pg22]{Shi3}, we now claim that in this situation we have
\begin{equation}
    O_p(\gamma_0, \lambda, \phi_p)=|D^H_{H_{M_b}}(\gamma_{H})|^{-\frac{1}{2}}_p SO^{H_{M_b}(\mathbb{Q}_p)}_{\gamma_{H_{M_b}}}(\phi^{H_{M_b}}_p).
\end{equation}
Indeed, we have
\begin{align*}
     O_p(\gamma_0, \lambda, \phi_p) &= \Delta[\mf{w}_p, z^{\iso}_p](\gamma_{\mb{H}}, \gamma_0)\sum\limits_{\delta \sim_{st} \gamma_0}\langle\tilde{\alpha}_p(\gamma_0; \gamma, \delta), \lambda \rangle^{-1} e(I^{J_b}_{\delta})O^{J_b(\Q_p)}_{\delta}(\phi_p)\\
     &= \Delta[\mf{w}^{M^*_b}, z^{\iso}_p](\gamma_{H_{M_b}}, \gamma_0) |D^G_{M_b}(\gamma_0)|^{ \frac{1}{2}}_p|D^H_{H_{M_b}}(\gamma_H)|^{ -\frac{1}{2}}_p \sum\limits_{\delta \sim_{st} \gamma_0}\langle\tilde{\alpha}_p(\gamma_0; \gamma, \delta), \lambda \rangle^{-1} e(I^{J_b}_{\delta})O^{J_b(\Q_p)}_{\delta}(\phi_p)\\
     &= |D^H_{H_{M_b}}(\gamma_H)|^{ -\frac{1}{2}}_p\sum\limits_{\delta \sim_{st} \gamma_0}\Delta[\mf{w}^{M^*_b}, \varrho_{M^*_b, p}^{-1}(z^b)z^{\iso}_p](\gamma_{H_{M_b}}, \delta)  \ov{\delta}^{\frac{1}{2}}_{P_{(\nu_b)}}(\delta)e(I^{J_b}_{\delta})O^{J_b(\Q_p)}_{\delta}(\phi_p)\\
     &= |D^H_{H_M}(\gamma_H)|^{ -\frac{1}{2}}_p\sum\limits_{\delta \sim_{st} \gamma_0}\Delta[\mf{w}^{M^*_b}, \varrho_{M^*_b, p}^{-1}(z^b)z^{\iso}_p](\gamma_{H_{M_b}}, \delta) e(I^{J_b}_{\delta})O^{J_b(\Q_p)}_{\delta}(\phi^0_p)\\
     &=|D^H_{H_{M_b}}(\gamma_H)|^{-\frac{1}{2}}_p SO^{H_{M_b}(\mathbb{Q}_p)}_{\gamma_{H_{M_b}}}(\phi^{H_{M_b}}_p).
\end{align*}

It remains to rewrite the righthand side of the above in terms of stable orbital integrals on $H$. To this end, we now apply \cite[Lemma 3.9]{Shi3} to the triple $(H, H_{M_b}, \nu)$ and the function $\phi^{H_{M_b}}_p \cdot \delta^{- \frac{1}{2}}_{P(\nu)}$ to get a function $\tilde{\phi^{J^{\mf{e}}_b}} \in C^{\infty}_c(H(\Q_p))$ satisfying 
\begin{equation}
    O^{H(\Q_p)}_{\gamma_H}(\tilde{\phi^{J^{\mf{e}}_b}}) = \delta^{-\frac{1}{2}}_{P(\nu)} O^{H_{M_b}(\Q_p)}_{\gamma_{H_{M_b}}}(\phi^{H_{M_b}}_p),
\end{equation}
for each semisimple $\gamma_H \in H(\Q_p)$ such that there exists a $\nu$-acceptable element $\gamma_{H_{M_b}} \in H_{M_b}(\Q_p)$ conjugate to $\gamma_H$ in $H(\Q_p)$ and $0$ otherwise. We remark that we believe there is a typo in \cite[Lemma 3.9]{Shi3} in the character twists. In particular, we claim that our $\tilde{\phi^{J^{\mf{e}}_b}} \in C^{\infty}_c(H(\Q_p))$ satisfies
\begin{equation}
    \tr(J^H_{P(\nu)^{op}}(\pi) \mid \phi^{H_{M_b}}_p) = \tr( \pi \mid \tilde{\phi^{J^{\mf{e}}_b}}).
\end{equation}

We need to justify why it is valid to apply that lemma to this function. In particular, it suffices to show that $\phi^{H_{M_b}}_p$ can be chosen to be supported on $\nu$-acceptable elements with connected centralizer. To deal with the connected centralizer condition we may as well assume that $\phi^{H_{M_b}}_p$ is supported on strongly regular semisimple elements as this will not change the semisimple orbital integrals.  Let $U \subset H_{M_b}(\Q_p)_{sr}$ be the set of $\nu$-acceptable elements, noting that the stable conjugate of any element of $U$ is also in $U$. Note that if a $(G,H)$-regular semisimple element of $H(\Q_p)$ transfers to a $\nu_b$-acceptable element of $M_b(\Q_p)$, then it is contained in $U$. Consider the function $1_U \cdot \phi^{H_{M_b}}_p$. The support of this function is contained in $U$. Moreover, since $\phi^{H_{M_b}}_p$ is the transfer of a function $\phi^0_p$ supported on the set of $\nu_b$-acceptable elements, it follows that the stable orbital integrals of $1_U \cdot \phi^{H_{M_b}}_p$ are the same as those of $\phi^{H_{M_b}}_p$ (the integral of elements inside $U$ does not change and the integral of elements outside $U$ is already $0$). Hence $1_U\phi^{H_{M_b}}_p$ is also a transfer of $\phi^0_p$ so we are free to choose this function instead.

We therefore get that 
\begin{equation}
     O_p(\gamma_0, \lambda, \phi_p) = |D^H_{H_{M_b}}(\gamma_{\mb{H}})|^{-\frac{1}{2}}_p SO^{H_{M_b}(\mathbb{Q}_p)}_{\gamma_{H_{M_b}}}(\phi^{H_{M_b}}_p) = SO^{H(\Q_p)}_{\gamma_{\mb{H}}}(\tilde{\phi^{J^{\mf{e}}_b}}).
\end{equation}

We now define $h^{\mb{H}}_p$ by
\begin{equation}
    h^{\mb{H}}_p := \sum\limits_{J^{\mf{e}}_b \in X^{J^{\mf{e}}_b}} \tilde{\phi^{J^{\mf{e}}_b}_p}.
\end{equation}
The key fact is that
\begin{lemma}
We have for every $(G,H)$-regular semisimple $\gamma_H \in H(\Q_p)$ that
\begin{equation}
   SO^{H(\Q_p)}_{\gamma_H}(h^{\mb{H}}_p)= O_p(\gamma_0, \lambda, \phi_p)
\end{equation}
if $(H,s, \eta, \gamma_H) \in \mc{EQ}^r_{eff}(G)$ and $0$ otherwise.
\end{lemma}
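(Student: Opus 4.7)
My plan is to show that at most a single summand in the definition of $h^{\mb{H}}_p$ contributes to the stable orbital integral at $\gamma_H$, and that this single contribution matches $O_p(\gamma_0,\lambda,\phi_p)$. First I would expand
\begin{equation*}
SO^{H(\Q_p)}_{\gamma_H}(h^{\mb{H}}_p) = \sum_{J^{\mf{e}}_b \in X^{\mf{e}}_{J_b}} SO^{H(\Q_p)}_{\gamma_H}(\tilde{\phi^{J^{\mf{e}}_b}_p}),
\end{equation*}
and analyze each summand using the defining property of $\tilde{\phi^{J^{\mf{e}}_b}_p}$ provided by \cite[Lemma 3.9]{Shi3}: for $J^{\mf{e}}_b = (H, H_{M_b}, s, \eta_2)$, the orbital integral $O^{H(\Q_p)}_{\gamma'_H}(\tilde{\phi^{J^{\mf{e}}_b}_p})$ vanishes unless $\gamma'_H$ is $H(\Q_p)$-conjugate to a $\nu$-acceptable, $(G,H_{M_b})$-regular $\gamma'_{H_{M_b}} \in H_{M_b}(\Q_p)$, in which case it equals $\delta^{-1/2}_{P(\nu)}(\gamma'_{H_{M_b}})\,O^{H_{M_b}(\Q_p)}_{\gamma'_{H_{M_b}}}(\phi^{H_{M_b}}_p)$.

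Next, assuming $(H,s,\eta,\gamma_H) \in \mc{EQ}^r_{eff}(G)$, I would invoke Lemma~\ref{stabigusalem}(2) to produce a unique $J^{\mf{e}}_b \in X^{\mf{e}}_{J_b}$ and a unique stable conjugacy class of $\nu_b$-acceptable $\gamma_{H_{M_b}} \in H_{M_b}(\Q_p)$ such that $(H, H_{M_b}, s, \eta_2, \gamma_{H_{M_b}})$ maps to $[(H, s, \eta, \gamma_H)]$ and $\gamma_{H_{M_b}}$ is stably conjugate to $\gamma_H$ in $H(\Q_p)$. For this unique $J^{\mf{e}}_b$, the chain of identities derived immediately before the statement of the lemma reassembles the stable integral as
\begin{equation*}
SO^{H(\Q_p)}_{\gamma_H}(\tilde{\phi^{J^{\mf{e}}_b}_p}) = |D^H_{H_{M_b}}(\gamma_H)|^{-1/2}_p\,SO^{H_{M_b}(\Q_p)}_{\gamma_{H_{M_b}}}(\phi^{H_{M_b}}_p) = O_p(\gamma_0, \lambda, \phi_p).
\end{equation*}
For every other $J^{\mf{e}}_b$ in $X^{\mf{e}}_{J_b}$, the uniqueness half of Lemma~\ref{stabigusalem}(2) forces all stable conjugates of $\gamma_H$ in $H(\Q_p)$ to lie outside the support of $\tilde{\phi^{J^{\mf{e}}_b}_p}$, so the corresponding summand vanishes. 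Conversely, if $(H,s,\eta,\gamma_H) \notin \mc{EQ}^r_{eff}(G)$, the existence part of Lemma~\ref{stabigusalem}(2) fails uniformly, so every summand vanishes and $SO^{H(\Q_p)}_{\gamma_H}(h^{\mb{H}}_p) = 0$.

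The main obstacle is the uniqueness step, where one must check that within the stable conjugacy class of $\gamma_H$ in $H(\Q_p)$, only those conjugates coming from the single relevant stable class in $H_{M_b}(\Q_p)$ can contribute. This relies on the fact that stable conjugacy in the Levi coincides with rational $\overline{\Q_p}$-conjugacy, which holds because $M_{b,\der}$ is simply connected by Lemma~\ref{levidersc} together with the corresponding property in $H_{M_b}$; the requisite argument is essentially the one used in the last paragraph of the proof of \cite[Lemma 9.7]{Kot6} and invoked at the end of the proof of Lemma~\ref{stabigusalem}.
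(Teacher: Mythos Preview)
Your overall plan matches the paper's: expand $h^{\mb{H}}_p$ by linearity, single out one $J^{\mf{e}}_b$ when $(H,s,\eta,\gamma_H)\in\mc{EQ}^r_{eff}(G)$, and use the identity derived just before the lemma to evaluate that term as $O_p(\gamma_0,\lambda,\phi_p)$. But the uniqueness step has a gap. Lemma~\ref{stabigusalem}(2) does not take only $(H,s,\eta,\gamma_H)$ as input: it also requires fixing a class $y\in\mc{EQ}^e_{ef}(M_b,G;H)$ that maps to $[(H,s,\eta,\gamma_H)]$, and its uniqueness conclusion applies only to representatives \emph{of that fixed} $y$. It does not by itself rule out a second contributing $J^{\mf{e}}_b$ coming from a different class $y'$ over the same $[(H,s,\eta,\gamma_H)]$.

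The paper closes this gap in two moves. First, it reuses the vanishing-case argument to show that any tuple $(H,H'_{M_b},s,\eta',\gamma_{H'_{M_b}})$ with nonzero contribution must project into $\mc{EQ}^e_{eff}(J_b,G;H)$: otherwise $\gamma_{H'_{M_b}}$ admits no $\nu_b$-acceptable transfer to $J_b$, so $SO^{H_{M_b}}_{\gamma_{H'_{M_b}}}(\phi^{H'_{M_b}}_p)=0$. Second, given two contributing tuples, their transfers $\gamma_0,\gamma_0'\in M_b(\Q_p)$ are $\nu_b$-acceptable and stably conjugate in $G(\Q_p)$; \cite[Lemma~3.5]{Shi3} then forces them to be stably conjugate already in $M_b(\Q_p)$, so both tuples lie over the same $y$, and only then does Lemma~\ref{stabigusalem}(2) yield a single $J^{\mf{e}}_b$. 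Your final paragraph misidentifies the obstacle: the relevant fact is not that stable conjugacy in the Levi agrees with $\overline{\Q_p}$-conjugacy (a simply-connectedness statement already absorbed into Lemma~\ref{stabigusalem}), but that $\nu_b$-acceptability forces stable $G$-conjugacy to descend to stable $M_b$-conjugacy. Likewise, the vanishing when $(H,s,\eta,\gamma_H)\notin\mc{EQ}^r_{eff}(G)$ comes from Lemma~\ref{stabigusalem}(1) plus the support condition on $\phi_p$, not from a ``failure of existence'' in Lemma~\ref{stabigusalem}(2).
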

\begin{proof}
There are two cases to consider. Suppose first that $(H, s, \eta, \gamma_H) \notin \mc{EQ}^r_{eff}(G)$. We show by contradiction that $SO^{H(\Q_p)}_{\gamma_H}(\tilde{\phi^{J^{\mf{e}}_b}})=0$ for each $J^{\mf{e}}_b \in X^{J^{\mf{e}}_b}$. If one of these integrals is nonzero, then we must have for some $J^{\mf{e}}_b \in X^{J^{\mf{e}}_b}$, a $\nu$-acceptable element $\gamma_{H_{M_b}} \in H_{M_b}(\Q_p)$ such that $\gamma_{H_{M_b}}$ and $\gamma_H$ are conjugate in $H$ (since otherwise the integral is $0$ by \cite[Lemma 3.9]{Shi3}). By (1) of Lemma \ref{stabigusalem}, we know that $(H, H_{M_b}, s, \eta', \gamma_{H_{M_b}})$ does not project to an element of $\mc{EQ}^e_{eff}(J_b, G, H)$. Hence if $(\gamma_0, \lambda)$ is the corresponding element of $\mc{SS}^r(M_b)$, we know that either $\gamma_{H_{M_b}}$ doesn't transfer to $J_b$ or it transfers to an element that isn't $\nu_b$ acceptable. Either way, we have that $SO^{H(\Q_p)}_{\gamma_H}(\tilde{\phi^{J^{\mf{e}}_b}})=0$.

We now consider the case where $(H, s, \eta, \gamma_H) \in  \mc{EQ}^r_{eff}(G)$. By the computations we have done above, it suffices to show there is a unique $J^{\mf{e}}_b \in X^{J^{\mf{e}}_b}$ such that $SO^{H(\Q_p)}_{\gamma_H}(\tilde{\phi^{J^{\mf{e}}_b}}) \neq 0$. If this integral is nonzero, then there exists a $\gamma_{H_{M_b}} \in H_{M_b}(\Q_p)$ such that $\gamma_{H_{M_b}}$ and $\gamma_H$ are stably conjugate in $H(\Q_p)$.

Suppose we have two such tuples $(H, H_{M_b}, s, \eta_1, \gamma_{H_{M_b}})$ and $(H, H'_{M_b}, s, \eta_2, \gamma_{H'_{M_b}})$. These tuples must project to elements of $\mc{EQ}^e_{eff}(J_b, G;H)$ since otherwise the corresponding integrals are $0$ by the previous part of the proof. Let $(\gamma_0, \lambda)$ and $(\gamma'_0, \lambda')$ denote representatives of the classes in $\mc{SS}^r_{ef}(G)$ that we get via the map $\mc{EQ}^e_{eff}(J_b,G;H) \to \mc{EQ}^r_{ef}(G) \to \mc{SS}^r_{ef}(G)$. Then, since both $\gamma_{H_{M_b}}$ and $\gamma_{H'_{M_b}}$ are stably conjugate to $\gamma_H$ in $H(\Q_p)$, we must have by Lemma \ref{SSEQMGGcomm} that $(\gamma_0, \lambda)$ and $(\gamma'_0, \lambda')$ are in the same class of $\mc{SS}^r_{ef}(G)$. In particular, $\gamma_0, \gamma'_0$ are stably conjugate in $G(\Q_p)$. Further, we have that $\gamma_0, \gamma'_0$ are $\nu_b$-acceptable and so by \cite[Lemma 3.5]{Shi3}, they are stably conjugate in $M_b(\Q_p)$. This implies that $(\gamma_0, \lambda)$ and $(\gamma'_0, \lambda')$ are in the same class of $\mc{SS}^e_{eff}(M_b, G)$ and hence that $(H, H_{M_b}, s, \eta_1, \gamma_{H_{M_b}})$ and $(H, H'_{M_b}, s, \eta_2, \gamma_{H'_{M_b}})$ project to the same class of $\mc{EQ}^e_{eff}(M_b, G;H)$. Then by (2) of Lemma \ref{stabigusalem}, we must have that $(H, H_{M_b}, s, \eta_1)$ and $(H, H'_{M_b}, s, \eta_2)$ are the same representative of $X^{J^{\mf{e}}_b}$.
\end{proof}

Now, we define $h^{\mathbf{H}}=h^{\mathbf{H},p}h^{'\mathbf{H}}_{\infty}h^{\mathbf{H}}_p \in C^{\infty}_c(\mathbf{H}(\mathbb{A}))$. Then Shin's final formula is as follows.
\begin{theorem}[Shin]
Let $\phi \in C^{\infty}_c(\mathbf{G}(\mathbb{A}^{p, \infty} \times J_b(\mathbb{Q}_p))$ be an acceptable function. For each $\mathcal{H}^{\mathfrak{e}} \in \mathcal{E}^{ell}(\mathbf{G})$ we get a function $h^{\mathbf{H}}$ as constructed above and have the equality
\begin{equation}
\mathrm{tr}(\phi | H^*_c(\mathrm{Ig}_{\Sigma_b}, \mathcal{L}_{\xi}))= \sum\limits_{X^{\mf{e}}} \iota(\mathbf{G}, \mathbf{H})ST^{\mathbf{H}}_{ell}(h^{\mathbf{H}}).
\end{equation}
\end{theorem}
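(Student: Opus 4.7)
The plan is to follow the strategy used in the previous section for Shimura varieties, substituting Shin's point counting formula for Igusa varieties as the starting point. First, Fourier-invert the constraint $\alpha(\gamma_0;\gamma,\delta)=1$ against $\mathfrak{K}(I^{\mathbf{G}}_{\gamma_0}/\mathbb{Q})$, then use the section $\mathcal{S}$ from Construction \ref{endoreps} to lift each $\kappa$ to an element $\lambda \in Z(\widehat{I^{\mathbf{G}}_{\gamma_0}})^{\Gamma_{\mathbb{Q}}}$, producing via Lemma \ref{SSeqEQ} a quadruple $(\mathbf{H},s,\eta,\gamma_{\mathbf{H}}) \in \mc{EQ}^r(\mathbf{G})$. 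Split the pairing $\langle\alpha,\kappa\rangle$ into local components $\tilde\alpha_v$ and use the identity $e^p(I^{\mathbf{G}}_\gamma)e_p(I^{J_b}_\delta)e_\infty(I_\infty)=1$ to write the trace as $\tau(\mathbf{G})\sum_{\gamma_0,\lambda}O^p(\gamma_0,\lambda,\phi^p)O_p(\gamma_0,\lambda,\phi_p)O_\infty(\gamma_0,\lambda)$, mirroring the Shimura variety computation.

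At places $v \neq p,\infty$, the fundamental lemma (in its refined version using $\Delta[\mathfrak{w}^{p,\infty},z^{\iso,p,\infty}]$) yields $f^{\mathbf{H},p}$ whose stable orbital integrals realize $O^p(\gamma_0,\lambda,\phi^p)$. At $\infty$, reuse the Shimura variety input: Kottwitz's pseudocoefficient construction provides $f^{\mathbf{H}}_\infty$, which I rescale to $f'^{\mathbf{H}}_\infty := b_{\mathbf{H}} f^{\mathbf{H}}_\infty$ so that the Whittaker normalization $\Delta[\mathfrak{w}_\infty,z^{\iso}_\infty]$ replaces Shelstad's $\Delta_{j,B}$, yielding $SO_{\gamma_{\mathbf{H}}}(f'^{\mathbf{H}}_\infty)=O_\infty(\gamma_0,\lambda)$.

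The heart of the argument is at $p$, where the goal is to show $SO^{H(\mathbb{Q}_p)}_{\gamma_H}(h^{\mathbf{H}}_p)=O_p(\gamma_0,\lambda,\phi_p)$ when $(H,s,\eta,\gamma_H)\in \mc{EQ}^r_{eff}(G)$ and vanishes otherwise. Fix an embedded representative $(H,H_{M_b},s,\eta)\in X^{\mf{e}}_{J_b}$ attached to this quadruple. Twist $\phi_p$ by $\overline{\delta}^{1/2}_{P(\nu_b)}$ to form $\phi^0_p$, transfer to $\phi^{H_{M_b}}_p \in C^\infty_c(H_{M_b}(\mathbb{Q}_p))$, and use the comparison of Whittaker-normalized transfer factors on $M_b^*$ versus $G^*$ (differing by discriminants) together with Kottwitz's identification $\mathrm{inv}[\varrho^{-1}_{M^*_b,p}(z^b)z^{\iso}_p]=\mathrm{inv}[z^b]+\mathrm{inv}[z^{\iso}_p]$ to relate the $J_b$-transfer factor and the $M_b$-transfer factor by the cocycle $\tilde\alpha_p(\gamma_0,\delta)$; this gives $O_p(\gamma_0,\lambda,\phi_p)=|D^H_{H_{M_b}}(\gamma_H)|^{-1/2}_p SO^{H_{M_b}}_{\gamma_{H_{M_b}}}(\phi^{H_{M_b}}_p)$. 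Next, apply \cite[Lemma 3.9]{Shi3} (after restricting the support of $\phi^{H_{M_b}}_p$ to the $\nu$-acceptable locus $U$, which is permissible because $\phi^0_p$ is already supported on $\nu_b$-acceptable elements) to produce $\tilde{\phi^{J^{\mf{e}}_b}}\in C^\infty_c(H(\mathbb{Q}_p))$ realizing the above stable orbital integral as a stable orbital integral on $H$. Define $h^{\mathbf{H}}_p:=\sum_{J^{\mf{e}}_b \in X^{J^{\mf{e}}_b}}\tilde{\phi^{J^{\mf{e}}_b}}$; the key lemma is that exactly one summand contributes to $SO^{H(\mathbb{Q}_p)}_{\gamma_H}$ for a given $(G,H)$-regular effective $\gamma_H$. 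This reduces via parts (1) and (2) of Lemma \ref{stabigusalem} and Lemma \ref{SSEQMGGcomm} to the bijection $\mc{SS}^r_{ef}(M_b,G)\cong \mc{SS}^r_{eff}(J_b,G)$ combined with the injectivity of $\mc{SS}^r_{eff}(J_b,G) \hookrightarrow \mc{SS}^r(G)$ and the fact (\cite[Lemma 3.5]{Shi3}) that two $\nu_b$-acceptable semisimple elements of $M_b(\mathbb{Q}_p)$ that are stably conjugate in $G$ are already stably conjugate in $M_b$.

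Finally, assemble $h^{\mathbf{H}} := h^{\mathbf{H},p}f'^{\mathbf{H}}_\infty h^{\mathbf{H}}_p$. The resulting product of stable orbital integrals $SO_{\gamma_{\mathbf{H}}}(h^{\mathbf{H}})$ reproduces the integrand indexed by $(\gamma_0,\lambda)$, and summing over $\gamma_{\mathbf{H}} \in X^{ss}_{\mc{H}^{\mf{e}}}$ exactly recovers $\tau(\mathbf{H})^{-1}ST^{\mathbf{H}}_{r,ell}(h^{\mathbf{H}})$ after accounting for the $|\mathrm{Out}_r(\mc{H}^{\mf{e}})|$-to-one overcounting via the map $\mc{EQ}^r_{ef}(\mathbf{G})\to \mc{E}^r(\mathbf{G})$; passing from the refined count to the unrefined $ST^{\mathbf{H}}_{ell}$ absorbs the ratio $|\mathrm{Out}_r(\mc{H}^{\mf{e}})|/|\mathrm{Out}(\mc{H}^{\mf{e}})|$ into $\iota(\mathbf{G},\mathbf{H})$, yielding the stated formula. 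The main obstacle is the stabilization at $p$, and specifically the collapsing argument showing that the sum over $X^{J^{\mf{e}}_b}$ contributes exactly one nonzero stable orbital integral per effective Kottwitz quadruple; the rest of the proof is structurally parallel to the Shimura variety case treated earlier.
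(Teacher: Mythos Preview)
Your proposal is correct and follows essentially the same approach as the paper's own derivation: Fourier inversion over $\mathfrak{K}(I^{\mathbf{G}}_{\gamma_0}/\mathbb{Q})$, lifting via the section $\mathcal{S}$, the three-way local decomposition into $O^p, O_p, O_\infty$, the identical treatment away from $p$ and at $\infty$, and at $p$ the chain $\phi_p \to \phi^0_p \to \phi^{H_{M_b}}_p \to \tilde{\phi^{J^{\mf{e}}_b}}$ culminating in the key lemma that exactly one summand in $h^{\mathbf{H}}_p$ contributes, proved via Lemma~\ref{SSEQMGGcomm}, \cite[Lemma 3.5]{Shi3}, and Lemma~\ref{stabigusalem}. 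The only cosmetic discrepancies are notational (the paper writes $X^{\mf{e}}_{J_b}$ and $h'^{\mathbf{H}}_\infty$ where you write $X^{J^{\mf{e}}_b}$ and $f'^{\mathbf{H}}_\infty$), and your phrase ``bijection $\mc{SS}^r_{ef}(M_b,G)\cong \mc{SS}^r_{eff}(J_b,G)$'' should more precisely be that the latter is identified with a subset of the former; neither affects the argument.
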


Equivalently, we have

\begin{equation}
    \mathrm{tr}(\phi | H^*_c(\mathrm{Ig}_{\Sigma_b}, \mathcal{L}_{\xi}))= \sum\limits_{\mathcal{H}^{\mathfrak{e}} \in X^{\mf{e}}} \iota_r(\mathbf{G}, \mathbf{H})ST^{\mathbf{H}}_{r, ell}(h^{\mathbf{H}}).
\end{equation}

\subsection{Destabilization of the trace formula for Igusa varieties}

As in the case of Shimura varieties, we discuss how to rewrite the trace of the cohomology of Igusa varieties as the trace of a representation of $\mathbf{G}(\mathbb{A}^{p, \infty}) \times J_b(\mathbb{Q}_p)$. We note that the cohomology of Igusa varieties does not carry an action of $\Gamma_E$. For this reason, some aspects of the destabilization are simpler than the Shimura variety case. However, we encounter additional complications related to endoscopy of Levi subgroups at $p$.

As before, away from $p, \infty$ we have
\begin{align*}
    &\sum\limits_{\pi^{\mathbf{H}, p, \infty} \in \Pi_{\psi^{\mathbf{H}, p, \infty}}(\mathbf{H}^{p, \infty}, 1)} \langle \pi^{\mathbf{H},p, \infty}, s_{\psi^{\mathbf{H},p, \infty}} \rangle \mathrm{tr}(\pi^{\mathbf{H},p, \infty} \mid h^{\mathbf{H},p, \infty})\\
    &=e(\mathbf{G}^{p, \infty}) \sum\limits_{\pi^{p, \infty} \in \Pi_{\psi^{p, \infty}}(\mathbf{G}^{p, \infty}, \varrho^{iso, p ,\infty})} \langle \pi^{p, \infty}, \overline{s}s_{\psi^{p, \infty}} \rangle \mathrm{tr}(\pi^{p, \infty} \mid \phi^p).
\end{align*}

At $\infty$, we have
\begin{align*}
    &\sum\limits_{\pi^{\mathbf{H}}_{\infty} \in \Pi_{\psi^{\mathbf{H}}_{\infty}}(\mathbf{H}_{\infty}, 1)} \langle \pi^{\mathbf{H}}_{\infty}, s_{\psi^{\mathbf{H}}_{\infty}} \rangle \mathrm{tr}( \pi^{\mathbf{H}}_{\infty} \mid h^{',\mathbf{H}}_{\xi})\\
    &=e(\mathbf{G}_{\infty})(-1)^{q(\mathbf{G})}\langle \lambda_{\pi_{\infty}}, ss_{\psi} \rangle\langle \pi_{\infty}, \overline{s} \rangle.
\end{align*}

The situation at $p$ is of course quite different from the case of Shimura varieties. Recall that we have fixed a set $X^{\mf{e}}$ of refined elliptic endoscopic data.  We also have a set of representatives $X^{\mf{e}}_{J_b}$ of $\mc{E}^i_{eff}(J_b, G; H)$. We recall that elements of this set are \emph{inner} equivalence classes of embedded endoscopic data whose embedded equivalence class is an element of $\mc{E}^e_{eff}(J_b, G; H)$. According to Lemma \ref{torendlem}, this latter set consists of isomorphism classes of embedded endoscopic data in $Y^{-1}(\mc{H}^{\mf{e}})$ such that there exist maximal tori $T_{H_{M_b}}, T_{M_b}, T_{J_b}$ of their respective groups such that each is the transfer of the others.

We recall the functions defined in the stabilization of the cohomology of Igusa varieties.

\begin{itemize}
    \item $\phi_p \in C^{\infty}_c(J_b(\Q_p))$ supported on $\nu_b$-acceptable elements.
    \item $\phi^0_p := \phi_p \cdot \ov{\delta}^{1/2}_{P(\nu_b)}$.
    \item $\phi^{H_{M_b}}_p$ a matching function of $\phi^0_p \in C^{\infty}_c(H_{M_b}(\Q_p))$ for $J^{\mf{e}}_b=(H, H_{M_b}, s, \eta) \in X^{\mf{e}}_{J_b}$.
    \item $\tilde{\phi}^{J^{\mf{e}}_b}_p$ constructed by applying \cite[Lemma 3.9]{Shi3} to the triple $(H, H_{M_b}, \nu)$ and function $\phi^{H_{M_b}}_p$ with a twist of $\delta^{-1/2}_{P(\nu)}$.
    \item $h^{\mb{H}}_p := \sum\limits_{J^{\mf{e}}_b \in X^{\mf{e}}_{J_b}} \tilde{\phi}^{J^{\mf{e}}_b}_p$.
\end{itemize}

Now for $\pi \in \Gr(H(\Q_p))$, we have by definition and also applying \cite[Lemma 3.9 (ii)]{Shi3}
\begin{align*}
    \tr( \pi \mid h^{\mb{H}}_p) & = \sum\limits_{J^{\mf{e}}_b \in X^{\mf{e}}_{J_b}} \tr( \pi \mid \tilde{\phi}^{J^{\mf{e}}_b}_p)\\
    & = \sum\limits_{J^{\mf{e}}_b \in X^{\mf{e}}_{J_b}} \tr( J^H_{P(\nu)^{op}}(\pi) \mid \phi^{H_{M_b}}_p).
\end{align*}

Then for $\pi \in \Gr(H(\Q_p))^{st}$, we have
\begin{equation}
    \tr( \pi \mid h^{\mb{H}}_p) = \sum\limits_{J^{\mf{e}}_b \in X^{\mf{e}}_{J_b}} \tr( \Trans^{H_{M_b}}_{J_b}(J^H_{P(\nu)^{op}}(\pi)) \otimes \ov{\delta}^{1/2}_{P(\nu_b)} \mid \phi_p).\\
\end{equation}

Note that for this to make sense, we first of all need \cite[Lemma 3.3]{Hir1} which states that the Jacquet module of a representation with stable character will again have stable character so that applying $\Trans$ makes sense.

Motivated by this computation, we have the following definition.
\begin{definition}
We define the map $\mathrm{Red}^{\mathcal{H}^{\mathfrak{e}}}_b: \mathbb{C}[\mathrm{Irr}(H(\mathbb{Q}_p))]^{st} \to \mathbb{C}[\mathrm{Irr}(J_b(\mathbb{Q}_p)]$ by
\begin{equation}
    \pi \mapsto  \sum\limits_{J^{\mf{e}}_b \in X^{\mf{e}}_{J_b}}  \mathrm{Trans}^{H_{M_b}}_{J_b}(J^H_{P(\nu)^{op}} (\pi)) \otimes \overline{\delta}^{\frac{1}{2}}_{P(\nu_b)}.
\end{equation}
\end{definition}
In particular, we have that for $ \pi \in \mathbb{C}[\mathrm{Irr}(H(\mathbb{Q}_p))]^{st}$,
\begin{equation}
    \mathrm{tr}(\pi \mid h^{\mathbf{H}}_p)=\mathrm{tr}(\mathrm{Red}^{\mathcal{H}^{\mathfrak{e}}}_b(\pi) \mid \phi_p).
\end{equation}

We can now carry out the destabilization of the cohomology of Igusa varieties. We make Assumption \ref{STELLA} as we did for Shimura varieties. We have
\begin{align*}
    &\mathrm{tr}(\phi | H^*_c(\mathrm{Ig}_{\Sigma_b}, \mathcal{L}_{\xi}))\\
    & = \sum\limits_{\mathcal{H}^{\mathfrak{e}} \in X^{\mf{e}}} \iota_r(\mathbf{G}, \mathbf{H})ST^{\mathbf{H}}_{r, ell}(h^{\mathbf{H}})\\
    &=\sum\limits_{\mathcal{H}^{\mathfrak{e}} \in X^{\mf{e}}} \iota_r(\mathbf{G}, \mathbf{H})ST^{\mathbf{H}}_{r, disc}(h^{\mathbf{H}})\\
    &=\sum\limits_{\mathcal{H}^{\mathfrak{e}} \in X^{\mf{e}}} \iota_r(\mathbf{G}, \mathbf{H}) \sum\limits_{[\psi_{\mathbf{H}}] \in X^{sp}_{\mc{H}^{\mf{e}}}} \frac{1}{|\mathcal{S}_{\psi^{\mathbf{H}}}|} \sum\limits_{\pi^{\mathbf{H}} \in \Pi_{\psi^{\mathbf{H}}}} \epsilon_{\psi^{\mathbf{H}}}(s_{\psi^{\mathbf{H}}}) \langle \pi^{\mathbf{H}}, s_{\psi^{\mathbf{H}}} \rangle \mathrm{tr}( \pi \mid h^{\mathbf{H}})\\
    &=\sum\limits_{\mathcal{H}^{\mathfrak{e}} \in X^{\mf{e}}} \frac{\tau(\mathbf{G})}{\tau(\mathbf{H})|\mathrm{Out}_r(\mathcal{H}^{\mathfrak{e}})|} \sum\limits_{[\psi_{\mathbf{H}}] \in X^{sp}_{\mc{H}^{\mf{e}}}} \frac{1}{|\mathcal{S}_{\psi^{\mathbf{H}}}|} \sum\limits_{\pi^{\mathbf{H}} \in \Pi_{\psi^{\mathbf{H}}}} \epsilon_{\psi^{\mathbf{H}}}(s_{\psi^{\mathbf{H}}}) \langle \pi^{\mathbf{H}}, s_{\psi^{\mathbf{H}}} \rangle \mathrm{tr}( \pi \mid h^{\mathbf{H}})\\
    &=\sum\limits_{\mathcal{H}^{\mathfrak{e}} \in X^{\mf{e}}} \,\ \sum\limits_{[[\psi_{\mathbf{H}}]]: [\psi_{\mb{H}}] \in X^{sp}_{\mc{H}^{\mf{e}}}} \frac{1}{|  _{\mathbf{G}}S_{\psi^{\mathbf{H}}}| \cdot |\mathrm{Out}_r(\mathcal{H}^{\mathfrak{e}})|} \sum\limits_{\pi^{\mathbf{H}} \in \Pi_{\psi^{\mathbf{H}}}} \epsilon_{\psi^{\mathbf{H}}}(s_{\psi^{\mathbf{H}}}) \langle \pi^{\mathbf{H}}, s_{\psi^{\mathbf{H}}} \rangle \mathrm{tr}( \pi \mid h^{\mathbf{H}})\\
    &=\sum\limits_{(\psi,s) \in X_{\Psi}}  \,\ \sum\limits_{\mathcal{H}^{\mathfrak{e}} \in X^{\mf{e}}} \,\ \sum\limits_{(\mc{H}^{\mf{e}}, [[\psi_{\mathbf{H}}]]) \in Y^{-1}(\psi, s)} \frac{1}{|  _{\mathbf{G}}S_{\psi^{\mathbf{H}}}| \cdot |\mathrm{Out}_r(\mathcal{H}^{\mathfrak{e}})|} \sum\limits_{\pi^{\mathbf{H}} \in \Pi_{\psi^{\mathbf{H}}}} \epsilon_{\psi^{\mathbf{H}}}(s_{\psi^{\mathbf{H}}}) \langle \pi^{\mathbf{H}}, s_{\psi^{\mathbf{H}}} \rangle \mathrm{tr}( \pi \mid h^{\mathbf{H}})
     \end{align*}
    \begin{align*}
    &=\sum\limits_{(\psi,s) \in X_{\Psi}}  \,\ \sum\limits_{\mathcal{H}^{\mathfrak{e}} \in X^{\mf{e}}} \,\ \sum\limits_{(\mc{H}^{\mf{e}}, [[\psi_{\mathbf{H}}]]) \in Y^{-1}(\psi, s)} |\frac{Z(\widehat{\mb{G}})^{\Gamma_{\Q}}}{\eta(C_{\psi^{\mb{H}}})}| \cdot \frac{1}{|\mathrm{Out}_r(\mathcal{H}^{\mathfrak{e}})|} \sum\limits_{\pi^{\mathbf{H}} \in \Pi_{\psi^{\mathbf{H}}}} \epsilon_{\psi^{\mathbf{H}}}(s_{\psi^{\mathbf{H}}}) \langle \pi^{\mathbf{H}}, s_{\psi^{\mathbf{H}}} \rangle \mathrm{tr}( \pi \mid h^{\mathbf{H}})\\
    &=\sum\limits_{(\psi, s) \in X_{\Psi}} \,\ \sum\limits_{\mathcal{H}^{\mathfrak{e}} \in X^{\mf{e}}} \,\ \sum\limits_{(\mc{H}^{\mf{e}}, [[\psi_{\mathbf{H}}]]) \in Y^{-1}(\psi, s)} |\frac{Z(\widehat{\mb{G}})^{\Gamma_{\Q}}}{\eta(C_{\psi^{\mb{H}}})}|\cdot \frac{1}{|\mathrm{Out}_r(\mathcal{H}^{\mathfrak{e}})|} \sum\limits_{\pi^{p,\infty} \in \Pi_{\psi^{\infty}}(\mathbf{G}, z^{iso, \infty})} \epsilon_{\psi}(\overline{s}s_{\psi}) \langle \pi^p, \overline{s}s_{\psi} \rangle \\
    & \cdot (-1)^{q(\mathbf{G})}\langle \lambda_{\pi_{\infty}}, ss_{\psi}\rangle e(\mathbf{G}^{p, \infty})e(\mathbf{G}_{\infty})\mathrm{tr}( \pi^{p, \infty} \mid \phi^{p})\mathrm{tr}(\mathrm{Red}^{\mathcal{H}^{\mathfrak{e}}}_b ( \sum\limits_{\pi^{\mathbf{H}}_p \in \Pi_{\psi^{\mathbf{H}}_p}} \langle \pi^{\mathbf{H}}_p, s_{\psi^{\mathbf{H}}} \rangle \pi^{\mathbf{H}}_p) \mid \phi_p).\\
\end{align*}
To complete the destabilization, we need to show that $\mathrm{Red}^{\mathcal{H}^{\mathfrak{e}}}_b(\sum\limits_{\pi^{\mathbf{H}}_p \in \Pi_{\psi^{\mathbf{H}}_p}} \langle \pi^{\mathbf{H}}_p, s_{\psi^{\mathbf{H}}} \rangle \pi^{\mathbf{H}}_p )$ only depends on $\psi, s, b$. This follows from Proposition \ref{EPeqSP} which shows we can recover $(\mc{H}^{\mf{e}}, [[\psi^H]])$ from $(\psi, s)$.

Hence we use the notation $\mathrm{Red}_b(\psi, s) := \mathrm{Red}^{\mathcal{H}^{\mathfrak{e}}}_b(\sum\limits_{\pi^{\mathbf{H}}_p \in \Pi_{\psi^{\mathbf{H}}_p}} \langle \pi^{\mathbf{H}}_p, s_{\psi^{\mathbf{H}}} \rangle \pi^{\mathbf{H}}_p)$ to indicate this dependence.

Then the formula for the trace of the cohomology of Igusa varieties becomes:
\begin{align*}
    &\sum\limits_{(\psi, s) \in X_{\Psi}} \,\ \sum\limits_{\mathcal{H}^{\mathfrak{e}} \in X^{\mf{e}}} \,\ \sum\limits_{(\mc{H}^{\mf{e}},[[\psi_{\mathbf{H}}]]) \in Y^{-1}(\psi, s)} |\frac{Z(\widehat{\mb{G}})^{\Gamma_{\Q}}}{\eta(C_{\psi^{\mb{H}}})}|\cdot \frac{1}{|\mathrm{Out}_r(\mathcal{H}^{\mathfrak{e}})|} \sum\limits_{\pi^{p, \infty} \in \Pi_{\psi^{p, \infty}}(\mathbf{G}, z^{iso, p,\infty})} \epsilon_{\psi}(\overline{s}s_{\psi}) \langle \pi^p, \overline{s}s_{\psi} \rangle \\
    & \cdot (-1)^{q(\mathbf{G})}\langle \lambda_{\pi_{\infty}}, ss_{\psi}\rangle e(\mathbf{G}^{p, \infty})e(\mathbf{G}_{\infty})\mathrm{tr}( \pi^{p, \infty} \mid \phi^{p})\mathrm{tr}(\mathrm{Red}_b(\psi, s) \mid \phi_p)\\
    &=\sum\limits_{(\psi, s) \in X_{\Psi}} |\Out_r(\mc{H}^{\mf{e}})| \cdot |\frac{\eta(C_{\psi^{\mb{H}}})}{Z_{C_{\psi}}(s)}|\cdot|\frac{Z(\widehat{\mb{G}})^{\Gamma_{\Q}}}{\eta(C_{\psi^{\mb{H}}})}|\cdot \frac{1}{|\mathrm{Out}_r(\mathcal{H}^{\mathfrak{e}})|}\sum\limits_{\pi^{p, \infty} \in \Pi_{\psi^{p, \infty}}(\mathbf{G}, z^{iso, p,\infty})} \epsilon_{\psi}(\overline{s}s_{\psi}) \langle \pi^p, \overline{s}s_{\psi} \rangle \\
    &  \cdot (-1)^{q(\mathbf{G})}\langle \lambda_{\pi_{\infty}}, ss_{\psi}\rangle e(\mathbf{G}^{p, \infty})e(\mathbf{G}_{\infty})\mathrm{tr}( \pi^{p, \infty} \mid \phi^{p})\mathrm{tr}(\mathrm{Red}_b(\psi, s) \mid \phi_p)\\
    &=\sum\limits_{(\psi, s) \in X_{\Psi}}  |\frac{Z(\widehat{\mb{G}})^{\Gamma_{\Q}}}{Z_{C_{\psi}}(s)}|\sum\limits_{\pi^{p, \infty} \in \Pi_{\psi^{p, \infty}}(\mathbf{G}, z^{iso, p,\infty})} \epsilon_{\psi}(\overline{s}s_{\psi}) \langle \pi^p, \overline{s}s_{\psi} \rangle \\
    &  \cdot (-1)^{q(\mathbf{G})}\langle \lambda_{\pi_{\infty}}, ss_{\psi}\rangle e(\mathbf{G}^{p, \infty})e(\mathbf{G}_{\infty})\mathrm{tr}( \pi^{p, \infty} \mid \phi^{p})\mathrm{tr}(\mathrm{Red}_b(\psi, s) \mid \phi_p)\\
    &=\sum\limits_{([\psi], [s]) \in X_{\Psi}}  \sum\limits_{s \in [s]} \frac{1}{|\mc{S}_{\psi}|}\sum\limits_{\pi^{p, \infty} \in \Pi_{\psi^{p, \infty}}(\mathbf{G}, z^{iso, p,\infty})} \epsilon_{\psi}(\overline{s}s_{\psi}) \langle \pi^p, \overline{s}s_{\psi} \rangle \\
    &  \cdot (-1)^{q(\mathbf{G})}\langle \lambda_{\pi_{\infty}}, ss_{\psi}\rangle e(\mathbf{G}^{p, \infty})e(\mathbf{G}_{\infty})\mathrm{tr} (\pi^{p, \infty} \mid \phi^{p})\mathrm{tr}(\mathrm{Red}_b(\psi, s) \mid \phi_p)\\ 
\end{align*}
This implies by \cite[Lemma 6.4 ]{Shi4} (up to semisimplification) the following formula for the cohomology of Igusa varieties. Notice that since there is no Galois action, we also do not see characters of $S_{\psi}$ in the above as in the Shimura variety case. We also note that since $\mathbf{G}_p$ is assumed to be quasisplit, $e(\mathbf{G}_p)=1$ so that $e(\mathbf{G}^{p, \infty})e(\mathbf{G}_{\infty})=e(\mathbf{G})=1$.
\begin{equation}
    H^*_c(\mathrm{Ig}_{\Sigma_b}, \mathcal{L}_{\xi})
\end{equation}
\begin{equation*}
    =\sum\limits_{[\psi], s : ([\psi], [s]) \in X_{\Psi}} \frac{1}{|\mathcal{S}_{\psi}|}\sum\limits_{\pi^{p, \infty} \in \Pi_{\psi^{p, \infty}}(\mathbf{G}, z^{iso, p,\infty})} \epsilon_{\psi}(\overline{s}s_{\psi}) \langle \pi^p, \overline{s}s_{\psi} \rangle (-1)^{q(\mathbf{G})}\langle \lambda_{\pi_{\infty}}, ss_{\psi}\rangle
    \pi^{p, \infty}\boxtimes \mathrm{Red}_b(\psi, s).  
\end{equation*}

\section{Cohomology of Rapoport--Zink spaces}
In this section we discuss Rapoport--Zink spaces and their cohomology and describe the Mantovan formula relating the cohomology of Rapoport--Zink spaces, Igusa varieties, and Shimura varieties. We largely follow \cite{RV1}. However, we use the covariant Dieudonne functor so our normalization looks like that of \cite{Shi1}. We then combine the results of the previous sections to derive a formula for the cohomology of Rapoport--Zink spaces.

\subsection{Local Shimura varieties}

\begin{definition}
A local Shimura datum over $\mathbb{Q}_p$ is a tuple $(G, b, \{\mu \})$ such that 
\begin{enumerate}
    \item $G$ is a connected reductive group over $\mathbb{Q}_p$\\
    \item $\{ \mu \}$ is a conjugacy class of minuscule cocharacters $\mu: \mathbb{G}_{m, \overline{\mathbb{Q}_p}} \to G_{\overline{\mathbb{Q}_p}}$\\
    \item $b \in \mathbf{B}(\Q_p, G, -\mu)$
\end{enumerate}
\end{definition}

We denote by $E_{\mu}$ the field of definition of the conjugacy class $\{\mu\}$ inside $\overline{\mathbb{Q}_p}$. There exists a tower of rigid spaces $(\bb{M}_{G, b, \mu, K})_K$ over $\breve{E}_{\mu}$. The tower is indexed by compact open subgroups $K \subset G(\mathbb{Q}_p)$. We fix a decent representative $\tilde{b}$ of $b$ and let $J_b=J_{\tilde{b}}$. Then each $\bb{M}_{G, b, \mu, K}$ has an action by $J_b(\mathbb{Q}_p)$ and the tower $(\bb{M}_{G, b, \mu, K})_{K}$ has an action of $G(\mathbb{Q}_p)$ by Hecke correspondences. The tower $(\bb{M}_{G, b, \mu, K})_{K}$ is equipped with a $W_{E_{\mu}}$-descent datum.

In this paper we only consider local Shimura varieties of PEL-type. These are the spaces that arise in the $p$-adic uniformization of PEL-type Shimura varieties and are known to exist by \cite{RZ1}.

We denote by $H^i_c(\bb{M}_{G, b, \mu, K}, \ov{\Q_{\ell}})$ the $\ell$-adic cohomology with compact supports of $\bb{M}_{G, b, \mu, K}$. Let $\rho$ be an admissible representation of $J_b(\mathbb{Q}_p)$. Then following \cite{RV1}, we define 
\begin{equation*}
H^{i,j}(G, b, \mu)[\rho] := \varinjlim_K \mathrm{Ext}^j_{J_b(\mathbb{Q}_p)}(H^i_c(\bb{M}_{G, b, \mu, K}, \ov{\Q_{\ell}}), \rho).
\end{equation*}

Finally, we define the homomorphism $\mathrm{Mant}_{G, b, \mu}: \mathrm{Groth}(J_b(\mathbb{Q}_p)) \to \mathrm{Groth}(G(\mathbb{Q}_p) \times W_{E_{\{\mu\}_G}}$ by
\begin{equation}
    \mathrm{Mant}_{G, b, \mu}(\rho) := \sum\limits_{i, j} (-1)^{i+j} H^{i,j}(G, b, \mu)[\rho](- \mathrm{dim}\bb{M}).
\end{equation}
\subsection{Mantovan's formula}
We now return to the setup of \S3. In particular, we have a fixed PEL-type Shimura datum $(\mb{G}, X)$ and $\mb{G}$ is assumed to be anisotropic modulo center and unramified at $p$.

By \cite[Theorem 22]{Man1}, we have the following equality in $\mathrm{Groth}(\mathbf{G}(\mathbb{A}^{\infty}) \times W_{E_{\mu}})$

\begin{equation}
    H^*_c(\mathrm{Sh}, \mathcal{L}_{\xi})=\sum\limits_{b \in \mathbf{B}(G, -\mu)} \mathrm{Mant}_{G, b, \mu}(H^*_c(\mathrm{Ig}_{\Sigma_b}, \mathcal{L}_{\xi})). 
\end{equation}
This formula has been established for more general $\mb{G}$ in non-compact PEL cases by \cite{LS2018} and for Hodge type case by \cite{HK}.

\subsection{Cohomology formulas for Rapoport--Zink spaces}
In this subsection we combine the results of the previous sections to derive formulas for the cohomology of Rapoport--Zink spaces. We continue with the assumptions of \S3.

We substitute our formula for the cohomology of Shimura varieties and Igusa varietes  in the previous sections to get the following equality in $\mathrm{Groth}(\mathbf{G}(\A_f) \times W_{E_{\{\mu\}_G}})$
\begin{align*}
&\sum\limits_{[\psi] \in X_{\Psi}} \, \sum\limits_{\nu}\sum\limits_{\pi^{\infty} \in \Pi_{\psi^{\infty}}(\mathbf{G}, z^{iso, \infty})} m(\pi^{\infty}, \nu)\nu(s_{\psi})(-1)^{q(\mathbf{G})}(\pi^{\infty}) \boxtimes V(\psi, \nu)_{\lambda}\\
&=\sum\limits_{b \in \mathbf{B}(G, -\mu)} \mathrm{Mant}_{G,b,\mu}(\sum\limits_{[\psi], s : ([\psi], [s]) \in X_{\Psi}} \frac{1}{|\mathcal{S}_{\psi}|}\sum\limits_{\pi^{p, \infty} \in \Pi_{\psi^{p, \infty}}(\mathbf{G}, z^{iso, p,\infty})} \epsilon_{\psi}(\overline{s}s_{\psi})\\
&\cdot \langle \pi^p, \overline{s}s_{\psi} \rangle (-1)^{q(\mathbf{G})}\langle \lambda_{\pi_{\infty}}, ss_{\psi}\rangle
    \pi^{p, \infty}\boxtimes \mathrm{Red}_b(\psi, s)).  
\end{align*}

Fix a representation $\pi_p \in \mathrm{Irr}(G(\mathbb{Q}_p))$ such that there exists a representation $\pi^{\infty} \in \mathrm{Irr}(\mb{G}(\mathbb{A}^{\infty}))$ appearing in the cohomology of Shimura varieties and such that the local factor at $p$ is isomorphic to $\pi_p$.
\begin{assumption}{\label{unicityassump}}
Choose $\pi^{\infty} \in \mathrm{Irr}(\mb{G}(\mathbb{A}^{\infty}))$ appearing in the cohomology of Shimura varieties and such that the local factor at $p$ is isomorphic to $\pi_p$. We assume that there is at most one global $A$-parameter $\psi$ appearing in the cohomology of Shimura varieties with sheaf $\mathcal{L}_{\xi}$ such that $\pi^{\infty}$ is the away from $\infty$ piece of some $\pi \in \Pi_{\psi}(\mb{G}, \varrho)$.
\end{assumption}

With $\pi^{\infty}$ and $\psi$ fixed as in the assumption, the $\pi^{p, \infty}$-isotypic piece of the formula above is the following (Keeping in mind that the $\mathrm{Mant}_{G,b,\mu}$ acts as the identity away from $p$). 
\begin{align*}
&\sum\limits_{\nu}\sum\limits_{\pi_p \in \Pi_{\psi_p}(\mathbf{G}, \varrho_p)} m(\pi^{\infty}, \nu)\nu(s_{\psi})(\pi^{p, \infty}) \boxtimes \pi_p \boxtimes V(\psi, \nu)_{\lambda}\\
&=\sum\limits_{b \in \mathbf{B}(G, -\mu)} \mathrm{Mant}_{G,b,\mu}\left(\sum\limits_{s : ([\psi], [s]) \in X_{\Psi}} \frac{1}{|\mathcal{S}_{\psi}|} \epsilon_{\psi}(\overline{s}s_{\psi}) \langle \pi^p, \overline{s}s_{\psi} \rangle \langle \lambda_{\pi_{\infty}}, ss_{\psi}\rangle
    \pi^{p, \infty}\boxtimes \mathrm{Red}_b(\psi, s)\right).  
\end{align*}

We now substitute in the formula for $m(\pi_{\infty}, \nu)$ to get

\begin{align*}
&\sum\limits_{\nu}\sum\limits_{\pi_p \in \Pi_{\psi_p}(\mathbf{G}, \varrho_p)} \frac{1}{|\mathcal{S}_{\psi}|}\sum\limits_{s : ([\psi],[s]) \in X_{\Psi}} \epsilon_{\psi}(\overline{s})\langle \pi, \overline{s} \rangle \nu(s)^{-1}\langle \lambda_{\pi_{\infty}}, s \rangle\nu(s_{\psi})(\pi^{p, \infty}) \boxtimes \pi_p \boxtimes V(\psi, \nu)_{\lambda}\\
&=\sum\limits_{b \in \mathbf{B}(G, -\mu)} \mathrm{Mant}_{G,b,\mu}\left(\sum\limits_{s : ([\psi], [s]) \in X_{\Psi}} \frac{1}{|\mathcal{S}_{\psi}|} \epsilon_{\psi}(\overline{s}s_{\psi}) \langle \pi^p, \overline{s}s_{\psi} \rangle \langle \lambda_{\pi_{\infty}}, ss_{\psi}\rangle
    \pi^{p, \infty}\boxtimes \mathrm{Red}_b(\psi, s)\right).  
\end{align*}
We now argue that $s$-dependence of the right-hand side depends only on $\ov{s}$. In particular, we need to show that if we change $s$ to some $sz$ such that $z \in Z(\widehat{\mb{G}})^{\Gamma_{\Q}}$, then this will not change the value of the right-hand side.  The term $\langle \lambda_{\pi_{\infty}}, z\rangle$ is equal to $\mu(z)$. On the other hand, the terms $\Red_b(\psi,sz)$ and $\Red_b(\psi,s)$ differ by $\kappa(b)(z)=\mu(z)^{-1}$ since $b \in \mb{B}(G, - \mu)$. 

We can now translate the Igusa sum by $s_{\psi}$ to get

\begin{align*}
&\sum\limits_{\nu}\sum\limits_{\pi_p \in \Pi_{\psi_p}(\mathbf{G}, \varrho_p)} \frac{1}{|\mathcal{S}_{\psi}|}\sum\limits_{s : ([\psi],[s]) \in X_{\Psi}} \epsilon_{\psi}(\overline{s})\langle \pi, \overline{s} \rangle \nu(s)^{-1}\langle \lambda_{\pi_{\infty}}, s \rangle\nu(s_{\psi})(\pi^{p, \infty}) \boxtimes \pi_p \boxtimes V(\psi, \nu)_{\lambda}\\
&=\sum\limits_{b \in \mathbf{B}(G, -\mu)} \mathrm{Mant}_{G,b,\mu}(\sum\limits_{s : ([\psi], [s]) \in X_{\Psi}} \frac{1}{|\mathcal{S}_{\psi}|} \epsilon_{\psi}(\overline{s}) \langle \pi^p, \overline{s} \rangle \langle \lambda_{\pi_{\infty}}, s\rangle
    \pi^{p, \infty}\boxtimes \mathrm{Red}_b(\psi, ss_{\psi})).  
\end{align*}

In particular, we must have equality ``at $p$'' above, which gives
\begin{align*}
&\sum\limits_{\nu}\sum\limits_{\pi_p \in \Pi_{\psi_p}(\mathbf{G}, \varrho_p)} \frac{1}{|\mathcal{S}_{\psi}|}\sum\limits_{s : ([\psi],[s]) \in X_{\Psi}} \epsilon_{\psi}(\overline{s})\langle \pi, \overline{s} \rangle \nu(s)^{-1}\langle \lambda_{\pi_{\infty}}, s \rangle\nu(s_{\psi}) \pi_p \boxtimes V(\psi, \nu)_{\lambda}\\
&=\sum\limits_{b \in \mathbf{B}(G, -\mu)} \mathrm{Mant}_{G,b,\mu}(\sum\limits_{s : ([\psi], [s]) \in X_{\Psi}} \frac{1}{|\mathcal{S}_{\psi}|} \epsilon_{\psi}(\overline{s}) \langle \pi^p, \overline{s} \rangle \langle \lambda_{\pi_{\infty}}, s\rangle
    \mathrm{Red}_b(\psi, ss_{\psi})).  
\end{align*}

Now, we have a natural embedding $i_p: C_{\psi} \hookrightarrow C_{\psi_p}$. We would like to rewrite the above equation purely locally such that the only dependence on $\psi$ is the subgroup $i_p(C_{\psi}) \subset C_{\psi_p}$. We let $X_{\psi}$ denote the set of $s \in C_{\psi}$ such that $([s], [\psi]) \in X_{\Psi}$.

We wish to replace the sum over $\nu \in X^*(C_{\psi})$ by a sum only depending on $\psi_p$. We decompose $V=\oplus_{\rho} V_{\rho}$ such that each $\rho$ is an irreducible representation of $\mathcal{L}_{E_{\mf{p}}} \times SL_2(\mathbb{C})$ appearing in the semisimple decomposition of $V$ and $V_{\rho}$ is the $\rho$-isotypic piece of $V$. Then fix $V_{\rho}$ and $s \in X_{\psi}$.  The element $s$ must stabilize $V_{\rho}$ since its action commutes with the one coming from $\psi_p$. In particular, since $s$ is assumed to be semisimple, we have the following claim: we can decompose $V_{\rho}$ as a sum of copies of $\rho$ such that $s$ stabilizes each copy.

Assuming the claim for the moment, we now decompose $V$ as a sum of irreducible $\mathcal{L}_{E_{\mf{p}}} \times SL_2(\mathbb{C})$-representations such that $s$ stabilizes each. Observe that for a fixed copy of an irreducible representation $\rho$, the $s$-action can only have a single eigenvalue since the subspace with eigenvalue $\lambda$ is a $\mathcal{L}_{E_{\mf{p}}} \times \SL_2(\mathbb{C})$-sub module and $\rho$ is irreducible. Then in the Grothendieck group of $\mathcal{L}_{E_{\mf{p}}}$-representations, we have 
\begin{equation*}
\sum\limits_{\nu} \nu(s^{-1}s_{\psi})V(\psi, \nu)_{\lambda}=  \sum\limits_{\rho} \frac{\tr( ss_{\psi} | V_\rho)}{\dim \rho}[\rho \otimes | \cdot |^{-\langle \rho_G, \mu \rangle}].
\end{equation*}

We now prove the claim. We have a group $G$ and a finite dimensional representation $V$ of $G$ which is isomorphic to a sum of copies of a fixed irreducible $G$-representation. We have a semisimple automorphism $s$ of $V$ that commutes with the action of $G$ and we claim that we can decompose $V$ into irreducible $G$ representations in such a way that $s$ fixes each irreducible constituent. Pick a decomposition of $V$ into irreducible $G$-representations. By Schur's lemma, $s$ acts as a diagonalizable matrix on these constituents. One can then diagonalize this matrix to achieve the desired decomposition. 

Then our earlier equation becomes
\begin{align*}
&\sum\limits_{\rho}\sum\limits_{\pi_p \in \Pi_{\psi_p}(\mathbf{G}, \varrho_p)} \frac{1}{|\mathcal{S}_{\psi}|}\sum\limits_{s \in X_{\psi}} \epsilon_{\psi}(\overline{s})\langle \pi, \overline{s} \rangle \langle \lambda_{\pi_{\infty}}, s \rangle \frac{\tr( ss_{\psi} | V_\rho)}{\dim \rho} \pi_p \boxtimes [\rho \otimes | \cdot |^{-\langle \rho_G, \mu \rangle}]\\
&=\sum\limits_{b \in \mathbf{B}(G, -\mu)} \mathrm{Mant}_{G,b,\mu}\left(\sum\limits_{s \in X_{\psi}} \frac{1}{|\mathcal{S}_{\psi}|} \epsilon_{\psi}(\overline{s}) \langle \pi^p, \overline{s} \rangle \langle \lambda_{\pi_{\infty}}, s\rangle
    \mathrm{Red}_b(\psi, ss_{\psi})\right).  
\end{align*}

We now let $X_{\psi_p}$ consist of the set of $s \in \widehat{\mb{G}}$ such that there exists a global parameter $\psi$ whose $p$ component is equivalent to $\psi_p$ and such that $(s, \psi) \in X_{\Psi}$.

We make the following assumption:

\begin{assumption}{\label{liftingassump}}
We assume that we have enough different lifts $\psi$ of $\psi_p$ that we can separate the contributions for different elements of $X_{\psi_p}$ in the above equation.
\end{assumption}

In particular, if the above assumption holds, then for each $x \in X_{\psi_p}$, we expect an equation of the form

\begin{align*}
&\sum\limits_{\rho}\sum\limits_{\pi_p \in \Pi_{\psi_p}(\mathbf{G}, \varrho_p)}  \epsilon_{\psi}(x)\langle \pi, x\rangle\langle \lambda_{\pi_{\infty}}, x \rangle\frac{\tr( xs_{\psi_p} | V_\rho)}{\dim \rho} \pi_p \boxtimes [\rho \otimes | \cdot |^{-\langle \rho_G, \mu \rangle}]\\
&=\sum\limits_{b \in \mathbf{B}(G, -\mu)} \mathrm{Mant}_{G,b,\mu} \left(  \epsilon_{\psi}(x) \langle \pi^p, x \rangle \langle \lambda_{\pi_{\infty}}, x\rangle
    \mathrm{Red}_b(\psi, xs_{\psi_p})\right),
\end{align*}

which simplifies to

\begin{align*}
&\sum\limits_{\rho}\sum\limits_{\pi_p \in \Pi_{\psi_p}(\mathbf{G}, \varrho_p)}  \langle \pi_p, x\rangle \frac{\tr( x | V_\rho)}{\dim \rho} \pi_p \boxtimes [\rho \otimes | \cdot |^{-\langle \rho_G, \mu \rangle}]\\
&=\sum\limits_{b \in \mathbf{B}(G, -\mu)} \mathrm{Mant}_{G,b,\mu} \left(
    \mathrm{Red}_b(\psi_p, x) \right).
\end{align*}

Finally, we rewrite $ \mathrm{Red}_b(\psi_p, x)$ as  $\mathrm{Red}^{\mc{H}^{\mf{e}}}_b(\sum\limits_{\pi^{\mb{H}_p} \in \Pi_{\psi^{\mb{H}}_p}} \langle \pi^{\mb{H}_p}, s^{\mb{H}}_{\psi_p} \rangle \pi^{\mb{H}_p})$ to get our final formula.

\begin{theorem}{\label{finalformula}}
We expect the following formula for the cohomology of unramified PEL-type Rapoport--Zink spaces.
\begin{equation}
\sum\limits_{\rho}\sum\limits_{\pi_p \in \Pi_{\psi_p}(\mathbf{G}, \varrho_p)}  \langle \pi_p, x\rangle \frac{\tr( x | V_\rho)}{\dim \rho} \pi_p \boxtimes [\rho \otimes | \cdot |^{-\langle \rho_G, \mu \rangle}]
\end{equation}
\begin{equation*}
=\sum\limits_{b \in \mathbf{B}(G, -\mu)} \mathrm{Mant}_{G,b,\mu}(
    \mathrm{Red}^{\mc{H}^{\mf{e}}}_b(\sum\limits_{\pi^{\mb{H}_p} \in \Pi_{\psi^{\mb{H}}_p}} \langle \pi^{\mb{H}_p}, s^{\mb{H}}_{\psi_p} \rangle \pi^{\mb{H}_p})).
\end{equation*}
\end{theorem}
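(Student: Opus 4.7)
The plan is to combine the three main inputs developed in the preceding sections—Mantovan's formula, the destabilized trace formula for Shimura varieties, and the destabilized trace formula for Igusa varieties—to obtain a global identity in $\mathrm{Groth}(\mb{G}(\A_f) \times W_{E_{\{\mu\}_G}})$, and then to extract a purely local statement by isolating contributions associated to a single local parameter $\psi_p$ and a single element $x \in C_{\psi_p}$. First I would substitute the expression for $H^*_c(\mathrm{Sh},\mc{L}_\xi)$ from Section 4 and the expression for $H^*_c(\mathrm{Ig}_{\Sigma_b},\mc{L}_\xi)$ from Section 5 into the Mantovan formula, producing an identity indexed by pairs $([\psi],[s])$ ranging over $X_\Psi$. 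Both sides are naturally indexed by global $A$-parameters, and the nontrivial task at this stage is purely bookkeeping: tracking the $\epsilon_\psi$, $\langle\pi,\overline{s}\rangle$, $\langle\lambda_{\pi_\infty},s\rangle$, and $s_\psi$ factors carefully so that the Galois-theoretic terms $V(\psi,\nu)_\lambda$ on the Shimura side align with the $\mathrm{Red}_b$ terms pushed forward by $\mathrm{Mant}_{G,b,\mu}$ on the Igusa side.

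Next I would localize the identity by fixing an irreducible $\pi_p$ and globalizing it, via Assumption \ref{unicityassump}, to a unique global $A$-parameter $\psi$ contributing to the cohomology; projecting onto the $\pi^{p,\infty}$-isotypic component turns the $\mathbf{G}(\A^{\infty,p})$ factor into a scalar and leaves an equation in $\mathrm{Groth}(G(\Q_p) \times W_{E_\mu})$. At this point the remaining sums are over $\nu \in X^*(C_\psi)$ (on the left) and over $s \in X_\psi$ (on the right). To harmonize these, I would carry out Kottwitz's manipulation of the multiplicity $m(\pi^\infty,\nu)$ as a Fourier expansion on $\mc{S}_\psi$, then translate the $s$-index on the Igusa side by $s_\psi$, and finally use the decomposition $V = \bigoplus_\rho V_\rho$ of the $r_{-\mu}\circ\psi_p$-representation into $\mc{L}_{E_\mf{p}}\times \mathrm{SL}_2(\C)$-isotypic pieces together with the Schur-theoretic observation that a semisimple element $s\in C_\psi$ commuting with the parameter acts on each $V_\rho$ through $\tr(s|V_\rho)/\dim\rho$. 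Verifying that the Igusa-side dependence descends from $s$ to $\overline{s}$ uses that $b\in \mb{B}(G,-\mu)$ via the identity $\kappa(b)(z)=\mu(z)^{-1}$.

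Finally I would invoke Assumption \ref{liftingassump} to separate the global sum into individual contributions indexed by $x \in X_{\psi_p} \subset C_{\psi_p}$: this is what allows the global identity obtained above to be broken up term-by-term so that for each $x$ we recover a local identity. Matching the resulting expression with $\mathrm{Red}^{\mc{H}^{\mf{e}}}_b$ via Proposition \ref{EPeqSP}, which ensures that the pair $(\psi_p,x)$ determines the tuple $(\mc{H}^{\mf{e}},[[\psi^{\mb{H}}]])$ up to the allowed ambiguities, then converts $\mathrm{Red}_b(\psi_p,x)$ into the stable character sum $\sum_{\pi^{\mb{H}_p}}\langle \pi^{\mb{H}_p}, s^{\mb{H}}_{\psi_p}\rangle \pi^{\mb{H}_p}$ evaluated under $\mathrm{Red}^{\mc{H}^{\mf{e}}}_b$, yielding the stated formula.

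The main obstacle is the separation step in the last paragraph: Assumption \ref{liftingassump} is a nontrivial globalization hypothesis that requires constructing enough global parameters $\psi$ with prescribed local component $\psi_p$ and controlled endoscopic behavior, so that the linear system relating the global identities (one per lift) to the individual $x$-contributions can be inverted. The other delicate point is the correct tracking of the numerical factors $|\mathrm{Out}_r(\mc{H}^{\mf{e}})|$, $|{}_{\mb{G}}\mc{S}_{\psi^{\mb{H}}}|$, $|\mathcal{S}_\psi|$, and the centralizer quotients appearing in Lemma \ref{paramcountinglem} and Lemma \ref{centralizerlem}; these must cancel cleanly against one another when passing from the elliptic refined stable trace formula on the $\mb{H}$-side to a sum indexed by $\mc{SP}^r(\mb{G})$, so that the only combinatorial factor surviving on the Igusa side is $1/|\mc{S}_\psi|$, to be consumed in the final Fourier inversion.
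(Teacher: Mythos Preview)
Your proposal is correct and follows essentially the same approach as the paper: substitute the destabilized Shimura and Igusa formulas into Mantovan's formula, pass to the $\pi^{p,\infty}$-isotypic part via Assumption~\ref{unicityassump}, expand $m(\pi^\infty,\nu)$ as a sum over $\mc{S}_\psi$, verify that the Igusa side depends only on $\overline{s}$ using $\kappa(b)(z)=\mu(z)^{-1}$, shift by $s_\psi$, rewrite the $\nu$-sum as a $\rho$-sum via the isotypic decomposition $V=\bigoplus_\rho V_\rho$, and then invoke Assumption~\ref{liftingassump} to isolate a single $x\in X_{\psi_p}$ before identifying $\mathrm{Red}_b(\psi_p,x)$ with $\mathrm{Red}^{\mc{H}^{\mf{e}}}_b$ of the stable character sum. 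One small imprecision: $s$ does not act on $V_\rho$ by the single scalar $\tr(s|V_\rho)/\dim\rho$; rather, one decomposes $V_\rho$ into $s$-stable copies of $\rho$, each carrying a possibly different scalar, and it is only after summing these scalars (weighted by $[\rho]$) in the Grothendieck group that the expression $\tr(s|V_\rho)/\dim\rho$ emerges.
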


\section{Endoscopic cocharacter data}

In this section we sketch the theory of endoscopic cocharacter data in analogy with the theory of cocharacter pairs of \cite{abm1}.

We use the notation of $\S{2}$ of \cite{abm1}. In particular, let $G$ be a quasisplit connected reductive group over $\mathbb{Q}_p$. Fix a maximal torus $T$ with maximal split rank and a Borel subgroup $B$ so that $T \subset B$. Let $A$ be the maximal split subtorus of $T$. We let $\Delta \subset X^*(A)$ be the set of relative simple roots determined by the above data. There is an order-preserving bijection $S \mapsto M_S$ between subsets of $\Delta$ and standard Levi subgroups of $G$. The set of cocharacter pairs $\mathcal{C}_G$ consists of pairs $(M_S, \mu_S)$ where $M_S$ is a standard Levi subgroup and $\mu_S \in X_*(T)$ is an $M_S$-dominant cocharacter. 

\begin{definition}
An endoscopic cocharacter datum for $G$ is a tuple $(\mathcal{H}^{\mathfrak{e}}_S, M_S, \mu_S)$ where $(M_S, \mu_S) \in \mathcal{C}_G$ and $\mathcal{H}^{\mathfrak{e}}_S$ is an equivalence class of embedded endoscopic data for $M_S$ relative to $G$. We denote the set of endoscopic cocharacter data for $G$ by $\mc{C}^{\mf{e}}_G$.

\end{definition}
Note there is a natural embedding $\mathcal{C}_G \hookrightarrow \mathcal{C}^{\mathfrak{e}}_G$ whereby we attach the class of the trivial embedded endoscopic datum $(G, M_S, 1, \mathrm{id})$ to $(M_S, \mu_S)$. There is also a natural projection $\mathcal{C}^{\mathfrak{e}}_G \to \mathcal{C}_G$ given by forgetting the extra data.

Recall from \cite[Definition 2.1.4, 2.1.5]{abm1}, the map $\theta_{M_S}: X_*(T) \to \mathfrak{A}_{\mathbb{Q}}$ and poset structure on $\mathcal{C}_G$. We now define a poset structure on $\mathcal{C}^{\mathfrak{e}}_{G}$. We first define for $S \subset S'$ a map 
\begin{equation*}
Y^e_{M_S, M_{S'},G}: \mc{E}^{e}(M_S, G) \to \mc{E}^{e}(M_{S'}, G),
\end{equation*}
as the unique map such that the following diagram commutes (where the horizontal maps are bijections and were defined in Proposition \ref{refemb})
\begin{equation*}
\begin{tikzcd}
\mc{E}^e(M_{S'}, G)  \arrow[r, "X"] & \mc{E}^r(M_{S'}) \\
\mc{E}^e(M_{S}, G) \arrow[u, "{Y^e_{M_S, M_{S'}, G}}"] \arrow[r, "X"] & \mc{E}^r(M_S) \arrow[u, swap, "Y"].
\end{tikzcd}    
\end{equation*}

We then define a partial order on $\mc{C}^{\mf{e}}_G$ by  stipulating that $(\mathcal{H}^{\mathfrak{e}}_S, M_S, \mu_S) \leq (\mathcal{H}^{\mathfrak{e}}_{S'}, M_{S'}, \mu_{S'})$ exactly when $(M_S, \mu_S) \leq (M_{S'}, \mu_{S'})$ and $Y^e_{M_S, M_{S'},G}(\mc{H}^{\mf{e}}_S)=\mc{H}^{\mf{e}}_{S'}$.

\begin{definition}
We define the subset $\mathcal{SD}^{\mf{e}} \subset \mathcal{C}^{\mathfrak{e}}_G$ as the set of endoscopic cocharacter data $(\mathcal{H}^{\mathfrak{e}}_S, M_S, \mu_S)$ such that $(M_S, \mu_S) \in \mathcal{SD}$. We define $\mathcal{SD}^{\mathfrak{e}}_{(\mathcal{H}^{\mathfrak{e}}, \mu)}$ in analogy with \cite[Definition 2.1.7]{abm1} to be the elements $(\mc{H}^{\mf{e}}_S, M_S, \mu_S) \in \mc{SD}$ such that $(\mc{H}^{\mf{e}}_S, M_S, \mu_S) \leq (\mc{H}^{\mf{e}}, G, \mu)$
\end{definition}

The $\mathcal{C}^{\mathfrak{e}}_G$-analogues of the results of \S2.3 of \cite{abm1} go through essentially unchanged.  We remark that the ``extension'' (as in \cite[Proposition 2.3.8]{abm1}) of $(\mc{H}^{\mf{e}}_S, M_S, \mu_S)$ to $M_{S'}$ is defined as $(Y^e_{M_S, M_{S'}, G}(\mc{H}^{\mf{e}}_S), M_{S'}, \mu_{S'})$ where $(M_{S'}, \mu_{S'})$ is the extension of $(M_S, \mu_S)$.  Extending these arguments requires the following obvious transitivity property of $Y^e$.

\begin{lemma}
Suppose $M_{S_3} \subset M_{S_2} \subset M_{S_1}$ are standard Levi subgroups. Then $Y^e_{M_{S_2}, M_{S_1},G} \circ Y^e_{M_{S_3}, M_{S_2},G} =Y^e_{M_{S_3}, M_{S_1},G}$.
\end{lemma}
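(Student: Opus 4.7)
The plan is to reduce the claim to the analogous transitivity for the map $Y$ on refined endoscopic data, and then to verify this directly using Construction \ref{Ymap} together with the key containment of $\mc{H}$-groups established in the proof of Proposition \ref{refemb}.

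First, I would unwind the defining diagram of $Y^e_{M_S, M_{S'}, G}$: since $X: \mc{E}^e(M_S,G) \to \mc{E}^r(M_S)$ is a bijection and $Y^e_{M_S, M_{S'}, G} = X^{-1} \circ Y_{M_S, M_{S'}} \circ X$ (where $Y_{M_S, M_{S'}}: \mc{E}^r(M_S) \to \mc{E}^r(M_{S'})$ is Construction \ref{Ymap} applied to the Levi $M_S \subset M_{S'}$), the desired identity reduces to proving
\begin{equation*}
Y_{M_{S_2}, M_{S_1}} \circ Y_{M_{S_3}, M_{S_2}} = Y_{M_{S_3}, M_{S_1}}
\end{equation*}
as maps $\mc{E}^r(M_{S_3}) \to \mc{E}^r(M_{S_1})$. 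So I would fix a representative $(H_{M_{S_3}}, s, \eta) \in \mc{E}^r(M_{S_3})$ and compare the two outputs.

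For the underlying complex reductive groups, the direct map gives $\widehat{H} = Z_{\widehat{M_{S_1}}}(\eta(s))^0$, while the composition gives $Z_{\widehat{M_{S_1}}}(\eta_2(s'))^0$ with $\widehat{H_{M_{S_2}}} = Z_{\widehat{M_{S_2}}}(\eta(s))^0$ and $\eta_2$ the natural inclusion, so $\eta_2(s') = \eta(s)$. Thus the two $\widehat{H}$'s and their embeddings into $\widehat{M_{S_1}}$ literally coincide, and the elements $s''$ (defined as $\eta^{-1}$ of $\eta(s)$ under either composition of embeddings) agree. Lemma \ref{endlevi} is implicitly used here to ensure the intermediate centralizers are well-behaved Levi subgroups. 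What remains is to verify that the two $\Gamma_F$-actions on $\widehat{H}$, and therefore the two resulting quasisplit $F$-forms, coincide.

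For the Galois actions, I would choose the splittings carefully. The direct construction picks a splitting $c: W_F \to \mc{H}_{M_{S_3}} \subset \,^L M_{S_1}$ and reads off the induced map $W_F \to \Aut(\widehat H) \to \Out(\widehat H)$. For the composition, the first step uses a splitting $c_1: W_F \to \mc{H}_{M_{S_3}} \subset \,^L M_{S_2}$ to equip $\widehat H_{M_{S_2}}$ with a $\Gamma_F$-action and produce the quasisplit group $H_{M_{S_2}}$; the second step uses a splitting $c_2: W_F \to \mc{H}_{H_{M_{S_2}}} \subset \,^L M_{S_1}$. The main obstacle is to show that we may take $c_2 = c_1$ (viewed inside $\,^L M_{S_1}$), which is exactly the assertion $\mc{H}_{M_{S_3}} \subset \mc{H}_{H_{M_{S_2}}}$. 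But this is precisely the containment established in the first paragraph of the proof of Proposition \ref{refemb} (applied with the roles of $H, H_M, G$ played by $H_{M_{S_2}}, H_{M_{S_3}}, M_{S_1}$). With this compatible choice, both $c_1$ and $c$ induce the same homomorphism $W_F \to \Out(\widehat H)$, so the two $\Gamma_F$-actions match up to inner automorphism and produce the same quasisplit $F$-form $H$ by Lemma \ref{qsbijlem}. This identifies the two outputs as refined endoscopic data of $M_{S_1}$ and proves the transitivity.
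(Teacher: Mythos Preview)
Your proof is correct; the paper leaves the proof blank, treating the lemma as an immediate consequence of the defining diagram for $Y^e$ and the transitivity of Construction \ref{Ymap}, which is exactly what you verify. Your reduction via $Y^e = X^{-1}\circ Y \circ X$ to the transitivity of $Y$, followed by the observation that the candidate $\widehat{H}$, $\eta$, and $s$ literally agree and that the containment $\mc{H}_{M_{S_3}} \subset \mc{H}_{H_{M_{S_2}}}$ from the proof of Proposition \ref{refemb} lets one use the same splitting throughout, is the natural way to spell out what the paper leaves implicit.
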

\begin{proof}
\end{proof}

In \cite[Proposition 2.4.3]{abm1}, we defined a map 
\begin{equation*}
    \mc{T}: \mc{SD} \to \mb{B}(\Q_p, G),
\end{equation*}
such that $\mc{T}(\mc{SD}_{\mu}) \subset \mb{B}(\Q_p, G, \mu)$. We pre-compose this map with the projection $\mc{C}^{\mf{e}}_G \to \mc{C}_G$ to get a map 
\begin{equation*}
    \mathcal{T}^{\mathfrak{e}}: \mathcal{SD}^{\mathfrak{e}} \to \mathbf{B}(\Q_p, G),
\end{equation*}
such that $\mathcal{T}^{\mathfrak{e}}(\mathcal{SD}^{\mathfrak{e}}_{\mathcal{H}^{\mathfrak{e}}, \mu}) \subset \mathbf{B}(\Q_p, G, \mu)$.

%is it interesting to think about the image of T^e(SD_(H,mu))?

Finally, we can define $\mathcal{T}^{\mathfrak{e}}_{G,\mathcal{H}^{\mathfrak{e}},b, \mu}, \mathcal{R}_{G, \mathcal{H}^{\mathfrak{e}}, b, \mu}, \mathcal{M}_{G, \mathcal{H}^{\mathfrak{e}}, b, \mu}$ in analogy with \cite[Definitions 2.5.1, 2.5.2]{abm1}. 

In particular:
\begin{itemize}
    \item $\mc{T}_{G, \mc{H}^{\mf{e}}, b, \mu} := \mc{T^{\mf{e}}}^{-1}(b) \cap \mc{SD}^{\mf{e}}_{\mc{H}^{\mf{e}}, \mu}$,
    \item 
    \begin{equation*}
        \mc{R}_{G, \mc{H}^{\mf{e}}, b, \mu} :=
    \end{equation*}
    \begin{equation*}
          \{ (\mc{H}^{\mf{e}}_{S_1} , M_{S_1}, \mu_{S_1}): (\mc{H}^{\mf{e}}_{S_1} , M_{S_1}, \mu_{S_1}) \leq  (\mc{H}^{\mf{e}}_{S_2}, M_{S_2}, \mu_{S_2}) \text{ where } (\mc{H}^{\mf{e}}_{S_2}, M_{S_2}, \mu_{S_2}) \in \mc{T}_{G,\mc{H}^{\mf{e}}, b, \mu}   \},
    \end{equation*}
    \item $\mc{M}_{G, \mc{H}^{\mf{e}}, b, \mu} := \sum\limits_{(\mc{H}^{\mf{e}}_S , M_S, \mu_S) \in \mc{R}_{\mc{H}^{\mf{e}}, G, \mu}} (-1)^{L_{M_S, M_b}} (\mc{H}^{\mf{e}}_S, M_S, \mu_S)$.
\end{itemize}
We can prove the following endoscopic sum formula:
\begin{theorem}{\label{endsum}}
We have
\[
\sum\limits_{b \in \mathbf{B}(\Q_p, G, \mu)} \mathcal{M}_{G, \mathcal{H}^{\mathfrak{e}},b, \mu} = (\mathcal{H}^{\mathfrak{e}}, G, \mu),
\]
which is an equality in $\mathbb{Z} \langle \mathcal{C}^{\mathfrak{e}}_{G} \rangle$.
\end{theorem}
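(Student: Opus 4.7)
The plan is to reduce Theorem \ref{endsum} to the non-endoscopic identity $\sum_b \mathcal{M}_{G, b, \mu} = (G, \mu)$ established in \cite{abm1}, by tracking contributions fiber-by-fiber along the projection $\pi: \mathcal{C}^{\mathfrak{e}}_G \to \mathcal{C}_G$.

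First I would fix an arbitrary element $(\mathcal{H}^{\mathfrak{e}}_S, M_S, \mu_S) \in \mathcal{SD}^{\mathfrak{e}}_{\mathcal{H}^{\mathfrak{e}}, \mu}$ and compute its coefficient in the formal sum $\sum_b \mathcal{M}_{G, \mathcal{H}^{\mathfrak{e}}, b, \mu}$. By definition this coefficient equals $\sum_b (-1)^{L_{M_S, M_b}}$, where the sum ranges over those $b \in \mathbf{B}(\Q_p, G, \mu)$ for which $(\mathcal{H}^{\mathfrak{e}}_S, M_S, \mu_S)$ lies in $\mathcal{R}_{G, \mathcal{H}^{\mathfrak{e}}, b, \mu}$. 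The central claim is the equivalence
\[
(\mathcal{H}^{\mathfrak{e}}_S, M_S, \mu_S) \in \mathcal{R}_{G, \mathcal{H}^{\mathfrak{e}}, b, \mu} \quad \Longleftrightarrow \quad (M_S, \mu_S) \in \mathcal{R}_{G, b, \mu}.
\]
The forward direction is immediate from the projection $\pi$, since any witnessing chain in $\mathcal{C}^{\mathfrak{e}}_G$ projects to a witnessing chain in $\mathcal{C}_G$. For the reverse direction, given a covering element $(M_S, \mu_S) \leq (M_{S_2}, \mu_{S_2})$ with $(M_{S_2}, \mu_{S_2}) \in \mathcal{T}^{-1}(b) \cap \mathcal{SD}_{\mu}$, I would define $\mathcal{H}^{\mathfrak{e}}_{S_2} := Y^e_{M_S, M_{S_2}, G}(\mathcal{H}^{\mathfrak{e}}_S)$. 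Invoking the transitivity lemma for $Y^e$ stated just before the theorem, together with the hypothesis $Y^e_{M_S, G, G}(\mathcal{H}^{\mathfrak{e}}_S) = \mathcal{H}^{\mathfrak{e}}$, one finds $Y^e_{M_{S_2}, G, G}(\mathcal{H}^{\mathfrak{e}}_{S_2}) = \mathcal{H}^{\mathfrak{e}}$, so that $(\mathcal{H}^{\mathfrak{e}}_{S_2}, M_{S_2}, \mu_{S_2}) \in \mathcal{T}_{G, \mathcal{H}^{\mathfrak{e}}, b, \mu}$ and lies above $(\mathcal{H}^{\mathfrak{e}}_S, M_S, \mu_S)$ by construction.

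Since the sign $(-1)^{L_{M_S, M_b}}$ depends only on the $\mathcal{C}_G$-projections, this equivalence shows that the coefficient of $(\mathcal{H}^{\mathfrak{e}}_S, M_S, \mu_S)$ in $\sum_b \mathcal{M}_{G, \mathcal{H}^{\mathfrak{e}}, b, \mu}$ coincides with the coefficient of $(M_S, \mu_S)$ in $\sum_b \mathcal{M}_{G, b, \mu}$. Applying the non-endoscopic sum formula of \cite{abm1}, this coefficient equals $1$ when $(M_S, \mu_S) = (G, \mu)$ and vanishes otherwise. In the surviving case, the only embedded endoscopic datum $\mathcal{H}^{\mathfrak{e}}_S$ with $(\mathcal{H}^{\mathfrak{e}}_S, G, \mu) \leq (\mathcal{H}^{\mathfrak{e}}, G, \mu)$ is $\mathcal{H}^{\mathfrak{e}}$ itself, leaving exactly the single term $(\mathcal{H}^{\mathfrak{e}}, G, \mu)$ on the right-hand side.

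The main technical point is the transitivity property $Y^e_{M_{S_2}, G, G} \circ Y^e_{M_S, M_{S_2}, G} = Y^e_{M_S, G, G}$ used above; this is precisely the lemma stated immediately before the theorem, and follows formally from the analogous transitivity of $Y$ together with the bijections $X$ of Proposition \ref{refemb}. The only other point requiring care is checking that the $\mathcal{SD}$-condition is preserved when extending $\mathcal{H}^{\mathfrak{e}}_S$ upward to $\mathcal{H}^{\mathfrak{e}}_{S_2}$, but since this condition depends only on the $\mathcal{C}_G$-part of the data it is inherited verbatim from the corresponding verification in \cite{abm1}.
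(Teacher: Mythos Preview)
Your argument is correct and is essentially the approach the paper intends: the paper's own proof consists only of the sentence ``analogous to \cite[Theorem 2.5.4]{abm1}'', and your reduction makes that analogy precise by showing that the endoscopic decoration decouples from the $\mathcal{C}_G$-combinatorics via the maps $Y^e_{M_S,M_{S'},G}$ and their transitivity.

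One small indexing point: you fix $(\mathcal{H}^{\mathfrak{e}}_S, M_S, \mu_S) \in \mathcal{SD}^{\mathfrak{e}}_{\mathcal{H}^{\mathfrak{e}}, \mu}$, but elements of $\mathcal{R}_{G,\mathcal{H}^{\mathfrak{e}},b,\mu}$ need not lie in $\mathcal{SD}^{\mathfrak{e}}$; they are only required to sit below some element of $\mathcal{T}_{G,\mathcal{H}^{\mathfrak{e}},b,\mu}$. What you actually use in the reverse implication is the weaker hypothesis $Y^e_{M_S,G,G}(\mathcal{H}^{\mathfrak{e}}_S)=\mathcal{H}^{\mathfrak{e}}$, i.e.\ $(\mathcal{H}^{\mathfrak{e}}_S,M_S,\mu_S)\leq(\mathcal{H}^{\mathfrak{e}},G,\mu)$, and elements failing this condition contribute zero to both sides by transitivity of the partial order. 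So the proof goes through unchanged once you enlarge the class of test elements accordingly.
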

\begin{proof}
The proof is analogous to that of \cite[Theorem 2.5.4]{abm1}.
\end{proof}

We now describe how to generalize the induction formula (\cite[Corollary 2.5.8]{abm1}). Fix standard Levi subgroups $M_{S_2} \subset M_{S_1} \subset G$. By Proposition \ref{refemb}, we have an isomorphism $Z^e_{M_{S_2}, M_{S_1}, G}: \mc{E}^e(M_{S_2}, M_{S_1}) \to \mc{E}^e(M_{S_2}, G)$ defined such that the following diagram commutes:
\begin{equation*}
\begin{tikzcd}
 \mc{E}^e(M_{S_2} , G)  \arrow[r, "X"] & \mc{E}^r(M_{S_2})\\
 \mc{E}^e(M_{S_2}, M_{S_1}) \arrow[r, "X"] \arrow[u, "{Z^e_{M_{S_2}, M_{S_1}, G}}"]& \mc{E}^r(M_{S_2}) \arrow[u, equals]
\end{tikzcd}    
\end{equation*}
In particular, we get a natural injection $i^G_{M_{S_1}}: \mc{C}^e_{M_{S_1}} \hookrightarrow \mc{C}^e_G$. 

As in the paragraph before \cite[Definition 2.5.5]{abm1}, for each $b \in \mb{B}(\Q_p, G)$ such that $M_b \subset M_{S_1}$,  we have a canonical element $b_{S_1}$ of $\mb{B}(\Q_p, M_{S_1})$ that maps to $b$ under the natural map $\mb{B}(\Q_p, M_{S_1}) \to \mb{B}(\Q_p, G)$.

\begin{definition}
For a fixed $(\mc{H}^{\mf{e}}, G, \mu) \in \mc{C}^{\mf{e}}_G$ and $b \in \mb{B}(\Q_p, G, \mu)$ and standard Levi subgroup $M_S$ such that $M_b \subset M_S$, we define the set $\mc{I}^{\mc{H}^{\mf{e}}, G, \mu}_{M_S, b_S}$ to equal
\begin{equation*}
     \{ (\mc{H}^{\mf{e}}_S, M_S, \mu_S) : b_S \in \mb{B}(\Q_p, M_S, \mu_S), \mu_S \sim_G \mu, Z^e_{M_S, M_S, G}(\mc{H}^{\mf{e}}_S)= \mc{H}^{\mf{e}}\}.
\end{equation*}
\end{definition}

We now have the following proposition (cf \cite[Proposition 2.5.6]{abm1})
\begin{proposition}
Fix $(\mc{H}^{\mf{e}}, G, \mu) \in \mathcal{C}^{\mf{e}}_G$ and $b \in \mathbf{B}(\Q_p, G, \mu)$. Suppose $M_{S_2}$ and $M_{S_1}$ are standard Levi subgroups of $G$ such that $M_b \subset M_{S_2} \subset M_{S_1}$. Then
\begin{equation*}
    \mathcal{I}^{\mc{H}^{\mf{e}}, G, \mu}_{M_{S_2}, b_{S_2}} =
\end{equation*}
\begin{equation*}
    \{ (\mc{H}^{\mf{e}}_{S_2}, M_{S_2}, \mu_{S_2}) \in \mathcal{C}^{\mf{e}}_{M_{S_2}} : (\mc{H}^{\mf{e}}_{S_2}, M_{S_2}, \mu_{S_2}) \in \mathcal{I}^{\mc{H}^{\mf{e}}_{S_1}, M_{S_1},  \mu_{S_1}}_{M_{S_2}, b_{S_2}} \,\ \text{for a} \,\ (\mc{H}^{\mf{e}}_{S_1}, M_{S_1}, \mu_{S_1}) \in \mathcal{I}^{\mc{H}^{\mf{e}}, G, \mu}_{M_{S_1}, b_{S_1}} \}.
\end{equation*}
\end{proposition}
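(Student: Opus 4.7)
The statement is the endoscopic refinement of \cite[Proposition 2.5.6]{abm1}, and the strategy is to reduce to that proposition (which controls the underlying cocharacter pairs $(M_S,\mu_S)$) and then match the endoscopic decorations using the transitivity property of $Y^e$ established in the preceding lemma, $Y^e_{M_{S_2},G,G} = Y^e_{M_{S_1},G,G}\circ Y^e_{M_{S_2},M_{S_1},G}$, together with the compatibility of the bijection $Z^e$ from Proposition \ref{refemb}. I would prove the two inclusions separately.

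For the forward inclusion, start with $(\mc{H}^{\mf{e}}_{S_2},M_{S_2},\mu_{S_2})\in\mc{I}^{\mc{H}^{\mf{e}},G,\mu}_{M_{S_2},b_{S_2}}$. Forgetting endoscopic data and applying \cite[Proposition 2.5.6]{abm1} produces $(M_{S_1},\mu_{S_1})$ with $b_{S_1}\in \mb{B}(\Q_p,M_{S_1},\mu_{S_1})$, $\mu_{S_1}\sim_G\mu$, and $(M_{S_2},\mu_{S_2})\in \mc{I}^{M_{S_1},\mu_{S_1}}_{M_{S_2},b_{S_2}}$. To promote this to the endoscopic setting I set $\mc{H}^{\mf{e}}_{S_1}:=Y^e_{M_{S_2},M_{S_1},G}(\mc{H}^{\mf{e}}_{S_2})\in\mc{E}^e(M_{S_1},G)$, then pull this back under $Z^e_{M_{S_1},M_{S_1},G}$ if needed so that it becomes an endoscopic datum in the right place to index an element of $\mc{C}^{\mf{e}}_{M_{S_1}}$. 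Transitivity of $Y^e$ then gives $Y^e_{M_{S_1},G,G}(\mc{H}^{\mf{e}}_{S_1}) = Y^e_{M_{S_2},G,G}(\mc{H}^{\mf{e}}_{S_2}) = \mc{H}^{\mf{e}}$, so $(\mc{H}^{\mf{e}}_{S_1},M_{S_1},\mu_{S_1})\in\mc{I}^{\mc{H}^{\mf{e}},G,\mu}_{M_{S_1},b_{S_1}}$; and, by construction of $\mc{H}^{\mf{e}}_{S_1}$ via $Z^e$, the pair $(\mc{H}^{\mf{e}}_{S_2},M_{S_2},\mu_{S_2})$ lies in $\mc{I}^{\mc{H}^{\mf{e}}_{S_1},M_{S_1},\mu_{S_1}}_{M_{S_2},b_{S_2}}$, as required.

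For the reverse inclusion, start with $(\mc{H}^{\mf{e}}_{S_2},M_{S_2},\mu_{S_2})$ and a witnessing $(\mc{H}^{\mf{e}}_{S_1},M_{S_1},\mu_{S_1})\in\mc{I}^{\mc{H}^{\mf{e}},G,\mu}_{M_{S_1},b_{S_1}}$ with $(\mc{H}^{\mf{e}}_{S_2},M_{S_2},\mu_{S_2})\in\mc{I}^{\mc{H}^{\mf{e}}_{S_1},M_{S_1},\mu_{S_1}}_{M_{S_2},b_{S_2}}$. The cocharacter-pair part of the desired membership, namely $(M_{S_2},\mu_{S_2})\in\mc{I}^{G,\mu}_{M_{S_2},b_{S_2}}$, follows by forgetting endoscopic data and invoking \cite[Proposition 2.5.6]{abm1} (specifically the ``$\supseteq$'' direction). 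The endoscopic compatibility $Y^e_{M_{S_2},G,G}(\mc{H}^{\mf{e}}_{S_2})=\mc{H}^{\mf{e}}$ is then immediate from the hypotheses combined with transitivity of $Y^e$: unwinding $Z^e$ identifications gives $Y^e_{M_{S_2},M_{S_1},G}(\mc{H}^{\mf{e}}_{S_2})=\mc{H}^{\mf{e}}_{S_1}$ and $Y^e_{M_{S_1},G,G}(\mc{H}^{\mf{e}}_{S_1})=\mc{H}^{\mf{e}}$, so composing gives the claim.

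\textbf{Main obstacle.} The cocharacter bookkeeping is entirely handled by \cite[Proposition 2.5.6]{abm1}, so no genuinely new combinatorial work is needed there. The principal subtlety is keeping track of which ambient group the embedded endoscopic data live in: $\mc{H}^{\mf{e}}_{S_2}$ appears inside $\mc{E}^e(M_{S_2},G)$ when indexed by $\mc{C}^{\mf{e}}_G$, inside $\mc{E}^e(M_{S_2},M_{S_1})$ when indexed by $\mc{C}^{\mf{e}}_{M_{S_1}}$, and the two incarnations are identified via $Z^e_{M_{S_2},M_{S_1},G}$. The care required is to verify that this identification is compatible with the $Y^e$ maps in the way asserted by the preceding transitivity lemma, and that the defining condition of $\mc{I}^{\mc{H}^{\mf{e}}_{S_1},M_{S_1},\mu_{S_1}}_{M_{S_2},b_{S_2}}$ is exactly the condition obtained by pulling the ``$Y^e$-to-$G$-equals-$\mc{H}^{\mf{e}}$'' condition back through $Z^e$. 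Once this identification is set up cleanly, the proposition is a formal consequence.
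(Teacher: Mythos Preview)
Your approach is correct and is essentially the same as the paper's: the paper's proof is the single line ``Follows from Proposition \cite[2.5.6]{abm1} and the transitivity of $Z^{\mf{e}}$,'' which is exactly the two-step reduction you describe (cocharacter-pair part via \cite[Proposition 2.5.6]{abm1}, endoscopic decoration via the compatibility of the $Y^e$/$Z^e$ maps). The only minor difference is terminological---the paper invokes transitivity of $Z^e$ whereas you phrase it in terms of transitivity of $Y^e$---but since $Z^e$ and $Y^e$ are intertwined through the bijections $X$ of Proposition \ref{refemb}, these amount to the same compatibility.
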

\begin{proof}
Follows from Proposition \cite[2.5.6]{abm1} and the transitivity of $Z^{\mf{e}}$.
\end{proof}
Now we have the analogue of \cite[Proposition 2.5.7]{abm1}.
\begin{proposition}
Fix $M_S, \mu, b$ as before. The map $i^G_{M_S}: \mc{C}^{\mf{e}}_{M_S} \to \mc{C}^{\mf{e}}_G$ induces a bijection
\begin{equation*}
    \coprod\limits_{(\mc{H}^{\mf{e}}_S, M_S, \mu_S) \in \mc{I}^{\mc{H}^{\mf{e}}, G, \mu}_{M_S, b_S}} \mc{T}_{\mc{H}^{\mf{e}}_S, M_S, b_S, \mu_S} \cong  \mc{T}_{\mc{H}^{\mf{e}}, G, b, \mu}
\end{equation*}
\end{proposition}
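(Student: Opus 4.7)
The plan is to reduce to the non-endoscopic analogue \cite[Proposition 2.5.7]{abm1} for the underlying cocharacter pairs and then track the endoscopic decorations using the transitivity of $Y^e$ (the lemma immediately preceding the proposition) together with a compatibility between $Y^e$ and $Z^e$ that falls out of the commutative triangle in Proposition \ref{refemb}. The point is that every map appearing in the definition of $i^G_{M_S}$ (and hence in the two sides of the claimed bijection) is determined by the underlying refined endoscopic datum of the innermost Levi via the bijection $X$, so all identities reduce to transitivity of $Y$ on refined data, which Construction \ref{Ymap} makes manifest.

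For well-definedness, fix $(\mc{H}^{\mf{e}}_S, M_S, \mu_S) \in \mc{I}^{\mc{H}^{\mf{e}}, G, \mu}_{M_S, b_S}$ and $(\mc{H}^{\mf{e}}_{S'}, M_{S'}, \mu_{S'}) \in \mc{T}_{\mc{H}^{\mf{e}}_S, M_S, b_S, \mu_S}$, and form the image $(Z^e_{M_{S'}, M_S, G}(\mc{H}^{\mf{e}}_{S'}), M_{S'}, \mu_{S'})$ under $i^G_{M_S}$. The non-endoscopic proposition places $(M_{S'}, \mu_{S'})$ in $\mc{T}_{G, b, \mu}$, so it remains to show that the image lies below $(\mc{H}^{\mf{e}}, G, \mu)$ in $\mc{C}^{\mf{e}}_G$. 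Applying transitivity of $Y^e$ with ambient $G$,
\[
Y^e_{M_{S'}, G, G}(Z^e_{M_{S'}, M_S, G}(\mc{H}^{\mf{e}}_{S'})) = Y^e_{M_S, G, G}\bigl(Y^e_{M_{S'}, M_S, G}(Z^e_{M_{S'}, M_S, G}(\mc{H}^{\mf{e}}_{S'}))\bigr),
\]
and then invoking the compatibility
\[
Y^e_{M_{S'}, M_S, G} \circ Z^e_{M_{S'}, M_S, G} = Z^e_{M_S, M_S, G} \circ Y^e_{M_{S'}, M_S, M_S},
\]
together with the relation $Y^e_{M_{S'}, M_S, M_S}(\mc{H}^{\mf{e}}_{S'}) = \mc{H}^{\mf{e}}_S$ (the ordering condition in $\mc{T}_{\mc{H}^{\mf{e}}_S, M_S, b_S, \mu_S}$) and the defining property $Y^e_{M_S, G, G}(Z^e_{M_S, M_S, G}(\mc{H}^{\mf{e}}_S)) = \mc{H}^{\mf{e}}$ of $\mc{I}^{\mc{H}^{\mf{e}}, G, \mu}_{M_S, b_S}$, the right-hand side collapses to $\mc{H}^{\mf{e}}$ as required.

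Injectivity follows from the injectivity of $i^G_{M_S}$, which itself is a consequence of the bijectivity of $Z^e_{M_{S'}, M_S, G}$ given by Proposition \ref{refemb}: the underlying pair $(M_{S'}, \mu_{S'})$ is determined by the non-endoscopic bijection, $Z^e$ determines $\mc{H}^{\mf{e}}_{S'}$, and the index $(\mc{H}^{\mf{e}}_S, M_S, \mu_S)$ is recovered as the pair $(Y^e_{M_{S'}, M_S, M_S}(\mc{H}^{\mf{e}}_{S'}), M_S, \mu_S)$ with $\mu_S$ produced by the non-endoscopic version. For surjectivity, starting with $(\widetilde{\mc{H}}^{\mf{e}}_{S'}, M_{S'}, \mu_{S'}) \in \mc{T}_{\mc{H}^{\mf{e}}, G, b, \mu}$, the non-endoscopic version supplies the required $\mu_S$, and I then set $\mc{H}^{\mf{e}}_{S'} := (Z^e_{M_{S'}, M_S, G})^{-1}(\widetilde{\mc{H}}^{\mf{e}}_{S'})$ and $\mc{H}^{\mf{e}}_S := Y^e_{M_{S'}, M_S, M_S}(\mc{H}^{\mf{e}}_{S'})$; reversing the calculation above confirms that $(\mc{H}^{\mf{e}}_S, M_S, \mu_S) \in \mc{I}^{\mc{H}^{\mf{e}}, G, \mu}_{M_S, b_S}$ and that $(\mc{H}^{\mf{e}}_{S'}, M_{S'}, \mu_{S'})$ is a preimage.

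The main technical obstacle is verifying the compatibility $Y^e_{M_{S'}, M_S, G} \circ Z^e_{M_{S'}, M_S, G} = Z^e_{M_S, M_S, G} \circ Y^e_{M_{S'}, M_S, M_S}$. I expect this to fall out mechanically by dualizing everything through the natural identification $X$ of Proposition \ref{refemb}: both composites become the Levi-induction $Y \colon \mc{E}^r(M_{S'}) \to \mc{E}^r(M_S)$ followed by the forgetful identification with $\mc{E}^e(M_S, G)$, so the identity reduces to unwinding Construction \ref{Ymap} through a single cube of Levi/ambient groups. Once this is in hand, the rest of the argument is bookkeeping parallel to the non-endoscopic case.
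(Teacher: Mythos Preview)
Your proposal is correct and is precisely the argument the paper has in mind: the paper's proof is the single line ``Same as \cite[Proposition 2.5.7]{abm1},'' and your reduction to that non-endoscopic statement together with the bookkeeping of the endoscopic decorations through $X$, $Y^e$, and $Z^e$ is exactly what is being elided. Your ``main technical obstacle'' is indeed a formality: since both $Y^e$ and $Z^e$ are defined by conjugating through the bijection $X$ of Proposition~\ref{refemb}, both sides of the claimed compatibility become the single map $Y\colon \mc{E}^r(M_{S'}) \to \mc{E}^r(M_S)$ after applying $X$, so the identity holds by construction.
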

\begin{proof}
Same as \cite[Proposition 2.5.7]{abm1}.
\end{proof}

The map $i^G_{M_S}$ induces a map:
\begin{equation*}
    i^G_{M_S} : \Z \langle C^{\mf{e}}_{M_S}\rangle  \hookrightarrow \Z \langle  C^{\mf{e}}_{G} \rangle,
\end{equation*}
on the free abelian groups generated by the sets of endoscopic cocharacter data.

\begin{corollary}
The inclusion $i^G_{M_S}$ induces an equality
\begin{equation*}
   i^G_{M_S} \left( \sum\limits_{(\mc{H}^{\mf{e}}_S, M_S, \mu_S) \in \mc{I}^{\mc{H}^{\mf{e}}, G, \mu}_{M_S, b_S}} \mc{M}_{\mc{H}^{\mf{e}}_S, M_S, b_S, \mu_S} \right) = \mc{M}_{\mc{H}^{\mf{e}}, G, b, \mu}
\end{equation*}
in $\Z \langle \mc{C}^{\mf{e}}_G \rangle$.
\end{corollary}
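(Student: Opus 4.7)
The plan is to expand both sides of the claimed equality using the definition of $\mc{M}$, and to upgrade the bijection of the preceding proposition from the level of $\mc{T}$-sets to the level of $\mc{R}$-sets, with a bookkeeping check that the sign exponents match. After unwinding, the right-hand side is
\begin{equation*}
\mc{M}_{\mc{H}^{\mf{e}}, G, b, \mu} = \sum_{(\mc{H}^{\mf{e}}_{S'}, M_{S'}, \mu_{S'}) \in \mc{R}_{\mc{H}^{\mf{e}}, G, b, \mu}} (-1)^{L_{M_{S'}, M_b}} (\mc{H}^{\mf{e}}_{S'}, M_{S'}, \mu_{S'}),
\end{equation*}
while the left-hand side is the image under $i^G_{M_S}$ of
\begin{equation*}
\sum_{(\mc{H}^{\mf{e}}_S, M_S, \mu_S) \in \mc{I}^{\mc{H}^{\mf{e}}, G, \mu}_{M_S, b_S}} \,\, \sum_{(\mc{H}^{\mf{e}}_{S'}, M_{S'}, \mu_{S'}) \in \mc{R}_{\mc{H}^{\mf{e}}_S, M_S, b_S, \mu_S}} (-1)^{L_{M_{S'}, M_{b_S}}} (\mc{H}^{\mf{e}}_{S'}, M_{S'}, \mu_{S'}).
\end{equation*}

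The main step is to produce a bijection
\begin{equation*}
i^G_{M_S} : \coprod_{(\mc{H}^{\mf{e}}_S, M_S, \mu_S) \in \mc{I}^{\mc{H}^{\mf{e}}, G, \mu}_{M_S, b_S}} \mc{R}_{\mc{H}^{\mf{e}}_S, M_S, b_S, \mu_S} \xrightarrow{\sim} \mc{R}_{\mc{H}^{\mf{e}}, G, b, \mu}.
\end{equation*}
For surjectivity, pick $(\mc{H}^{\mf{e}}_{S'}, M_{S'}, \mu_{S'}) \in \mc{R}_{\mc{H}^{\mf{e}}, G, b, \mu}$ with witness $(\mc{H}^{\mf{e}}_{S''}, M_{S''}, \mu_{S''}) \in \mc{T}_{\mc{H}^{\mf{e}}, G, b, \mu}$ above it; the previous proposition supplies a unique $(\mc{H}^{\mf{e}}_S, M_S, \mu_S) \in \mc{I}^{\mc{H}^{\mf{e}}, G, \mu}_{M_S, b_S}$ and a preimage of $(\mc{H}^{\mf{e}}_{S''}, M_{S''}, \mu_{S''})$ in $\mc{T}_{\mc{H}^{\mf{e}}_S, M_S, b_S, \mu_S}$. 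Since $M_b \subset M_{S'} \subset M_{S''} \subset M_S$, the triple $(\mc{H}^{\mf{e}}_{S'}, M_{S'}, \mu_{S'})$ itself lies in the image of $i^G_{M_S}$, and sits below the preimage of the witness in $\mc{C}^{\mf{e}}_{M_S}$ by the transitivity lemma for $Y^e$ (combined with the compatibility of $Z^e$ with $Y^e$). For injectivity, the key point is that $i^G_{M_S}$ is an injection of posets, so a given triple $(\mc{H}^{\mf{e}}_{S'}, M_{S'}, \mu_{S'})$ in its image determines the coarser endoscopic datum $\mc{H}^{\mf{e}}_S = Y^e_{M_{S'}, M_S, M_S}(\mc{H}^{\mf{e}}_{S'})$ uniquely.

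The sign check is straightforward once one unpacks the definition of $L_{-,-}$ from \cite{abm1}: because $b_S$ is the canonical lift of $b$ to $\mb{B}(\Q_p, M_S)$ with $M_b \subset M_S$, we have $M_{b_S} = M_b$ as standard Levi subgroups of $G$, and the exponent $L_{M_{S'}, M_{b_S}}$ computed internally to $M_S$ equals $L_{M_{S'}, M_b}$ computed in $G$; the latter makes sense because each triple $(\mc{H}^{\mf{e}}_{S'}, M_{S'}, \mu_{S'})$ contributing on the left satisfies $M_b \subset M_{S'} \subset M_S$. Combining the bijection of the previous paragraph with this matching of signs yields the claimed equality.

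The hardest step is conceptually the bijection of $\mc{R}$-sets, and within that the verification that the downward closure in $\mc{C}^{\mf{e}}_G$ (relative to the $\mc{T}$-witness) is carried faithfully into the downward closure in $\mc{C}^{\mf{e}}_{M_S}$. This is not automatic from the $\mc{T}$-level bijection, but follows from the transitivity of $Y^e$ which ensures that any element lying below $(\mc{H}^{\mf{e}}, G, \mu)$ in $\mc{C}^{\mf{e}}_G$ and inside the image of $i^G_{M_S}$ lies below the corresponding element $(\mc{H}^{\mf{e}}_S, M_S, \mu_S)$ in $\mc{C}^{\mf{e}}_{M_S}$, and conversely. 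Once this structural point is in place, the remainder of the argument is formal.
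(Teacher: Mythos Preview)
Your proposal is correct and follows essentially the same approach as the paper, which simply records ``Analogous to \cite[Corollary 2.5.8]{abm1}'' and defers the details to that reference. You have unpacked the argument in more detail than the paper: upgrading the $\mc{T}$-level bijection of the preceding proposition to an $\mc{R}$-level bijection via the poset compatibility (transitivity of $Y^e$ and uniqueness of extensions), together with the observation $M_{b_S}=M_b$ so that the sign exponents match, is exactly how the non-endoscopic analogue in \cite{abm1} goes, and the endoscopic decoration does not change the structure of the proof.
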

\begin{proof}
Analogous to \cite[Corollary 2.5.8]{abm1}.
\end{proof}

We now relate the above to the cohomology of Rapoport--Zink spaces. Our first step is to associate to elements of $\mc{C}^{\mf{e}}_G$ certain maps of Grothendieck groups. Pick $(\mc{H}^{\mf{e}}_S, M_S, \mu_S)$ and an embedded endoscopic datum $(H, H_{M_S}, s, \eta)$ representing $\mc{H}^{\mf{e}}_S$. We define
\begin{equation*}
    [\mc{H}^{\mf{e}}_S, M_S, \mu_S]: \Groth^{st}(H(\Q_p)) \to \Groth(G(\Q_p) \times W_{E_{\{\mu_S\}_{M_S}}}),
\end{equation*}
as follows. We first use the theory of \S 2 to pick a set $X_{\mc{H}^{\mf{e}}_S}$ of embedded endoscopic data for the various inner classes whose embedded class equals $\mc{H}^{\mf{e}}_S$. We can pick these representatives to all have the same group $H$ as the endoscopic group of $G$ and such that each $H_{M_S}$ is a standard Levi subgroup with respect to a fixed pair $(B_H, T_H)$. If $P_S$ is the standard parabolic subgroup of $M_S$, then we get a parabolic subgroup $P^{\mf{e}}_S$ of $H_{M_S}$ compatible with $P_S$ via $\eta$.

We now define
\begin{equation*}
    [\mc{H}^{\mf{e}}, M_S, \mu_S](\pi^{st}) =\sum\limits_{M^{\mf{e}}_S \in X_{\mc{H}^{\mf{e}}_S}} (\Ind^G_{P_S} \circ [\mu_S]_{M^{\mf{e}}_S}  \circ \delta^{1/2}_{P^{\mf{e}}_S} \otimes \Jac^H_{P^{\mf{e}, op}_S})(\pi^{st}) \otimes [1][|\cdot|^{\langle \rho_G, \mu_S - \mu \rangle}],
\end{equation*}
where
\begin{equation*}
    [\mu_S]_{M^{\mf{e}}_S}: \Groth^{st}(H_{M_S}(\Q_p)) \to \Groth(M_S(\Q_p) \times W_{E_{\{\mu_S\}_{M_S}}})
\end{equation*}
is given by a composition of the transfer map
\begin{equation*}
 \Trans^{H_{M_S}}_{M_S} : \Groth^{st}(H_{M_S}(\Q_p)) \to \Groth(M_S(\Q_p))
\end{equation*}
and the ``weighted local Langlands map''
\begin{equation*}
    LL_S: \Groth(M_S(\Q_p)) \to \Groth(M_S(\Q_p) \times W_{E_{\{\mu_S\}_{M_S}}})
\end{equation*}
defined on tempered $\pi$ by
\begin{equation*}
    \pi \mapsto \langle \pi, \eta(s) \rangle \pi \boxtimes \sum\limits_{\rho \in \Irr(r_{- \mu} \circ \psi_{\pi})} \frac{ \tr( \eta(s) | V_{\rho})  }{\dim \rho} [\rho \boxtimes | \cdot |^{- \langle \rho_G, \mu \rangle}].
\end{equation*}
Note that this last construction requires $\pi$ to be tempered (or at least have an associated Arthur parameter).

We now use the above definition to produce representation-theoretic analogues of the combinatorial sum and induction formulas. For the sum formula, this is clear: we replace each endoscopic cocharacter pair by its corresponding map of representations and use \cite[Lemma 3.3.5]{abm1} to consider $\sum\limits_{b \in \mb{B}(G ,\mu)} \mc{M}_{\mc{H}^{\mf{e}}, G, b, \mu} =(\mc{H}^{\mf{e}}, G, \mu)$ as an equality in $\Groth(G(\Q_p) \times W_{E_{\{\mu\}_G}})$.

We now tackle the induction formula. Fix standard Levi subgroups $M_{S_2} \subset M_{S_1} \subset G$. We recall that there is a bijection between isomorphism classes of embedded data $\mc{E}^e(M_{S_2}, G)$ and $\mc{E}^e(M_{S_2}, M_{S_1})$ since both correspond bijectively to $\mc{E}^r(M_{S_2})$. On the other hand, the number of inner classes in each embedded class need not be the same. In fact, by the comment immediately following Definition \ref{embdat}, the number of inner classes in a given embedded class in $\mc{E}^e(M_{S_2}, M_{S_1})$ will be a multiple of the number of inner classes of the corresponding embedded class in $\mc{E}^{e}(M_{S_2}, G)$.

By Proposition \ref{fibercard}, the number of inner classes in a $\mc{E}^e(M_{S_2}, G)$ (respectively $\mc{E}^e(M_{S_2}, M_{S_1})$) class is $|\Out_r(H,s,\eta)/\Out_r(H_{M_{S_2}} , s, \eta)|$\\ (respectively $|\Out_r(H_{M_{S_1}},s,\eta)/\Out_r(H_{M_{S_2}} , s, \eta)|$). Hence the ratio of the inner classes is $|\Out_r(H,s,\eta)/\Out_r(H_{M_{S_1}} , s, \eta)|$. 

With the above paragraph in mind, let $(\mc{H}^{\mf{e}}_{S_2}, M_{S_2}, \mu_{S_2}) \in \mc{C}^{\mf{e}}_{M_{S_1}}$. Then we get an element $i^G_{M_{S_1}}(\mc{H}^{\mf{e}}_{S_2}, M_{S_2}, \mu_{S_2}) \in \mc{C}^{\mf{e}}_G$ and the corresponding maps of representations are related by
\begin{equation*}
    [i^G_{M_{S_1}}(\mc{H}^{\mf{e}}_{S_2}, M_{S_2}, \mu_{S_2})] = \left|\frac{\Out_r(H_{M_{S_1}} , s, \eta)}{\Out_r(H, s, \eta)} \right|  \Ind^G_{P_{S_1}} \circ [\mc{H}^{\mf{e}}_{S_2}, M_{S_2}, \mu_{S_2}] \circ (\delta_{P^{\mf{e}, op}_{S_1}} \otimes \Jac^H_{P^{\mf{e}, op}_{S_1}}) \otimes | \cdot |^{ \langle \rho_G, \mu_{S_1} - \mu \rangle }.
\end{equation*}

We now define a map $\iota^G_{M_{S_1}}: (\Groth^{st}(H_{M_{S_1}}(\Q_p)) \to \Groth(M_{S_1}(\Q_p) \times W_{E_{\{\mu_{S_2}\}_{M_{S_2}}}}) \to (\Groth^{st}(H(\Q_p)) \to \Groth(G(\Q_p) \times W_{E_{\{\mu_{S_2}\}_{M_{S_2}}}}) $ given by 

\begin{equation*}
    \iota^G_{M_{S_1}} [\mc{H}^{\mf{e}}_{S_2}, M_{S_2}, \mu_{S_2}] = \left|\frac{\Out_r(H_{M_{S_1}} , s, \eta)}{\Out_r(H, s, \eta)} \right|[i^G_{M_{S_1}} (\mc{H}^{\mf{e}}_{S_2}, M_{S_2}, \mu_{S_2})]. 
\end{equation*}

Then we get 

\begin{equation*}
    \sum\limits_{(\mc{H}^{\mf{e}}_S, M_S, \mu_S) \in \mc{I}^{\mc{H}^{\mf{e}}, G, \mu}_{M_S, b_S}}  \left|\frac{\Out_r(H, s, \eta)}{\Out_r(H_{M_{S}} , s, \eta)} \right|\iota^G_{M_S}([\mc{M}_{\mc{H}^{\mf{e}}_S, M_S, b_S, \mu_S}])=[\mc{M}_{\mc{H}^{\mf{e}}, G, b, \mu}].
\end{equation*}

Finally, we show how to relate this to the cohomology of Rapoport--Zink spaces. Clearly, the endoscopic sum formula and our endoscopic averaging formula are analogous. We claim that the induction formula is analogous to the Harris-Viehmann conjecture. To see this, we need to show how $\Red^{\mc{H}^{\mf{e}}}_b$ relates to $\Red^{\mc{H}^{\mf{e}}_S}_b$ for $M_b \subset M_S \subset G$. To do this, we need to show how $X^{\mf{e}}_{M_b}$ relative to $G$ relates to $X^{\mf{e}}_{M_b}$ relative to $M_S$. The answer is that the term $(H, H_M, s, \eta)$ appears $|\Out_r(H,s,\eta)/\Out_r(H_M , s, \eta)|$ many more times in the second set than the first. Note that we can use $X^{\mf{e}}_{M_b}$ instead of $X^{\mf{e}}_{J_b}$ since the extra terms in the $M_b$ set have trivial endoscopic transfer. Then the Harris-Viehmann conjecture \cite[Conjecture 3.2.1]{abm1} implies the formula:
\begin{equation*}
    \Mant_{G,b,\mu} \circ \Red^{\mc{H}^{\mf{e}}}_b = \sum\limits_{(\mc{H}^{\mf{e}}_S, M_S, \mu_S) \in \mc{I}^{\mc{H}^{\mf{e}}, G, \mu}_{M_S, b_S}} \left|\frac{\Out_r(H, s, \eta)}{\Out_r(H_{M_{S}} , s, \eta)} \right|  \Ind^G_{P_S} \left( \Mant_{M_S, b_S, \mu_S} \circ \Red^{\mc{H}^{\mf{e}}_S}_b \right) \otimes | \cdot |^{ \langle \rho_G, \mu_S - \mu \rangle },
\end{equation*}
which we note is entirely analogous to our above combinatorial formula. Hence, arguing by induction as in \cite[\S3.3]{abm1}, one expects the following final formula:
\begin{conjecture}{\label{RZcon}}
We have
\begin{equation*}
    \Mant_{G,b,\mu} \circ \Red^{\mc{H}^{\mf{e}}}_b = [\mc{M}_{\mc{H}^{\mf{e}}, G, b, \mu}],
\end{equation*}
in $\Groth(G(\Q_p) \times W_{E_{\{\mu_G\}}})$.
\end{conjecture}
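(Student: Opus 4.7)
The plan is to imitate the inductive strategy of \cite[\S3.3]{abm1}, inducting on the semisimple rank of $G$. The structural inputs are three mutually parallel identities: the endoscopic averaging formula (Theorem \ref{finalformula}), the Harris--Viehmann reduction for $\Mant_{G,b,\mu} \circ \Red^{\mc{H}^{\mf{e}}}_b$ displayed immediately above the conjecture, and the combinatorial endoscopic sum and induction formulas (Theorem \ref{endsum} and its induction corollary) for the virtual element $[\mc{M}_{\mc{H}^{\mf{e}}, G, b, \mu}]$. Since the $\Mant$-side and the $\mc{M}$-side obey structurally identical decompositions, matching them at each stage of the induction reduces to a bookkeeping exercise. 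The base case is a torus (or $\mu$ central), where every $\Red^{\mc{H}^{\mf{e}}}_b$ and $\Mant$ reduce to explicit character identities and both sides agree tautologically.

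For the inductive step, assume the conjecture for all triples $(G', b', \mu')$ with $\mathrm{rk}_{ss}(G') < \mathrm{rk}_{ss}(G)$, and fix $b \in \mb{B}(\Q_p, G, -\mu)$. If $b$ is non-basic, then $M_b \subsetneq G$ is a proper Levi and we apply the Harris--Viehmann identity with $M_S = M_b$, which rewrites
\[
\Mant_{G,b,\mu} \circ \Red^{\mc{H}^{\mf{e}}}_b = \sum_{(\mc{H}^{\mf{e}}_S, M_b, \mu_S) \in \mc{I}^{\mc{H}^{\mf{e}}, G, \mu}_{M_b, b_{M_b}}} \left|\frac{\Out_r(H, s, \eta)}{\Out_r(H_{M_b}, s, \eta)}\right| \iota^G_{M_b}\bigl(\Mant_{M_b, b_{M_b}, \mu_S} \circ \Red^{\mc{H}^{\mf{e}}_S}_{b_{M_b}}\bigr).
\]
By the inductive hypothesis applied to each $(M_b, b_{M_b}, \mu_S)$, each inner Mantovan term equals $[\mc{M}_{\mc{H}^{\mf{e}}_S, M_b, b_{M_b}, \mu_S}]$. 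The representation-theoretic induction identity for $[\mc{M}]$ then assembles these into $[\mc{M}_{\mc{H}^{\mf{e}}, G, b, \mu}]$, yielding the conjecture for this $b$. For the remaining basic $b$, Theorem \ref{finalformula} gives
\[
\sum_{b' \in \mb{B}(\Q_p, G, -\mu)} \Mant_{G,b',\mu} \circ \Red^{\mc{H}^{\mf{e}}}_{b'} = [(\mc{H}^{\mf{e}}, G, \mu)]
\]
as maps on stable virtual characters of $H(\Q_p)$, which by Theorem \ref{endsum} equals $\sum_{b'} [\mc{M}_{\mc{H}^{\mf{e}}, G, b', \mu}]$. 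Subtracting the contributions from the already-established non-basic terms isolates the basic contribution and completes the induction.

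The main obstacle is verifying that the combinatorial index sets and sign conventions match exactly between the two sides. The Harris--Viehmann formula reorganizes the sum over $b$ in a manner that must be consistent with both Theorem \ref{endsum} and the representation-theoretic induction identity just above the conjecture; this requires careful accounting of the $\Out_r$-ratios, the $\iota^G_{M_S}$ multiplicities, and the twist by $|\cdot|^{\langle \rho_G, \mu_S - \mu\rangle}$. A secondary obstacle is ensuring that the isolation of the basic term in the final step is unambiguous, which depends on Assumption \ref{liftingassump} producing enough global lifts to separate contributions corresponding to different $x \in X_{\psi_p}$. Finally, the whole argument is conditional on Harris--Viehmann, Theorem \ref{finalformula}, and the assumptions listed in the introduction; unconditionally verifying these inputs for a given group, as done for odd unitary similitude groups in \cite{BMN1}, remains the most delicate part of any concrete application.
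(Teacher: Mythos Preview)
Your proposal follows essentially the same inductive strategy that the paper itself sketches. Note that the statement is labeled a \emph{Conjecture}; the paper does not supply a proof but writes ``arguing by induction as in \cite[\S3.3]{abm1}, one expects the following final formula'' after displaying the Harris--Viehmann identity and observing its structural parallel with the combinatorial induction formula for $[\mc{M}_{\mc{H}^{\mf{e}}, G, b, \mu}]$. Your write-up makes this heuristic explicit --- induct on semisimple rank, handle non-basic $b$ via Harris--Viehmann plus the inductive hypothesis, then isolate the basic $b$ from the sum formula (Theorem \ref{endsum} matched against Theorem \ref{finalformula}) --- which is exactly the intended conditional argument.
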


\appendix

\section{Endoscopic triples and endoscopic quadruples}{\label{appendixA}}
In this appendix, we explain the equivalence between our definition of endoscopic triple and the definition of endoscopic datum appearing in \cite[\S 2.1]{KS}. We point out that the results of this section are due to unpublished work of Kottwitz as well as \cite{KS}.

\begin{definition}{\cite[\S 2.1]{KS}}{\label{KSend}}
An \emph{endoscopic datum for $G$} is a quadruple $(H, \mathcal{H}, s, \eta)$ such that
\begin{itemize}
    \item $H$ is a quasisplit reductive group over $F$,\\
    \item $\mathcal{H}$ fits into a split exact sequence
    \begin{equation} 0 \to \widehat{H} \to \mathcal{H} \to W_F \to 0  \end{equation} such that the map $\rho_{\mathcal{H}}: W_F \to \mathrm{Out}(\widehat{H})$ agrees with $\rho_H$.\\
    \item $\eta: \mathcal{H} \hookrightarrow \, ^LG$ is an $L$-homomorphism,\\
    \item $s$ is a semisimple element of $\widehat{G}$ such that $\eta(\widehat{H})=Z_{\widehat{G}}(s)^0$,\\
    \item We have $\mathrm{Int}(s) \circ \eta= a \cdot \eta$ where $a: W_F \to Z(\widehat{G})$ is a $1$-cocycle that is trivial if $F$ is local or locally trivial if $F$ is global.
\end{itemize}
An isomorphism between endoscopic data $(H, \mathcal{H}, s, \eta)$ and $(H', \mathcal{H}', s', \eta')$ is an element $g \in \widehat{G}$ such that 
\begin{itemize}
\item we have $g \eta(\mathcal{H})g^{-1}=\eta'(\mathcal{H}')$,\\
\item and $gsg^{-1}=s' \, \mathrm{mod} Z(\widehat{G})$.
\end{itemize}
We define $\mathrm{Out}(H, \mathcal{H}, s, \eta)$ to be $\mathrm{Aut}(H, \mathcal{H}, s, \eta)/\eta( \widehat{H})$.
\end{definition}
\begin{construction}
We claim first that we can construct in a natural way an endoscopic triple as in \ref{endtrip} from an endoscopic datum as in \ref{KSend}.
\end{construction}
\begin{proof}
Given an endoscopic datum $(H, \mathcal{H}, s, \eta)$ we get a triple $(H, \eta^{-1}(s), \eta|_{\widehat{H}})$. We must show this is indeed an endoscopic triple. We first check that the $\widehat{G}$-conjugacy class of $\eta$ is fixed by $\Gamma_F$. Fix $\gamma \in \Gamma_F$. We need to show that there exists $g \in \widehat{G}$ such that 
\begin{equation}
    \gamma \circ \eta \circ \gamma^{-1}=g\eta g^{-1}.
\end{equation}
There exists a finite extension $K$ of $F$ such that the  actions of $\Gamma_F$ on $\widehat{H}$ and $\widehat{G}$ factor through $\mathrm{Gal}(K/F)$. Hence it suffices to assume $\gamma \in \mathrm{Gal}(K/F)$. Pick a lift $w \in W_F$ of $\gamma$. We claim there exist  $g_w \in \, ^LG$ and $h_w \in \, \mathcal{H}$ such that $\mathrm{Int}(g_w)|_{\widehat{G}}$ equals the action of $\gamma$ on $\widehat{G}$ and $\mathrm{Int}(h_w)|_{\widehat{H}}$ equals the action of $\gamma$ on $\widehat{H}$ and both elements project to $w \in W_F$. Assuming the claim, we get 
\begin{equation}
    \gamma \circ \eta \circ \gamma^{-1}=\mathrm{Int}(g_w\eta(h_w)^{-1}) \circ \eta,
\end{equation}
and we observe that $g_w\eta(h_w)^{-1} \in \widehat{G}$ as desired.

For $^LG$, the element $(1, w)$ satisfies the claim. For $\mathcal{H}$, we pick a splitting $c: W_F \to \mathcal{H}$ and observe that by the second bullet in \ref{KSend}, the action of $c(w)$ on $\widehat{H}$ differs from that of $\gamma$ by an inner automorphism of $\widehat{H}$. This proves the claim.

Finally we need to check that the image of $\eta^{-1}(s)$ in $Z(
\widehat{H})/Z(\widehat{G})$ is $\Gamma_F$-invariant and that the image of $\eta^{-1}(s)$ in $H^1(F, Z(\widehat{G}))$ is (locally) trivial. We denote $\eta^{-1}(s)$ by $s_H$ for convenience. Then as before, pick $\gamma \in \mathrm{Gal}(K/F)$ and choose $h \in \mathcal{H}$ such that $\mathrm{Int}(h)$ acts by $\gamma$ on $\widehat{H}$. In particular $s_Hhs^{-1}_H=s_H\gamma(s_H)^{-1}h$. Then the fifth bullet in \ref{KSend} gives 
\begin{equation}
    \eta(s_H\gamma(s_H)^{-1}h)=\eta(s_Hhs^{-1}_H)=\eta(h) \mathrm{mod} Z(\widehat{G}),
\end{equation}
which implies $s_H=\gamma(s_H) \mathrm{mod} Z(\widehat{G})$ as desired. Moreover, the above equation implies that the $W_F$-cocycle valued in $Z(\widehat{G})$ given by $s_H\gamma(s_H)^{-1}$ is equal to $a$ which is (locally) trivial by assumption.
\end{proof}
\begin{lemma}
The above construction induces a map of isomorphism classes of endoscopic data to isomorphism classes of triples.
\end{lemma}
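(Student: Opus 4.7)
The plan is to show that an isomorphism $g \in \widehat{G}$ between endoscopic data $(H, \mathcal{H}, s, \eta)$ and $(H', \mathcal{H}', s', \eta')$ induces a well-defined isomorphism of the associated triples $(H, \eta^{-1}(s), \eta|_{\widehat{H}})$ and $(H', \eta'^{-1}(s'), \eta'|_{\widehat{H'}})$. I will first construct a candidate dual morphism, then verify Galois equivariance, and finally check the two defining conditions for an isomorphism of triples.

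Setting $s_H := \eta^{-1}(s)$ and $s'_{H'} := \eta'^{-1}(s')$, I define
\[
\widehat{\alpha} := \eta^{-1} \circ \mathrm{Int}(g^{-1}) \circ \eta' : \widehat{H'} \to \widehat{H}.
\]
This makes sense because the hypothesis $g\eta(\mathcal{H})g^{-1} = \eta'(\mathcal{H}')$, combined with the identities $\eta(\widehat{H}) = Z_{\widehat{G}}(s)^\circ$ and $\eta'(\widehat{H'}) = Z_{\widehat{G}}(s')^\circ$ and $gsg^{-1} \equiv s' \pmod{Z(\widehat{G})}$, forces $\mathrm{Int}(g^{-1})$ to carry $\eta'(\widehat{H'})$ onto $\eta(\widehat{H})$.

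Next I show that $\widehat{\alpha}$ has $\Gamma_F$-stable conjugacy class, so by Fact \ref{equivarenum} it descends to an $F$-rational isomorphism $\alpha : H \to H'$ whose dual (in the sense used in Definition \ref{isoendtrip}) recovers $\widehat{\alpha}$ up to $\widehat{H}^{\Gamma_F}$-conjugacy. Fix $\gamma \in \Gamma_F$ with lift $w \in W_F$, and choose splittings $c : W_F \to \mathcal{H}$ and $c' : W_F \to \mathcal{H}'$. By the second bullet of Definition \ref{KSend}, the adjoint action of $c(w)$ on $\widehat{H}$ differs from the $\Gamma_F$-action of $\gamma$ by an inner automorphism of $\widehat{H}$, and similarly for $c'(w)$ on $\widehat{H'}$. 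Since $g$ conjugates $\eta(\mathcal{H})$ to $\eta'(\mathcal{H}')$ compatibly with the projections to $W_F$, there is an element $h' \in \widehat{H'}$ with $g\,\eta(c(w))\,g^{-1} = \eta'(h' c'(w))$. Substituting into the definition of $\widehat{\alpha}$ and comparing the two orders $\widehat{\alpha} \circ \gamma$ and $\gamma \circ \widehat{\alpha}$ of composition, the cocycle discrepancy collapses to conjugation by an element of $\widehat{H}$, proving the claim.

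Finally, I verify the two conditions of Definition \ref{isoendtrip}. By construction $\eta \circ \widehat{\alpha} = \mathrm{Int}(g^{-1}) \circ \eta'$, which is the first condition (conjugacy by $g^{-1} \in \widehat{G}$). For the second,
\[
\widehat{\alpha}(s'_{H'}) = \eta^{-1}\bigl(g^{-1} s' g\bigr) \equiv \eta^{-1}(s) = s_H \pmod{Z(\widehat{G})},
\]
using the hypothesis $gsg^{-1} \equiv s' \pmod{Z(\widehat{G})}$ and the fact that $Z(\widehat{G}) \subset Z(\widehat{H})$ via $\eta$. Hence $\alpha$ is an isomorphism of endoscopic triples. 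I expect the main obstacle to be the Galois-equivariance bookkeeping in the middle step; the other verifications are essentially formal manipulations with the defining relations.
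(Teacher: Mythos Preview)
Your proposal is correct and follows essentially the same approach as the paper: both construct the dual isomorphism $\widehat{H'}\to\widehat{H}$ via $\mathrm{Int}(g^{\pm 1})$, establish $\Gamma_F$-equivariance up to inner automorphisms by exploiting that $g$ conjugates $\eta(\mathcal{H})$ to $\eta'(\mathcal{H}')$ (hence intertwines the outer actions encoded by the split extensions), descend to an $F$-rational $\alpha$, and then verify the two conditions of Definition~\ref{isoendtrip}. The only cosmetic differences are that you invoke Fact~\ref{equivarenum} where the paper argues directly via a choice of $F$-splittings, and you justify $g\eta(\widehat{H})g^{-1}=\eta'(\widehat{H'})$ via the centralizer description $Z_{\widehat{G}}(s)^\circ$ whereas the paper reads it off from the kernels of $\mathcal{H},\mathcal{H}'\to W_F$.
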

\begin{proof}
Suppose we have a $g \in \widehat{G}$ inducing an isomorphism between $(H, \mathcal{H}, s, \eta)$ and $(H', \mathcal{H}', s', \eta')$. Then $\mathrm{Int}(g)$ induces an isomorphism $\beta: \widehat{H} \to \widehat{H}$. We claim that this isomorphism comes from an isomorphism $\alpha: H \to H'$ defined over $F$.

To prove the claim, we first show that $\beta$ is $\Gamma_F$-equivariant up to conjugacy and hence induces an equivariant map of canonical based root data. As before, pick $\gamma \in \mathrm{Gal}(K/F)$ and let $h_{\gamma} \in \mathcal{H}$ be such that $\mathrm{Int}(h_{\gamma})|_{\widehat{H}}$ and $\gamma$ coincide on $\widehat{H}$. Then for any $h\in \widehat{H}$, we have 
\begin{equation}
    \beta(\gamma(h))=\mathrm{Int}(\beta(h_{\gamma})) \circ \beta(h).
\end{equation}
Now, the action of $\mathrm{Int}(\beta(h_{\gamma}))$ on $\widehat{H'}$ agrees with that of $\gamma$ up to an inner automorphism as desired.

Now, we pick splittings of $H$ and $H'$ defined over $F$  and let $\alpha: H' \to H$ be the unique isomorphism dual to $\beta$ and mapping the splitting of $H'$ to that of $H$. Then by uniqueness, $\alpha$ is defined over $F$.

It is clear that $\alpha$ induces an isomorphism of triples $(H, s_H, \eta), (H', s'_H, \eta')$.
\end{proof}
Conversely, we must show that given an endoscopic triple $(H, s, \eta)$ we can construct an endoscopic datum.
\begin{construction}{\label{triptodat}}
Let $(H,s, \eta)$ be an endoscopic triple of $G$. We construct an endoscopic datum $(H', \mathcal{H}', s', \eta')$ from $(H, s, \eta)$ in a natural way. Moreover, this construction gives a map of isomorphism classes of the respective data.
\end{construction}
\begin{proof}
Define the following subgroup $\mathcal{H}$ of $^LG$. Let $x \in \, ^LG$ and suppose $x$ projects to $w \in W_F$. Then $x \in \mathcal{H}$ precisely if there exists a $y \in \, ^LH$ that also projects to $w$ and such that
\begin{equation}
    \mathrm{Int}(x) \circ \eta= \eta \circ \mathrm{Int}(y).
\end{equation}

We claim that $\mathcal{H}$ is an extension of $W_F$ by $\eta(\widehat{H})$. Pick $w \in W_F$ such that $w$ projects to $\gamma \in \mathrm{Gal}(K/F)$. Then $(1, w) \in \, ^LH$ acts on $\widehat{H}$ by $\gamma$. By definition, there exists a $g_{\gamma} \in \widehat{G}$ such that $\mathrm{Int}(g_{\gamma}) \circ \eta=\gamma \cdot \eta$. Then
\begin{equation}
    \eta \circ \Int(1, w) = \mathrm{Int}(\gamma^{-1}(g_{\gamma}), w) \circ \eta,
\end{equation}
which implies $\mathcal{H}$ surjects onto $W_F$.

Now the kernel of $\mathcal{H} \to W_F$ consists of $x \in \widehat{G}$ such that there exists $y \in \widehat{H}$ and $\mathrm{Int}(x) \circ \eta= \eta \circ \mathrm{Int}(y)$. Clearly $\eta(\widehat{H})$ is contained in this set. Conversely, we have that $\mathrm{Int}(x^{-1} \eta(y))$ acts trivially on $\widehat{H}$. In particular, $x^{-1}\eta(y)$ must centralize a maximal torus $\widehat{T}_H$ of $\eta(\widehat{H})$. Then $\widehat{T}_H$ is maximal in $\widehat{G}$ as well so $x^{-1} \eta(y) \in \widehat{T}_H \subset \eta(\widehat{H})$. Hence $x \in \eta( \widehat{H})$.

We now prove that the extension
\begin{equation}
1 \to \eta(\widehat{H}) \to \mathcal{H} \to W_F \to 1
\end{equation}
is split. We proceed as follows. Let $\widehat{T} \subset \widehat{B}$ be maximal torus and Borel of $\widehat{H}$ and let $\mathcal{T}$ be the subgroup of $\mathcal{H}$ of elements preserving the pair $(\eta(\widehat{T}), \eta(\widehat{B}))$. Then $\mathcal{T}$ is an extension of $W_F$ by $\eta(\widehat{T})$.

Then \cite[Lemma 4]{Lan1} says that if there exists a field $K$ that is a finite Galois extension of $F$ such that the action of $W_F$ on $\widehat{T}$ factors through $\mathrm{Gal}(K/F)$, then $\mathcal{T}$ is split. Since this is the case, $\mathcal{T}$ is split so we can take a splitting $c: W_F \to \mathcal{T}$. Then this is also a splitting of $\mathcal{H}$.

We define $(H', \mathcal{H}', s', \eta')$ so that $H'=H$ and $\mathcal{H}'$ is defined to be equal to $\mathcal{H}$ and $\eta'$ is the natural embedding $\mathcal{H} \subset \, ^LG$. Finally, let $s'$ equal $\eta(s)$. We need to check that $\rho_{\mathcal{H}'}: W_F \to \mathrm{Out}(\widehat{H}')$ agrees with $\rho_{H'}$ and that $\mathrm{Int}(s') \circ \eta'=a\cdot \eta'$ where $a: W_F \to Z(\widehat{G})$ is a (locally) trivial cocycle. 

For the first property, we pick $w \in W_F$. Then we need to show that the action of $\mathrm{Int}(c(w))$ on $\eta'(\widehat{H})$ is equal up to conjugation by an element of $\eta(\widehat{H})$ to the action given by $\eta(h) \mapsto \eta(w(h))$. By definition, there exists $(y,w) \in {}^LH$ so that 
\begin{equation}
   \mathrm{Int}(c(w)) \circ \eta(h)=\eta(yw(h)y^{-1}),
\end{equation}
hence the actions differ by  conjugation by $\eta(y)$ as desired.

For the second property, we need to show that $(1,w) \mapsto s' \eta'(1,w)s'^{-1}\eta'(1,w)^{-1}$ is a cocycle valued in $Z(\widehat{G})$. By definition, there exists $(y,w) \in \, ^LH$ such that the above equals
\begin{equation}
    \eta(s(y,w)s^{-1}(y,w)^{-1})=\eta(syw(s^{-1})y^{-1})=\eta(sys^{-1}zy^{-1})=\eta(z)
\end{equation}
for some $z \in Z(\widehat{G})$. To check the cocycle condition, we observe that this map is given by $w \mapsto s w(s^{-1})$.

Finally, we check that this construction gives a map of isomorphism classes. Suppose $(H_1, s_1, \eta_1), (H_2, s_2, \eta_2)$ are isomorphic endoscopic triples and $\alpha: H_1 \to H_2$ gives an isomorphism between them. Then there exists $g \in \widehat{G}$ so that $\eta_1 \circ \widehat{\alpha}= \mathrm{Int}(g) \circ \eta_2$. Now pick some $(x,w) \in \mathcal{H}_1$. Then there exists $(y,w) \in \, ^LH$ such that 
\begin{equation}
    \mathrm{Int}(x,w) \circ \eta_1 = \eta_1 \circ \mathrm{Int}(y,w).
\end{equation}
Now we post-compose with $\widehat{\alpha}$ and use that $\widehat{\alpha}$ extends naturally to a map $^LH_2 \to \, ^LH_1$ by Lemma \ref{alphalem}. Let $(y', w)= \widehat{\alpha}^{-1}(y,w)$. Then we get 
\begin{equation}
    \mathrm{Int}(x,w) \circ \mathrm{Int}(g) \circ \eta_2= \mathrm{Int}(x,w) \circ \eta_1 \circ \widehat{\alpha}=\eta_1 \circ \mathrm{Int}(y,w) \circ \widehat{\alpha}=\eta_1 \circ \widehat{\alpha} \circ \mathrm{Int}(y',w)= \mathrm{Int}(g) \circ \eta_2 \circ \mathrm{Int}(y',w).
\end{equation}
This implies that $g^{-1}$ conjugates $\mathcal{H}_1$ to $\mathcal{H}_2$. To get the desired isomorphism, we just need to check that $g^{-1}\eta_1(s_1)g=\eta_2(s_2) \mathrm{mod} Z(\widehat{G})$. But modulo $Z(\widehat{G})$, we have $\widehat{\alpha}(s_2)=s_1$ so $g \eta_2(s_2)g^{-1}=\eta_1(s_1)$ modulo $Z(\widehat{G})$ as desired.
\end{proof}

Finally, it remains to check that the two constructions are inverses of each other. Take an endoscopic triple $(H,s, \eta)$ and get the datum $(H', \mathcal{H}', s', \eta')$ and then get the triple $(H', \eta'^{-1}(s'), \eta'|_{\widehat{H}'})$. We need to show this triple is isomorphic to $(H,s,\eta)$. Consider the map $\eta^{-1} \circ \eta' : \widehat{H}' \to \widehat{H}$. This map is by definition $\Gamma_F$-equivariant, hence induces an isomorphism $\alpha: H \to H'$. Moreover we get $\widehat{\alpha}(s')=s$ as desired.

Now take a datum $(H, \mathcal{H}, s, \eta)$. Then we need to show this is isomorphic to the datum $(H', \mathcal{H}', s', \eta')$ induced by the triple $(H, \eta^{-1}(s), \eta|_{\widehat{H}})$. We first check that $\eta(\mathcal{H}) \subset \mathcal{H}'$. This follows because if $(g,w) \in \eta(\mathcal{H})$ then let $(x,w)$ by the pre-image of $(g,w)$ under $\eta$. Then $\eta \circ \mathrm{Int}(x,w)= \mathrm{Int}(g,w) \circ \eta$ so that $(g, w) \in \mc{H}'$. But now since both $\eta(\mathcal{H})$ and $\mathcal{H}'$ are split extensions of $W_F$, this implies they are equal. Furthermore, we have $s=s'$ by construction.

This completes our demonstration of the equivalence of the two definitions of endoscopic data.

\printbibliography

\end{document}